\patchcmd{\thebibliography}{*}{}{}{}
\definecolor{darkblue}{rgb}{0,0,0.4} 
\tikzset{zxplane/.style={canvas is zx plane at y=#1,very thin}}
\tikzset{xyplane/.style={canvas is xy plane at z=#1,very thin}}
\tikzset{yzplane/.style={canvas is yz plane at x=#1,very thin}}
\tikzstyle{intext}=[rectangle,fill=white,inner sep=1pt,outer sep=2pt, fill opacity=0.7,text opacity=1]
\tikzset{doubar/.style={double, double equal sign distance, -implies}}
\newcommand{\widebar}{\overline}
\newcommand{\RR}{\mathbb R}
\newcommand{\CC}{\mathbb C}
\newcommand{\ZZ}{\mathbb Z}
\newcommand{\QQ}{\mathbb Q}
\newcommand{\FF}{\mathbb F}
\newcommand{\NN}{\mathbb N}
\newcommand{\Ker}{\mathrm{ker}}
\newcommand{\Coker}{\mathrm{coker}}
\newcommand{\bD}{\mathbb{D}}
\newcommand{\co}{\nobreak\mskip2mu\mathpunct{}\nonscript
  \mkern-\thinmuskip{:}\penalty300\mskip6muplus1mu\relax}
\newcommand{\from}{\co}
\newcommand{\bdy}{\partial}
\newcommand{\into}{\hookrightarrow}
\newcommand{\lbracket}{[}
\newcommand{\rbracket}{]}
\newcommand{\spinc}{\mathfrak s}
\DeclareMathOperator{\sign}{sign}
\DeclareMathOperator{\Hom}{Hom}
\DeclareMathOperator{\Iso}{Iso}
\DeclareMathOperator{\Map}{Map}
\DeclareMathOperator{\Fun}{Fun}
\DeclareMathOperator{\Ext}{Ext}
\DeclareMathOperator{\im}{im}
\DeclareMathOperator{\spin}{spin}
\DeclareMathOperator{\pin}{pin}
\DeclareMathOperator{\Pin}{\mathit{Pin}}
\DeclareMathOperator{\ind}{ind}
\DeclareMathOperator{\ev}{ev}
\DeclareMathOperator{\gr}{gr}
\newcommand\interior{\mathrm{int}}
\renewcommand{\emptyset}{\varnothing}
\DeclareMathOperator{\SO}{\mathit{SO}}
\DeclareMathOperator{\Gr}{\mathit{Gr}}
\DeclareMathOperator{\Lag}{\mathit{Lag}}
\DeclareMathOperator{\Fr}{\mathrm{Fr}}
\theoremstyle{plain}
\numberwithin{equation}{section}
\newtheorem{theorem}[equation]{Theorem}
\newtheorem{proposition}[equation]{Proposition}
\newtheorem{lemma}[equation]{Lemma}
\newtheorem{corollary}[equation]{Corollary}
\newtheorem{conjecture}[equation]{Conjecture}
\newtheorem{convention}[equation]{Convention}
\newtheorem{definition}[equation]{Definition}
\newtheorem{hypothesis}[equation]{Hypothesis}
\theoremstyle{definition}
\theoremstyle{remark}
\newtheorem{example}[equation]{Example}
\newtheorem{remark}[equation]{Remark}
\newcommand{\HF}{\mathit{HF}}
\newcommand{\CF}{{\mathit{CF}}}
\newcommand{\x}{\mathbf x}
\newcommand\HH{\mathit{HH}}
\newcommand\Hochschild\HH
\newcommand{\cM}{\mathcal{M}}
\newcommand{\ocM}{\widebar{\cM}{}} 
\newcommand\Id{\mathbb{I}}
\newcommand{\Field}{\FF}
\newcommand{\Ring}{R}
\DeclareMathOperator{\nbd}{nbd}
\newcommand{\dbar}{\bar{\partial}}
\renewcommand{\th}{^\text{th}}
\newcommand{\Cat}{\mathscr{C}}
\DeclareMathOperator{\ob}{Ob}
\newcommand{\op}{\mathrm{op}}
\newcommand{\ol}[1]{\overline{#1}{}}
\newcommand{\wt}[1]{\widetilde{#1}{}}
\newcommand\honestalg[3]{\bigl\lbracket
\begin{smallmatrix} #1\@ifempty{#3}{}{&#3} \\ #2 \end{smallmatrix}
\bigr\rbracket}
\newcommand{\cube}{\mathit{cube}}
\newcommand{\Morse}{\mathit{Morse}}
\newcommand{\Moduli}{\cM}
\newcommand{\pModuli}{\mathcal{N}}
\newcommand{\oModuli}{\overline{\Moduli}}
\newcommand{\opModuli}{\overline{\pModuli}}
\newcommand{\Filt}{\mathcal{F}}
\newcommand{\pt}{\mathit{pt}}
\newcommand{\Crit}{\mathit{Crit}}
\newcommand{\Hess}{\mathit{Hess}}
\newcommand{\JSpace}{\mathcal{J}}
\newcommand{\cyl}{\mathrm{cyl}}
\newcommand{\ECat}{\mathscr{E}}
\newcommand{\BCat}{\mathscr{B}}
\newcommand{\ICat}{\mathscr{I}}
\newcommand{\Complexes}{\mathsf{Kom}}
\DeclareMathOperator{\hocolim}{hocolim}
\DeclareMathOperator{\fathocolim}{focolim}
\newcommand{\KhSymp}{\mathit{Kh}_{\mathit{symp}}}
\DeclareMathOperator{\Hilb}{Hilb}
\newcommand{\ssspace}[1]{\mathcal{Y}_{#1}}
\newcommand{\eH}[1][{\ZZ/2}]{H_{#1}}
\newcommand{\eHF}[1][{\ZZ/2}]{\HF_{\!#1}}
\newcommand*\wthelper[2]{%
        \hbox{\dimen@\accentfontxheight#1%
                \accentfontxheight#11.3\dimen@
                $\m@th#1\widetilde{#2}$%
                \accentfontxheight#1\dimen@
        }%
}
\newcommand*\accentfontxheight[1]{%
        \fontdimen5\ifx#1\displaystyle
                \textfont
        \else\ifx#1\textstyle
                \textfont
        \else\ifx#1\scriptstyle
                \scriptfont
        \else
                \scriptscriptfont
        \fi\fi\fi3
}
\newlength\xvec@height%
\newlength\xvec@depth%
\newlength\xvec@width%
\newcommand{\xvec}[2][]{%
  \ifmmode%
    \settoheight{\xvec@height}{$#2$}%
    \settodepth{\xvec@depth}{$#2$}%
    \settowidth{\xvec@width}{$#2$}%
  \else%
    \settoheight{\xvec@height}{#2}%
    \settodepth{\xvec@depth}{#2}%
    \settowidth{\xvec@width}{#2}%
  \fi%
  \def\xvec@arg{#1}%
  \def\xvec@dd{:}%
  \def\xvec@d{.}%
  \raisebox{.2ex}{\raisebox{\xvec@height}{\rlap{%
    \kern.05em
    \begin{tikzpicture}[scale=1]
    \pgfsetroundcap
    \draw (.05em,0)--(\xvec@width-.05em,0);
    \draw (\xvec@width-.05em,0)--(\xvec@width-.15em, .075em);
    \draw (\xvec@width-.05em,0)--(\xvec@width-.15em,-.075em);
    \ifx\xvec@arg\xvec@d%
      \fill(\xvec@width*.45,.5ex) circle (.5pt);%
    \else\ifx\xvec@arg\xvec@dd%
      \fill(\xvec@width*.30,.5ex) circle (.5pt);%
      \fill(\xvec@width*.65,.5ex) circle (.5pt);%
    \fi\fi%
    \end{tikzpicture}%
  }}}%
  #2%
}
\newcommand{\SingG}[1][{\bullet}]{G_{#1}}
\newcommand{\SingX}[1][{\bullet}]{X_{#1}}
\newcommand{\ChainComplexes}{\mathsf{Kom}}
\newcommand{\Sets}{\mathsf{Sets}}
\newcommand{\chainFb}{F}
\newcommand{\Nerve}{\mathscr{N}}
\newcommand{\NerveAc}{\mathscr{N}^{\!\mathit{AC}}}
\newcommand{\NerveWAc}{\mathscr{N}^{\!\mathit{ac}}}
\newcommand{\SSets}{\mathsf{SSet}}
\newcommand{\Spaces}{\mathsf{Top}}
\newcommand{\SSetsOther}{\mathsf{Set}^{\Delta^{\op}}}
\newcommand{\CVcat}[1][\bullet]{S[#1]}
\newcommand{\sCVcat}[1][\bullet]{S^{\mathit{sm}}[#1]}
\newcommand{\MarkLine}[1][{}]{\mathit{ML}_{#1}}
\DeclareMathOperator{\gluing}{gl}
\newcommand{\oMarkLine}[1][{}]{\overline{\mathit{ML}}_{#1}}
\newcommand{\JH}{(\wt{J},\wt{H})}
\newcommand{\JHp}{(\wt{J}',\wt{H}')}
\newcommand{\Line}{\mathbb{L}}
\newcommand{\action}[2]{#1#2}
\newcommand{\GJHspace}{\mathcal{J}_G}
\newcommand{\GJHcat}{\mathcal{G\!J}}
\newcommand{\FloerFunc}{F{\scriptscriptstyle\mathit{\!\!loer}}}
\newcommand{\maxFloerFunc}{\maxmap{F}{\scriptscriptstyle\mathit{\!\!loer}}}
\newcommand{\upto}[1]{\mathfrak{m}}
\newcommand{\maxmap}[1]{\xvec{#1}}
\newcommand{\messygr}[2]{[{#1}|{#2}]}
\newcommand{\Realize}[1]{|#1|}
\newcommand{\FatRealize}[1]{\|#1\|}
\newcommand{\Nervesm}{\Nerve^{\text{sm}}}
\newcommand{\brane}{\mathrm{br}}
\newcommand{\metric}{\langle\cdot,\cdot\rangle}
\newcommand{\MPspace}{\mathcal{J}_G}
\newcommand{\MPcat}{\mathcal{G\!J}}
\newcommand{\tmetric}{\wt{\metric}}
\newcommand{\MorseFunc}{F_{\scriptscriptstyle\mathit{\!\!Morse}}}
\newcommand{\cubeF}{F_{\scriptscriptstyle\mathit{\!\!\cube}}}
\newcommand{\maxMorseFunc}{\maxmap{F}_{\scriptscriptstyle\mathit{\!\!Morse}}}
\newcommand{\spaceF}{F_{\scriptscriptstyle\mathit{\!\!sp}}}
\newcommand{\diff}{\delta}
\newcommand{\BGSection}{\mathcal{S}}
\newcommand{\Polyh}{P}
\newcommand{\inj}{\mathrm{inj}}
\newcommand{\cubemap}[1]{\maxmap{#1}}
\begin{document}
\title{A simplicial construction of G-equivariant Floer homology}

\author{Kristen Hendricks}
 \address{Mathematics Department, Rutgers University\\
   New Brunswick, NJ 08854}
  \thanks{\texttt{KH was supported by NSF grant DMS-1663778.}}
\email{\href{mailto:kristen.hendricks@rutgers.edu}{kristen.hendricks@rutgers.edu}}

\author{Robert Lipshitz}
 \address{Department of Mathematics, University of Oregon\\
   Eugene, OR 97403}
\thanks{\texttt{RL was supported by NSF grant DMS-1149800.}}
\email{\href{mailto:lipshitz@uoregon.edu}{lipshitz@uoregon.edu}}

\author{Sucharit Sarkar}
\thanks{\texttt{SS was supported by NSF grant DMS-1643401.}}
\address{Department of Mathematics, University of California\\
  Los Angeles, CA 90095}
\email{\href{mailto:sucharit@math.ucla.edu}{sucharit@math.ucla.edu}}

\date{\today}

\subjclass[2010]{53D40, 57R58, 55U10}

\begin{abstract}
  For $G$ a Lie group acting on a symplectic manifold $(M,\omega)$
  preserving a pair of Lagrangians $L_0$, $L_1$, under certain
  hypotheses not including equivariant transversality we construct a
  $G$-equivariant Floer cohomology $\eHF[G](L_0,L_1)$.
\end{abstract}

\maketitle

\tableofcontents


\section{Introduction}
In the 1980s, Floer introduced a collection of semi-infinite
dimensional Morse-like
homologies\cite{Floer88:unregularized,Floer88:LagrangianHF,Floer88:instanton}. These
and later Floer-type invariants are often enriched by various kinds of
symmetries. For example, Seiberg-Witten Floer homology and cylindrical
contact homology are intrinsically $S^1$-equivariant
theories~\cite{KronheimerMrowka,Manolescu03:SW-spectrum-1,BO:equi-symp-hom},
and Fukaya categories often carry actions of mapping class groups
(e.g.,~\cite{KhS02:BraidGpAction,SeidelSmith6:Kh-symp}) and Lie
algebras (e.g.,~\cite{Seidel:dilating,Lekili:G-mflds}).

In this paper, we will focus on Lagrangian intersection Floer
cohomology in the presence of extrinsic symmetries: 
an action of a Lie group $G$ on a symplectic manifold $M$ preserving
Lagrangians $L_0$ and $L_1$. Our goal is to construct an equivariant
Floer cohomology, modeled on Borel equivariant (singular) cohomology,
and show that it enjoys good properties. The naive approach
is to consider $G$-equivariant almost complex structures and
Hamiltonian perturbations. The difficulty is that even for $G$ a discrete
group it is often impossible to achieve transversality for moduli
spaces of holomorphic curves via equivariant choices, and for $G$ a
positive-dimensional Lie group, $G$-equivariant transversality seems to
occur rarely indeed.

One way to construct equivariant Floer (co)homology without
equivariant transversality is to consider the space of
(non-equivariant) Lagrangians and build a kind of $G$-equivariant
local system over this space. There are at least two approaches to
making this rigorous. One is in terms of Floer theory on $M$ coupled to Morse
theory on $EG$; this approach is sketched by
Seidel-Smith\cite{SeidelSmith10:localization,Seidel:equi-pants} (who
focus on the case $G=\ZZ/2$). The other is to use notions of homotopy
coherence and a simplicial model for $BG$; in the case that $G$ is a
finite group and we consider Floer complexes with coefficients in
$\FF_2$ this approach was explained in our previous
paper~\cite{HLS:HEquivariant}. The goal of this paper is to extend
the homotopy coherence approach to $G$ an arbitrary compact Lie group (or, under
additional hypotheses on the action, any Lie group) and, under appropriate
hypotheses, coefficients in $\ZZ$ (or any other ring). Our main
construction / theorem is:

\begin{theorem}
  Suppose a Lie group $G$ acts on a symplectic manifold $(M,\omega)$
  by symplectomorphisms preserving Lagrangians $L_0$ and $L_1$,
  satisfying the following hypotheses:
  \begin{enumerate}
  \item For any loop of paths in $M$ from $L_0$ to $L_1$, both the
    symplectic area and the Maslov index vanish.  (See
    Point~\ref{item:J-1} in Hypothesis~\ref{hyp:Floer-defined}.)
  \item The manifold $M$ is either compact or is convex at infinity,
    and in the latter case $L_0$ and $L_1$ are compact or conical at
    infinity. (See Point~\ref{item:J-2} in
    Hypothesis~\ref{hyp:Floer-defined}.)
  \item The group $G$ is either compact or else every $G$-twisted loop
    of paths has Maslov index $0$. (See
    Section~\ref{sec:twisted-pi2}.)
  \end{enumerate}
  Then:
  \begin{enumerate}
  \item There is an associated $G$-equivariant Floer cohomology group
    $\eHF[G](L_0,L_1)$ over the ground ring $\Ring=\FF_2$.
  \item If the configuration spaces of Whitney disks in $(M,L_0,L_1)$ admit a
    $G$-orientation system (Definition~\ref{def:G-or-system}) then
    there is an associated $G$-equivariant Floer cohomology group
    $\eHF[G](L_0,L_1)$ over any ground ring $\Ring$. In particular, this
    holds whenever $L_0$ and $L_1$ admit $G$-equivariant brane data (see
    Proposition~\ref{prop:spin-or}).
  \item The equivariant Floer cohomology $\eHF[G](L_0,L_1)$ is a module
    over the group cohomology $H^*(BG;\Ring)$ of $G$ with coefficients in
    $\Ring$.
  \item If $\Ring=\Field$ is a field then there is a spectral sequence with $E^2$-page
    $H^*\bigl(BG;\HF^*(L_0,L_1;\Field)\bigr)$ converging to $
    \eHF[G](L_0,L_1; \Field)$. In particular, if $G$ is connected the $E^2$-page of this spectral sequence is 
    $H^*(BG;\Field)\otimes \HF^*(L_0,L_1;\Field)$.
  \item In the case that $G$ is a finite group and $\Ring=\FF_2$,
    $\eHF[G](L_0,L_1)$ is isomorphic to the equivariant Floer
    cohomology as defined in our previous
    paper~\cite{HLS:HEquivariant}.
  \end{enumerate}
\end{theorem}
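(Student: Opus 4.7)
The plan is to construct $\eHF[G](L_0, L_1)$ as the totalization of a homotopy-coherent diagram of ordinary Floer complexes parametrized by the simplices of a simplicial model $\Nerve(G)$ of $BG$ (e.g.\ the bar construction, with an $n$-simplex consisting of an $n$-tuple $(g_1,\dots,g_n)\in G^n$). The point is to avoid equivariant transversality entirely: over each simplex one makes \emph{non-equivariant} choices of almost complex structures and Hamiltonian perturbations on $M$, and the $G$-action enters only through the face and degeneracy maps identifying the Floer data for one simplex with a $G$-translate of the data on another.

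First I would spell out the parametrizing data. For each $n$-simplex $\sigma = (g_1,\dots,g_n)$, choose a family of compatible Floer data $\JH_\sigma$ parametrized by $\sigma$ (thought of as a moduli space like the associahedron or a simplex of paths in $G$). These families should be compatible with face and degeneracy maps up to the action of $G$: restricting $\JH_\sigma$ to a codimension-$1$ face should agree with $g \cdot \JH_\tau$ for the appropriate $g\in G$ and boundary simplex $\tau$. Using the freedom to perturb within each simplex and working inductively on the skeleta, standard Baire-category transversality arguments (applied one simplex at a time, after fixing data on the boundary) produce such a coherent system and guarantee that the associated moduli spaces of parametrized holomorphic strips are transversely cut out. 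This inductive construction is essentially the same as in our finite-group paper~\cite{HLS:HEquivariant}, except that now each $n$-simplex of $G^n$ itself carries a continuous family of data and one must also ensure compactness for non-compact $G$ using the twisted-$\pi_2$ hypothesis of Section~\ref{sec:twisted-pi2} to rule out breaking with positive area or negative Maslov index. Counting the resulting parametrized moduli spaces over each simplex, and assembling the counts via a simplicial bar-type totalization $\eCF[G](L_0, L_1) = \mathrm{Tot}\bigl(\sigma \mapsto \CF(L_0, L_1; \JH_\sigma)\bigr)$, produces the equivariant Floer complex whose cohomology is $\eHF[G](L_0,L_1)$; this proves (1).

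For (2), orientations of each parametrized moduli space require coherent orientations of the determinant line bundles over the configuration spaces of Whitney disks pulled back to the simplices of $\Nerve(G)$; this is precisely a $G$-orientation system in the sense of Definition~\ref{def:G-or-system}, and Proposition~\ref{prop:spin-or} shows that $G$-equivariant brane data furnish one. For (3), the cochain complex $C^*(\Nerve(G);\Ring)\cong C^*(BG;\Ring)$ acts on the totalization by the usual cup product on the simplicial direction, giving the $H^*(BG;\Ring)$-module structure on cohomology. For (4), the simplicial filtration of $\Nerve(G)$ by skeleta induces a filtration of $\eCF[G]$, whose associated spectral sequence has $E_1 = C^*\bigl(\Nerve(G); \CF(L_0,L_1)\bigr)$ with differentials the simplicial coboundary twisted by the $G$-action on $\HF^*$, so $E_2 = H^*(BG;\HF^*(L_0,L_1;\Field))$; convergence is immediate for $G$ compact from finiteness of the skeleta in each degree, and reduces to $H^*(BG;\Field)\otimes \HF^*(L_0,L_1;\Field)$ when $G$ is connected since the local coefficient system is then trivial.

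Finally, for (5), when $G$ is finite the construction here specializes (up to a choice of simplicial versus cubical model for $EG$) to the one in~\cite{HLS:HEquivariant}: both totalize the same homotopy-coherent Floer-theoretic functor out of $G$, just indexed by $\Nerve(G)$ instead of our previous cubical / cellular model of $EG$. Exhibiting an explicit quasi-isomorphism between the two totalizations (via a standard Eilenberg-Zilber-type comparison of the two models for $BG$ combined with acyclic-models for the choices of Floer data) identifies the two theories over $\FF_2$.

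The hardest step, and the one that dominates the paper, is the coherence construction in the first paragraph: making precise what it means for a $G^n$-family of Floer data to be "coherent up to homotopy" when $G$ is positive-dimensional, and then verifying that the formal structure assembles into an honest cochain complex (i.e.\ that the signed counts of parametrized moduli spaces satisfy a simplicial $A_\infty$-style Maurer--Cartan equation). This is where the homotopy-coherent language developed earlier in the paper, together with a careful analysis of codimension-one boundary strata of parametrized moduli spaces, is indispensable; by contrast, the module structure, spectral sequence, and comparison with the finite-group case are essentially formal consequences of having the totalization in hand.
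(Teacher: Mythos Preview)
Your overall architecture matches the paper's: build a homotopy-coherent diagram of ordinary Floer complexes indexed by a simplicial model of $BG$, take its hypercohomology (homotopy colimit), and read off the module structure, spectral sequence, and finite-group comparison formally. The module structure, spectral sequence, and part (5) are handled essentially as you describe.

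The one place your outline is genuinely off is the choice of simplicial model for $BG$. You propose the bar construction, with an $n$-simplex being a tuple $(g_1,\dots,g_n)\in G^n$. That model is a simplicial \emph{space}, not a simplicial set, and an individual simplex carries no internal parameter space over which to run a family of Floer data; your attempt to patch this (``thought of as a moduli space like the associahedron or a simplex of paths in $G$'') is exactly the point that needs to be made precise. The paper instead uses the \emph{smooth homotopy-coherent nerve} $\Nerve BG$: here an $n$-simplex is a smooth functor $\sCVcat[n]\to BG$, i.e.\ a compatible system of smooth maps $[0,1]^{j-i-1}\to G$ for each $0\le i<j\le n$ (see Example~\ref{exam:3-cell-in-nbg}). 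So a single $n$-simplex already packages an $(n-1)$-cube of elements of $G$, and the section $\BGSection$ lifts this to an $(n-1)$-cube of semicylindrical pairs in $\GJHspace$ (Lemma~\ref{lem:sec-exists}), over which one then counts parametrized strips. This is the content you flag at the end as ``the hardest step'': the paper resolves it not by coherence conditions on $G^n$-families, but by replacing the naive nerve with Cordier--Vogt's coherent nerve (in a smooth variant to make transversality arguments work).

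With that correction your sketch is accurate. One other small point: in your discussion of (4), convergence of the spectral sequence has nothing to do with compactness of $G$; it follows because the filtration on $\hocolim$ is bounded below and exhaustive, and the identification of $E^2$ uses only that $\Realize{\Nerve BG}\simeq BG$ (Lemmas~\ref{lem:Hinich} and~\ref{lem:sm-nerve}).
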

This is a combination of Definition~\ref{def:equivariant-cohomology},
Theorems~\ref{thm:eHF-inv} and~\ref{thm:finite-same},
and Proposition~\ref{prop:sseq}.

Versions of equivariant Lagrangian intersection Floer homology have
appeared elsewhere in the literature. One of the earliest instances 
is Khovanov-Seidel's use of $\ZZ/2$-equivariant
almost complex structures to simplify computations of Floer homology
with $\Field_2$-coefficients in certain cases~\cite[Lemma
5.14]{KhS02:BraidGpAction}. There, strong hypotheses on the group
action guaranteed that one could achieve equivariant
transversality. Seidel later constructed and exploited an action of
$\ZZ/2$ on the Fukaya category, again under strong hypotheses on the
group action~\cite{SeidelBook,Seidel:quartic}.  Free
actions were further exploited by Wu~\cite{Wu:free-action}. These
constructions were extended to fairly general finite group actions
by Cho-Hong~\cite{CH:group-actions}, for coefficient rings containing
$\QQ$. The hypothesis on the coefficients is essential for their
construction, as they average the differential over a $G$'s worth of
complex structures. Consequently, their construction does not carry an
interesting action by $H^*(BG)$.

The constructions in this paper can be seen as a more homotopy-theoretic 
kind of averaging over the group action, allowing us to work
with coefficients in arbitrary rings (and, in particular,
$\Field_2$). While we will not pursue the analogy with Cho-Hong's work
further, and in particular will not attempt to construct an
equivariant Fukaya category, we will make use of some of their ideas when
orienting moduli spaces in Section~\ref{sec:G-or}.

The paper is organized as follows. The language that we use to
formulate the coherent Floer data in the construction of
$\eHF[G](L_0,L_1)$ is that of infinity
(or $(\infty,1)$) categories. We review the aspects of this subject
that are needed for the construction in
Section~\ref{sec:background}. (The exposition is intended to be a
convex combination of efficient and explicit, so while we omit some
details it does not assume any familiarity with higher categories or
algebra.) Section~\ref{sec:construction} gives the construction of
$G$-equivariant Floer cohomology, culminating in
Section~\ref{seq:Floer-coho}, which gives both the definition of $\eHF[G](L_0,L_1)$ and a
proof of its invariance. We also outline the analogous construction in
Morse theory, in
Section~\ref{sec:Morse-const}. Section~\ref{sec:computations} gives
further properties and computations of these invariants: an
identification of this construction in the discrete case with our
earlier construction~\cite{HLS:HEquivariant}
(Section~\ref{sec:discrete}); the proof that the Morse-theory analogue
gives the usual Borel equivariant cohomology
(Section~\ref{sec:comp-morse}); the spectral sequence
$H^*(BG;\HF(L_0^H,L_1;\Field))\Rightarrow\eHF[G](L_0,L_1;\Field)$
(Section~\ref{sec:sseq}); and a brief discussion of $S^1$- and
$O(2)$-equivariant symplectic Khovanov homology
(Section~\ref{sec:kh-symp}).

\emph{Acknowledgments.} We thank Mohammed Abouzaid, Dan Dugger, Tyler
Lawson, Tim Perutz, and Dev Sinha for helpful conversations. In
particular, this paper is partly inspired by Abouzaid's work
on family Floer homology~\cite{Abouzaid:Family}. We also thank the
contributors to nLab, the articles in which helped us parse the
infinity-categorical literature. Finally, we thank the referee for
a careful reading and many helpful suggestions.


\section{Background and conventions}\label{sec:background}
We collect some background about infinity categories which we
need later. Throughout this section, we provide references to
treatments with further details, but have not attempted to cite the
original sources for this material.

\begin{convention}
  Fix a commutative, unital base ring $\Ring$. Unless otherwise mentioned, all algebraic
  constructions in this paper will be over $\Ring$. Usually $\Ring$ will be
  $\ZZ$ or some field $\Field$.
\end{convention}

\subsection{Simplicial sets}
To fix notation, recall that a \emph{simplicial set} $S$ consists of
a sequence of sets $S_n$, $n=0,1,2,\dots$, and maps
\begin{align*}
  d_j&\co S_n\to S_{n-1} & 
  s_j&\co S_n\to S_{n+1}
\end{align*}
for $0\leq j\leq n$, satisfying the simplicial relations. (See,
e.g.,~\cite[Section I.1]{GoerssJardine09}.) The maps $d_j$ are the
\emph{face maps} and the maps $s_j$ are the \emph{degeneracy maps};
the elements of $S_n$ are called $n$-simplices. Equivalently, a
simplicial set is a functor from $\Delta^\op$ to $\Sets$, where
$\Delta$ is the category with one object $[n]=\{0,\dots,n\}$ for each
non-negative integer $n$ and with $\Hom([m],[n])$ the set of monotone
functions $\alpha\co [m]\to[n]$. The face map
$d_j\from S_n\to S_{n-1}$ comes from the monotone injective map
$[n-1]\to[n]$ that misses $j$, and the degeneracy map
$s_j\from S_{n}\to S_{n+1}$ comes from the monotone surjective map
$[n+1]\to[n]$ sending $j$ and $j+1$ to $j$.

For $\sigma\in S_n$ and $0\leq i_0<\dots<i_k\leq n$, let
$\sigma_{i_0,\dots,i_k}$ denote the element $\alpha^*(\sigma)\in S_k$,
where $\alpha\from[k]\to[n]$ is the map $j\mapsto i_j$.

\subsection{Cubes}\label{sec:cubes}
The \emph{standard $n$-cube} is $[0,1]^n$. The boundary facets of
$[0,1]^n$ are
\[
  \bdy_{i,\epsilon}[0,1]^n=\{(x_1,\dots,x_n)\in[0,1]^n\mid x_i=\epsilon\}
\]
where $i\in\{1,\dots,n\}$ and $\epsilon\in\{0,1\}$. Projecting off the
$i\th$ coordinate identifies $\bdy_{i,\epsilon}[0,1]^n$ with the
standard $(n-1)$-cube. This identification is orientation-preserving
if $i+\epsilon$ is even and orientation-reversing otherwise.

\subsection{Infinity categories and functors}\label{sec:infty-cat-functors}
Let $\Delta^n$ be the standard $n$-simplex. We often view $\Delta^n$ as a
simplicial set with $k$-simplices $\sigma$ corresponding to length-$(k+1)$
increasing sequences in $\{0,1,\dots,n\}$; the elements of the sequence are the
\emph{vertices} of the $k$-simplex $\sigma$. A simplex in $\Delta^n$ is
non-degenerate if the corresponding sequence is strictly increasing.

For convenience later, given $-1\leq i\leq k$, let
$\Delta^1_{k;i}$ denote the $k$-simplex in $\Delta^1$ whose vertices
are $\overbrace{0,\dots,0}^{i+1},\overbrace{1,\dots,1}^{k-i}$. (The
simplex $\Delta^1_{k;j}$ is degenerate if $k>1$.)

Let $\Lambda_i^n$ denote the $i\th$ \emph{horn} of $\Delta^n$, that is,
$\Delta^n$ with the $n$-dimensional cell and the $i\th$
$(n-1)$-dimensional cell deleted. So, $\Lambda_i^n$ is a simplicial set whose
simplices correspond to increasing sequences in $\{0,\dots,n\}$ not containing
$\{0,1,\dots,\hat{i},\dots,n\}$ or $\{0,\dots,n\}$.

Recall~\cite[Definition 1.1.2.4]{Lurie09:topos} that a \emph{weak Kan
  complex} or an \emph{infinity category} (or $(\infty,1)$-category, or quasicategory)
is a simplicial set $S$ which satisfies the \emph{weak Kan
  condition}: if $0<i<n$, and $f_0\co \Lambda_i^n\to S$ is a
map of simplicial sets then $f_0$ can be extended to a map
$f\co \Delta^n\to S$. The elements of $S_0$ are the
\emph{objects} of $S$, sometimes denoted $\ob(S)$. The
elements of $S_n$ (for $n>0$) are the \emph{$n$-morphisms} of
$S$. The infinity category is a \emph{Kan
  complex} if it satisfies the horn-filling condition for $i=0,n$ as
well.

Given any topological space $X$, the \emph{singular set} $\SingX$ of
$X$, which has $\SingX[n]$ the set of continuous maps $\Delta^n\to X$,
is a simplicial set satisfying the Kan condition (because the topological
$n$-simplex retracts to any of its horns). So, we can view $\SingX$ as
an infinity category.

A \emph{functor} of infinity categories is simply a map of simplicial
sets~\cite[Section 1.2.7]{Lurie09:topos}.

\subsection{Semi-simplicial sets}
We will occasionally use \emph{semi-simplicial sets}, which
have face maps but no degeneracy maps. Equivalently, a semi-simplicial
set is a functor from $\Delta^\op_{\inj}$ to $\Sets$, where $\Delta_{\inj}$ has
objects the non-negative integers and $\Hom([m],[n])$ the set of
monotone injections from $[m]$ to $[n]$. 

If $\Sets^{\Delta^\op}$ (respectively $\Sets^{\Delta^\op_\inj}$)
denotes the category of simplicial sets (respectively semi-simplicial
sets) then the inclusion $j\co \Delta^\op_{\inj}\to\Delta^\op$ induces
a forgetful map $j^*\co
\Sets^{\Delta^\op}\to\Sets^{\Delta^\op_\inj}$. A semi-simplicial set
$X$ has a \emph{fat realization}, $\FatRealize{{X}}$
which is defined like the geometric realization $\Realize{X}$ except
without quotienting by the relations involving the degeneracy maps of
$X$. A map of semi-simplicial sets induces a map of fat
realizations. Further, if $X$ is a simplicial set then the
quotient map $\FatRealize{{j^*X}}\to\Realize{X}$ is a
weak equivalence~\cite[Appendix A]{Segal:categories}, and is
functorial with respect to maps of simplicial sets.

Recall that a map of simplicial sets is a \emph{weak equivalence} if
the induced map of fibrant replacements induces an isomorphism on all
homotopy groups. This is equivalent to the induced map of geometric
realizations being a weak equivalence. In fact, two simplicial sets
are weakly equivalent if and only if their geometric realizations
are~\cite[Theorem I.11.4]{GoerssJardine09}.

\begin{lemma}\label{lem:semi-weak-equiv} 
  Let $S,T\in\Sets^{\Delta^\op}$ be simplicial sets. If
  $f\from j^*S\to j^*T$ is a semi-simplicial map that induces weak
  equivalence of fat realizations then $S$ and $T$ are weakly
  equivalent as simplicial sets.
\end{lemma}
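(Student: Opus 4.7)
The plan is to produce a zigzag of weak equivalences between $\Realize{S}$ and $\Realize{T}$ by passing through the fat realizations, using the functoriality and weak-equivalence properties of the comparison map $\FatRealize{j^*(-)}\to\Realize{-}$ quoted from Segal.

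First I would assemble the following diagram of spaces and maps. The semi-simplicial map $f\from j^*S\to j^*T$ induces a map of fat realizations $\FatRealize{f}\from\FatRealize{j^*S}\to\FatRealize{j^*T}$, which is a weak equivalence by hypothesis. On the other hand, for any simplicial set $X$ the quotient map $q_X\from\FatRealize{j^*X}\to\Realize{X}$ is a weak equivalence, by the result of Segal cited in the excerpt. Applying this to $X=S$ and $X=T$ gives the zigzag
\[
\Realize{S}\xleftarrow{q_S}\FatRealize{j^*S}\xrightarrow{\FatRealize{f}}\FatRealize{j^*T}\xrightarrow{q_T}\Realize{T},
\]
in which every arrow is a weak equivalence. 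Since weak equivalence of simplicial sets is by definition weak equivalence of geometric realizations (or equivalently, being connected by a zigzag of weak equivalences in the Kan–Quillen model structure), this zigzag exhibits $S$ and $T$ as weakly equivalent simplicial sets.

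The only thing to check carefully is that no extra hypothesis beyond semi-simpliciality of $f$ is needed to invoke Segal's statement in the second step: the quoted result only asserts that $q_X$ is a weak equivalence and is natural with respect to maps of simplicial sets, but here we only need it as an abstract weak equivalence for the two particular objects $S$ and $T$, so no naturality issue arises. The map $\FatRealize{f}$ need not descend to a map $\Realize{S}\to\Realize{T}$, which is precisely why a zigzag (rather than a single map) is produced; this is the only mildly subtle point, and it is handled automatically by the definition of weak equivalence. No step here is a genuine obstacle; the lemma is essentially a formal consequence of Segal's comparison.
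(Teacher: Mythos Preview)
Your proposal is correct and essentially identical to the paper's own proof: the paper constructs exactly the same zigzag $\Realize{S}\stackrel{\simeq}{\longleftarrow}\FatRealize{j^*S}\stackrel{\simeq}{\longrightarrow}\FatRealize{j^*T}\stackrel{\simeq}{\longrightarrow}\Realize{T}$ and then invokes the equivalence of homotopy categories induced by geometric realization. Your remark that naturality of $q_X$ is not needed here (only that each $q_X$ is individually a weak equivalence) is exactly right and matches how the paper uses Segal's result.
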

\begin{proof}
  By hypothesis, $\FatRealize{f}\co
  \FatRealize{j^*S}\to\FatRealize{j^*T}$ is a weak
  equivalence of CW complexes. So, we have
  \[
    \Realize{S}\stackrel{\simeq}{\longleftarrow}\FatRealize{{j^*S}}\stackrel{\simeq}{\longrightarrow}\FatRealize{j^*T}\stackrel{\simeq}{\longrightarrow}\Realize{T}.
  \]
  It follows that $S$ and $T$ are weakly equivalent simplicial sets.
\end{proof}

The following lemma will be convenient for showing that two versions of the simplicial nerve agree:
\begin{lemma}\label{lem:fat}
  Let $S$ and $T$ be simplicial sets. If there are maps $\alpha\from
  j^*S\to j^*T$, $\beta\co j^*T\to j^*S$, $H\co j^*(\Delta^1\times
  T)\to j^*T$, and $K\from j^*(\Delta^1\times S)\to j^*S$ of
  semi-simplicial sets so that
  \begin{align*}
    H|_{\{0\}\times T}&=\Id_{T} & H|_{\{1\}\times T}&=\alpha\circ\beta\\
    K|_{\{0\}\times S}&=\Id_{S} & K|_{\{1\}\times S}&=\beta\circ\alpha
  \end{align*}
  then $S$ and $T$ are weakly equivalent as simplicial sets.
\end{lemma}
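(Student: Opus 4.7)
The plan is to reduce everything to Lemma~\ref{lem:semi-weak-equiv} by showing that the semi-simplicial map $\alpha\from j^*S\to j^*T$ induces a homotopy equivalence of fat realizations, with homotopy inverse $\|\beta\|$. The data needed to verify this is exactly what $H$ and $K$ supply, once we know how to convert a \emph{semi-simplicial} homotopy into a genuine topological one. This latter step is the only real content of the proof, and it is where the hypothesis that $\Delta^1\times T$ (and $\Delta^1\times S$) is a simplicial set, not merely semi-simplicial, gets used.

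First I would unpack what $\|H\|\from \|j^*(\Delta^1\times T)\|\to\|j^*T\|$ does at the two ``endpoint'' inclusions. The simplicial maps $i_0,i_1\from T\to\Delta^1\times T$ (at the vertices of $\Delta^1$) are honest maps of simplicial sets, so $j^*i_0$ and $j^*i_1$ make sense and satisfy $H\circ j^*i_0=\Id_{j^*T}$ and $H\circ j^*i_1=j^*\alpha\circ j^*\beta$ by the hypotheses on $H$. After taking fat realizations,
\[
\|H\|\circ\|j^*i_0\|=\Id_{\|j^*T\|}, \qquad \|H\|\circ\|j^*i_1\|=\|j^*\alpha\|\circ\|j^*\beta\|.
\]
Thus it suffices to produce a homotopy between the two maps $\|j^*i_0\|,\|j^*i_1\|\from\|j^*T\|\to\|j^*(\Delta^1\times T)\|$.

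For this I use the naturality and functoriality properties of the comparison map $q_X\from\|j^*X\|\to|X|$ recorded just before Lemma~\ref{lem:semi-weak-equiv}: for any \emph{simplicial} map $f\from X\to Y$ one has $q_Y\circ\|j^*f\|=|f|\circ q_X$, and $q_X$ is a weak equivalence. Since fat realizations of semi-simplicial sets and geometric realizations of simplicial sets are CW complexes, $q_{\Delta^1\times T}$ is in fact a homotopy equivalence. Under $q_{\Delta^1\times T}$ the maps $\|j^*i_0\|$ and $\|j^*i_1\|$ pull back to $|i_0|,|i_1|\from|T|\to|\Delta^1\times T|=[0,1]\times|T|$, which are the inclusions at $0$ and $1$, and these are manifestly homotopic via $(t,y)\mapsto(t,y)$. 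Composing this homotopy with a homotopy inverse of $q_{\Delta^1\times T}$ yields $\|j^*i_0\|\simeq\|j^*i_1\|$, and hence $\Id_{\|j^*T\|}\simeq\|j^*\alpha\|\circ\|j^*\beta\|$. The identical argument applied to $K$ gives $\|j^*\beta\|\circ\|j^*\alpha\|\simeq\Id_{\|j^*S\|}$.

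Therefore $\|j^*\alpha\|$ is a homotopy equivalence, in particular a weak equivalence of fat realizations, and Lemma~\ref{lem:semi-weak-equiv} applied to $\alpha$ concludes that $S$ and $T$ are weakly equivalent as simplicial sets. The main obstacle, if any, is keeping straight which maps are simplicial versus semi-simplicial: the formula $H\circ j^*i_0=\Id$ makes sense because $i_0$ is a map of simplicial sets, so $j^*i_0$ is a map of semi-simplicial sets and can be composed with $H$; but $H$ itself only exists semi-simplicially, which is exactly why the intermediate passage through $q_{\Delta^1\times T}$ is needed to import the obvious topological homotopy on $[0,1]\times|T|$.
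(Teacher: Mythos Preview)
Your proof is correct and follows essentially the same approach as the paper: reduce to Lemma~\ref{lem:semi-weak-equiv} by showing $\FatRealize{\alpha}$ is a homotopy equivalence, and for this show $\FatRealize{j^*i_0}\simeq\FatRealize{j^*i_1}$ by comparing with the thin realization $|\Delta^1\times T|=[0,1]\times|T|$ via the natural weak equivalence $q$. The paper organizes the last step as a diagram chase (a commuting square with a homotopy-commuting bigon on the right forces the left bigon to homotopy-commute since the horizontal arrows are equivalences), whereas you phrase it by composing with a homotopy inverse of $q_{\Delta^1\times T}$, but the content is identical.
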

\begin{proof}
  Using Lemma~\ref{lem:semi-weak-equiv}, it suffices to show that
  $\FatRealize{\alpha}$ is a homotopy equivalence.  We have
  $\FatRealize{\alpha}\circ\FatRealize{\beta}=\FatRealize{H}\circ\FatRealize{j^*\iota_{1}}$
  and
  $\Id_{\FatRealize{j^*T}}=\FatRealize{H}\circ\FatRealize{j^*\iota_{0}}$,
  where $\iota_i\co\Delta^0\to\Delta^1$ are the endpoint inclusions.
  Hence, to show that $\FatRealize{\alpha}\circ\FatRealize{\beta}$ is
  homotopic to the identity it suffices to show that
  $\FatRealize{j^*\iota_{0}}$ and $\FatRealize{j^*\iota_{1}}$ are
  homotopic.  This follows from the diagram
  \[
    \begin{tikzpicture}[xscale=6,yscale=1.5]
      \node at (0,0) (fat) {$\FatRealize{j^*T}$};
      \node at (0,-1) (fatDelt) {$\FatRealize{j^*(\Delta^1\times T)}$};
      \node at (1,0) (thin) {$\Realize{T}$};
      \node at (1,-1) (thinDelt) {$\Realize{\Delta^1\times T}=[0,1]\times \Realize{T}$};

      \draw[->] (fat) to[out=-120,in=120] node[left]{\tiny $\FatRealize{j^*\iota_{0}}$} (fatDelt);
      \draw[->] (fat) to[out=-60,in=60] node[right]{\tiny $\FatRealize{j^*\iota_{1}}$} (fatDelt);
      \draw[->] (fat) to node[above]{\tiny $\simeq$} (thin);
      \draw[->] (thin) to[out=-120,in=120] node[left]{\tiny $\Realize{\iota_{0}}$} (thinDelt);
      \draw[->] (thin) to[out=-60,in=60] node[right]{\tiny $\Realize{\iota_{1}}$} (thinDelt);
      \draw[->] (fatDelt) to node[above]{\tiny $\simeq$} (thinDelt);
    \end{tikzpicture}
  \]
  where the rectangular face involving $\FatRealize{j^*\iota_i}$ and
  $\Realize{\iota_i}$ commutes for $i=0,1$, and the right bigon
  commutes up to homotopy, forcing the left bigon to commute up to
  homotopy as well, since the horizontal arrows are homotopy
  equivalences.
  
  A similar argument shows that
  $\FatRealize{\beta}\circ\FatRealize{\alpha}$ is homotopic to
  $\Id_{\FatRealize{j^*S}}$, proving the result.
\end{proof}

\subsection{The category of chain complexes}\label{sec:chain-complexes}
Another infinity category of importance to us is
$\ChainComplexes$, the infinity category of bounded-below chain
complexes of free $\Ring$-modules. Following
Lurie~\cite[Construction 1.3.1.6]{Lurie:HigherAlgebra}, the
$n$-simplices in $\ChainComplexes$ are pairs $(\{C_j\},\{f_J\})$ where
$\{C_j\}$ is a sequence of $n+1$ bounded-below chain complexes
$C_0,\cdots,C_n$ and $\{f_J\}$ is a collection of maps $f_{J}\co
C_{j_0}\to C_{j_k}$, one for each increasing sequence $J=(0\leq
j_0<\cdots<j_k\leq n)$ (with $k\geq 1$), of degree $k-1$, satisfying
\begin{equation}\label{eq:chain-cx-cat}
  \bdy_{C_{j_k}}\circ f_{j_0,\dots,j_k}+(-1)^{k}f_{j_0,\dots,j_k}\circ \bdy_{C_{j_0}} = 
  \sum_{0<\ell<k}(-1)^{k-\ell+1}\left(f_{J\setminus\{j_\ell\}}-f_{j_\ell,\dots,j_k}\circ f_{j_0,\dots,j_\ell}\right).
\end{equation}
The structure
maps of the simplicial set $\ChainComplexes$ are given as follows. For
each monotone function $\alpha\co [m]\to[n]$ and $n$-simplex
$(\{C_j\},\{f_J\})$ of $\ChainComplexes$ there is a corresponding
$m$-simplex $\alpha^*(\{C_j\},\{f_J\})=(\{D_\ell\},\{g_L\})$ where
$D_{\ell}=C_{\alpha(\ell)}$ and $g_L$ is:
\begin{itemize}
\item $f_{\alpha(L)}$ if $|\alpha(L)|=|L|$,
\item $\Id_{C_j}$ if $L=\{\ell_1,\ell_2\}$ has two elements and
  $\alpha(\ell_1)=\alpha(\ell_2)=j$, and
\item the zero map in all other cases.
\end{itemize}

In fact, the category of chain complexes is a \emph{stable} infinity
category~\cite[Proposition 1.3.2.10]{Lurie:HigherAlgebra},
i.e., $\ChainComplexes$ has a zero object, which is the zero complex;
every morphism admits a fiber and a cofiber; and fiber and cofiber
sequences agree. The cofiber of a morphism $f$ is simply the mapping
cone of $f$, and the (induced) shift operator is just the usual shift
operator on chain complexes.

\begin{remark}
  Our convention for indexing the morphisms $f_{j_0,\dots,j_k}$
  differs slightly from Lurie's; consequently, the sign in
  Formula~\eqref{eq:chain-cx-cat} differs from Lurie's, as well.
\end{remark}

\begin{example}\label{ref:3-cell-in-kom}
A $3$-simplex in $\ChainComplexes$ consists of four chain complexes
$C_0,C_1,C_2,C_3$ with maps
\[
\begin{tikzpicture}[xscale=2,yscale=1.5]
\tikzset{every
  node/.style={fill=white,inner sep=0pt,outer sep=1pt, fill
    opacity=0.7,text opacity=1}}

\node (a0) at (0,0) {$C_0$};
\node (a1) at (1,1) {$C_1$};
\node (a2) at (3,1) {$C_2$};
\node (a3) at (4,0) {$C_3$};

\draw[->] (a0) -- (a1) node[pos=0.5] {\tiny $f_{01}$};
\draw[->] (a1) -- (a2) node[pos=0.5] {\tiny $f_{12}$};
\draw[->] (a2) -- (a3) node[pos=0.5] {\tiny $f_{23}$};
\draw[->] (a0) -- (a2) node[pos=0.5] {\tiny $f_{02}$};
\draw[->] (a1) -- (a3) node[pos=0.5] {\tiny $f_{13}$};
\draw[->] (a0) -- (a3) node[pos=0.5] {\tiny $f_{03}$};

\draw[->,red,dashed] (a0) .. controls (0.6,0.6) and (2,1) .. (a2) node[pos=0.5] {\tiny $f_{012}$};
\draw[->,red,dashed] (a1) .. controls (2,1) and (4-0.6,0.6) .. (a3) node[pos=0.5] {\tiny $f_{123}$};
\draw[->,red,dashed] (a0) .. controls (0.9,0.5) .. (a3) node[pos=0.5] {\tiny $f_{013}$};
\draw[->,red,dashed] (a0) .. controls (4-0.9,0.5) .. (a3) node[pos=0.5] {\tiny $f_{023}$};

\draw[->,green!50!black,dotted] (a0) .. controls (2,-0.5) .. (a3) node[pos=0.5] {\tiny $f_{0123}$};
\end{tikzpicture}
\]
where the solid maps are degree-$0$ chain maps, the
{\color{red}dashed} maps are degree-$1$ chain homotopies, and the
{\color{green!50!black}dotted} map is a degree-$2$ homotopy of
homotopies filling in the following square of maps $C_0\to C_3$:
\[
\begin{tikzpicture}[cm={0,1,1,0,(0,0)},scale=1.7]
\tikzset{every
  node/.style={fill=white,inner sep=0pt,outer sep=1pt, fill
    opacity=0.7,text opacity=1}}

\draw (0,0) rectangle (2,2);

\node at (0,0) {\tiny $f_{03}$};
\node at (0,2) {\tiny $f_{13}\!\circ\! f_{01}$};
\node at (2,0) {\tiny $f_{23}\!\circ\! f_{02}$};
\node at (2,2) {\tiny $f_{23}\!\circ\! f_{12}\!\circ\! f_{01}$};

\node at (1,0) {\tiny ${\color{red}f_{023}}$};
\node at (0,1) {\tiny ${\color{red}f_{013}}$};
\node at (2,1) {\tiny $f_{23}\!\circ\! {\color{red}f_{012}}$};
\node at (1,2) {\tiny ${\color{red}f_{123}}\!\circ\! f_{01}$};

\node at (1,1) {\tiny ${\color{green!50!black}f_{0123}}$};
\end{tikzpicture}
\]
\end{example}

\begin{convention}\label{convention:highest-maps-in-complexes}
  Let $\Cat$ be an infinity category or, more generally, simplicial set and $F\co\Cat\to\ChainComplexes$
  a functor (map of simplicial sets).  For any $n$-simplex $\sigma$ in
  $\Cat$ with vertices $\sigma_0,\dots,\sigma_n$, $F(\sigma)$ is an
  $n$-simplex in $\Complexes$, consisting of chain complexes
  $F(\sigma_0),\dots,F(\sigma_n)$, and maps $F(\sigma)_J$ between
  them, one for each sequence $J=(0\leq j_0<\dots<j_k\leq n)$. We will
  let $\maxmap{F}(\sigma)$ denote the degree $(n-1)$ map
  $F(\sigma_0)\to F(\sigma_n)$ associated to the maximal sequence
  $(0<1<\dots<n)$. The functor $F$ is determined
  entirely by the chain complexes $F(x)$, $x\in\Cat_0$, and these maps
  $\maxmap{F}(\sigma)$ for non-degenerate $n$-simplices $\sigma$,
  $n\geq 1$. (For degenerate simplices except $s_0x$, $\maxmap{F}$
  must vanish.)
\end{convention}
In this notation, the fact that $F$ is a functor is equivalent to
\begin{equation}
  \label{eq:chain-cx-cat-max}
  \bdy_{C_n}\circ \maxmap{F}(\sigma)+(-1)^n\maxmap{F}(\sigma)\circ\bdy_{C_0}=
  \sum_{0<\ell<n}(-1)^{n-\ell+1}\left(\maxmap{F}(d_\ell\sigma)
    -\maxmap{F}(\sigma_{\ell,\dots,n})\circ\maxmap{F}(\sigma_{0,\dots,\ell})\right).
\end{equation}

\subsection{The homotopy coherent nerve}\label{sec:hc-nerve}
Cordier's \emph{homotopy coherent nerve} or \emph{simplicial nerve} is
a procedure for turning categories where the morphisms form (fibrant) simplicial
sets, i.e., \emph{simplicially enriched categories}, into simplicial
sets
(infinity categories). Briefly, for each non-negative integer $n$ let
$S[n]$ be the simplicially enriched category in which:
\begin{itemize}
\item The objects are $0,1,\dots,n$.
\item $\Hom_{S[n]}(i,j)$ is the order complex associated to the set of
  subsets of $\{i,i+1,\dots,j\}$ which include both $i$ and $j$,
  ordered by inclusion.
  (So,
  $\Hom_{S[n]}(i,j)$ is a simplicial set, and is
  combinatorially equivalent to
  the $(j-i-1)$-dimensional cube; the face $x_\ell=0$ (respectively
  $x_\ell=1$) of the cube corresponds to the subsets of $[i,j]$ not
  containing (respectively containing) the element $i+\ell$.)
\item Composition $\Hom_{S[n]}(j,k)\times \Hom_{S[n]}(i,j)\to
  \Hom_{S[n]}(i,k)$ is induced by the union of sets.
\end{itemize}
(We have followed the notation in \emph{nLab}; Lurie denotes
$S[n]$ by $\mathfrak{C}[\Delta^{[0,n]}]$~\cite[Definition
1.1.5.1]{Lurie09:topos}.)

Vertices of $\Hom_{S[n]}(i,j)$ correspond to subsets of
the form $V=\{i<k_1<k_2<\dots<k_\ell<j\}$; we will denote the
corresponding vertex by $A_V$. Then a non-degenerate $k$-simplex in
$\Hom_{S[n]}(i,j)$ corresponds to a sequence $V_0,\dots,V_k$ of such
subsets where ${V_{\ell}}$ is a proper subset of ${V_{\ell+1}}$,
$0\leq\ell<k$; we will represent the simplex as the sequence
$A_{V_0},\dots,A_{V_k}$ of its vertices.

There are coface and codegeneracy functors $\delta_j\co S[n]\to S[n+1]$,
$\sigma_j\co S[n]\to S[n-1]$, where $\delta_j$ is induced by the
monotone injection $\{0,\dots,n\}\to\{0,\dots,n+1\}$ which
skips $j$ and $\sigma_j$ is induced by the monotone surjection
$\{0,\dots,n\}\to \{0,\dots,n-1\}$ which sends both $j$ and $j+1$ to
$j$ (i.e., by the usual maps which dualize to $d_j$ and $s_j$); these
maps induce maps of power sets and order complexes of power sets.

Given a simplicially enriched category $\Cat$, the simplicial nerve
$\Nerve\Cat$ of $\Cat$ is the simplicial set with:
\begin{itemize}
\item $n$-simplices given by $\Fun(S[n],\Cat)$, the set of functors of
  simplicially enriched categories from $S[n]$ to $\Cat$.
\item Face and degeneracy maps induced by the $\delta_j$ and $\sigma_j$.
\end{itemize}

If each of the $\Hom$ sets in $\Cat$ is a Kan complex
then the simplicial nerve $\Nerve\Cat$ is an infinity category (weak
Kan complex)~\cite[Proposition 1.1.5.10]{Lurie09:topos}.

\begin{example}\label{eg:simp-nerve}
  A $3$-simplex of the simplicial nerve consists of four objects
$x_0,x_1,x_2,x_3$ with morphisms 
\[
\begin{tikzpicture}[xscale=2,yscale=2]
\tikzset{every
  node/.style={fill=white,inner sep=0pt,outer sep=1pt, fill
    opacity=0.7,text opacity=1}}

\node (a0) at (0,0) {$x_0$};
\node (a1) at (1,1) {$x_1$};
\node (a2) at (3,1) {$x_2$};
\node (a3) at (4,0) {$x_3$};

\draw[->] (a0) -- (a1) node[pos=0.5] {\tiny $\varphi_{01}$};
\draw[->] (a1) -- (a2) node[pos=0.5] {\tiny $\varphi_{12}$};
\draw[->] (a2) -- (a3) node[pos=0.5] {\tiny $\varphi_{23}$};
\draw[->] (a0) -- (a2) node[pos=0.5] {\tiny $\varphi_{02}$};
\draw[->] (a1) -- (a3) node[pos=0.5] {\tiny $\varphi_{13}$};
\draw[->] (a0) -- (a3) node[pos=0.5] {\tiny $\varphi_{03}$};

\draw[->,red,dashed] (a0) .. controls (0.6,0.6) and (2,1) .. (a2) node[pos=0.5] {\tiny $\varphi_{02,012}$};
\draw[->,red,dashed] (a1) .. controls (2,1) and (4-0.6,0.6) .. (a3) node[pos=0.5] {\tiny $\varphi_{13,123}$};
\draw[->,red,dashed] (a0) .. controls (0.9,0.5) .. (a3) node[pos=0.5] {\tiny $\varphi_{03,013}$};
\draw[->,red,dashed] (a0) .. controls (4-0.9,0.5) .. (a3) node[pos=0.5] {\tiny $\varphi_{03,023}$};

\draw[->,red,dashed] (a0) .. controls (2,0.2) .. (a3) node[pos=0.5] {\tiny
  $\varphi_{03,0123}$};

\draw[->,green!50!black,dotted] (a0) .. controls (3.5,-0.5) .. (a3)
node[pos=0.5] {\tiny $\varphi_{03,023,0123}$};

\draw[->,green!50!black,dotted] (a0) .. controls (0.5,-0.5) .. (a3)
node[pos=0.5] {\tiny $\varphi_{03,013,0123}$};
\end{tikzpicture}
\]
where the solid morphisms are $0$-simplices in $\Hom_{\Cat}(x_i,x_j)$, the {\color{red}dashed}
morphisms are $1$-simplices in $\Hom_{\Cat}(x_i,x_j)$ whose faces are those $0$-simplices or their
compositions, and the {\color{green!50!black}dotted} morphisms are
$2$-simplices in $\Hom_{\Cat}(x_i,x_j)$, whose faces are either the $1$-simplices or their compositions
with $0$-simplices (after treating the $0$-simplices as $1$-simplices by the
degeneracy maps). For instance, the three faces of
${\color{green!50!black} \varphi_{03,023,0123}}$ are ${\color{red}
  \varphi_{03,0123}}$, ${\color{red} \varphi_{03,023}}$, and
$s_0(\varphi_{23})\circ{\color{red}\varphi_{02,012}}$.
The morphisms
from $x_0$ to $x_3$ fit into the following triangulation of the
square:
\[
\begin{tikzpicture}[cm={0,1,1,0,(0,0)},scale=2.5]
\tikzset{every
  node/.style={fill=white,inner sep=0pt,outer sep=1pt, fill
    opacity=0.7,text opacity=1}}

\draw (0,0) rectangle (2,2);
\draw (0,0) -- (2,2);

\node at (0,0) {\tiny $\varphi_{03}$};
\node at (0,2) {\tiny $\varphi_{13}\circ\varphi_{01}$};
\node at (2,0) {\tiny $\varphi_{23}\circ\varphi_{02}$};
\node at (2,2) {\tiny $\varphi_{23}\circ\varphi_{12}\circ\varphi_{01}$};

\node at (1,0) {\tiny ${\color{red}\varphi_{03,023}}$};
\node at (0,1) {\tiny ${\color{red}\varphi_{03,013}}$};
\node at (2,1) {\tiny $s_0(\varphi_{23})\circ{\color{red}\varphi_{02,012}}$};
\node at (1,2) {\tiny ${\color{red}\varphi_{13,123}}\circ s_0(\varphi_{01})$};

\node at (1,1) {\tiny ${\color{red}\varphi_{03,0123}}$};

\node at (1.5,0.5) {\tiny ${\color{green!50!black}\varphi_{03,023,0123}}$};
\node at (0.5,1.5) {\tiny ${\color{green!50!black}\varphi_{03,013,0123}}$};
\end{tikzpicture}
\]
\end{example}

The similarity with the definition of the infinity category of chain
complexes is not a coincidence~\cite[Section
1.3.1]{Lurie:HigherAlgebra}.

\subsection{Classifying spaces and smooth nerves}\label{sec:smooth-nerves}
The degeneracy maps in the simplicial nerves are not smooth maps
between cubes. A variant of the construction, which makes sense for
topological categories, does only involve smooth maps:

\begin{definition}\label{def:smooth-CV-cat}
  The \emph{smooth Cordier-Vogt category} is the cosimplicial
  topological category $\sCVcat$ defined as follows. For $n\in\NN$,
  let $\sCVcat[n]$ be the topological category which has objects
  $\{0,\dots,n\}$, and
  \[
    \Hom_{\sCVcat[n]}(a,b)=\{1\}\times\left(\prod_{i=a+1}^{b-1}[0,1]\right)\times\{1\}=\{(1,t_{a+1},t_{a+2},\dots,t_{b-1},1)\mid t_i\in[0,1]\},
  \]
  with composition
  $\Hom_{\sCVcat[n]}(b,c)\times\Hom_{\sCVcat[n]}(a,b)\to
  \Hom_{\sCVcat[n]}(a,c)$ given by 
  \[
    \bigl((1,t_{b+1},\dots,t_{c-1},1),(1,t_{a+1},\dots,t_{b-1},1)\bigr)\mapsto (1,t_{a+1},\dots,t_{b-1},1,t_{b+1},\dots,t_{c-1},1).
  \]
  
  The co-face maps $\delta^i\co \sCVcat[n-1]\to\sCVcat[n]$,
  $i=0,\dots,n$, are given by
  \[
    \delta^i(k)=
    \begin{cases}
      k & k<i\\
      k+1 & k\geq i.
    \end{cases}
  \]
  and
  \begin{multline*}
    \delta^i|_{\Hom(a,b)}(1,t_{a+1},\dots,t_{b-1},1)\\
    =
    \begin{cases}
      (1,t_{a+1},\dots,t_{b-1},1)\in\Hom(a+1,b+1) & i\leq a\\
      (1,t_{a+1},\dots,t_{i-1},0,t_i,\dots,t_{b-1},1)\in\Hom(a,b+1) & a<i\leq b\\
      (1,t_{a+1},\dots,t_{b-1},1)\in\Hom(a,b) & i>b.
    \end{cases}
  \end{multline*}

  The co-degeneracy maps $\sigma^j\co \sCVcat[n+1]\to \sCVcat[n]$,
  $j=0,\dots,n$, are given by
  \[
    \sigma^j(k)=
    \begin{cases}
      k & k\leq j\\
      k-1 & k>j
    \end{cases}
  \]
  and
  \begin{multline*}
    \sigma^j|_{\Hom(a,b)}(1,t_{a+1},\dots,t_{b-1},1)\\
    =
    \begin{cases}
      (1,t_{a+1},\dots,t_{b-1},1)\in\Hom(a-1,b-1) & j<a\\
      (1,t_{a+1},\dots,t_{j-1},t_j+t_{j+1}-t_jt_{j+1},t_{j+2},\dots,t_{b-1},1)\in\Hom(a,b-1) & a\leq j<b\\
      (1,t_{a+1},\dots,t_{b-1},1)\in\Hom(a,b) & j\geq b.
    \end{cases}
  \end{multline*}
  In the second case, if $j=a$ then we interpret $t_a=1$ so
  $\sigma^a$ deletes $t_{a+1}$, while if $j=b-1$ we interpret $t_b=1$ so
  $\sigma^b$ deletes $t_{b-1}$. Note that $t_j+t_{j+1}-t_jt_{j+1}=1-(1-t_j)(1-t_{j+1})$.
\end{definition}

\begin{proposition}\label{prop:sCV-is-cosimp-cat}
  Definition~\ref{def:smooth-CV-cat} defines a cosimplicial
  topological category.
\end{proposition}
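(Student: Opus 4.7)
The plan is to verify three things in sequence: first, that each $\sCVcat[n]$ is a topological category; second, that each coface $\delta^i$ and codegeneracy $\sigma^j$ is a continuous functor; and third, that the cosimplicial identities hold, namely $\delta^j\delta^i=\delta^i\delta^{j-1}$ for $i<j$, $\sigma^j\sigma^i=\sigma^i\sigma^{j+1}$ for $i\leq j$, and the three cases of $\sigma^j\delta^i$ (equal to $\delta^i\sigma^{j-1}$ for $i<j$, the identity for $i\in\{j,j+1\}$, and $\delta^{i-1}\sigma^j$ for $i>j+1$).

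For the first point, continuity and associativity of composition are immediate because composition is literally concatenation of tuples drawn from cubes; the identity at $a$ is the unique element of $\Hom_{\sCVcat[n]}(a,a)$, since the intervening product is empty. For the second point, on objects $\delta^i$ and $\sigma^j$ act as the standard cosimplicial maps of $\Delta$, so I would only need to check that they respect composition on morphisms. This reduces to a case split on whether the inserted or collapsed index falls before, inside, or after the interval $[a,b]$ of the source morphism. The one case that is not completely routine is $\sigma^j$ applied to a composition through an object $b$ with $j=b-1$ or $j=b$: here the concatenation produces a $1$ at position $b$, which is then merged with an adjacent coordinate by the operation $t\mapsto t+1-t\cdot 1=1$, so both orders of operations yield the same tuple. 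The key algebraic fact behind this is that $t\star t':=t+t'-tt'=1-(1-t)(1-t')$ is commutative, associative, has $0$ as unit, and $1$ as absorbing element.

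For the third point, each identity is verified by expanding both sides on objects (where everything is standard) and then on coordinates of morphisms. Most cases reduce to a relabeling of indices. The substantive cases are $\sigma^j\sigma^i$ with $i=j$ or $i=j-1$, where both sides apply $\star$ twice; agreement is exactly the associativity of $\star$. The mixed relation $\sigma^j\delta^i$ for $i\in\{j,j+1\}$ reduces to the unit law $0\star t=t=t\star 0$, and the remaining mixed cases are just index bookkeeping.

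The main obstacle will be organizational: the coface and codegeneracy maps are already defined by a case split, and composing two of them multiplies the number of sub-cases, with each sub-case depending on the relative positions of the two indices and the source/target objects. However, once the cases are set up, every individual verification collapses either to a relabeling of coordinates or to one of the three algebraic properties of $\star$ (commutativity, associativity, $0$ as unit with $1$ absorbing), so no subtle argument is needed and no further input from topology is required beyond continuity of $\star$ on $[0,1]^2$.
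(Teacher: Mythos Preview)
Your proposal is correct, and the substantive algebraic content (associativity and unit of $\star$, with $1$ absorbing) is exactly what the paper uses. The paper's proof differs organizationally: rather than splitting into cases according to the relative positions of the indices and of $a,b$, it introduces global maps $\bar{\delta}^i,\bar{\sigma}^j\co[0,1]^{\NN}\to[0,1]^{\NN}$ (inserting a $0$ at position $i$, respectively merging positions $j$ and $j+1$ via $\star$) and observes that each $\delta^i|_{\Hom(a,b)}$ and $\sigma^j|_{\Hom(a,b)}$ is just the restriction of the corresponding global map to the subspace $\{1\}^a\times[0,1]^{b-a-1}\times\{1\}\times\{1\}\times\cdots$. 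This collapses all of your case analysis into a single verification of the cosimplicial identities for $\bar{\delta}^i$ and $\bar{\sigma}^j$, which is then a uniform computation on sequences. Your direct approach works fine and makes the role of the absorbing element $1$ (needed for functoriality at the boundary index, as you note) more explicit; the paper's trick buys a cleaner bookkeeping that sidesteps the case explosion you flagged as the main obstacle. Incidentally, commutativity of $\star$ is true but not actually needed---the paper only invokes associativity and the unit property.
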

\begin{proof}
  We must check that the maps $\delta^i$ and $\sigma^j$ respect
  composition and the identity maps and that the $\delta^i$ and
  $\sigma^j$ satisfy the cosimplicial identities.

  The first statement, that the $\delta^i$ and $\sigma^j$ are
  functors, is clear.
  
  For the cosimplicial identities, consider
  $[0,1]^\NN=\{(t_0,t_1,t_2,\dots)\mid t_i\in [0,1]\}$. There are maps
  $\bar{\delta}^i\co [0,1]^\NN\to[0,1]^\NN$ and
  $\bar{\sigma}^j\co[0,1]^\NN\to[0,1]^\NN$, $i,j\in\NN$, defined by
  \begin{align*}
    \bar{\delta}^i(t_0,t_1,\dots)&=(t_0,t_1,\dots,t_{i-1},0,t_i,t_{i+1},\dots)\\
    \bar{\sigma}^j(t_0,t_1,\dots)&=(t_0,t_1,\dots,t_j+t_{j+1}-t_jt_{j+1},t_{j+2},\dots). 
  \end{align*}
  We can view the map $\delta^i$ (respectively $\sigma^j$) on
  $\Hom(a,b)$ as the restriction of $\bar{\delta}^i$ (respectively
  $\bar{\sigma}^j$) to
  $\{1\}^{a}\times[0,1]^{b-a-1}\times\{1\}\times\{1\}\times\cdots$. Thus,
  it suffices to check the simplicial identities for the
  $\bar{\delta}^i$ and $\bar{\sigma}^j$. This check is routine, using
  the fact that the function $f(x,y)=x+y-xy$ is associative (i.e.,
  $f(f(x,y),z)=f(x,f(y,z))$) and has unit $0$ (i.e.,
  $f(0,x)=f(x,0)=x$).
\end{proof}

\begin{definition}\label{def:smooth-cat}
  A \emph{smooth category} consists of a topological category $\Cat$
  together with a choice, for each $n$ and each pair of objects
  $x,y\in\ob(\Cat)$, of a subset of the maps
  $[0,1]^n\to \Hom_\Cat(x,y)$, which we will call the \emph{smooth
    $n$-cubes in $\Hom_\Cat(x,y)$}. These subsets are required to
  satisfy the following properties:
  \begin{itemize}
  \item 
    If $f\co [0,1]^n\to \Hom_\Cat(x,y)$ is a smooth $n$-cube and
    $\bar{\delta}^i\co [0,1]^{n-1}\to [0,1]^n$ (respectively
    $\bar{\sigma}^j\co [0,1]^{n+1}\to [0,1]^n$) is one of the
    extensions of the co-face (respectively co-degeneracy) maps from
    the proof of Proposition~\ref{prop:sCV-is-cosimp-cat} then $f\circ
    \bar{\delta}^i$ (respectively $f\circ \bar{\sigma}^j$) is a smooth
    cube.
  \item If $f\co[0,1]^n\to \Hom_\Cat(x,y)$ and
    $g\co[0,1]^n\to \Hom_\Cat(y,z)$ are smooth $n$-cubes then the
    pointwise composition $g\circ f$ is a smooth $n$-cube.
  \item The identity map of $x$, viewed as a $0$-cube in
    $\Hom_\Cat(x,x)$, is smooth.
  \end{itemize}
\end{definition}

The fundamental example is the following:
\begin{example}\label{eg:smooth-BG-cat}
  If $G$ is a Lie group there is a smooth category $BG$ with a single
  object $o$, $\Hom(o,o)=G$, and smooth $n$-cubes the set of smooth
  maps $[0,1]^n\to G$.
\end{example}

\begin{definition}
  Let $\Cat$ be a smooth category. A functor $F\co
  \sCVcat[n]\to \Cat$ is \emph{smooth} if for each $0\leq a\leq b\leq n$,
  $F|_{\Hom_{\sCVcat[n]}(a,b)}\co[0,1]^{b-a-1}\to
  \Hom_{\Cat}(F(a),F(b))$ is a smooth $(b-a-1)$-cube in $\Cat$.
\end{definition}

\begin{lemma}\label{lem:smooth-nerve-exists}
  If $F\co\sCVcat[n]\to\Cat$ is a smooth functor then for any
  $i=0,\dots,n$, $F\circ\delta^i\co \sCVcat[n-1]\to\Cat$ and
  $F\circ\sigma^i\co\sCVcat[n+1]\to\Cat$ are smooth functors.
\end{lemma}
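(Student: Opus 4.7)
The plan is to verify smoothness of $F\circ\delta^i$ and $F\circ\sigma^i$ by a direct case analysis on each $\Hom$-space of $\sCVcat$, reducing in each case to one of the three closure properties for smooth cubes required by Definition~\ref{def:smooth-cat}. Both composites are automatically functors of topological categories (composition of functors), so the only content lies in the cube-level smoothness.

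For $F\circ\delta^i\co\sCVcat[n-1]\to\Cat$, I would fix $0\leq a\leq b\leq n-1$ and run through the three branches of the formula for $\delta^i|_{\Hom(a,b)}$ in Definition~\ref{def:smooth-CV-cat}. When $i\leq a$ or $i>b$, $\delta^i|_{\Hom(a,b)}$ is the identity map of $[0,1]^{b-a-1}$ regarded as $\Hom_{\sCVcat[n]}(a+1,b+1)$ or $\Hom_{\sCVcat[n]}(a,b)$, so $(F\circ\delta^i)|_{\Hom(a,b)}$ is one of the cubes $F|_{\Hom_{\sCVcat[n]}(\cdot,\cdot)}$, which is smooth by hypothesis. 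In the middle case $a<i\leq b$, the map is the restriction of the extension $\bar{\delta}^i\co[0,1]^{\NN}\to[0,1]^{\NN}$ built in the proof of Proposition~\ref{prop:sCV-is-cosimp-cat}, so $(F\circ\delta^i)|_{\Hom(a,b)}=F|_{\Hom_{\sCVcat[n]}(a,b+1)}\circ\bar{\delta}^i$; this is smooth by the first bullet of Definition~\ref{def:smooth-cat}, which was stated precisely so that smooth cubes are closed under precomposition with the $\bar{\delta}^i$.

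The argument for $F\circ\sigma^j\co\sCVcat[n+1]\to\Cat$ proceeds in the same way, with $\bar{\sigma}^j$ in place of $\bar{\delta}^i$. For $0\leq a\leq b\leq n+1$, the cases $j<a$ and $j\geq b$ again produce an identity restriction and so inherit smoothness directly from $F$, while for $a\leq j<b$ the composite is $F|_{\Hom_{\sCVcat[n]}(a,b-1)}\circ\bar{\sigma}^j$, smooth by the corresponding closure property in Definition~\ref{def:smooth-cat}.

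The only subtlety to track is the boundary behavior of $\sigma^j$ when $j=a$ or $j=b-1$: the conventions $t_a=1$ and $t_b=1$ mean that $\sigma^j$ literally deletes a coordinate rather than combining two via $f(x,y)=x+y-xy$. However, since $f(1,y)=f(x,1)=1$, the relevant restriction of $\bar{\sigma}^j$ to the appropriate face $\{1\}^a\times[0,1]^{b-a-1}\times\{1\}\times\cdots$ is still literally the restriction of $\bar{\sigma}^j$, so the closure axiom applies as stated and no separate argument is needed. There is no real obstacle here; the lemma is essentially bookkeeping, and the key point is that Definition~\ref{def:smooth-cat} was formulated with exactly this degree of closure in mind.
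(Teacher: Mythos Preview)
Your proposal is correct and takes the same approach as the paper, which simply states ``This is immediate from the definitions.'' You have unpacked the case analysis that underlies that immediacy, and your observation that Definition~\ref{def:smooth-cat} was engineered precisely so that this lemma holds is exactly the point.
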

\begin{proof}
  This is immediate from the definitions.
\end{proof}

\begin{definition}\label{def:smooth-nerve}
  Given a smooth category $\Cat$, let $\Nervesm\Cat$, the
  \emph{smooth simplicial nerve} of $\Cat$, be the simplicial set with
  $n$-simplices the set of smooth functors $\sCVcat[n]\to\Cat$ and
  face and degeneracy maps induced by the co-face and co-degeneracy
  maps of $\sCVcat$. In the same spirit as
  Convention~\ref{convention:highest-maps-in-complexes}, for any
  $n$-simplex $\sigma$ in $\Nervesm\Cat$, we will denote the
  top-dimensional smooth cube
  $\sigma|_{\Hom_{\sCVcat[n]}(0,n)}\co[0,1]^{n-1}\to
  \Hom_{\Cat}(\sigma(0),\sigma(n))$ by $\cubemap{\sigma}$.
\end{definition}
The fact that $\Nervesm\Cat$ is a well-defined simplicial set is immediate from
Lemma~\ref{lem:smooth-nerve-exists}.

\begin{remark}\label{rem:max-vs-smooth-max}
  In Cordier's simplicial nerve~\cite{Cordier82:coherent-nerve}, the $\max\{t_j,t_{j+1}\}$
  is used in place of the map $t_j+t_{j+1}-t_jt_{j+1}$, which means
  that the degeneracy maps in the simplicial nerve do not respect
  smoothness. The map $t_j+t_{j+1}-t_jt_{j+1}$ is closely related to
  Vogt's definition of homotopy coherent
  diagrams~\cite{Vogt73:hocolim} (which exchanges $0$ and $1$ and uses
  the map $t_jt_{j+1}$); the relation between the two constructions
  was further studied by Cordier~\cite{Cordier82:coherent-nerve}.
\end{remark}

\begin{lemma}\label{lem:sm-nerve-is-nerve-semi}
  Let $\Cat_\bullet$ be the simplicially enriched category obtained by
  replacing each morphism space in a smooth category $\Cat$ by its
  singular set.  Then there is an inclusion map
  $\iota\from\Nervesm\Cat\to\Nerve\Cat_\bullet$ of semi-simplicial
  sets. If for every $x$ and $y$, every continuous map
  $[0,1]^n\to\Hom_{\Cat}(x,y)$ is declared to be smooth, then $\iota$
  is an isomorphism of semi-simplicial sets.
\end{lemma}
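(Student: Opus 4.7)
The plan is to construct $\iota$ via the canonical identification between the geometric realization of $\Hom_{S[n]}(a,b)$ and the smooth cube $\Hom_{\sCVcat[n]}(a,b)$, then verify functoriality, compatibility with face maps, injectivity, and (under the added smoothness hypothesis) surjectivity.

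\emph{Construction and functoriality.} An $n$-simplex $F$ of $\Nervesm\Cat$ amounts to a collection of smooth cubes $F|_{\Hom_{\sCVcat[n]}(a,b)}\from[0,1]^{b-a-1}\to\Hom_\Cat(F(a),F(b))$ satisfying the concatenation rule for composition. The order complex $\Hom_{S[n]}(a,b)$ realizes canonically as $[0,1]^{b-a-1}$: send the vertex $A_V$, for $V\subseteq\{a,\dots,b\}$ containing $a$ and $b$, to the cube corner $(x_1,\dots,x_{b-a-1})$ with $x_\ell=1$ iff $a+\ell\in V$, and extend linearly on higher simplices. Under this identification, $F|_{\Hom_{\sCVcat[n]}(a,b)}$ is a continuous map out of $\Realize{\Hom_{S[n]}(a,b)}$, which transposes via the realization--singular-set adjunction to a simplicial map $\Hom_{S[n]}(a,b)\to\Hom_{\Cat_\bullet}(F(a),F(b))$. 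Define $\iota(F)$ to be the assignment $\iota(F)(i)=F(i)$ together with these simplicial maps on morphism spaces. For $a<b<c$, composition in $S[n]$ sends $(A_W,A_V)$ to $A_{V\cup W}$; under the cube identification this lies in the face $\{t_b=1\}\subseteq[0,1]^{c-a-1}$, which is exactly the image of concatenation in $\sCVcat[n]$. More generally, the image of $\Hom_{S[n]}(b,c)\times\Hom_{S[n]}(a,b)\to\Hom_{S[n]}(a,c)$ is the subcomplex of simplices whose vertices all contain $b$, and this realizes as $\{t_b=1\}$; on this face the functoriality of $F$ over $\sCVcat[n]$ gives precisely the factoring needed for $\iota(F)$ to respect composition in $\Cat_\bullet$.

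\emph{Semi-simpliciality and injectivity.} The coface maps $\delta^i$ on $\sCVcat$ and on $S[n]$ are both induced by the same order-preserving injection $\{0,\dots,n-1\}\to\{0,\dots,n\}$ missing $i$; on morphism spaces both realize as the inclusion of the lower-dimensional cube as the face $\{x_{\mathrm{new}}=0\}$ of the larger cube, so $\iota$ commutes with face maps. (The degeneracies need not match, by Remark~\ref{rem:max-vs-smooth-max}, which is why $\iota$ is only semi-simplicial.) Injectivity is immediate: the smooth cubes making up $F$ are recovered from the simplicial maps $\iota(F)|_{\Hom_{S[n]}(a,b)}$ by passing to geometric realization.

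\emph{Surjectivity under the added hypothesis.} Given an enriched functor $G\from S[n]\to\Cat_\bullet$, adjunction transposes each simplicial morphism on $\Hom$-spaces to a continuous map $[0,1]^{b-a-1}\to\Hom_\Cat(G(a),G(b))$, which is smooth by hypothesis. Enriched functoriality of $G$ forces these cubes to restrict on the face $\{t_b=1\}$ to the composition of the smaller cubes, which is the concatenation rule defining a functor out of $\sCVcat[n]$. The cubes therefore assemble into a smooth functor $F$ with $\iota(F)=G$. The main technical obstacle throughout is the combinatorial identification that the image of composition in $S[n]$ realizes exactly as the face $\{t_b=1\}$ of the ambient cube; once that identification is in hand, everything else reduces to adjunction formalities.
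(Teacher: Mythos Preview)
Your proof is correct and is precisely the unpacking of what the paper's one-line proof (``This is immediate from the definitions'') leaves implicit. The paper offers no details beyond noting that the only reason $\iota$ fails to be simplicial is the discrepancy between the degeneracy formulas $\max\{t_j,t_{j+1}\}$ and $t_j+t_{j+1}-t_jt_{j+1}$; your argument supplies the actual construction via the cube realization of $\Hom_{S[n]}(a,b)$ and the realization--singular-set adjunction, and correctly identifies the one nontrivial verification (that the realized composition from $S[n]$ matches concatenation in $\sCVcat[n]$ on the face $\{t_b=1\}$).
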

\begin{proof}
  This is immediate from the definitions. Indeed, in the second statement,
  if $\max\{t_j,t_{j+1}\}$ were used in place of
  $t_j+t_{j+1}-t_jt_{j+1}$ in the definition of the smooth nerve,
  cf.~Remark~\ref{rem:max-vs-smooth-max}, then $\iota$ would have been
  an isomorphism of simplicial sets.
\end{proof}

Now we will apply this construction of the smooth nerve to study
classifying spaces.  Let $G$ be a Lie group, and let $\SingG$ denote
the singular set of $G$, which is a simplicial group.  The following
is a slightly unusual model for the classifying space of $G$:
\begin{lemma}\label{lem:Hinich}\cite[Proposition 2.6.2]{Hinich:nerve}
  Let $B\SingG$ denote the simplicially enriched category with a
  single object $o$ and $\Hom(o,o)=\SingG$. The composition in
  $B\SingG$ is given by $g\circ h=hg$.  Let $\Nerve B\SingG$ be the
  simplicial nerve of $B\SingG$ and $|\Nerve B\SingG|$ the geometric
  realization of $\Nerve B\SingG$. Then $|\Nerve B\SingG|$ is weakly
  homotopy equivalent to the classifying space of $G$.
\end{lemma}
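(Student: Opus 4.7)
The plan is to invoke Hinich's Proposition~2.6.2, which establishes precisely this statement; I will sketch the underlying idea and indicate how it fits with the framework of Section~\ref{sec:hc-nerve}.

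First, I would unpack an $n$-simplex of $\Nerve B\SingG$ as a simplicially enriched functor $F\co S[n]\to B\SingG$. Since $B\SingG$ has only one object, this reduces to a coherent family of simplicial maps $F_{ij}\from \Hom_{S[n]}(i,j)\to\SingG$ for $0\leq i<j\leq n$, where each $\Hom_{S[n]}(i,j)$ is (the order complex of) the cube of subsets of $\{i,i+1,\ldots,j\}$ containing both endpoints. The composition law in $B\SingG$ (reversed, since $g\circ h=hg$) translates to the constraint $F_{ik}(A_{V\cup W})=F_{jk}(A_W)\cdot F_{ij}(A_V)$ for compatible vertex sequences $V,W$, and similarly on higher simplices of the cubes. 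Thus an $n$-simplex is a homotopy-coherent sequence of composable singular simplices of $G$.

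The main step is to compare this description with the Eilenberg--MacLane $\overline{W}$-construction for the simplicial group $\SingG$, whose $n$-simplices are tuples $(g_{n-1},g_{n-2},\ldots,g_0)$ with $g_k\in\SingG[k]$ and for which the classical theorem gives $|\overline{W}\SingG|\simeq BG$. A natural comparison semi-simplicial map $\Nerve B\SingG\to\overline{W}\SingG$ is obtained by restricting $F$ to the top-dimensional non-degenerate simplices of the cubes $\Hom_{S[n]}(k,n)$ for $k=0,\ldots,n-1$; conversely, starting from an $n$-simplex of $\overline{W}\SingG$ one extends to all of the $F_{ij}$ by an explicit combinatorial recipe using the group multiplication, which is well-defined up to the coherence data encoded in the higher cube simplices. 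Assembling these maps and applying Lemma~\ref{lem:semi-weak-equiv} (or, more explicitly, producing a semi-simplicial homotopy as in Lemma~\ref{lem:fat}) reduces the lemma to the classical fact about $\overline{W}$.

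The main obstacle is the explicit combinatorial verification that the comparison is a weak equivalence of fat realizations, which requires careful bookkeeping through the cubical subdivisions of $\Hom_{S[n]}(i,j)$. Since this is precisely the content of Hinich's Proposition~2.6.2 (after matching conventions, most notably the reversal $g\circ h=hg$, which corresponds to passing to the opposite simplicial group and does not change the underlying homotopy type), I would cite that argument directly rather than reproduce it.
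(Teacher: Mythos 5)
Your proposal is correct and takes essentially the same approach as the paper: the paper provides no proof of its own and simply cites Hinich's Proposition~2.6.2, exactly as you do at the end of your sketch. Your additional exposition of the structure of an $n$-simplex of $\Nerve B\SingG$ as a coherent family $\{F_{ij}\}$ on the cubes $\Hom_{S[n]}(i,j)$, the comparison to the $\overline{W}$-construction, and the remark that the reversal $g\circ h=hg$ amounts to passing to the opposite simplicial group (which is harmless) are all accurate and fill in what the citation leaves implicit.
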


If $G$ is a Lie group, we can also consider the smooth nerve
$\Nervesm BG$ of the category $BG$ from Example~\ref{eg:smooth-BG-cat}.

\begin{example}\label{exam:3-cell-in-nbg}
  Let $\sigma$ be a $3$-simplex in the smooth nerve $\Nervesm BG$,
  and let $\iota(\sigma)$ be the corresponding $3$-simplex in the
  simplicial nerve $\Nerve B\SingG$. Then the data for $\sigma$ and
  $\iota(\sigma)$ consist of
\begin{itemize}[leftmargin=*]
\item six elements of $G$, $\cubemap{\sigma}_{01}=g_{01}$,
  $\cubemap{\sigma}_{12}=g_{12}$, $\cubemap{\sigma}_{23}=g_{23}$,
  $\cubemap{\sigma}_{02}=g_{02}$, $\cubemap{\sigma}_{13}=g_{13}$,
  $\cubemap{\sigma}_{03}=g_{03}$,
\item five paths in $G$,
  $\cubemap{\sigma}_{012}={\color{red}g_{02,012}}$,
  $\cubemap{\sigma}_{123}={\color{red}g_{13,123}}$,
  $\cubemap{\sigma}_{013}={\color{red}g_{03,013}}$,
  $\cubemap{\sigma}_{023}={\color{red}g_{03,023}}$, and
  ${\color{red}g_{03,0123}}$, and
\item two singular $2$-simplices in $G$,
  ${\color{green!50!black}g_{03,023,0123}}$,
  ${\color{green!50!black}g_{03,013,0123}}$,
  fitting into the square $\cubemap{\sigma}$,
\[
\begin{tikzpicture}[cm={0,1,1,0,(0,0)},scale=2.5]
\tikzset{every
  node/.style={fill=white,inner sep=0pt,outer sep=1pt, fill
    opacity=0.7,text opacity=1}}

\begin{scope}[yshift=-1.5cm]
\draw (0,0) rectangle (2,2);

\node at (0,0) {\tiny $\cubemap{\sigma}_{03}$};
\node at (0,2) {\tiny $\cubemap{\sigma}_{01}\cubemap{\sigma}_{13}$};
\node at (2,0) {\tiny $\cubemap{\sigma}_{02}\cubemap{\sigma}_{23}$};
\node at (2,2) {\tiny $\cubemap{\sigma}_{01}\cubemap{\sigma}_{12}\cubemap{\sigma}_{23}$};

\node at (1,0) {\tiny ${\cubemap{\sigma}_{023}}$};
\node at (0,1) {\tiny ${\cubemap{\sigma}_{013}}$};
\node at (2,1) {\tiny ${\cubemap{\sigma}_{012}}\cubemap{\sigma}_{23}$};
\node at (1,2) {\tiny ${\cubemap{\sigma}_{01}\cubemap{\sigma}_{123}}$};

\node at (1,1) {\tiny $\cubemap{\sigma}$};
\end{scope}

\node at (1,1) {$=$};

\begin{scope}[yshift=1.5cm]
\draw (0,0) rectangle (2,2);
\draw (0,0) -- (2,2);

\node at (0,0) {\tiny $g_{03}$};
\node at (0,2) {\tiny $g_{01}g_{13}$};
\node at (2,0) {\tiny $g_{02}g_{23}$};
\node at (2,2) {\tiny $g_{01}g_{12}g_{23}$};

\node at (1,0) {\tiny ${\color{red}g_{03,023}}$};
\node at (0,1) {\tiny ${\color{red}g_{03,013}}$};
\node at (2,1) {\tiny ${\color{red}g_{02,012}}g_{23}$};
\node at (1,2) {\tiny ${g_{01}\color{red}g_{13,123}}$};

\node at (1,1) {\tiny ${\color{red}g_{03,0123}}$};

\node at (1.5,0.5) {\tiny ${\color{green!50!black}g_{03,023,0123}}$};
\node at (0.5,1.5) {\tiny ${\color{green!50!black}g_{03,013,0123}}$};
\end{scope}
\end{tikzpicture}
\]
\end{itemize}
While the right picture is a special case of
Example~\ref{eg:simp-nerve}, notice that the left picture is similar to
Example~\ref{ref:3-cell-in-kom}.
\end{example}

\begin{lemma}\label{lem:sm-nerve}
  The smooth nerve $\Nervesm BG$ is weakly equivalent to the ordinary
  nerve $\Nerve B\SingG$. So, $\Realize{\Nervesm BG}$ is a model for
  the classifying space of $G$.
\end{lemma}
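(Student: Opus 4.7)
The plan is to invoke Lemma~\ref{lem:fat} with $S=\Nervesm BG$ and $T=\Nerve B\SingG$, taking $\alpha=\iota$ to be the semi-simplicial inclusion from Lemma~\ref{lem:sm-nerve-is-nerve-semi}. Combined with Lemma~\ref{lem:Hinich}, the resulting weak equivalence $\Nervesm BG\simeq\Nerve B\SingG$ then gives that $\Realize{\Nervesm BG}$ is a model for $BG$.

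An $n$-simplex of either nerve is specified by a family of maps $f_{ij}\co[0,1]^{j-i-1}\to G$ for $0\leq i<j\leq n$ (continuous for $\Nerve B\SingG$, smooth for $\Nervesm BG$) satisfying the strict composition relations
\[
f_{ij}|_{t_k=1}=f_{ik}\cdot f_{kj}
\]
for each $i<k<j$, where $\cdot$ is the group operation. I construct $\beta\co j^*\Nerve B\SingG\to j^*\Nervesm BG$ by induction on $j-i$: take $f^{sm}_{i,i+1}=f_{i,i+1}$, which is automatically smooth as a point of $G$; for $j-i>1$, the inductive hypothesis prescribes smooth boundary data $f^{sm}_{ik}\cdot f^{sm}_{kj}$ on each face $\{t_k=1\}\subset[0,1]^{j-i-1}$. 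Using the exponential map of $G$ to work in a local chart near the original values of $f_{ij}$, I convolve $f_{ij}$ against a fixed smooth bump function to obtain a smooth approximation, then graft it to the prescribed boundary data via a collar neighborhood of $\bigcup_k\{t_k=1\}$ together with a smooth cutoff. The resulting $f^{sm}_{ij}$ is smooth, matches the prescribed boundary values, and depends only on the cubical data of the input simplex, so $\beta$ is semi-simplicial.

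The homotopies $H\co j^*(\Delta^1\times T)\to j^*T$ and $K\co j^*(\Delta^1\times S)\to j^*S$ required by Lemma~\ref{lem:fat} are produced by the same inductive scheme applied to cylinders $[0,1]^{j-i-1}\times[0,1]$, interpolating between the input cube and its smoothing inside the exponential chart, and grafting on the boundary as before. The main technical obstacle is the coherence of the smoothing: it must simultaneously respect the strict composition on boundary faces $\{t_k=1\}$ and the semi-simplicial face maps. The former is enforced by the induction on $j-i$; the latter is automatic because the auxiliary data (bump function, exponential chart, collar neighborhood) is fixed once and for all independently of the simplex, so restriction along face maps commutes with the smoothing. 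Applying Lemma~\ref{lem:fat} then yields the desired weak equivalence $\Nervesm BG\simeq\Nerve B\SingG$.
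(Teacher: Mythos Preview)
Your overall strategy—applying Lemma~\ref{lem:fat} with $\alpha=\iota$ and building a smoothing map $\beta$ together with homotopies—matches the paper's. However, your construction of $\beta$ has a real gap.

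You only enforce boundary conditions on the faces $\{t_k=1\}$ (coming from composition in $\sCVcat[n]$), and then assert that compatibility with the semi-simplicial face maps is ``automatic''. But for $0<k<n$ the face map $d_k$ acts on the top cube by restriction to $\{t_k=0\}$: one has $\cubemap{d_k\sigma}=\cubemap{\sigma}|_{t_k=0}$ (Equation~\eqref{eq:bdy-of-cubesym}). For $\beta$ to be semi-simplicial you therefore need
\[
\bigl(f^{sm}_{0n}\bigr)\big|_{t_k=0} \;=\; \bigl(f_{0n}\big|_{t_k=0}\bigr)^{sm},
\]
i.e., your smoothing procedure must commute with restriction to each $\{t_k=0\}$ hyperplane. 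Mollification by convolution does not have this property: the left-hand side depends on values of $f_{0n}$ in a full neighborhood of $\{t_k=0\}$ inside $[0,1]^{n-1}$, whereas the right-hand side depends only on the restriction. Fixing the bump function, chart, and collar once and for all does not help—those choices are the same on both sides, but the inputs differ.

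The paper avoids this by inducting on the \emph{dimension of the simplex} rather than on $j-i$ within a simplex. At the step for an $n$-simplex $\sigma$, the map $\cubemap{f(\sigma)}$ is already prescribed on \emph{all} of $\partial[0,1]^{n-1}$: the $\{t_k=0\}$ faces by $f(d_k\sigma)$ (lower-dimensional, already constructed), and the $\{t_k=1\}$ faces by composition of lower cubes. One then extends smoothly to the interior. Crucially, the paper maintains throughout the induction a uniform ``closeness'' condition (via a fixed deformation retract of a neighborhood of $1\in G$ to $1$), which provides a \emph{canonical} homotopy between any two close maps; this is what makes the construction of $H$ go through coherently on all faces simultaneously, something your sketch does not address. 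As a side remark, once you arrange $\beta\circ\iota=\Id$ (the paper does this by taking the smoothing to be the identity on already-smooth inputs), only $H$ needs to be built, not $K$.
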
 

\begin{proof}
  Let $\iota\co \Nervesm BG\to \Nerve B\SingG$ be the inclusion map
  from Lemma~\ref{lem:sm-nerve-is-nerve-semi} (of semi-simplicial
  sets).  By Lemma~\ref{lem:fat}, it suffices to construct
  semi-simplicial maps $f\from \Nerve B\SingG\to\Nervesm BG$ and
  $H\from{\Nerve B\SingG\times\Delta^1}\to\Nerve B\SingG$ so that
  $f\circ\iota=\Id_{\Nervesm BG}$ and $H$ restricts to $\Id_{\Nerve
    B\SingG}$ and $\iota\circ f$ on $\Nerve B\SingG\times\{0\}$ and
  $\Nerve B\SingG\times\{1\}$, respectively. (To reduce notational
  clutter, we are suppressing $j^*$.)
  
  Fix, once and for all, a deformation retraction $D\co U\times [0,1]\to U$ of
  some open neighborhood $U$ of $1\in G$ to $1$. Given a topological space $X$,
  call maps $p,q\co X\to G$ \emph{close} if for each $x\in X$,
  $p(x)^{-1}q(x)\in U$. If $p$ and $q$ are close then there is a canonical
  homotopy from $q$ to $p$ given by $(x,t)\mapsto p(x)D(p(x)^{-1}q(x),t)$.

  If $\sigma$ is an $n$-simplex in $\Nerve B\SingG$, it specifies an
  $(n-1)$-dimensional singular cube in $G$, which we also denote
  $\cubemap{\sigma}$. For $\sigma$ an $n$-simplex in either
  $\Nerve B\SingG$ or $\Nervesm BG$, the restriction of
  $\cubemap{\sigma}$ to the boundary is given by
  \begin{equation}\label{eq:bdy-of-cubesym}
    \begin{split}
      \cubemap{\sigma}(t_1,\dots,t_{i-1},0,t_{i+1},\dots,t_{n-1})&=\cubemap{(d_i\sigma)}(t_1,\dots,t_{i-1},t_{i+1},\dots,t_{n-1})\\
      \cubemap{\sigma}(t_1,\dots,t_{i-1},1,t_{i+1},\dots,t_{n-1})&=\cubemap{\sigma}_{0,\dots,i}(t_1,\dots,t_{i-1})\cubemap{\sigma}_{i,\dots,n}(t_{i+1},\dots,t_{n-1}).
    \end{split}
  \end{equation}

  We construct $f$ inductively on the dimension of the simplices,
  ensuring the following:
  \begin{itemize}
  \item The smooth singular cube $\cubemap{f(\sigma)}$ is close
    to the singular cube $\cubemap{\sigma}$.
  \item If $\sigma=\iota(\sigma')$, then $f(\sigma)=\sigma'$.
  \end{itemize}
  Fix some $n$-simplex $\sigma$ in $\Nerve B\SingG$. By induction, $f$
  has already been constructed on all lower dimensional simplices, and
  hence, by Equation~\eqref{eq:bdy-of-cubesym}, the singular cube
  $\cubemap{f(\sigma)}$ has already been constructed on
  $\bdy[0,1]^{n-1}$ (and is close to
  $\cubemap{\sigma}|_{\bdy[0,1]^{n-1}}$, and equals
  $\cubemap{\sigma}|_{\bdy[0,1]^{n-1}}$ if $\sigma$ is in the image of
  $\iota$). Moreover, the restriction of $\cubemap{f(\sigma)}$ to each
  facet is a smooth singular $(n-2)$-dimensional cube in $G$, and
  hence~\cite[Lemma 18.9]{Lee13:intro},
  $\cubemap{f(\sigma)}|_{\bdy[0,1]^{n-1}}$ is a smooth map from this closed
  subset of $\RR^{n-1}$ to $G$. By definition, this gives a smooth
  extension of $\cubemap{f(\sigma)}|_{\bdy[0,1]^{n-1}}$ to a neighborhood of
  $\bdy[0,1]^{n-1}$.  If $\sigma$ is in the image of $\iota$ (and
  hence $\cubemap{\sigma}$ is smooth), choose this extension to be
  $\cubemap{\sigma}$; otherwise, fix any smooth extension.  Next,
  approximate $\cubemap{\sigma}$ on the interior of $[0,1]^{n-1}$ by a
  smooth family; once again, if $\cubemap{\sigma}$ is smooth, choose
  the approximation to be $\cubemap{\sigma}$. Patch these two
  approximations together near the boundary to define
  $\cubemap{f(\sigma)}$.
  
  We now construct $H$ inductively on the dimension of the
  simplices.  Consider any $n$-simplex $\sigma=(a,b)$, where $a$ is an
  $n$-simplex in $\Nerve B\SingG$ and $b$ is an $n$-simplex in
  $\Delta^1$. The construction of $H$ will inductively satisfy the
  condition that the singular cubes $\cubemap{H(\sigma)}$ and
  $\cubemap{a}$ are close.

  Since $H$ has already been defined on $\Nerve B\SingG\times\{0,1\}$
  (and satisfies the closeness condition), it only remains  to define
  $H$ on simplices that do not lie in $\Nerve B\SingG\times\{0,1\}$.
  Therefore, we may assume $b=\Delta^1_{n;i}$ for some $0\leq i<n$
  (using notation from Section~\ref{sec:infty-cat-functors}). For the
  induction base case, if $n=1$, map $(a,b)$ to $a=\iota\circ f(a)$.

  For the induction step, $H$ has already been defined on every face
  of $\sigma$; therefore, by Equation~\eqref{eq:bdy-of-cubesym},
  $\cubemap{H(\sigma)}$ has already been constructed on
  $\bdy[0,1]^{n-1}$. So we only need to extend $\cubemap{H(\sigma)}$ to the
  interior. Inductively, $\cubemap{H(\sigma)}|_{\bdy[0,1]^{n-1}}$ is
  close to $\cubemap{a}|_{\bdy[0,1]^{n-1}}$. So, define the
  extension by setting it to be (a scaled version of) $a$ in a
  slightly smaller sub-cube, and the canonical homotopy from
  $\cubemap{H(\sigma)}|_{\bdy[0,1]^{n-1}}$ to
  $\cubemap{a}|_{\bdy[0,1]^{n-1}}$ on the remaining annular
  neighborhood of the boundary.
\end{proof}

\begin{example}
  Consider the 3-simplex $\sigma=(a,\Delta^1_{3;0})$, where $a$ is a
  3-simplex in $\Nerve B\SingG$, given by points
  $g_{01},g_{12},g_{23},g_{02},g_{13},g_{03}$, paths
  ${\color{red}g_{02,012}}$, ${\color{red}g_{13,123}}$,
  ${\color{red}g_{03,013}}$, ${\color{red}g_{03,023}}$,
  ${\color{red}g_{03,0123}}$ and singular triangles
  ${\color{green!50!black}g_{03,013,0123}}$,
  ${\color{green!50!black}g_{03,023,0123}}$ in $G$, as in
  Example~\ref{exam:3-cell-in-nbg}. We need to define $H(\sigma)$ as a
  3-simplex in $\Nerve B\SingG$. Say the data of $H(\sigma)$ is given
  by points $h_{01},h_{12},h_{23},h_{02},h_{13},h_{03}$, paths
  ${\color{red}h_{02,012}}$, ${\color{red}h_{13,123}}$,
  ${\color{red}h_{03,013}}$, ${\color{red}h_{03,023}}$,
  ${\color{red}h_{03,0123}}$ and singular triangles
  ${\color{green!50!black}h_{03,013,0123}}$,
  ${\color{green!50!black}h_{03,023,0123}}$ in $G$. Inductively, the
  following things are defined as follows:
\begin{enumerate}
\item $h_{01}=g_{01},h_{12}=g_{12},h_{23}=g_{23},h_{02}=g_{02},h_{13}=g_{13},h_{03}=g_{03}$.
\item
  ${\color{red}h_{02,012}}=s_0(g_{02})*{\color{red}g_{02,012}}*s_0(g_{01}g_{12}),{\color{red}h_{13,123}}=\iota\circ f({\color{red}g_{13,123}}),{\color{red}h_{03,013}}=s_0(g_{03})*{\color{red}g_{03,013}}*s_0(g_{01}g_{13}),{\color{red}h_{03,023}}=s_0(g_{03})*{\color{red}g_{03,023}}*s_0(g_{02}g_{23})$. (Here
  $*$ denotes concatenation of three paths, with the middle path being
  the longer one, and two end paths being short constant ones.)
\end{enumerate}

So we only need to define ${\color{red}h_{03,0123}}$,
${\color{green!50!black}h_{03,013,0123}}$, and
${\color{green!50!black}h_{03,023,0123}}$, that is, the interior of
the square $\cubemap{H(\sigma)}$. Define it as follows.

\[
\begin{tikzpicture}[cm={0,1,1,0,(0,0)},scale=2.5]
\tikzset{every
  node/.style={fill=white,inner sep=0pt,outer sep=1pt, fill
    opacity=0.7,text opacity=1}}

\draw (0,0) rectangle (2,2);

\foreach \i/\j in {0/0,2/0,0/2,2/2,0.3/0,1.7/0,0/0.3,0/1.7,2/0.3,2/1.7}{
\node at (\i,\j) {$\bullet$};
}

\begin{scope}[xshift=1cm,yshift=1cm]
\clip (-1,-1) rectangle (1,1);
\foreach \i in {15,30,...,360}{
\draw[dotted] (0,0)--(\i:2cm);}
\end{scope}

\draw[thick,fill=white] (0.2,0.2) rectangle (1.8,1.8);
\draw[thick] (0.2,0.2) -- (1.8,1.8);

\node[anchor=north east] at (0,0) {\tiny $g_{03}$};
\node[anchor=north west] at (0,2) {\tiny $g_{01}g_{13}$};
\node[anchor=south east] at (2,0) {\tiny $g_{02}g_{23}$};
\node[anchor=south west] at (2,2) {\tiny $g_{01}g_{12}g_{23}$};

\node[rotate=90,anchor=south] at (1,0) {\tiny ${\color{red}g_{03,023}}$};
\node[anchor=north] at (0,1) {\tiny ${\color{red}g_{03,013}}$};
\node[anchor=south] at (2,1) {\tiny ${\color{red}g_{02,012}}g_{23}$};
\node[rotate=90,anchor=north] at (1,2) {\tiny ${g_{01}(\iota\circ f(\color{red}g_{13,123}}))$};

\draw[<-] (0.15,0)--++(0,-0.2) node[pos=1,rotate=90,anchor=south] {\tiny $g_{03}$};
\draw[<-] (2-0.15,0)--++(-0.2,-0.2) node[pos=1,rotate=90,anchor=south] {\tiny $g_{02}g_{23}$};
\draw[<-] (0,0.15)--++(-0.2,0) node[pos=1,anchor=north] {\tiny $g_{03}$};
\draw[<-] (0,2-0.15)--++(-0.2,0) node[pos=1,anchor=north] {\tiny $g_{01}g_{13}$};
\draw[<-] (2,0.15)--++(0.2,0) node[pos=1,anchor=south] {\tiny $g_{02}g_{23}$};
\draw[<-] (2,2-0.15)--++(0.2,0) node[pos=1,anchor=south] {\tiny $g_{01}g_{12}g_{23}$};

\node at (0.2,0.2) {\tiny $g_{03}$};
\node at (0.2,1.8) {\tiny $g_{01}g_{13}$};
\node at (1.8,0.2) {\tiny $g_{02}g_{23}$};
\node at (1.8,1.8) {\tiny $g_{01}g_{12}g_{23}$};

\node[rotate=90] at (1,0.2) {\tiny ${\color{red}g_{03,023}}$};
\node at (0.2,1) {\tiny ${\color{red}g_{03,013}}$};
\node at (1.8,1) {\tiny ${\color{red}g_{02,012}}g_{23}$};
\node[rotate=90] at (1,1.8) {\tiny $g_{01}{\color{red}g_{13,123}}$};

\node at (1,1) {\tiny ${\color{red}g_{03,0123}}$};

\node at (1.4,0.6) {\tiny ${\color{green!50!black}g_{03,023,0123}}$};
\node at (0.6,1.4) {\tiny ${\color{green!50!black}g_{03,013,0123}}$};
\end{tikzpicture}
\]
\end{example}

\begin{lemma}\label{lem:sm-nerve-inf}
  The smooth nerve $\Nervesm BG$ is an infinity category, that
  is, satisfies the weak Kan condition.
\end{lemma}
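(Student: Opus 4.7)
The plan is to construct fillers for inner horns by producing smooth top cubes for the two non-degenerate simplices missing from the horn, using the infinity-category property of $\Nerve B\SingG$ to get continuous extensions, and then smoothing. Fix $0<i<n$ and an inner horn $h\co\Lambda^n_i\to\Nervesm BG$. The non-degenerate simplices of $\Delta^n$ not contained in $\Lambda^n_i$ are the $i$-th face $\delta$ and the top simplex $\sigma$; by Definition~\ref{def:smooth-nerve}, an extension of $h$ to $\Delta^n$ amounts to choosing smooth top cubes $\cubemap{\delta}\co[0,1]^{n-2}\to G$ and $\cubemap{\sigma}\co[0,1]^{n-1}\to G$ compatible with the horn via Equation~\eqref{eq:bdy-of-cubesym}. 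Values on degenerate simplices are then forced, and automatically smooth since the co-degeneracy maps of $\sCVcat$ are smooth (Proposition~\ref{prop:sCV-is-cosimp-cat}).

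First I would construct $\cubemap{\delta}$. Every proper sub-simplex of $\delta$ has vertex set a proper subset of $\{0,\dots,\hat{\imath},\dots,n\}$, hence lies in $\Lambda^n_i$, so the horn specifies smooth top cubes for all faces of $\delta$. Equation~\eqref{eq:bdy-of-cubesym} and the smoothness of multiplication in $G$ assemble these into a smooth map $\partial[0,1]^{n-2}\to G$, with compatibility across corners supplied by the simplicial identities. Under the inclusion $\iota$ of Lemma~\ref{lem:sm-nerve-is-nerve-semi} the horn becomes a horn in $\Nerve B\SingG$, which is an infinity category because its Hom-spaces are Kan complexes (\cite[Proposition~1.1.5.10]{Lurie09:topos}); a continuous filler there provides a continuous extension of the smooth boundary data to $[0,1]^{n-2}\to G$. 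The relative Whitney approximation theorem, applied to maps into the smooth manifold $G$, then replaces this continuous extension by a smooth one rel boundary, yielding $\cubemap{\delta}$. The construction of $\cubemap{\sigma}$ follows the same pattern: with $\cubemap{\delta}$ in place, every face of $\sigma$ has a smooth top cube, Equation~\eqref{eq:bdy-of-cubesym} assembles a smooth map on $\partial[0,1]^{n-1}$, a continuous filler exists in $\Nerve B\SingG$ by the same infinity-category argument, and Whitney approximation rel boundary gives the smooth extension $\cubemap{\sigma}$.

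The main subtlety is that $\iota$ from Lemma~\ref{lem:sm-nerve-is-nerve-semi} is only a semi-simplicial map---the degeneracies in $\Nervesm BG$ use $t_j+t_{j+1}-t_jt_{j+1}$ while those in $\Nerve B\SingG$ use $\max\{t_j,t_{j+1}\}$, cf.\ Remark~\ref{rem:max-vs-smooth-max}---so $\iota\circ h$ is not a priori a simplicial map. However, since $\Lambda^n_i$ is generated as a simplicial set by its non-degenerate simplices, one can lift $\iota\circ h$ to a genuine simplicial map $\Lambda^n_i\to\Nerve B\SingG$ by declaring its values on degenerate simplices via the degeneracy operations of $\Nerve B\SingG$; face compatibility follows from the simplicial identities together with the fact that $\iota$ respects face maps. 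The infinity-category property of $\Nerve B\SingG$ then applies to this lift and supplies the continuous fillers used above.
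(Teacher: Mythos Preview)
Your strategy is workable but there is a gap in the construction of $\cubemap{\sigma}$. After step~1 you have fixed a smooth $\cubemap{\delta}$ on the facet $\bdy_{i,0}[0,1]^{n-1}$, so together with the horn data you have prescribed the \emph{entire} boundary $\bdy[0,1]^{n-1}$. The weak Kan property of $\Nerve B\SingG$ fills inner horns $\Lambda^n_i\hookrightarrow\Delta^n$, not boundary spheres $\bdy\Delta^n\hookrightarrow\Delta^n$; so ``the same infinity-category argument'' does not supply a continuous extension matching your chosen $\cubemap{\delta}$. Filling the horn again would give a continuous $\cubemap{\sigma'}$, but its $t_i=0$ facet is some $\cubemap{\delta'}$ with no reason to equal your smooth $\cubemap{\delta}$, and in general a map $S^{n-2}\to G$ need not extend over the disk. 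The fix is short: Whitney approximation actually provides a homotopy rel $\bdy[0,1]^{n-2}$ between $\cubemap{\delta}$ and the continuous $\cubemap{\delta'}$ coming from the horn-filler in step~1; use this homotopy to modify $\cubemap{\sigma'}$ in a collar of the $t_i=0$ facet, producing a continuous map $[0,1]^{n-1}\to G$ whose full boundary is now your smooth data, and then smooth rel boundary.

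The paper's proof is considerably more direct and avoids this issue. It never constructs $\cubemap{\delta}$ separately and never invokes $\Nerve B\SingG$. The horn data already assembles to a map $f$ on the union $A=\bdy[0,1]^{n-1}\setminus\bdy_{i,0}[0,1]^{n-1}$ of the other facets, smooth on each facet and hence (by \cite[Lemma~18.9]{Lee13:intro}) extending smoothly to a neighborhood $U$ of $A$ in $\RR^{n-1}$. Since $A$ is contractible, there is a smooth map $\phi\co[0,1]^{n-1}\to U$ equal to the identity on $A$; composing $\phi$ with the smooth extension of $f$ gives $\cubemap{\sigma}$, and its restriction to $\bdy_{i,0}$ is automatically the required $\cubemap{\delta}$. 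Your careful handling of the semi-simplicial discrepancy between $\Nervesm BG$ and $\Nerve B\SingG$ is correct, but that whole discussion becomes unnecessary under this one-step retraction argument.
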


\begin{proof}
  This proof is similar to the proof of Lemma~\ref{lem:sm-nerve}. By
  Equation~\eqref{eq:bdy-of-cubesym}, any map $\Lambda^n_i\to\Nervesm BG$
  assembles to give a map
  $f\co \bdy[0,1]^{n-1}\setminus\bdy_{i,0}[0,1]^{n-1}\to G$. The map $f$
  restricts on each facet to a smooth map, and therefore forms a smooth
  $\bdy[0,1]^{n-1}\setminus\bdy_{i,0}[0,1]^{n-1}$-family~\cite[Lemma
  18.9]{Lee13:intro}. So, $f$ extends to
  some smooth family $\widetilde{g}$ on some $\epsilon$-neighborhood $U$.  To
  fill the horn, we only need to extend $\widetilde{g}$ to the entire cube $[0,1]^{n-1}$
  smoothly. This is easy: choose a smooth map $\phi\co [0,1]^{n-1}\to U$ which
  is the identity on $\bdy[0,1]^{n-1}\setminus\bdy_{i,0}[0,1]^{n-1}$ and define
  the extension of $g$ to be $\wt{g}\circ \phi$.
\end{proof}

Next, suppose we have two maps $F_0,F_1\co \Nervesm BG\to S$.  A
\emph{natural transformation} from $F_0$ to $F_1$ is a functor $H\co
\Delta^1\times \Nervesm BG\to S$ which restricts to $F_0$ on
$\{0\}\times \Nervesm BG$ and $F_1$ on $\{1\}\times \Nervesm BG$.
As we see below, this has an interpretation in terms of smooth nerves as well.

Let $\Cat$ be any smooth category (such as $BG$), and let $\ICat$ be
the discrete category associated to the ordered set $0<1$, with
objects $\{0,1\}$ and
\[
  \Hom(0,0)=\Hom(0,1)=\Hom(1,1)=\{\pt\}\qquad\qquad
  \Hom(1,0)=\emptyset,
\]
viewed as a topological category.
Treat $\ICat\times\Cat$ as a smooth category by declaring a cube
$f\from[0,1]^n\to\Hom_{\ICat\times\Cat}(x,y)$ to be smooth if and only
if $\pi_\Cat\circ
f\from[0,1]^n\to\Hom_{\Cat}(\pi_\Cat(x),\pi_\Cat(y))$ is smooth, where
$\pi_\Cat\from\ICat\times\Cat\to\Cat$ is projection to the second
factor.  It is clear that
$\Nervesm(\ICat\times\Cat)=\Delta^1\times\Nervesm\Cat$, since an
$n$-simplex of either side consists of an $n$-simplex of $\Delta^1$
(which is a length-$(n+1)$ nondecreasing sequence in $\{0,1\}$) and a
smooth functor $\sCVcat[n]\to \Cat$.
Therefore, the following is an equivalent definition of a natural transformation.

\begin{definition}\label{def:sm-htpy}
  Given functors $F_0,F_1\co \Nervesm BG\to S$, a
  \emph{(smooth) natural transformation} from $F_0$ to $F_1$ is a functor
  $H\co \Nervesm(\ICat\times BG)\to S$ whose restriction
  to $\{0\}\times \Nervesm BG$ (respectively
  $\{1\}\times \Nervesm BG$) is $F_0$ (respectively $F_1$).
\end{definition}

In light of the above lemmas, in the rest of the paper we will adopt the following convention:
\begin{convention}
  We will henceforth use $\Nerve BG$ to denote the smooth nerve
  $\Nervesm BG$ (not $\Nerve B\SingG$), and often drop the word
  ``smooth''.
\end{convention}

\subsection{Homotopy colimits}\label{sec:homotopy-colimits}
Let $I_*$ denote the simplicial chain complex of the $1$-simplex
$\Delta^1$, i.e., $I_0=\Ring\langle \{0\}, \{1\}\rangle$,
$I_1=\Ring\langle \{0,1\}\rangle$, and $\bdy\{0,1\}=\{1\}-\{0\}$. The
inclusions $\{0\}\hookrightarrow [0,1]$ and $\{1\}\hookrightarrow
[0,1]$ induce chain maps $\iota_0,\iota_1\co \Ring\hookrightarrow I_*$
and the projection $[0,1]\to \{\pt\}$ induces a chain map $\pi\co
I_*\to\Ring$.  The multiplication map $[0,1]^2\to [0,1]$,
$(x,y)\mapsto xy$, induces the ``multiplication'' $m\co
I_*\otimes I_*\to I_*$ given by
\begin{align*}
&m(\{0\}\otimes \{0\})=m(\{0\}\otimes\{1\})=m(\{1\}\mathrlap{\otimes\{0\})=\{0\}} & m(\{1\}\otimes \{1\})=\{1\}& \\
&m(\{0,1\}\otimes \{0\})=m(\{0\}\otimes \{0,1\})=0 & 
{m(\{0,1\} \otimes \{1\}) = m(\{1\}\otimes \{0,1\})=\{0,1\}}& \\
&m(\{0,1\}\otimes\{0,1\})=0.
\end{align*}

The following is an adaptation of Vogt's construction of homotopy
colimits of diagrams in topological categories~\cite{Vogt73:hocolim}.
\begin{definition} \label{def:hocolim}
 Given a map of simplicial sets $F$ from a
  simplicial set $\Cat$ to $\ChainComplexes$, the \emph{homotopy colimit} of $F$
  is the chain complex
  \begin{equation}\label{eq:chain-hocolim-def}
    \hocolim F = \bigoplus_{n\geq 0}
    \bigoplus_{\sigma\in \Cat_n}  I_*^{\otimes n}\otimes F(\sigma_0)/\sim,
  \end{equation}
  where the equivalence relation $\sim$ is generated by
  \begin{align*}
    (s_0\sigma;t_{n+1}\otimes\dots\otimes t_1\otimes
    x)&\sim(\sigma;t_{n+1}\otimes\dots\otimes t_2\otimes\pi(t_1)\otimes x)\\
    (s_i\sigma;t_{n+1}\otimes\dots\otimes t_1\otimes x)&\sim
    (\sigma;t_{n+1}\otimes\dots\otimes m(t_{i+1},t_{i})\otimes\dots
    \otimes t_1\otimes x)&\text{if }i\geq 1\\
    (\sigma;t_n\otimes\dots\otimes t_1\otimes x)&\sim(d_i\sigma;t_n\otimes\dots\otimes \widehat{t_i}\otimes\dots\otimes t_1\otimes x)&\text{if }t_i=\{1\}\\
    (\sigma;t_n\otimes\dots\otimes t_1\otimes
    x)&\sim(\sigma_{i,\dots,n};t_n\otimes\cdots\otimes t_{i+1}\otimes
    \maxmap{F}(\sigma_{0,\dots,i})(x))\\
    &&\mathllap{\text{if }t_1=\dots=t_{i-1}=\{0,1\}\text{ and }t_i=\{0\}.}
  \end{align*}
  The differential is induced by the tensor product differential in
  Formula~\eqref{eq:chain-hocolim-def}, namely:
  \begin{align*}
  \bdy(\sigma;t_n\otimes\dots\otimes t_1\otimes
  x)&=\sum_{i=1}^n(-1)^{\gr(t_{i+1})+\dots+\gr(t_n)}(\sigma;t_n\otimes\dots\otimes\bdy(t_i)\otimes\dots\otimes
  t_1\otimes x)\\
  &\quad\qquad{}+ (-1)^{\gr(t_1)+\dots+\gr(t_{n})}(\sigma;t_n\otimes\dots\otimes
  t_1\otimes \bdy(x)).
  \end{align*}
\end{definition}

The homotopy colimit has a filtration $\Filt$ where
$\Filt_m\hocolim G$ corresponds to replacing the first direct sum over
$n\geq0$ in Equation~\eqref{eq:chain-hocolim-def} with the sum
$0\leq n\leq m$.  (The special case $\Filt_0\hocolim G$ is simply the
direct sum over objects $v$ in $\Cat$ of $F(v)$.)

It is not hard to verify that the differential is consistent with the equivalence
relation. For instance, temporarily using the notation
$\messygr{a}{b}$ to denote $\sum_{j=a}^b\gr(t_j)$, we have
\[
\begin{tikzpicture}
  \node[draw] (main) at (0,0) {\tiny $(\sigma; t_{n}\otimes\dots\otimes
    t_{i+1}\otimes\{0\}\otimes\{0,1\}\otimes\dots\otimes\{0,1\}\otimes
    x)$};

  \node[draw,align=left] (second) [right=4ex of main.east,anchor=west] {\tiny $(\sigma_{i,\dots,n};t_n\otimes\cdots\otimes t_{i+1}\otimes
    \maxmap{F}(\sigma_{0,\dots,i})(x))$};

  \node[draw,align=left] (diffsecond) [below left =4ex and 6ex of
  second.south west,anchor=north west] {\tiny $\sum_{j=i+1}^n(-1)^{\messygr{j+1}{n}}(\sigma_{i,\dots,n};t_{n}\otimes\dots\otimes\bdy(t_j)\otimes\dots\otimes
  t_{i+1}\otimes \maxmap{F}(\sigma_{0,\dots,i})(x))$\\
  \tiny $+(-1)^{\messygr{i+1}{n}}(\sigma_{i,\dots,n};t_{n}\otimes\dots\otimes
  t_{i+1}\otimes\bdy\circ\maxmap{F}(\sigma_{0,\dots,i})(x))$};

\node[draw, align=left] (equalsecond) [below=4ex of
  diffsecond.south west,anchor=north west]  {\tiny $\sum_{j=i+1}^n(-1)^{\messygr{j+1}{n}}(\sigma_{i,\dots,n};t_{n}\otimes\dots\otimes\bdy(t_j)\otimes\dots\otimes
  t_{i+1}\otimes \maxmap{F}(\sigma_{0,\dots,i})(x))$\\
  \tiny $+\sum_{j=1}^{i-1}(-1)^{\messygr{i+1}{n}+i-j+1}(\sigma_{i,\dots,n};t_{n}\otimes\dots\otimes t_{i+1}\otimes \maxmap{F}(\sigma_{0,\dots,\hat{j},\dots,i})(x))$\\
  \tiny $+\sum_{j=1}^{i-1}(-1)^{\messygr{i+1}{n}+i-j}(\sigma_{i,\dots,n};t_{n}\otimes\dots\otimes
  t_{i+1}\otimes
  \maxmap{F}(\sigma_{j,\dots,i})\circ\maxmap{F}(\sigma_{0,\dots,j})(x))$\\
  \tiny $+(-1)^{\messygr{i+1}{n}+i-1}(\sigma_{i,\dots,n};t_{n}\otimes\dots\otimes
  t_{i+1}\otimes\maxmap{F}(\sigma_{0,\dots,i})\circ\bdy(x))$};

\node[draw, align=left] (diffmain) [below=2ex of main.west|-equalsecond.south,
anchor=north west] {\tiny $\sum_{j=i+1}^n(-1)^{\messygr {j+1}{n}}(\sigma;t_{n}\otimes\dots\otimes\bdy(t_j)\otimes\dots\otimes
  t_{i+1}\otimes\{0\}\otimes \{0,1\}\otimes\dots\otimes\{0,1\}\otimes x)$\\
  \tiny $+\sum_{j=1}^{i-1}(-1)^{\messygr{i+1}{n}+i-j-1}(\sigma;t_{n}\otimes\dots\otimes t_{i+1}\otimes\{0\}\otimes
  \{0,1\}\otimes\dots\otimes\{1\}\otimes\dots\otimes\{0,1\}\otimes
  x)$\\
  \tiny $+\sum_{j=1}^{i-1}(-1)^{\messygr{i+1}{n}+i-j}(\sigma;t_{n}\otimes\dots\otimes t_{i+1}\otimes\{0\}\otimes
  \{0,1\}\otimes\dots\otimes\{0\}\otimes\dots\otimes\{0,1\}\otimes x)$\\
  \tiny $+(-1)^{\messygr{i+1}{n}+i-1}(\sigma;t_{n}\otimes\dots\otimes t_{i+1}\otimes\{0\}\otimes
  \{0,1\}\otimes\dots\otimes\{0,1\}\otimes\bdy(x))$};

\draw[->] (main) -- (main.south|-diffmain.north) node[pos=0.5,fill=white] {\tiny $\bdy$};
\draw[->] (second) -- (second.south|-diffsecond.north) node[pos=0.5,fill=white] {\tiny $\bdy$};
\draw (diffsecond) -- (diffsecond.south|-equalsecond.north) node[pos=0.5,fill=white] (eq) {\tiny $=$};
\draw (main) -- (second) node[pos=0.5,fill=white] {\tiny $\sim$};
\draw (diffmain.east) to [out=0,in=270] node[midway,fill=white] {\tiny
  $\sim$} (equalsecond.south-|second.east);
\node [right=-1ex of eq] {\tiny (by Equation~\eqref{eq:chain-cx-cat})};
\end{tikzpicture}
\]

For $\sigma\in\Cat_n$ and $x\in F(\sigma_0)$, write $[\sigma]\otimes
x$ for $(\sigma;\{0,1\}\otimes\dots\otimes\{0,1\}\otimes x)$. Note that
$\hocolim F$ is generated by such $[\sigma]\otimes x$ for
non-degenerate $\sigma$ and
\begin{equation}\label{eq:simple-hocolim-diff}
\bdy([\sigma]\otimes x)=(-1)^{n}[\sigma]\otimes(\bdy
x)+\sum_{i=1}^n(-1)^{n-i}\big([\sigma_{0,\dots,\hat{i},\dots,n}]\otimes x-[\sigma_{i,\dots,n}]\otimes\maxmap{F}(\sigma_{0,\dots,i})(x)\big).
\end{equation}

Let $I_\Ring\co\Cat\to\ChainComplexes$ be the constant functor
$I_\Ring(x)=\Ring$, viewed as a chain complex concentrated in degree $0$,
for each $x\in\ob(\Cat)=\Cat_0$; $\maxmap{I}_\Ring(f)=\Id$, the identity
map, for each $1$-morphism $f\in \Cat_1$; and
$\maxmap{I}_\Ring(\sigma)=0$ for any $n$-morphism $\sigma\in \Cat_n$,
$n>1$.  Fix another functor $F\from\Cat\to\Complexes$. For any cochains
$\alpha\in C^m(\hocolim F;\Ring)$ and $\beta\in C^n(\hocolim
I_\Ring;\Ring)$, define their product $\beta\cdot\alpha\in
C^{m+n}(\hocolim F;\Ring)$ as
\begin{equation}\label{eq:hocolim-product}
(\beta\cdot\alpha)([\sigma]\otimes
x)=
\begin{cases}
  \alpha([\sigma_{0,\dots,k-n}]\otimes
  x)\beta([\sigma_{k-n,\dots,k}]\otimes 1)&\text{\parbox{2in}{if
      $\sigma$ is a $k$-simplex, $k\geq n$, and $x\in
      F(\sigma_0)$ of grading
      $(n+m-k)$,}}\\
  0&\text{otherwise.}
\end{cases}
\end{equation}
Clearly, $(\beta\cdot\alpha)([\sigma]\otimes x)=0$ if $\sigma$ is
degenerate, and for any other $\gamma\in C^p(\hocolim I_\Ring;\Ring)$,
\[
(\gamma\cdot\beta)\cdot\alpha=\gamma\cdot(\beta\cdot\alpha).
\]
We use Formula~\eqref{eq:simple-hocolim-diff} to verify that
\[
\diff(\beta\cdot\alpha)=\diff(\beta)\cdot\alpha+(-1)^n\beta\cdot\diff(\alpha).
\]
To wit, for any $k$-simplex $\sigma$ and $x\in F(\sigma_0)$ of grading $(n+m+1-k)$,
\begin{align*}
&\diff(\beta\cdot\alpha)([\sigma]\otimes x)\\
=\,&(\beta\cdot\alpha)\big((-1)^{k}[\sigma]\otimes(\bdy
x)+\sum_{i=1}^k(-1)^{k-i}\big([\sigma_{0,\dots,\hat{i},\dots,k}]\otimes x-[\sigma_{i,\dots,k}]\otimes\maxmap{F}(\sigma_{0,\dots,i})(x)\big)\big)\\
=\,&(-1)^k\alpha([\sigma_{0,\dots,k-n}]\otimes
\bdy x)\beta([\sigma_{k-n,\dots,k}]\otimes
1)\\
  &\qquad{}+\sum_{i=1}^{k-n-1}(-1)^{k-i}\alpha([\sigma_{0,\dots,\hat{i},\dots,k-n}]\otimes
x)\beta([\sigma_{k-n,\dots,k}]\otimes 1)\\
&\qquad{}+\sum_{i=k-n}^{k}(-1)^{k-i}\alpha([\sigma_{0,\dots,k-n-1}]\otimes
x)\beta([\sigma_{k-n-1,\dots,\hat{i},\dots,k}]\otimes
1)\\
&\qquad{}-\sum_{i=1}^{k-n}(-1)^{k-i}\alpha([\sigma_{i,\dots,k-n}]\otimes\maxmap{F}(\sigma_{0,\dots,i})(x))\beta([\sigma_{k-n,\dots,k}]\otimes 1)\\
\shortintertext{while}
  &(\diff(\beta)\cdot\alpha+(-1)^n\beta\cdot\diff(\alpha))([\sigma]\otimes x)\\
  =\,&\alpha([\sigma_{0,\dots,k-n-1}]\otimes
  x)\diff\beta([\sigma_{k-n-1,\dots,k}]\otimes
  1)+(-1)^n\diff\alpha([\sigma_{0,\dots,k-n}]\otimes
  x)\beta([\sigma_{k-n,\dots,k}]\otimes 1)\\
  =\,&\alpha([\sigma_{0,\dots,k-n-1}]\otimes
  x)\sum_{i=k-n-1}^k(-1)^{k-i}\beta([\sigma_{k-n-1,\dots,\hat{i},\dots,k}]\otimes
  1)\\
  &\qquad{}+(-1)^k\alpha([\sigma_{0,\dots,k-n}]\otimes \bdy
  x)\beta([\sigma_{k-n,\dots,k}]\otimes 1)\\
  &\qquad{}+\sum_{i=1}^{k-n}(-1)^{k-i}\big(\alpha([\sigma_{0,\dots,\hat{i},\dots,k-n}]\otimes
  x)-\alpha([\sigma_{i,\dots,k-n}]\otimes\maxmap{F}(\sigma_{0,\dots,i})(x))\big)\beta([\sigma_{k-n,\dots,k}]\otimes 1).
\end{align*}
Therefore, $H^*(\hocolim I_\Ring)$ becomes a graded ring and
$H^*(\hocolim F)$ becomes a graded left module over it.

\begin{lemma}\label{lem:hocolim-geom-realization}
  Consider the functor $I_\Ring\from\Cat\to\Complexes$. Then $\hocolim
  I_\Ring$ is isomorphic to the cellular chain complex of the geometric
  realization $|\Cat|$ of $\Cat$ and the induced isomorphism on cohomology
  respects the ring structures.
\end{lemma}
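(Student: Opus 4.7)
The plan is to construct an explicit chain map $\psi\from\cellC{*}(\Realize{\Cat})\to\hocolim I_\Ring$ sending each non-degenerate $n$-simplex $\sigma$ of $\Cat$ to $[\sigma]\otimes 1$, and to show that it is an isomorphism which intertwines the ring structures on cohomology. To see that $\psi$ is a chain map, I would apply Formula~\eqref{eq:simple-hocolim-diff} with $F=I_\Ring$ and $x=1$; since $\bdy(1)=0$, $\maxmap{I_\Ring}(\sigma_{0,1})=\Id$, and $\maxmap{I_\Ring}(\sigma_{0,\dots,i})=0$ for $i\geq 2$, the formula collapses to
\[
  \bdy([\sigma]\otimes 1)=\sum_{i=0}^{n}(-1)^{n-i}[d_i\sigma]\otimes 1,
\]
which (up to an inessential overall sign $(-1)^n$) is the standard simplicial boundary.

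To show $\psi$ is an isomorphism, I would exhibit a normal form for generators of $\hocolim I_\Ring$: every $(\sigma;t_n\otimes\cdots\otimes t_1\otimes x)$ reduces, via the equivalence relations of Definition~\ref{def:hocolim}, to a scalar multiple of $[\tau]\otimes 1$ for a non-degenerate simplex $\tau$ (or to zero). Indeed, if some $t_i=\{1\}$, relation~(a) strips it off and replaces $\sigma$ by $d_i\sigma$; if the first non-$\{0,1\}$ entry is $t_i=\{0\}$, relation~(b) either truncates $\sigma$ (when $i=1$, using $\maxmap{I_\Ring}(\sigma_{0,1})=\Id$) or kills the generator (when $i\geq 2$, using $\maxmap{I_\Ring}(\sigma_{0,\dots,i})=0$); and if $\sigma$ is degenerate, the degeneracy relations combined with $\pi(\{0,1\})=0$ and $m(\{0,1\},\{0,1\})=0$ yield $[\sigma]\otimes 1=0$. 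Linear independence of the normal forms follows by inspecting the associated graded of the filtration $\Filt_m\hocolim I_\Ring$, which matches the skeletal filtration of $\Realize{\Cat}$ on the target.

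For the ring structure, I would observe that for an $(m+n)$-simplex $\sigma$, Formula~\eqref{eq:hocolim-product} reads
\[
  (\beta\cdot\alpha)([\sigma]\otimes 1)=\alpha([\sigma_{0,\dots,m}]\otimes 1)\cdot\beta([\sigma_{m,\dots,m+n}]\otimes 1),
\]
which is precisely the Alexander--Whitney formula for the cup product of an $m$-cochain $\alpha$ with an $n$-cochain $\beta$. Since the usual cup product on $H^*(\Realize{\Cat};\Ring)$ is induced by Alexander--Whitney on simplicial cochains, $\psi$ induces a ring isomorphism on cohomology.

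The main obstacle I expect is the bookkeeping in the second step. The equivalence relations in Definition~\ref{def:hocolim} are interleaved: relations (a) and (b) both shrink simplicial dimension, while the degeneracy relations mix simplices of different dimensions. One must argue carefully that no non-trivial combination of these relations identifies distinct $[\tau]\otimes 1$'s for non-degenerate $\tau$. Passing to the associated graded with respect to $\Filt_\bullet$ decouples the relations into facet-collapsing relations in each simplicial dimension separately, at which point the identification with the simplicial chain complex of $\Cat$ becomes transparent.
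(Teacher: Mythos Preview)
Your approach is essentially the same as the paper's: both compute $\bdy([\sigma]\otimes 1)$ from Formula~\eqref{eq:simple-hocolim-diff}, identify it with the simplicial boundary, and recognize the product~\eqref{eq:hocolim-product} as Alexander--Whitney. The paper absorbs your ``inessential'' sign $(-1)^n$ into the isomorphism by sending $[\sigma]\otimes 1\mapsto (-1)^{\binom{n+1}{2}}|\sigma|$, and then tracks this sign through the ring computation---noting in particular that $\ol{\beta}\cdot\ol{\alpha}$ matches $\alpha\cup\beta$ at the cochain level and invoking graded commutativity to conclude it represents $\ol{\beta\cup\alpha}$; you should make both of these points explicit rather than leaving the sign as a loose end.
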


\begin{proof}
  This is immediate from Equation~\eqref{eq:simple-hocolim-diff}. The
  homotopy colimit is freely generated by $[\sigma]\otimes 1$ for
  non-degenerate $n$-simplices $\sigma$ of $\Cat$ and $1\in
  I_\Ring(\sigma_0)=\Ring$, and the differential of such a generator is
  \begin{align*}
  \bdy([\sigma]\otimes 1)&=\sum_{i=1}^n(-1)^{n-i}[\sigma_{0,\dots,\hat{i},\dots,n}]\otimes 1-\sum_{i=1}^n(-1)^{n-i}[\sigma_{i,\dots,n}]\otimes\maxmap{I}_\Ring(\sigma_{0,\dots,i})(1)\\
  &= \sum_{i=1}^n(-1)^{n-i}[d_i\sigma]\otimes 1+(-1)^n[\sigma_{1,\dots,n}]\otimes\maxmap{I}_\Ring(\sigma_{0,1})(1)\\
  &=(-1)^n\sum_{i=0}^n(-1)^i[d_i\sigma]\otimes 1.
  \end{align*}
  Therefore, the map $[\sigma]\otimes 1\mapsto (-1)^{\binom{n+1}{2}}|\sigma|$ is an isomorphism between $\hocolim I_\Ring$ and the
  cellular chain complex of $|\Cat|$. 

  For the ring structures, let $\alpha\in C^m(|\Cat|)$ and $\beta\in
  C^n(|\Cat|)$ be cocycles and let $\ol{\alpha}$ and $\ol{\beta}$ be
  the corresponding cocycles in $C^*(\hocolim I_\Ring)$. Then for
  any $(m+n)$-simplex $\sigma$,
  \begin{align*}
    \ol{\beta}\cdot\ol{\alpha}([\sigma]\otimes 1)&=(-1)^{\binom{m+1}{2}+\binom{n+1}{2}}\alpha(|\sigma_{0,\dots,m}|)\beta(|\sigma_{m,\dots,m+n}|)\\
    &=(-1)^{\binom{m+1}{2}+\binom{n+1}{2}}(\alpha\cup\beta)(|\sigma|)\\
    &\sim (-1)^{\binom{m+n+1}{2}}(\beta\cup\alpha)(|\sigma|)\\
    &=\ol{\beta\cup\alpha}([\sigma]\otimes 1).\qedhere
  \end{align*}
\end{proof}

\begin{definition}
  For any functor $F\from\Cat\to\Complexes$, define its
  \emph{hypercohomology} $H^*(F)$ to be the cohomology of $\hocolim
  F$. Using the product from Equation~\eqref{eq:hocolim-product} and
  the isomorphism from Lemma~\ref{lem:hocolim-geom-realization}, the
  hypercohomology $H^*(F)$ is a graded left module over $H^*(|\Cat|)$.
\end{definition}

Next, suppose we have functors $F_0,F_1\from\Cat\to\Complexes$ and a
natural transformation $G$ from $F_0$ to $F_1$, that is, $G$ is a
functor $\Delta^1\times\Cat\to\Complexes$ satisfying
$G|_{\{i\}\times\Cat}=F_i$, $i=0,1$. To this, we will associate a
chain map $f_G\from\hocolim F_0\to\hocolim F_1$.  

With notation as in 
Section~\ref{sec:infty-cat-functors}, a
non-degenerate $n$-simplex $\sigma$ in $\Cat$ produces the following
non-degenerate simplices of $\Delta^1\times\Cat$:
\begin{itemize}[label=$\circ$]
\item $\Delta^1_{n;n}\times\sigma$; we denote this $\{0\}\times\sigma$. 
\item $\Delta^1_{n;-1}\times\sigma$; we denote this $\{1\}\times\sigma$. 
\item $\Delta^1_{n;i}\times\sigma$ for $0\leq i< n$. 
\item $\Delta^1_{n+1;i}\times s_i\sigma$ for $0\leq i\leq n$.
\end{itemize}
Therefore, by Convention~\ref{convention:highest-maps-in-complexes},
$G$ consists of maps
\begin{itemize}
\item $\maxmap{G}(\Delta^1_{n;i}\times\sigma)\from
F_0(\sigma_0)\to F_1(\sigma_n)$, $0\leq i<n$, of degree $n-1$,
\item $\maxmap{G}(\Delta^1_{n+1;i}\times s_i\sigma)\from F_0(\sigma_0)\to F_1(\sigma_n)$, $0\leq i\leq n$, of degree $n$,
\end{itemize}
for each non-degenerate $n$-simplex $\sigma$ of $\Cat$.  We will use
the notation $\ol{G}(\sigma;i)$ for
$\maxmap{G}(\Delta^1_{n+1;i}\times s_i\sigma)$.  These maps are
required to satisfy certain compatibility conditions coming from
Equation~\eqref{eq:chain-cx-cat}, one of which is the following:
\begin{align*}
  &\bdy\circ \ol{G}(\sigma;i)+(-1)^{n+1}\ol{G}(\sigma;i)\circ \bdy\\
 &\qquad=\sum_{\ell=1}^{i-1}(-1)^{n-\ell}\ol{G}(\sigma_{0,\dots,\hat{\ell},\dots,n};i-1)+\sum_{\ell=i+2}^{n}(-1)^{n-\ell}\ol{G}(\sigma_{0,\dots,\widehat{\ell-1},\dots,n};i)\\
  &\qquad\qquad{}+\sum_{\ell=\max(1,i)}^{\min(i+1,n)}(-1)^{n-\ell}\maxmap{G}(\Delta^1_{n;\ell-1}\times \sigma)
    -\sum_{\ell=1}^i(-1)^{n-\ell}
  \ol{G}(\sigma_{\ell,\dots,n};i-\ell)\circ \maxmap{F}_0(\sigma_{0,\dots,\ell})\\
  &\qquad\qquad{}-\sum_{\ell=i+1}^{n}(-1)^{n-\ell}\maxmap{F}_1(\sigma_{\ell-1,\dots,n})\circ
    \ol{G}(\sigma_{0,\dots,\ell-1};i).
\end{align*}
In particular, the maps of the form
$\maxmap{G}(\Delta^1_{n;i}\times \sigma)$ are determined by the
maps of the form $\ol{G}(\sigma;i)$. Consequently, the data for
$G$ is entirely encapsulated by these maps $\ol{G}$, and one can
check that they are only required to satisfy the following equation:
\begin{equation}\label{eq:nat-transform-main-old}
\begin{split}
  &\sum_{i=0}^n(-1)^{n-i}\big(\bdy\circ \ol{G}(\sigma;i)+(-1)^{n+1}\ol{G}(\sigma;i)\circ\bdy\big)\\
&\qquad=\sum_{\substack{1\leq\ell\leq n-1\\0\leq i\leq n-1}}
(-1)^{i+\ell-1}\ol{G}(\sigma_{0,\dots,\hat{\ell},\dots,n};i)+\sum_{\substack{1\leq \ell\leq n\\0\leq i\leq n-\ell}}
(-1)^{i+1}\ol{G}(\sigma_{\ell,\dots,n};i)\circ\maxmap{F}_0(\sigma_{0,\dots,\ell})\\
&\qquad\quad{}+\sum_{\substack{0\leq\ell\leq
    n-1\\0\leq i\leq \ell}}(-1)^{i+\ell}\maxmap{F}_1(\sigma_{\ell,\dots,n})\circ \ol{G}(\sigma_{0,\dots,\ell};i).
\end{split}
\end{equation}

Now construct the map $f_G\from\hocolim F_0\to\hocolim F_1$ by defining, for non-degenerate $n$-simplices
$\sigma$ in $\Cat$ and $x\in F_0(\sigma_0)$,
\begin{equation}\label{eq:hocolim-map}
f_G([\sigma]\otimes x)=\sum_{m=0}^n\Big([\sigma_{m,\dots,n}]\otimes\big(\sum_{i=0}^{m}(-1)^{m-i}\ol{G}(\sigma_{0,\dots,m};i)(x)\big)\Big).
\end{equation}
\begin{lemma}\label{lem:hocolim-map}
  Given functors $F_0,F_1\from\Cat\to\Complexes$ and a natural
  transformation $G$ from $F_0$ to $F_1$, let
  $f_{G}\from\hocolim(F_0)\to \hocolim(F_1)$ be the map from
  Equation~\eqref{eq:hocolim-map}. Then $f_G$ is a chain map and the
  induced map $H^*(F_1)\to H^*(F_0)$ on hypercohomology respects the
  module structure over $H^*(|\Cat|)$.
\end{lemma}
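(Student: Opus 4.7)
The plan is to verify the two claims in turn: first that $f_G$ is a chain map on the generators $[\sigma]\otimes x$, then that its cochain dual is strictly compatible at the cochain level with the cup product defined in Equation~\eqref{eq:hocolim-product}.

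For the chain map property, I would evaluate $\bdy f_G([\sigma]\otimes x)$ and $f_G\bdy([\sigma]\otimes x)$ on a non-degenerate $n$-simplex $\sigma$ using Equation~\eqref{eq:simple-hocolim-diff} together with the definition~\eqref{eq:hocolim-map} of $f_G$. Expanding $\bdy f_G$ produces four kinds of summands indexed by $(m,i)$ and one auxiliary index: internal-$\bdy$ terms $[\sigma_{m,\dots,n}]\otimes\bdy\circ\ol{G}(\sigma_{0,\dots,m};i)(x)$; internal differentials on $x$ of the form $[\sigma_{m,\dots,n}]\otimes\ol{G}(\sigma_{0,\dots,m};i)\circ\bdy(x)$; face-of-simplex terms $[\sigma_{m,\dots,\widehat{\ell},\dots,n}]\otimes\ol{G}(\sigma_{0,\dots,m};i)(x)$; and composition terms $[\sigma_{\ell,\dots,n}]\otimes\maxmap{F_1}(\sigma_{m,\dots,\ell})\circ\ol{G}(\sigma_{0,\dots,m};i)(x)$. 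The expression $f_G\bdy([\sigma]\otimes x)$ likewise produces the $[\sigma_{m,\dots,n}]\otimes\ol{G}(\sigma_{0,\dots,m};i)\circ\bdy(x)$ terms, together with face terms $[\sigma_{m,\dots,n}]\otimes\ol{G}(\sigma_{0,\dots,\widehat{j},\dots,m};i)(x)$ and composition terms $[\sigma_{m,\dots,n}]\otimes\ol{G}(\sigma_{j,\dots,m};i)\circ\maxmap{F_0}(\sigma_{0,\dots,j})(x)$. Grouping the terms on the left by their leading factor $[\sigma_{m,\dots,n}]$ and isolating the combination $\sum_i(-1)^{m-i}\bdy\circ\ol{G}(\sigma_{0,\dots,m};i)$, one sees that substituting the right-hand side of Equation~\eqref{eq:nat-transform-main-old} (applied to the $m$-simplex $\sigma_{0,\dots,m}$) produces exactly the face and composition terms of $f_G\bdy([\sigma]\otimes x)$. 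Thus the natural transformation identity is precisely the relation one needs, and its validity is built into the definition of $G$ as a functor $\Delta^1\times\Cat\to\Complexes$.

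For the module structure, the argument runs at the cochain level. Fix $\alpha\in C^m(\hocolim F_1;\Ring)$, $\beta\in C^n(\hocolim I_\Ring;\Ring)$, a $k$-simplex $\sigma$ of $\Cat$, and $x\in F_0(\sigma_0)$ of the grading at which $\beta\cdot\alpha$ can be nonzero. The element $f_G([\sigma]\otimes x)$ is a sum of terms $[\sigma_{p,\dots,k}]\otimes\ol{G}(\sigma_{0,\dots,p};i)(x)$, each of which retains the terminal segment $\sigma_{k-n,\dots,k}$ provided $p\le k-n$; for $p>k-n$ the simplex $\sigma_{p,\dots,k}$ has fewer than $n+1$ vertices and contributes $0$ to $\beta\cdot\alpha$ by the case analysis in Equation~\eqref{eq:hocolim-product}. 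Plugging into that formula yields
\begin{equation*}
  (\beta\cdot\alpha)\bigl(f_G([\sigma]\otimes x)\bigr)
  =\alpha\bigl(f_G([\sigma_{0,\dots,k-n}]\otimes x)\bigr)\cdot\beta\bigl([\sigma_{k-n,\dots,k}]\otimes 1\bigr)
  =\bigl(\beta\cdot(f_G^*\alpha)\bigr)([\sigma]\otimes x).
\end{equation*}
Thus $f_G^*$ is strictly $C^*(\hocolim I_\Ring)$-linear; passing to cohomology and applying Lemma~\ref{lem:hocolim-geom-realization} to identify $H^*(\hocolim I_\Ring)$ with $H^*(|\Cat|)$ gives the claimed module-map property.

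The only real difficulty lies in the first part: the sign bookkeeping needed to reconcile the factor $(-1)^{m-i}$ built into the definition of $f_G$, the signs $(-1)^{n-j}$ coming from the $\hocolim$ differential, and the signs on the right-hand side of Equation~\eqref{eq:nat-transform-main-old}. Once the four families of terms on each side are sorted by leading face $[\sigma_{m,\dots,n}]$, the coincidence is forced by the natural transformation identity, and no further input is required.
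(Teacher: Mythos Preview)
Your approach is essentially the same as the paper's: both expand $\bdy f_G-f_G\bdy$ on a generator, sort the resulting terms by their leading simplex $[\sigma_{m,\dots,n}]$ (the paper separately notes that the face terms $[\sigma_{m,\dots,\hat\ell,\dots,n}]\otimes y$ with $\ell>m$ cancel directly between the two sides), and then reduce the remaining contributions to Equation~\eqref{eq:nat-transform-main-old} applied to $\sigma_{0,\dots,m}$; the module-structure argument is likewise the same, the paper simply asserting that $f_G^*$ respects the product on the nose. One small slip in your bookkeeping: the term $[\sigma_{m,\dots,n}]\otimes\ol{G}(\sigma_{0,\dots,m};i)\!\circ\!\bdy(x)$ arises from $f_G\bdy$ (via the $(-1)^n[\sigma]\otimes\bdy x$ summand of $\bdy([\sigma]\otimes x)$), not from $\bdy f_G$, so it should appear only once in your ledger rather than on both sides.
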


\begin{proof}
  We represent a generator of the form
  $[\sigma]\otimes x$, for $x\in F_i(\sigma_0)$, as living over the
  barycenter of $\{i\}\times \sigma$. Furthermore, we represent
  the map $f_G$ by {\color{blue}solid straight} arrows, and the three terms in
  the differential from Equation~\eqref{eq:simple-hocolim-diff} by
  {\color{gray}curved}, {\color{red}dashed}, and
  {\color{green!50!black}dotted} arrows, respectively. For example, the case $n=2$ is:

  \[
  \begin{tikzpicture}[every node/.style={inner sep=0pt, outer sep=1pt}]
    \foreach \i in {0,1}
    {
      \node (A\i) at (-0.5+2*\i,-0.5+2*\i) {\tiny $\i\sigma_0$};
      \node  (B\i) at (0.3+2*\i,1.7+2*\i) {\tiny $\i\sigma_1$};
      \node (C\i) at (3+2*\i,-1+2*\i) {\tiny $\i\sigma_2$};
      
      \node (BC\i) at ($(B\i)!.5!(C\i)$) {$\cdot$};
      \node (AC\i) at ($(A\i)!.5!(C\i)$) {$\cdot$};
      \node  (AB\i) at ($(B\i)!.5!(A\i)$) {$\cdot$};
      
      \node (ABC\i) at ($(AB\i)!0.333!(C\i)$) {$\cdot$};
    }
  \foreach \i in {1}
  {
    \draw[->,thick,dashed,red] (ABC\i) -- (AC\i);
    \draw[->,thick,dashed,red] (ABC\i) -- (AB\i);
    \draw[->,thick,dashed,red] (AB\i) -- (A\i);
    \draw[->,thick,dashed,red] (AC\i) -- (A\i);
    \draw[->,thick,dashed,red] (BC\i) -- (B\i);
    
    \draw[->,thick,green!50!black,dotted] (ABC\i) -- (BC\i);
    \draw[->,thick,green!50!black,dotted] (ABC\i) -- (C\i);
    \draw[->,thick,green!50!black,dotted] (AB\i) -- (B\i);
    \draw[->,thick,green!50!black,dotted] (AC\i) -- (C\i);
    \draw[->,thick,green!50!black,dotted] (BC\i) -- (C\i);

    \draw[->,thick,gray,looseness=8] (A\i) to [out=180,in=270] (A\i);
    \draw[->,thick,gray,looseness=8] (B\i) to [out=90,in=180] (B\i);
    \draw[->,thick,gray,looseness=8] (C\i) to [out=270,in=360] (C\i);

    \draw[->,thick,gray,looseness=12] (AB\i) to [out=100,in=190] (AB\i);
    \draw[->,thick,gray,looseness=12] (BC\i) to [out=0,in=90] (BC\i);
    \draw[->,thick,gray,looseness=12] (AC\i) to [out=220,in=310] (AC\i);

    \draw[->,thick,gray,looseness=12] (ABC\i) to [out=45,in=135] (ABC\i);
  }
  \foreach \l in {A,B,C,AB,BC,AC,ABC}
  {
    \draw[->,thin,blue] (\l 0) -- (\l 1);
  }
    \draw[->,thin,blue] (AB0) -- (B1);
    \draw[->,thin,blue] (AC0) -- (C1);
    \draw[->,thin,blue] (BC0) -- (C1);

    \draw[->,thin,blue] (ABC0) -- (BC1);
    \draw[->,thin,blue] (ABC0) -- (C1);
  \foreach \i in {0}
  {
    \draw[->,thick,dashed,red] (ABC\i) -- (AC\i);
    \draw[->,thick,dashed,red] (ABC\i) -- (AB\i);
    \draw[->,thick,dashed,red] (AB\i) -- (A\i);
    \draw[->,thick,dashed,red] (AC\i) -- (A\i);
    \draw[->,thick,dashed,red] (BC\i) -- (B\i);
    
    \draw[->,thick,green!50!black,dotted] (ABC\i) -- (BC\i);
    \draw[->,thick,green!50!black,dotted] (ABC\i) -- (C\i);
    \draw[->,thick,green!50!black,dotted] (AB\i) -- (B\i);
    \draw[->,thick,green!50!black,dotted] (AC\i) -- (C\i);
    \draw[->,thick,green!50!black,dotted] (BC\i) -- (C\i);

    \draw[->,thick,gray,looseness=8] (A\i) to [out=180,in=270] (A\i);
    \draw[->,thick,gray,looseness=8] (B\i) to [out=90,in=180] (B\i);
    \draw[->,thick,gray,looseness=8] (C\i) to [out=270,in=360] (C\i);

    \draw[->,thick,gray,looseness=12] (AB\i) to [out=100,in=190] (AB\i);
    \draw[->,thick,gray,looseness=12] (BC\i) to [out=0,in=90] (BC\i);
    \draw[->,thick,gray,looseness=12] (AC\i) to [out=220,in=310] (AC\i);

    \draw[->,thick,gray,looseness=12] (ABC\i) to [out=45,in=135] (ABC\i);
  }
  \end{tikzpicture}
  \]

  To see $f_G$ is a chain map, fix a non-degenerate $n$-simplex $\sigma$
  in $\Cat$ and $x\in F_0(\sigma_0)$, and consider the terms of
  $(\bdy\circ f_G-f_G\circ\bdy)([\sigma]\otimes x)$. The terms
  are of the following two types:
  \begin{enumerate}[leftmargin=*] 
  \item $[\sigma_{m,\dots,\hat{\ell},\dots,n}]\otimes y$, $0\leq
    m<\ell\leq n$, $y\in F_1(\sigma_m)$. From both
    $\bdy(f_G([\sigma]\otimes x))$ and $f_G(\bdy([\sigma]\otimes x))$,
    we get the term
    \[
    (-1)^{n-\ell}[\sigma_{m,\dots,\hat{\ell},\dots,n}]\otimes\big(\sum_{i=0}^{m}(-1)^{m-i}\ol{G}(\sigma_{0,\dots,m};i)(x)\big)
    \]
    (as compositions of {\color{blue}solid
      straight}--{\color{red}dashed} arrows in either order) which
    therefore cancel.
  \item $[\sigma_{m,\dots,n}]\otimes y$, $0\leq m\leq n$, $y\in
    F_1(\sigma_m)$. From $\bdy(f_G([\sigma]\otimes x))$, we get the
    terms
    {\small
    \[
    (-1)^{n-m}[\sigma_{m,\dots,n}]\otimes\Big(\bdy\big(\sum_{i=0}^{m}(-1)^{m-i}\ol{G}(\sigma_{0,\dots,m};i)(x)\big)-\sum_{\ell=0}^{m-1}\maxmap{F}_1(\sigma_{\ell,\dots,m})\big(\sum_{i=0}^{\ell}(-1)^{\ell-i}\ol{G}(\sigma_{0,\dots,\ell};i)(x)\big)\Big)
    \]
  }
    (as compositions of {\color{blue}solid
      straight}--{\color{gray}curved} arrows and {\color{blue}solid
      straight}--{\color{green!50!black}dotted} arrows, repsectively)
    while from $f_G(\bdy([\sigma]\otimes x))$ we get the terms
    \begin{align*}
    [\sigma_{m,\dots,n}]\otimes\Big((-1)^n\sum_{i=0}^{m}(-1)^{m-i}&\ol{G}(\sigma_{0,\dots,m};i)(\bdy x)+\sum_{i=0}^{m-1}(-1)^{m-1-i}\sum_{\ell=1}^{m-1}(-1)^{n-\ell}\ol{G}(\sigma_{0,\dots,\hat{\ell},\dots,m};i)(x)\\
    &-\sum_{\ell=1}^m\sum_{i=0}^{m-\ell}(-1)^{m-\ell-i}(-1)^{n-\ell}\ol{G}(\sigma_{\ell,\dots,m};i)\big(\maxmap{F}_0(\sigma_{0,\dots,\ell})(x)\big)\Big)
    \end{align*}
    (as compositions of {\color{gray}curved}--{\color{blue}solid
      straight} arrows, {\color{red}dashed}--{\color{blue}solid
      straight} arrows, and {\color{green!50!black}dotted}--{\color{blue}solid
      straight} arrows, respectively).
    Therefore, we need to show:
    \begin{align*}
      &\sum_{i=0}^m(-1)^{m-i}\big(\bdy\circ \ol{G}(\sigma_{0,\dots,m};i)+(-1)^{m+1}\ol{G}(\sigma_{0,\dots,m};i)\circ\bdy\big)\\
      &\qquad=\sum_{\substack{1\leq\ell\leq m-1\\0\leq i\leq m-1}}
      (-1)^{i+\ell-1}\ol{G}(\sigma_{0,\dots,\hat{\ell},\dots,m};i)+\sum_{\substack{1\leq
          \ell\leq m\\0\leq i\leq m-\ell}}
      (-1)^{i+1}\ol{G}(\sigma_{\ell,\dots,m};i)\circ\maxmap{F}_0(\sigma_{0,\dots,\ell})\\
      &\qquad\quad{}+\sum_{\substack{0\leq\ell\leq m-1\\0\leq i\leq
          \ell}}(-1)^{i+\ell}\maxmap{F}_1(\sigma_{\ell,\dots,m})\circ
      \ol{G}(\sigma_{0,\dots,\ell};i),
    \end{align*}
    which is exactly Equation~\eqref{eq:nat-transform-main-old}.
  \end{enumerate}

Finally, it is clear from Equations~\eqref{eq:hocolim-product}
and~\eqref{eq:hocolim-map} that the dual of the map $f_G$ respects
products with cochains in $C^*(\hocolim I_\Ring)$ on the nose.
\end{proof}

Going one step further:
\begin{lemma}\label{lem:hocolim-htpy}
  Suppose we have functors $F_0,F_1,F_2\from\Cat\to\Complexes$ and
  natural transformations $G_{01}$ from $F_0$ to $F_1$, $G_{12}$ from
  $F_1$ to $F_2$, and $G_{02}$ from $F_0$ to $F_2$. Further suppose
  that there is a functor $H\from\Delta^2\times\Cat\to\Complexes$
  satisfying $H|_{\{i\}\times\Cat}=F_i$, and
  $H|_{d_i\Delta^2\times\Cat}=G_{0,\dots,\hat{i},\dots,2}$,
  $i=0,1,2$. Then there is an induced chain homotopy connecting
  $f_{G_{12}}\circ f_{G_{01}}$ and
  $f_{G_{02}}$.
\end{lemma}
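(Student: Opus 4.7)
The plan is to imitate the construction of $f_G$ in Lemma~\ref{lem:hocolim-map} one step higher in coherence. First I would enumerate the relevant data encoded in $H$, then write down an explicit candidate chain homotopy, then verify the chain homotopy identity by reducing it to the functoriality of $H$ via Equation~\eqref{eq:chain-cx-cat}.

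Step 1 (extracting the new data from $H$). For each non-degenerate $n$-simplex $\sigma$ of $\Cat$, the non-degenerate simplices of $\Delta^2\times\sigma$ whose projection to $\Cat$ covers $\sigma$ correspond to monotone lattice paths from $(0,0)$ to $(2,n)$ in $\{0,1,2\}\times\{0,\dots,n\}$. Paths that use only two $\Delta^2$-vertices are $(n+1)$-simplices and have already been encoded via the restrictions of $H$ to the edges of $\Delta^2$, i.e., via $G_{01}$, $G_{12}$, $G_{02}$. The genuinely new data consists of the $(n+2)$-simplices, one for each pair $(i,j)$ with $0\leq i\leq j\leq n$: the pair specifies the $\Delta^2$-sequence $0^{i+1}1^{j-i+1}2^{n-j+1}$ together with a matching double-degeneracy of $\sigma$. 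Let $\tau_{\sigma;i,j}$ denote the resulting $(n+2)$-simplex and set $\ol{H}(\sigma;i,j)=\maxmap{H}(\tau_{\sigma;i,j})\co F_0(\sigma_0)\to F_2(\sigma_n)$, a map of degree $n+1$. The functoriality of $H$ (Equation~\eqref{eq:chain-cx-cat} applied to $\tau_{\sigma;i,j}$) produces an identity for $\bdy\circ\ol{H}(\sigma;i,j)\pm\ol{H}(\sigma;i,j)\circ\bdy$ in terms of (i) the $\ol{H}(\sigma';i',j')$ for faces $\sigma'$ of $\sigma$ or nearby index pairs, (ii) the $\ol{G_{ab}}$ data (from the edges of $\Delta^2$), (iii) compositions of these maps, and (iv) compositions with $\maxmap{F_k}(\cdot)$. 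This identity is the natural analogue of Equation~\eqref{eq:nat-transform-main-old} one level higher.

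Step 2 (defining the homotopy). Imitating Equation~\eqref{eq:hocolim-map}, define $h\co\hocolim F_0\to\hocolim F_2$ of degree $+1$ on a generator $[\sigma]\otimes x$ (with $\sigma$ an $n$-simplex) by
\[
h([\sigma]\otimes x)=\sum_{m=0}^{n}\,\sum_{0\leq i\leq j\leq m}(-1)^{\epsilon(m,i,j)}\,[\sigma_{m,\dots,n}]\otimes\ol{H}(\sigma_{0,\dots,m};i,j)(x),
\]
with signs $\epsilon(m,i,j)$ chosen to match the sign conventions inherited from Equation~\eqref{eq:chain-cx-cat}; a degree check confirms the total grading shifts by $+1$.

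Step 3 (verification). Compute $\bdy h+h\bdy$ on $[\sigma]\otimes x$ using Formula~\eqref{eq:simple-hocolim-diff}. The resulting expression splits into two families exactly as in the proof of Lemma~\ref{lem:hocolim-map}. Terms of the form $[\sigma_{m,\dots,\hat\ell,\dots,n}]\otimes(\cdots)$ cancel in pairs, arising once from the boundary map acting on the $[\sigma]$-factor and once from the face identity on $\sigma$. Terms of the form $[\sigma_{m,\dots,n}]\otimes(\cdots)$ are rearranged using the higher functoriality identity from Step~1: the face terms which lie on an edge of $\Delta^2$ reassemble into $f_{G_{01}}$, $f_{G_{12}}$, $f_{G_{02}}$, while the composition terms coming from splitting the $\Delta^2$-sequence at its middle vertex $1$ produce precisely $-f_{G_{12}}\circ f_{G_{01}}$. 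The remaining $\ol{H}$-terms and compositions involving $\maxmap{F_k}$ match $\bdy h$ and $h\bdy$ term-by-term. Collecting everything yields $\bdy h+h\bdy=f_{G_{02}}-f_{G_{12}}\circ f_{G_{01}}$, proving the lemma.

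The main obstacle is purely bookkeeping: the higher analogue of Equation~\eqref{eq:nat-transform-main-old} contains many more terms, and one must track carefully which face or composition of $\tau_{\sigma;i,j}$ is accounted for by which of $\ol{H}$, $\ol{G_{ab}}$, $\maxmap{F_k}$, and simultaneously pin down the signs $\epsilon(m,i,j)$ consistently. No conceptually new input is needed beyond Lemma~\ref{lem:hocolim-map}, since the chain-homotopy identity is a direct consequence of the single functoriality equation for $H$ on the top-dimensional simplices of $\Delta^2\times\Cat$.
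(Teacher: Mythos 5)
Your proposal is correct and follows precisely the route the paper points to: the paper declines to spell out the argument, remarking only that it is ``a more elaborate version of the previous proof,'' and you carry that out by lifting the construction of Lemma~\ref{lem:hocolim-map} one coherence level higher, indexing the new data $\ol{H}(\sigma;i,j)$ by the top-dimensional simplices of $\Delta^2\times\Cat$ over $\sigma$ (correctly enumerated by pairs $0\leq i\leq j\leq n$, with $i=j$ giving the $s_i^2\sigma$-case) and defining $h$ by the direct analogue of Equation~\eqref{eq:hocolim-map}.
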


We leave the proof, which is a more elaborate version of the previous
proof, to the reader.

\begin{remark}
  If we were to identify the explicit description of $\hocolim F$ with
  Joyal's more abstract definition~\cite[Definition
  4.5]{Joyal02:quasi-cat},~\cite[Definition 1.2.13.4]{Lurie09:topos}
  then Lemmas~\ref{lem:hocolim-map} and~\ref{lem:hocolim-htpy}
  would follow. 
\end{remark}

\begin{remark}
  For $\Cat$ an infinity category, functors from $\Cat$ to $\ChainComplexes$ form a stable
  infinity category~\cite[Proposition 1.1.3.1]{Lurie:HigherAlgebra}
  (see also~\cite[Proposition 1.2.7.3]{Lurie09:topos}).  In
  particular, the homotopy category $h\Fun(\Cat,\ChainComplexes)$ of
  $\Fun(\Cat,\ChainComplexes)$ is an additive (in fact, triangulated)
  category~\cite[Theorem 1.1.2.14 and Remark
  1.1.2.15]{Lurie:HigherAlgebra}. Using this, the hypercohomology of a
  functor $F$ can also be defined as
  $H^n(F)=\Hom_{h\Fun(\Cat,\Complexes)}(F,I_\Ring[n])$.
\end{remark}

\subsection{The fat homotopy colimit}\label{sec:fat-homotopy-colimits}
In this section, we define a version of the homotopy colimit that
makes sense for maps of semi-simplicial sets. We also give a modified
definition of a natural transformation. These tools are used in
Section~\ref{sec:comp-morse}.

\begin{definition} \label{def:fat-hocolim}
  Let $\Cat$ be a semi-simplicial set and
  $F\co\Cat\to \ChainComplexes$ a map of semi-simplicial sets. The
  \emph{fat homotopy colimit} of $F$ is the chain complex
  \begin{equation}\label{eq:fat-chain-hocolim-def}
    \fathocolim F = \bigoplus_{n\geq 0}
    \bigoplus_{\sigma\in \Cat_n}  I_*^{\otimes n}\otimes F(\sigma_0)/\sim,
  \end{equation}
  where the equivalence relation $\sim$ is generated by
  \begin{align*}
    (\sigma;t_n\otimes\dots\otimes t_1\otimes x)&\sim(d_i\sigma;t_n\otimes\dots\otimes \widehat{t_i}\otimes\dots\otimes t_1\otimes x)&\text{if }t_i=\{1\}\\
    (\sigma;t_n\otimes\dots\otimes t_1\otimes
    x)&\sim(\sigma_{i,\dots,n};t_n\otimes\cdots\otimes t_{i+1}\otimes
    \maxmap{F}(\sigma_{0,\dots,i})(x))\\
    &&\mathllap{\text{if }t_1=\dots=t_{i-1}=\{0,1\}\text{ and }t_i=\{0\}.}
  \end{align*}
  The differential is induced by the tensor product differential in
  Formula~\eqref{eq:fat-chain-hocolim-def}, namely:
  \begin{align*}
  \bdy(\sigma;t_n\otimes\dots\otimes t_1\otimes
  x)&=\sum_{i=1}^n(-1)^{\gr(t_{i+1})+\dots+\gr(t_n)}(\sigma;t_n\otimes\dots\otimes\bdy(t_i)\otimes\dots\otimes
  t_1\otimes x)\\
  &\quad\qquad{}+ (-1)^{\gr(t_1)+\dots+\gr(t_{n})}(\sigma;t_n\otimes\dots\otimes
  t_1\otimes \bdy(x)).
  \end{align*}
\end{definition}

\begin{lemma}
  Formula~\eqref{eq:hocolim-product} makes $H^*(\fathocolim F)$ into a module
  over $H^*(\FatRealize{\Cat})$.
\end{lemma}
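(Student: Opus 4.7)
The plan is to repeat the arguments establishing the module structure in Section~\ref{sec:homotopy-colimits}, observing that each step carries over verbatim to the fat setting. First, I would note that Formula~\eqref{eq:hocolim-product} still gives a well-defined product $C^m(\fathocolim F)\otimes C^n(\fathocolim I_\Ring)\to C^{m+n}(\fathocolim F)$, where $I_\Ring\from\Cat\to\ChainComplexes$ is the map of semi-simplicial sets sending each $n$-simplex $\sigma$ to the $n$-simplex of $\ChainComplexes$ whose chain complexes are all copies of $\Ring$, whose $f_{j,j+1}$ are the identity, and whose higher $f_J$ vanish. Well-definedness on equivalence classes in $\fathocolim F$ is automatic because the two relations in Definition~\ref{def:fat-hocolim} form a subset of the four relations in Definition~\ref{def:hocolim}, where the formula was already shown to be well-defined.

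Next, I would verify associativity $(\gamma\cdot\beta)\cdot\alpha=\gamma\cdot(\beta\cdot\alpha)$ and the Leibniz rule $\diff(\beta\cdot\alpha)=\diff(\beta)\cdot\alpha+(-1)^n\beta\cdot\diff(\alpha)$. The proofs of these identities in Section~\ref{sec:homotopy-colimits} use only the face-map formula for the differential given by Formula~\eqref{eq:simple-hocolim-diff}, together with the identity $\maxmap{I}_\Ring(\sigma)=0$ for $\sigma$ an $n$-simplex with $n>1$ and $\maxmap{I}_\Ring(\sigma)=\Id$ for $\sigma$ a $1$-simplex. All of these statements hold in the fat setting for generators $[\sigma]\otimes x$ with $\sigma\in\Cat_n$ arbitrary. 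Hence the dual of Formula~\eqref{eq:hocolim-product} makes $H^*(\fathocolim I_\Ring)$ into a graded ring and $H^*(\fathocolim F)$ into a graded left module over it.

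Finally, I would identify $H^*(\fathocolim I_\Ring)$ with $H^*(\FatRealize{\Cat})$ as graded rings, paralleling Lemma~\ref{lem:hocolim-geom-realization}. Since $\Cat$ is a semi-simplicial set, $\fathocolim I_\Ring$ is freely generated by $[\sigma]\otimes 1$ as $\sigma$ ranges over simplices of $\Cat$, with no degenerate generators to identify; and the derivation of Formula~\eqref{eq:simple-hocolim-diff} from Formula~\eqref{eq:fat-chain-hocolim-def} goes through unchanged. So the map $[\sigma]\otimes 1\mapsto (-1)^{\binom{n+1}{2}}|\sigma|$ is an isomorphism of chain complexes from $\fathocolim I_\Ring$ to the cellular chain complex of $\FatRealize{\Cat}$. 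The computation comparing the product on $\fathocolim I_\Ring$ with the cup product on $C^*(\FatRealize{\Cat})$ at the end of the proof of Lemma~\ref{lem:hocolim-geom-realization} then applies verbatim.

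The main obstacle is really just sanity-checking that no step in Section~\ref{sec:homotopy-colimits} covertly uses the degeneracy relations in Definition~\ref{def:hocolim}; a direct inspection, combined with the observation that $I_\Ring$ extends naturally to a map of semi-simplicial sets as described above, confirms that it does not. The rest is bookkeeping.
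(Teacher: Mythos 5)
Your proposal is correct and matches the paper's approach, which simply remarks that the proof is identical to the argument following Equation~\eqref{eq:hocolim-product} for the ordinary homotopy colimit. You have spelled out the brief sanity check — that the product formula, associativity, Leibniz rule, and the identification $\fathocolim I_\Ring\cong C^{\cell}_*(\FatRealize{\Cat})$ depend only on the face relations and on $\maxmap{I}_\Ring$ — which is exactly what the paper leaves implicit.
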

\begin{proof}
  The proof is exactly the same as for the ordinary homotopy colimit (see
  Section~\ref{sec:homotopy-colimits}, following
  Equation~(\ref{eq:hocolim-product})).
\end{proof}

\begin{proposition}\label{prop:fat-hocolim-is-hocolim}
  If $\Cat$ is a simplicial set and $F\co\Cat\to\ChainComplexes$ is a
  map of simplicial sets then there is a quasi-isomorphism
  \[
    \hocolim F\simeq \fathocolim F.
  \]
  Further, the induced map on cohomology intertwines the actions of
  $H^*(\Realize{\Cat})$ and $H^*(\FatRealize{\Cat})$.
\end{proposition}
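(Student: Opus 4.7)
The plan is to construct a natural quotient map $q\co \fathocolim F \to \hocolim F$, prove it is a quasi-isomorphism via the skeletal filtration of $\Cat$, and then observe that it is compatible with the product formula from Equation~\eqref{eq:hocolim-product} on the chain level. Comparing Definitions~\ref{def:hocolim} and~\ref{def:fat-hocolim}, the two relations generating $\sim$ in $\fathocolim F$ (the face-collapse when some $t_i=\{1\}$, and the reduction via $\maxmap{F}$ to $\sigma_{i,\dots,n}$ when $t_1=\dots=t_{i-1}=\{0,1\}$ and $t_i=\{0\}$) coincide with two of the four relations generating $\sim$ in $\hocolim F$, while the two additional degeneracy relations involving $\pi$ and $m$ appear only in $\hocolim F$. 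Since both complexes are quotients of the same free module $\bigoplus_{n\geq 0}\bigoplus_{\sigma\in\Cat_n} I_*^{\otimes n}\otimes F(\sigma_0)$ equipped with the same tensor-product differential, the identity on generators descends to a chain map $q$.

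To prove $q$ is a quasi-isomorphism, I would filter both sides by the skeletal filtration on $\Cat$: set $\Filt_m\fathocolim F=\fathocolim(F|_{\Cat^{(m)}})$ and $\Filt_m\hocolim F=\hocolim(F|_{\Cat^{(m)}})$, where $\Cat^{(m)}\subset\Cat$ is the sub-simplicial set generated by simplices of dimension at most $m$. These filtrations are exhaustive and $q$ is filtration-preserving. Arguing by induction on $m$, the base case $m=-1$ is trivial, and for the inductive step I would use the pushout of simplicial sets
\[
  \Cat^{(m)}=\Cat^{(m-1)}\cup_{\coprod\bdy\Delta^m}\textstyle\coprod\Delta^m,
\]
indexed by the non-degenerate $m$-simplices of $\Cat$. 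Since both $\hocolim$ and $\fathocolim$ carry pushouts along monomorphisms of simplicial (respectively semi-simplicial) sets to homotopy pushouts of chain complexes, the five-lemma reduces the inductive step to the case $\Cat=\Delta^m$, where one checks directly that both $\hocolim F$ and $\fathocolim F$ are quasi-isomorphic to $F(0)$ (using, on the fat side, that $\FatRealize{\Delta^m}$ is contractible—the cited consequence of Segal's appendix invoked in Lemma~\ref{lem:semi-weak-equiv}). The main obstacle in carrying this plan out is verifying the ``hocolim preserves these pushouts'' step, which at the level of associated gradeds amounts to showing that the contribution of \emph{degenerate} $m$-simplices to $\fathocolim F$ forms an acyclic subcomplex; this is the chain-complex analog of the classical Moore/Eilenberg–Zilber theorem, and is essentially the content of Segal's result repackaged with local coefficients $F$.

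For the module-structure compatibility, the key observation is that Formula~\eqref{eq:hocolim-product},
\[
  (\beta\cdot\alpha)([\sigma]\otimes x)=\alpha([\sigma_{0,\dots,k-n}]\otimes x)\beta([\sigma_{k-n,\dots,k}]\otimes 1),
\]
is defined verbatim in both settings because it only refers to generators $[\sigma]\otimes x$ with $\sigma$ non-degenerate, which exist in both complexes. Hence the dual $q^*$ strictly intertwines the chain-level products of $C^*(\hocolim I_\Ring)$ and $C^*(\fathocolim I_\Ring)$. Combining this with Lemma~\ref{lem:hocolim-geom-realization} and its fat analog (which is the special case $F=I_\Ring$ of the quasi-isomorphism just established, together with the evident identification of $\fathocolim I_\Ring$ with the cellular chain complex of $\FatRealize{\Cat}$), the induced map on cohomology intertwines the actions of $H^*(\Realize{\Cat})$ and $H^*(\FatRealize{\Cat})$, as required.
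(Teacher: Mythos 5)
Your proposal correctly identifies the quotient map $q=\Pi$ and correctly identifies the crux of the matter: that the contribution of degenerate simplices must form an acyclic subcomplex. However, at that crux you stop short of a proof. Appealing to Segal's appendix (that $\FatRealize{j^*X}\to\Realize{X}$ is a weak equivalence) or to ``the chain-complex analog of Moore/Eilenberg--Zilber repackaged with local coefficients $F$'' is an analogy, not an argument: Segal's result concerns simplicial spaces, and the Moore/normalization theorem concerns simplicial abelian groups, whereas here one needs acyclicity of the degenerate-simplex subcomplex of the $E^1$-page $\mathbb{E}^1_{p,q}=\bigoplus_{\sigma\in\Cat_p}H_q(F(\sigma_0))$, a complex whose $d^1$ involves the nontrivial maps $\maxmap{F}(\sigma_{01})_*$ on the $d_0$-face. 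That statement has to be \emph{proved}, and the paper does so directly: it first checks that degenerate simplices span a $d^1$-subcomplex (the two potentially dangerous faces $d_j$ and $d_{j+1}$ of $s_j\tau$ cancel, including the $j=0$ case since $\maxmap{F}(s_0\tau_{01})=\Id$), and then establishes acyclicity by a double filtration: first by $J(\sigma)=\min\{j:\sigma\in\im s_j\}$, then on the associated graded by $|\sigma|-K(\sigma)$ with $K(\sigma)=\max\{k:\sigma\in\im s_j^k\}$ (for $j=J(\sigma)$). On the final associated graded, the differential pairs $s_j^k\tau$ with $s_j^{k-1}\tau$ for $k$ odd, which is visibly acyclic. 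You should supply this (or an equivalent) explicit argument rather than cite a theorem from a different category.

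A secondary issue is the scaffolding. The paper filters \emph{both} complexes by the dimension $n$ of the simplex appearing in the generator $[\sigma]\otimes x$, which is simpler and goes straight to the spectral sequence comparison; the key lemma above finishes it by showing $\Pi_*$ is an isomorphism on $E^2$. Your skeletal filtration $\Filt_m\fathocolim F=\fathocolim(F|_{\Cat^{(m)}})$ is genuinely different on the fat side (since $\Cat^{(m)}$ contains degenerate simplices of dimension $>m$, which are \emph{not} quotiented away in $\fathocolim$), and your reduction to $\Cat=\Delta^m$ via pushouts requires an additional, nontrivial verification that $\fathocolim$ of semi-simplicial sets takes the pushout $\Cat^{(m)}=\Cat^{(m-1)}\cup_{\coprod\bdy\Delta^m}\coprod\Delta^m$ to a homotopy pushout of chain complexes (this is not an issue of $\Cat^{(m)}$ merely being built cellularly --- it involves the degenerate higher simplices of $\Delta^m$ in the coefficients). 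Even granting that, the terminal step ``both colimits over $\Delta^m$ are quasi-isomorphic to $F(0)$'' on the fat side again reduces to exactly the acyclicity lemma you are trying to establish, so the detour does not buy you anything.

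The module-compatibility part of your argument is fine and matches the paper: the product formula refers only to nondegenerate generators, so $q^*$ intertwines it strictly, not just up to homotopy.
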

\begin{proof}
  First, note that there is an obvious quotient map
  \[
    \Pi\co \fathocolim F\to \hocolim F
  \]
  which quotients by the additional relations in
  Definition~\ref{def:hocolim} corresponding to the degeneracy
  maps.
  The map $\Pi$ clearly intertwines the actions of
  $H^*(\Realize{\Cat})$ and $H^*(\FatRealize{\Cat})$.

  Recall that there is a filtration $\Filt$ on $\hocolim F$
  where $\Filt_m\hocolim F$ comes from simplices $\sigma\in\Cat_n$ for
  $n\leq m$. There is a corresponding filtration
  $\Filt$ on $\fathocolim F$, defined in the same way. The quotient
  map $\Pi$ respects these filtrations.

  Consider the associated spectral sequences. The $E_1$-page
  for $\hocolim F$ is
  \[
    E^1_{p,q}=\bigoplus_{\sigma\in\Cat_p\text{ nondegenerate}}H_q(F(\sigma_0))
  \]
  while the $E_1$-page for $\fathocolim F$ is
  \[
    \mathbb{E}^1_{p,q}=\bigoplus_{\sigma\in\Cat_p}H_q(F(\sigma_0)).
  \]
  Generators for $\mathbb{E}^1_{p,q}$ have the form
  $(\sigma,\{0,1\}^{\otimes p}\otimes x)$ with $\sigma\in \Cat_p$ a
  $p$-simplex and $x\in H_{q}(F(\sigma_0))$; as before, we denote
  it $[\sigma]\otimes x$.

  We claim that the degenerate simplices $\sigma$ span an acyclic
  subcomplex of $\mathbb{E}^1$.  It then follows that the map
  $\Pi_*\co \mathbb{E}^2_{p,q}\to E^2_{p,q}$ is an isomorphism. Thus,
  the original map $\Pi$ is a quasi-isomorphism, as desired.

  To see that the degenerate simplices form a subcomplex, recall that
  for $\sigma\in\Cat_p$ and $x\in H_{q}(F(\sigma_0))$, the $d^1$
  differential is given by
  \begin{align*}
    d^1_{p,q}([\sigma]\otimes x)
    &=\left[\sum_{i=1}^p(-1)^{p-i}[d_i\sigma]\otimes x
    \right]
    +(-1)^p[d_0\sigma]\otimes \maxmap{F}(\sigma_{01})_*(x)\\
&=\sum_{i=0}^p (-1)^{p-i}[d_i\sigma]\otimes x_i,
  \end{align*}
  where $x_i=x$ if $i>0$ and
  $x_0=\maxmap{F}(\sigma_{01})_*(x)$. (Here,
  $\maxmap{F}(\sigma_{01})_*$ is the map on homology induced by the
  chain map $\maxmap{F}(\sigma_{01})\co 
  F(\sigma_0)\to F(\sigma_1)$.)  If $\sigma=s_j(\tau)$, $j>0$, then each
  of the simplices in this sum is degenerate except for
  $d_j\sigma=\tau$ and $d_{j+1}\sigma=\tau$; these two terms
  cancel. If $\sigma=s_0(\tau)$ then $\maxmap{F}(\sigma_{01})$ is the
  identity map so again $d_0\sigma$ and $d_1\sigma$
  cancel. Thus, the degenerate
  simplices span a subcomplex. 

  All that remains is to prove that this
  subcomplex is acyclic. Associate two integers $(J,K)$ to each 
  degenerate simplex as follows:
  \begin{align*}
    J(\sigma)&=\min\{j\mid \sigma\in\im(s_j)\}&\text{for all degenerate $\sigma$,}\\
    K(\sigma)&=\max\{k\mid \sigma\in\im(s_j^k)\}&\text{for all degenerate $\sigma$ with $J(\sigma)=j$.}
  \end{align*}
  Note that $J$ defines a filtration on the subcomplex of degenerate simplices, since for
  $\sigma=s_j\tau$, the $d^1$ differential takes the form
  \begin{align*}
    d^1_{p,q}([s_j\tau]\otimes x)
    &=\sum_{i=0}^p (-1)^{p-i}[d_is_j\tau]\otimes x_i\\
    &=\sum_{i=0}^{j-1}(-1)^{p-i}[s_{j-1}d_i\tau]\otimes x_i+\sum_{i=j+2}^{p}(-1)^{p-i}[s_{j}d_{i-1}\tau]\otimes x.
  \end{align*}
  The terms inside the first summation have smaller $J$, while the
  terms inside the second summation have the same or smaller $J$.

  Consider the associated graded complex in $J$-grading $j$. We claim
  that for all $j$, the associated graded complex itself is
  acyclic, which implies that the original complex is acyclic, as
  well. We use $K$, as follows.
  Fix $\sigma$ with $J(\sigma)=j$ and $K(\sigma)=k$, so
  $\sigma=s^k_j\tau$ for some $\tau$. The differential in the associated graded
  complex is given by
  \begin{align*}
    d^1_{p,q}([s^k_j\tau]\otimes x)
    &=\sum_{i=j+2}^{p}(-1)^{p-i}[d_is^k_j\tau]\otimes x\\
    &=\sum_{i=j+2}^{j+k}(-1)^{p-i}[s_j^{k-1}\tau]\otimes x +\sum_{i=j+k+1}^p(-1)^{p-i}[s_j^{k}d_{i-k}\tau]\otimes x
  \end{align*}
  (with the caveat that some of the terms inside the second summation
  sign might have smaller $J$-grading, and hence not actually
  appear).  For the first term, the $K$-grading drops by exactly one,
  while for the terms inside the second summation, the value of $K$
  remains same or increases. Therefore, $|\cdot|-K$ is a filtration on
  this complex, where $|\sigma|=p$ is the dimension of the simplex.
  
  Now consider the associated graded complex of this filtration. The
  differential is given by
  \begin{align*}
    d^1_{p,q}([s^k_j\tau]\otimes x)
    &=\sum_{i=j+2}^{j+k}(-1)^{p-i}[s_j^{k-1}\tau]\otimes x\\
    &=\begin{cases}
      (-1)^{p-j}[s_j^{k-1}\tau]\otimes x&\text{if $k$ is even,}\\
      0&\text{otherwise.}
      \end{cases}
  \end{align*}
  That is, the differential pairs the terms $[s^k_j\tau]\otimes x$ and
  $[s^{k-1}_j\tau]\otimes x$ ($k$ even). (Note that $s^k_j\tau=s^k_j\tau'$ if and only
  if $s^{k-1}_j\tau=s^{k-1}_j\tau'$, by applying $d_j$ or $s_j$ to
  both sides.) Therefore, the complex is acyclic.
\end{proof}

\begin{definition}\label{def:summed-nat-trans}
  Let $\Cat$ be a semi-simplicial set and let
  $F_0,F_1\co\Cat\to\ChainComplexes$ be maps of semi-simplicial
  sets. A \emph{summed natural transformation} from $F_0$ to $F_1$
  consists of a map
  \[
    G(\sigma)\co F_0(\sigma_0)\to F_1(\sigma_n)
  \]
  for each $n$-simplex $\sigma\in\Cat_n$, satisfying the structure
  equation
  \begin{equation}\label{eq:summed-hocolim-map}
    \begin{split}
      (-1)^n\bdy\circ G(\sigma)-G(\sigma)\circ\bdy
      =&\sum_{1\leq\ell\leq n-1}
      (-1)^{\ell-1}G(\sigma_{0,\dots,\hat{\ell},\dots,n})-\sum_{1\leq \ell\leq n}
      G(\sigma_{\ell,\dots,n})\circ \maxmap{F}_0(\sigma_{0,\dots,\ell})\\
      &+\sum_{0\leq\ell\leq
          n-1}(-1)^{\ell}\maxmap{F}_1(\sigma_{\ell,\dots,n})\circ G(\sigma_{0,\dots,\ell}).
    \end{split}
  \end{equation}
\end{definition}

Given a natural transformation $G$ as in
Section~\ref{sec:homotopy-colimits},
$\sum_{i=0}^n (-1)^i\ol{G}(\sigma;i)$ defines a summed natural
transformation (for non-degenerate $n$-simplices $\sigma$); compare
Formula~\eqref{eq:nat-transform-main-old}.

\begin{proposition}\label{prop:summed-nat-chain-map}
  A summed natural transformation $G$ from $F_0$ to $F_1$ induces a
  chain map
  \[
    \fathocolim G\co \fathocolim F_0\to\fathocolim F_1.
  \]
  The induced map on cohomology respects the
  $H^*(\FatRealize{\Cat})$-module structure.  Further, if for each
  $0$-simplex $\sigma\in\Cat$ the map
  $G(\sigma)\co F_0(\sigma)\to F_1(\sigma)$ is a quasi-isomorphism
  then $\fathocolim G$ is a quasi-isomorphism.
\end{proposition}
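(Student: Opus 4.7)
The plan is to define $\fathocolim G$ by an explicit formula mirroring Lemma~\ref{lem:hocolim-map}, and then to leverage the spectral sequence argument used in the proof of Proposition~\ref{prop:fat-hocolim-is-hocolim} for the quasi-isomorphism assertion. Concretely, I set
\[
  (\fathocolim G)([\sigma]\otimes x) \;=\; \sum_{m=0}^n (-1)^m\,[\sigma_{m,\dots,n}]\otimes G(\sigma_{0,\dots,m})(x)
\]
for $\sigma\in\Cat_n$ and $x\in F_0(\sigma_0)$. This formula arises from Equation~\eqref{eq:hocolim-map} under the identification $G(\sigma)=\sum_{i=0}^n(-1)^i\,\ol{G}(\sigma;i)$ flagged in the paragraph following Definition~\ref{def:summed-nat-trans}. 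Since a semi-simplicial set has no degeneracies, the only compatibilities to verify are with the face relations of Definition~\ref{def:fat-hocolim} (immediate) and the chain-map identity $\bdy\circ \fathocolim G=\fathocolim G\circ\bdy$.

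The chain-map identity reduces to the bookkeeping already carried out in Lemma~\ref{lem:hocolim-map}. Organizing the terms of $(\bdy\circ\fathocolim G - \fathocolim G\circ\bdy)([\sigma]\otimes x)$ by the shape of their simplex factor, the terms of the form $[\sigma_{m,\dots,\hat{\ell},\dots,n}]\otimes(\cdot)$ cancel pairwise (each occurs twice with opposite signs, corresponding to the two orders in which one can both restrict to a subsimplex and drop a vertex), while the terms of the form $[\sigma_{m,\dots,n}]\otimes(\cdot)$ collect to give the difference of the two sides of Equation~\eqref{eq:summed-hocolim-map} applied to $\sigma_{0,\dots,m}$, and so vanish. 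The compatibility of the induced cohomology map with the $H^*(\FatRealize{\Cat})$-module structure follows verbatim from the last paragraph of the proof of Lemma~\ref{lem:hocolim-map}: the product formula~\eqref{eq:hocolim-product} only sees the topmost subsimplex of $[\sigma]$ while $\fathocolim G$ only modifies the $F$-factor, so the two operations commute already at the cochain level.

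For the quasi-isomorphism claim, equip both $\fathocolim F_0$ and $\fathocolim F_1$ with the filtration $\Filt_p$ generated by $[\tau]\otimes y$ for $\tau\in\Cat$ of dimension at most $p$, as in the proof of Proposition~\ref{prop:fat-hocolim-is-hocolim}. The map $\fathocolim G$ respects these filtrations, and all terms in its defining sum with $m>0$ lie in strictly lower filtration; hence on the associated graded it acts by $[\sigma]\otimes x\mapsto [\sigma]\otimes G(\sigma_0)(x)$. Passing to vertical homology, the induced map on the $E^1$-page of the spectral sequence is $\bigoplus_{\sigma\in\Cat_p} G(\sigma_0)_*$, an isomorphism by hypothesis. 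Because the complexes in $\ChainComplexes$ are bounded below, for each fixed total degree only finitely many filtration levels contribute, so the spectral sequences converge strongly and the standard comparison theorem shows that $\fathocolim G$ induces an isomorphism on cohomology.

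The main pitfall will be the sign bookkeeping in the chain-map verification: reconciling the signs produced by iterating the differential~\eqref{eq:simple-hocolim-diff} against the signs in Equation~\eqref{eq:summed-hocolim-map} requires care, and in particular the $(-1)^m$ factor in the definition of $\fathocolim G$ (which is forced by the identification $G=\sum_i(-1)^i\ol{G}$) is easy to get wrong. The spectral sequence convergence is otherwise routine given the bounded-below hypothesis on $\ChainComplexes$.
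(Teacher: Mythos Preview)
Your proposal is correct and follows essentially the same approach as the paper: the same explicit formula for $\fathocolim G$, the same spectral sequence comparison for the quasi-isomorphism claim, and the same deferral of the module-structure check. The only cosmetic difference is that the paper writes out the chain-map verification directly (a one-page expansion of $\bdy\Gamma([\sigma]\otimes x)$) rather than appealing to the bookkeeping in Lemma~\ref{lem:hocolim-map}, but the underlying computation is identical.
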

\begin{proof}
  The proof is similar to that of Lemma~\ref{lem:hocolim-map}. As in
  Section~\ref{sec:homotopy-colimits}, write
  $(\sigma;\{0,1\}\otimes\dots\otimes\{0,1\}\otimes x)$ as
  $[\sigma]\otimes x$. For $\epsilon\in\{0,1\}$, let
  \[
    \bdy_{i,\epsilon}[\sigma]=\bigl(\sigma;\overbrace{\{0,1\}\otimes\dots\otimes\{0,1\}}^{n-i}\otimes\{\epsilon\}\otimes\overbrace{\{0,1\}\otimes\dots\otimes\{0,1\}}^{i-1}\bigr)
  \]
  so 
  \[
    \bdy([\sigma]\otimes x)=(-1)^n[\sigma]\otimes(\bdy x)+\sum_{i=1}^{n}(-1)^{n-i}[(\bdy_{i,1}[\sigma])\otimes x - (\bdy_{i,0}[\sigma])\otimes x].
  \]
  
  We will write $\Gamma$ for the map $\fathocolim
  G$ we are to construct. Let
  \[
    \Gamma([\sigma]\otimes x)
    =\sum_{m=0}^n(-1)^m\Big([\sigma_{m,\dots,n}]\otimes G(\sigma_{0,\dots,m})(x)\Big).
  \]
  Then
  \begin{align*}
    &\phantom{==}\bdy_{F_1(\sigma_n)} \Gamma([\sigma]\otimes x)\\
    &=\sum_{m=0}^n(-1)^m\Big(\bigl(\bdy[\sigma_{m,\dots,n}]\bigr)\otimes G(\sigma_{0,\dots,m})(x)
      +(-1)^{n-m}[\sigma_{m,\dots,n}]\otimes\bdy\bigl(G(\sigma_{0,\dots,m})(x)\bigr)\Big)\\
    &=\sum_{m=0}^n\Big(\sum_{i=1}^{n-m}\bigl((-1)^{n-i}\bdy_{i,1}[\sigma_{m,\dots,n}]+(-1)^{n-i+1}\bdy_{i,0}[\sigma_{m,\dots,n}]\bigr)\otimes G(\sigma_{0,\dots,m})(x)\\    
      &\qquad+(-1)^{n+m}[\sigma_{m,\dots,n}]\otimes
        \big[\bigl(G(\sigma_{0,\dots,m})(\bdy x)\bigr)
        +\sum_{1\leq \ell\leq m-1}(-1)^{\ell+1}G(\sigma_{0,\dots,\hat{\ell},\dots,m})(x)\\
        &\qquad\qquad-\sum_{1\leq \ell\leq m}G(\sigma_{\ell,\dots,m})(\maxmap{F}_0(\sigma_{0,\dots,\ell})(x))
        +\sum_{0\leq \ell\leq m-1}(-1)^\ell \maxmap{F}_1(\sigma_{\ell,\dots,m})(G(\sigma_{0,\dots,\ell})(x))
          \big]\Big)\\
    &=\sum_{m=0}^n\Big(\sum_{i=1}^{n-m}\big[(-1)^{n-i}[(d_{m+i}\sigma)_{m,\dots,n-1}]\otimes
      G(\sigma_{0,\dots,m})(x)\\
    &\qquad +(-1)^{n-i+1}[\sigma_{m+i,\dots,n}]\otimes \maxmap{F}_1(\sigma_{m,\dots,m+i})(G(\sigma_{0,\dots,m})(x))\big]\\    
      &\qquad+(-1)^{n+m}[\sigma_{m,\dots,n}]\otimes
        \big[\bigl(G(\sigma_{0,\dots,m})(\bdy x)\bigr)
        +\sum_{1\leq \ell\leq m-1}(-1)^{\ell+1}G(\sigma_{0,\dots,\hat{\ell},\dots,m})(x)\\
        &\qquad\qquad-\sum_{1\leq \ell\leq m}G(\sigma_{\ell,\dots,m})(\maxmap{F}_0(\sigma_{0,\dots,\ell})(x))
        +\sum_{0\leq \ell\leq m-1}(-1)^\ell \maxmap{F}_1(\sigma_{\ell,\dots,m})(G(\sigma_{0,\dots,\ell})(x))
          \big]\Big)\\
    &=\sum_{m=0}^n\Big(\sum_{i=1}^{n-m}(-1)^{n-i}[(d_{m+i}\sigma)_{m,\dots,n-1}]\otimes
      G(\sigma_{0,\dots,m})(x)\\
      &\qquad+(-1)^{n+m}[\sigma_{m,\dots,n}]\otimes
        \big[\bigl(G(\sigma_{0,\dots,m})(\bdy x)\bigr)
        +\sum_{1\leq \ell\leq m-1}(-1)^{\ell+1}G(\sigma_{0,\dots,\hat{\ell},\dots,m})(x)\\
        &\qquad\qquad-\sum_{1\leq \ell\leq
          m}G(\sigma_{\ell,\dots,m})(\maxmap{F}_0(\sigma_{0,\dots,\ell})(x))\big]\Big)\\
    &=\Gamma(\bdy([\sigma]\otimes x)),
  \end{align*}
  as desired.

  The statement that the induced map on cohomology respects the module
  structure is left to the reader.

  Finally, the last statement follows from the spectral sequence used
  in the proof of Proposition~\ref{prop:fat-hocolim-is-hocolim} and
  spectral sequence comparison.
\end{proof}


\section{The construction}\label{sec:construction}
\subsection{Hypotheses and statement of result}\label{subsec:hypotheses}
Fix a Lie group $G$ acting on a symplectic manifold
$(M,\omega)$ on the left, preserving Lagrangians $L_0$ and $L_1$ setwise. 
\begin{hypothesis}\label{hyp:Floer-defined}
  We assume:
  \begin{enumerate}[label=(J-\arabic*)]
  \item\label{item:J-1}
    For any loop of paths
    \[
      v\co \bigl([0,1]\times S^1,\{1\}\times S^1,\{0\}\times S^1\bigr)\to (M,L_0,L_1)
    \]
    with $v|_{[0,1]\times\{1\}}$ a constant path,
    the $\omega$-area of $v$ and the Maslov index of $v$ both vanish.
    (Compare~\cite[Theorem 1.0.1]{WW12:compose-correspond}.)
  \item\label{item:J-2} The manifold $M$ is either compact or
    $I$-convex at infinity~\cite{Gromov85} with respect to an
    $\omega$-compatible almost complex structure $I$ defined outside
    some $G$-invariant compact set and such that $I$ is
    $G$-invariant. (See also~\cite{SeidelSmith10:localization}.)
        Further, in the latter case,
    the Lagrangians $L_0$ and $L_1$ are either compact or else $M$ has
    finite type and $L_0$ and $L_1$ are conical at infinity and
    disjoint outside the $G$-invariant compact set from the definition of $I$ (see, e.g.,~\cite[Section
    5]{KhS02:BraidGpAction} or~\cite[Section 2.1]{Hendricks:symplecto}
    and the references there).
  \item\label{item:J3} Every $G$-twisted loop of paths
    (Definition~\ref{def:G-twisted-loop}) has Maslov index $0$.
  \end{enumerate}
\end{hypothesis}
That is, we make enough assumptions that the Lagrangian intersection
Floer complexes of a generic perturbation of $L_0$ and $L_1$ are
defined with respect to a generic almost complex structure without
resorting to Novikov coefficients or virtual
techniques. Hypothesis~\ref{item:J3} guarantees that we have $\ZZ$
gradings on our equivariant complexes defined below. If $G$ is compact
then Hypothesis~\ref{item:J3} is automatically satisfied; see
Lemma~\ref{lem:mu-on-pi2-g}.

In what follows, either:
\begin{itemize}
\item Fix a choice of coherent $G$-orientations for the moduli spaces,
  as discussed in Section~\ref{sec:G-or}. Or
\item Take Floer complexes with coefficients in a field of characteristic $2$.
\end{itemize}

\subsection{Spaces of Hamiltonian perturbations and almost complex
  structures}\label{sec:J-space}

By a \emph{cylindrical almost complex structure} on $M$ we mean a
smooth one-parameter family of smooth almost complex structures
$J=J(t)$, $t\in[0,1]$ on $M$ each of which is compatible with
$\omega$. In the non-compact case, we require that each $J(t)$ agree
with the almost complex structure $I$ in the definition of convexity
outside the fixed $G$-invariant compact set.  Similarly, by a \emph{cylindrical
  Hamiltonian} we mean a smooth function $H\co M\to\RR$, which in the
non-compact case is supported in the $G$-invariant compact set.  Let
$\JSpace_\cyl$ denote the space of pairs $(J,H)$ of a cylindrical
almost complex structure and a cylindrical Hamiltonian (with the
product of $C^\infty$ topologies). We call points
$(J,H)\in\JSpace_{\cyl}$ \emph{cylindrical pairs}.

The left action of
$G$ on $M$ induces the following left action of $G$ on
$\JSpace_{\cyl}$: for $g\in G$ and $(J,H)\in\JSpace_{\cyl}$,
\[
\action{g}{(J,H)}\coloneqq (g_*\circ J\circ g_*^{-1},H\circ g^{-1}).
\]

Given a cylindrical Hamiltonian $H$, let $\Phi_H$ denote the time-one flow of $H$ and let
$L_0^H=\Phi_H(L_0)$ denote the image of $L_0$ under $\Phi_H$.

For $n\geq 1$, by an \emph{$n$-interval, $1$-story marked line} we mean a sequence $p_1\leq
q_1\leq p_2\leq q_2\leq\cdots\leq p_n\leq q_n$ of points in $\RR$,
modulo overall translation. Let $\MarkLine[n]$ denote
the space of $1$-story $n$-interval marked lines, topologized as
$\RR^{2n}/\RR$. 
The \emph{difference coordinates} $d_1,\dots,d_{2n-1}$
on a $1$-story marked line $(p_1,\dots,q_n)$ are $d_1=q_1-p_1$, $d_2=p_2-q_1$,
\dots, $d_{2n-1}=q_n-p_n$.  By a \emph{(multistory) $n$-interval marked
  line} $\Line$ we mean a finite sequence of $1$-story marked lines
$\Line_1,\dots,\Line_k$, where $\Line_i$ has $n_i$ intervals and
$n=n_1+\cdots+n_k$. We define difference coordinates on $\Line$, lying
in $[0,\infty]$, by setting $d_1=d_1(\Line_1)$, $d_2=d_2(\Line_1)$,
\dots, $d_{2n_1}=\infty$, $d_{2n_1+1}=d_1(\Line_2)$, \dots. Giving
$[0,\infty]$ the topology of a closed interval, these difference
coordinates induce a topology on $\oMarkLine[n]$, the space of
$n$-interval marked lines.

Define the marked lines with difference coordinates
$(d_1,\dots,d_{i-1},0,d_{i+1},\dots,d_{2n-1})$ in $\oMarkLine[n]$ and
$(d_1,\dots,d_{i-1}+d_{i+1},\dots,d_{2n-1})$ in $\oMarkLine[n-1]$ to
be equivalent, provided that at least one of $d_{i-1}$ and $d_{i+1}$
is finite. By $\oMarkLine$ we mean the set of equivalence classes of
marked lines, with the quotient topology.

We say that $\gluing(\Line)\in \MarkLine[n]$ is \emph{a gluing} of
$\Line\in\oMarkLine[n]$ if the difference coordinates for $\gluing(\Line)$ are
obtained from the difference coordinates for $\Line$ by replacing
the $\infty$s by positive real numbers. We call these new positive real
numbers \emph{gluing parameters}.

Given a $1$-story marked line $\Line=(p_1,q_1,\dots,p_n,q_n)\in\MarkLine[n]$, by a
\emph{$1$-story semicylindrical pair of shape $\Line$} we mean a
smooth, one-parameter family of cylindrical pairs $\JH\co \RR\to\JSpace_{\cyl}$ so
that $\wt{J}$ and $\wt{H}$ are constant (translation-invariant) on
each of the intervals $(-\infty,p_1]$, $[q_i,p_{i+1}]$, $i=1,\dots,n-1$,
and $[q_n,\infty)$. By a \emph{$1$-story
  semicylindrical pair} we mean a $1$-story semicylindrical pair of
some shape. Endow the set of $1$-story semicylindrical pairs with the
quotient topology of the $C^\infty$-topology on pairs $(\wt{J},\wt{H})$ of a fiberwise almost
complex structure and a smooth function on the $M$-bundle $[0,1]\times \RR\times M$, by the action of $\RR$
by translation. If $\Line$ consists of a single interval and that
interval has length $0$, then a semicylindrical pair of shape $\Line$
is a cylindrical pair.

More generally, given a marked line
$\Line=(\Line_1,\dots,\Line_k)\in\oMarkLine$, by a \emph{(multistory)
  semicylindrical pair of shape $\Line$} we mean a sequence
$(\wt{J}_1,\wt{H}_1),\dots,(\wt{J}_k,\wt{H}_k)$ of $1$-story
semicylindrical pairs of shapes $\Line_1,\dots,\Line_k$ so that the
value of $(\wt{J}_i,\wt{H}_i)$ near $+\infty$ agrees with the value of
$(\wt{J}_{i+1},\wt{H}_{i+1})$ near $-\infty$ for
$i=1,\dots,k-1$. Given a semicylindrical pair $\JH$ of shape $\Line$
and a gluing $\gluing(\Line)$ of $\Line$ there is an induced semicylindrical
pair $\gluing\JH$ of shape $\gluing(\Line)$. Define a subbasis for the topology on the space of semicylindrical pairs to consist of the sets $V_U$ where $U$
is an open set of $1$-story semicylindrical pairs and
\[
V_U = \{\JH\mid \exists \text{ gluing }\gluing\text{ such that }\gluing\JH\in U\}.
\]

Let $\JSpace$ denote the set of pairs $(\Line,\JH)$ where $\Line$
is a marked line and $\JH$ is a semicylindrical pair of
shape $\Line$. Give $\JSpace$ the product of the topology on the
space of semicylindrical pairs and the topology on $\oMarkLine$.

Given a semicylindrical pair
$\JH=\bigl((\wt{J}_1,\wt{H}_1),\dots,(\wt{J}_k,\wt{H}_k)\bigr)$ let
$\JH^{-\infty}=(J^{-\infty},H^{-\infty})$ denote the value of
$(\wt{J}_1,\wt{H}_1)$ for $t\ll0\in\RR$ and let
$\JH^{+\infty}=(J^{+\infty},H^{+\infty})$ denote the value of
$(\wt{J}_k,\wt{H}_k)$ for $t\gg0\in\RR$, so $\JH^{-\infty}$ and
$\JH^{+\infty}$ are cylindrical pairs.
There is a partially-defined composition $\circ$ on 
semicylindrical pairs: given semicylindrical pairs
$\JH=\bigl((\wt{J}_1,\wt{H}_1),\dots,(\wt{J}_k,\wt{H}_k)\bigr)$ and
$\JHp=\bigl((\wt{J}'_1,\wt{H}'_1),\dots,(\wt{J}'_{k'},\wt{H}'_{k'})\bigr)$ with
$\JH^{+\infty}=\JHp^{-\infty}$ we can form a new
semicylindrical pair $\JHp\circ \JH$ by concatenation of sequences,
\[
\JHp\circ \JH=\bigl((\wt{J}_1,\wt{H}_1),\dots,(\wt{J}_k,\wt{H}_k),(\wt{J}'_1,\wt{H}'_1),\dots,(\wt{J}'_{k'},\wt{H}'_{k'})\bigr).
\]

\begin{definition}
  We call a cylindrical pair $(J,H)$ \emph{generic} if:
  \begin{enumerate}[label=($G$-\arabic*)]
  \item $L_0^H$ is transverse to $L_1$ and
  \item for any $x,y\in L_0^H\cap L_1$ and $\phi\in\pi_2(x,y)$ a
    homotopy class of Whitney disks connecting $x$ to $y$ with Maslov
    index $\mu(\phi)\leq 1$ the moduli space $\cM(\phi;J,H)$ of
    $J$-holomorphic disks in the homotopy class $\phi$ is
    transversely cut out by the $\dbar$ equation.
  \end{enumerate}
  We call $(J,H)$ \emph{very generic} if $(J,H)$ is generic and the second
  condition also holds for homotopy classes $\phi$ with $\mu(\phi)=2$.
\end{definition}
(Below, we will show that one can always find very generic pairs. The
weaker condition of being generic is useful in some computations, such
as in~\cite[Section 5.2]{HLS:HEquivariant}.)

Given a $1$-story semicylindrical pair $\JH$ we have a family of Lagrangian submanifolds
$L_0^{H_t}$, parameterized by $t\in\RR$, agreeing with
$L_0^{H^{-\infty}}$ for $t\ll0$ and $L_0^{H^{+\infty}}$ for $t\gg
0$. Given $x\in L_0^{H^{-\infty}}\cap L_1$ and $y\in
L_0^{H^{+\infty}}\cap L_1$ we can consider the set of Whitney disks
with dynamic boundary on $L_0^{H_t}$ and $L_1$. We denote this set
$\pi_2(x,y)$. Given $\phi\in\pi_2(x,y)$ there is a moduli space
$\cM(\phi;\wt{J},\wt{H})$ of $\wt{J}$-holomorphic strips with dynamic
Lagrangian boundary conditions $L_0^{H_t}$ and $L_1$ in the homotopy
class $\phi$. 

More generally, given a semicylindrical pair
$\JH=\bigl((\wt{J}_1,\wt{H}_1),\dots,(\wt{J}_k,\wt{H}_k)\bigr)$ and points
$x_1\in L_0^{H^{-\infty}}\cap L_1$, $x_{k+1}\in L_0^{H^{+\infty}}\cap
L_1$ we can consider the set of homotopy classes of Whitney disks
$\pi_2(x_1,x_{k+1})$ with boundary specified by $\wt{H}$. The easiest
way to define this space is to choose a gluing $\gluing\JH$ of $\JH$ so that
$\gluing\JH$ is a $1$-story semicylindrical pair, and define
$\pi_2(x_1,x_{k+1})$ to be the set of homotopy classes of Whitney
disks in this gluing; there are canonical bijections between these sets for different choices of gluings. Given a sequence of points $x_i\in
L_0^{H_i^{-\infty}}\cap L_1$, $i=1,\dots,k$, $x_{k+1}\in L_0^{H_k^{+\infty}}\cap L_1$, and homotopy classes
$\phi_i\in\pi_2(x_i,x_{i+1})$, $i=1,\dots,k$, (pre)gluing the homotopy
classes $\phi_i$ gives a homotopy class
$\phi=\phi_1*\cdots*\phi_k\in\pi_2(x_1,x_{k+1})$.

Given $\phi\in\pi_2(x_1,x_{k+1})$ there is a corresponding moduli
space 
\[
\cM(\phi;\wt{J},\wt{H})=\coprod_{\phi_1*\cdots*\phi_k=\phi}\cM(\phi_1;\wt{J}_1,\wt{H}_1)\times\cdots\times\cM(\phi_k;\wt{J}_k,\wt{H}_k).
\]

The moduli spaces $\cM(\phi;\wt{J},\wt{H})$ have compactifications by
allowing disks at intermediate, cylindrical levels between
$(\wt{J}_i,\wt{H}_i)$ and $(\wt{J}_{i+1},\wt{H}_{i+1})$, as well as
below $(\wt{J}_1,\wt{H}_1)$ and above $(\wt{J}_k,\wt{H}_k)$. To make
this precise, given a semicylindrical pair
$\JH=((\wt{J}_1,\wt{H}_1),\dots,(\wt{J}_k,\wt{H}_k))$ we say that the
semicylindrical pair
\[
\JHp=((\wt{J}_1,\wt{H}_1),\dots,(\wt{J}_i,\wt{H}_i),(J,H),(\wt{J}_{i+1},\wt{H}_{i+1}),\dots,(\wt{J}_k,\wt{H}_k))
\]
is an \emph{elementary expansion} of $\JH$ if $(J,H)$ is a cylindrical
pair (a pair with shape a $1$-story marked line with a single interval with length
$0$). We call $\JHp$ an
\emph{expansion} of $\JH$ if $\JHp$ is obtained from $\JH$ by some
sequence of elementary
expansions.
(In other words, $\JHp$ is an expansion of $\JH$ if $\JH$ is obtained
from $\JHp$ by forgetting some $\RR$-invariant levels in $\JHp$.)
Note that if $\JHp$ is an expansion of $\JH$ and $x\in
L_0^{H^{-\infty}}\cap L_1$, $y\in L_0^{H^{+\infty}}\cap L_1$ then
there is an identification between the homotopy classes of Whitney
disks $\pi_2(x,y)$ connecting $x$ and $y$ with respect to $\JH$ and
$\JHp$. Define
\[
  \ocM(\phi;\wt{J},\wt{H})=\coprod_{\JHp\text{ an expansion of }\JH}\cM(\phi;\wt{J}',\wt{H}').
\]
Here, we do not allow $\RR$-invariant (trivial) disks in the moduli
spaces on the right, or equivalently we consider only stable curves.
There is a natural topology on $\ocM(\phi;\wt{J},\wt{H})$ which we leave to
the reader to spell out. With this topology, the spaces $\ocM(\phi;\wt{J},\wt{H})$ are, in fact, compact; see the references following Theorem~\ref{thm:compactness}.  In particular, Hypothesis~\ref{hyp:Floer-defined} implies that we do not need to consider sphere or disk bubbles in our moduli spaces.

\subsection{Twisted homotopy classes}\label{sec:twisted-pi2}
Fix, for the rest of this section, a generic cylindrical pair $(J,H)$.

Fix $g\in G$ and $x,y\in L_0^H\cap L_1$. We define a set
$\pi_2^g(x,y)$ of \emph{$g$-twisted homotopy classes of Whitney disks}
as follows. Choose a semi-cylindrical Hamiltonian $\wt{H}$ so that
$\wt{H}^{-\infty}=H$ and $\wt{H}^{+\infty}=gH$. There is a
corresponding $1$-parameter family $L_0^{\wt{H}}\subset \RR\times M$,
  where $L_0^{\wt{H}}\cap (\{t\}\times M)$ is $L_0^{\wt{H}_t}$. Let
  $\pi_2^g(x,y)$ be the set of homotopy classes of sections
\begin{equation}\label{eq:pi-2-g}
  ([0,1]\times\RR,\{0\}\times\RR,\{1\}\times\RR)\to \bigl([0,1]\times\RR\times M, \{0\}\times\RR\times L_1, \{1\}\times L_0^{\wt{H}}\bigr)
\end{equation}
which are asymptotic to $x$ (or rather, $[0,1]\times x$) at $-\infty$
and $gy$ at $+\infty$. (Note that $y\in L_0^H$ implies $gy\in L_0^{gH}$ since $gH=H\circ g^{-1}$.)

We verify that the dependence of $\pi_2^g(x,y)$ on the choice of
$\wt{H}$ is superficial. Provisionally, denote $\pi_2^g(x,y)$ defined
using $\wt{H}$ by $\pi_2^g(x,y;\wt{H})$.  Given another
semi-cylindrical $\wt{K}$ with $\wt{K}^{-\infty}=H$ and
$\wt{K}^{+\infty}=gH$ consider the $1$-parameter family of dynamic
Hamiltonians $(1-s)\wt{H}+s\wt{K}$. Given $\phi\in\pi_2^g(x,y;\wt{H})$ as in
Formula~\eqref{eq:pi-2-g}, applying the
Hamiltonian flow of $(1-s)\wt{H}+s\wt{K}$ to $\phi(1,t)$ gives a
section
\[
  ([0,1]\times\RR,\{0\}\times\RR,\{1\}\times\RR)\to \bigl([0,1]\times\RR\times M, \{0\}\times L_0^{\wt{H}}, \{1\}\times L_0^{\wt{K}}\bigr).
\]
Concatenating $\phi$ with this new section gives a section
\[
  ([0,2]\times\RR,\{0\}\times\RR,\{2\}\times\RR)\to \bigl([0,2]\times\RR\times M, \{0\}\times\RR\times L_1, \{2\}\times L_0^{\wt{K}}\bigr),
\]
and rescaling $[0,2]$ to $[0,1]$ gives a section as in
Formula~\eqref{eq:pi-2-g} except with respect to $\wt{K}$ instead of
$\wt{H}$. This gives us a map
\[
  \psi_{\wt{H},\wt{K}}\co \pi_2^g(x,y;\wt{H})\to\pi_2^g(x,y;\wt{K}).
\]

\begin{lemma}
  The maps $\psi_{\wt{H},\wt{K}}$ satisfy $\psi_{\wt{H},\wt{H}}=\Id$
  and
  $\psi_{\wt{H}_2,\wt{H}_3}\circ
  \psi_{\wt{H}_1,\wt{H}_2}=\psi_{\wt{H}_1,\wt{H}_3}$.
  So, the maps $\psi_{\wt{H},\wt{K}}$ induce canonical bijections
  $\pi_2^g(x,y;\wt{H})\to\pi_2^g(x,y;\wt{K})$.
\end{lemma}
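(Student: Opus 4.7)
The plan is to show both identities by exhibiting the $\psi$ maps as a particular case of a more general construction associated to \emph{paths} of semicylindrical Hamiltonians, and then using a homotopy-invariance property of that construction. Given a smooth path $s\mapsto \wt H_s$, $s\in [0,1]$, of semicylindrical Hamiltonians all of which satisfy $\wt H_s^{-\infty}=H$ and $\wt H_s^{+\infty}=gH$, the same recipe as in the text (apply the fiberwise Hamiltonian flow of $\wt H_s$ along the $s$-direction to $\phi(1,t)$, concatenate with $\phi$, and rescale $[0,2]$ to $[0,1]$) defines a map
\[
  \Psi_{\{\wt H_s\}}\co \pi_2^g(x,y;\wt H_0)\to \pi_2^g(x,y;\wt H_1),
\]
of which $\psi_{\wt H,\wt K}$ is the instance $\wt H_s=(1-s)\wt H+s\wt K$.

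Next, I would establish two properties of $\Psi$. First, \emph{concatenation:} if $\{\wt H_s\}_{s\in[0,1]}$ runs from $\wt H_1$ to $\wt H_2$ and $\{\wt H'_s\}_{s\in[0,1]}$ runs from $\wt H_2$ to $\wt H_3$, then their concatenation (rescaled to $s\in[0,1]$) induces $\Psi_{\{\wt H'_s\}}\circ \Psi_{\{\wt H_s\}}$. This is immediate from the definition: concatenating three sections and rescaling gives the same class as concatenating two pairs and then concatenating. Second, \emph{homotopy invariance:} if $\{\wt H_s^0\}$ and $\{\wt H_s^1\}$ are two such paths connected by a smooth homotopy $\{\wt H_s^r\}_{r\in[0,1]}$ fixing the endpoints $\wt H$ and $\wt K$, then $\Psi_{\{\wt H_s^0\}}=\Psi_{\{\wt H_s^1\}}$. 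Indeed, given $\phi\in \pi_2^g(x,y;\wt H)$, the family $\Psi_{\{\wt H_s^r\}}(\phi)$ for $r\in [0,1]$ is a continuous family of sections of the type in~\eqref{eq:pi-2-g} (with respect to $\wt K$), all asymptotic to $x$ and $gy$; since $\pi_2^g(x,y;\wt K)$ is a discrete set of homotopy classes, this family is constant.

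With these two facts, both identities follow formally. For $\psi_{\wt H,\wt H}=\Id$, the associated path is the constant path at $\wt H$; concatenating $\phi$ with the section produced by the trivial flow simply extends $\phi$ by a translation-invariant slab, which after rescaling is homotopic to $\phi$ itself. For the cocycle identity, observe that in the affine space of semicylindrical Hamiltonians with prescribed limits $H$ and $gH$ (which is convex, hence contractible), the concatenated path
\[
  s\mapsto \begin{cases}(1-2s)\wt H_1+2s\wt H_2 & s\in[0,\tfrac12]\\ (2-2s)\wt H_2+(2s-1)\wt H_3 & s\in[\tfrac12,1]\end{cases}
\]
is homotopic, rel endpoints, to the straight-line path $s\mapsto (1-s)\wt H_1+s\wt H_3$ via the obvious affine triangle. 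Combining concatenation and homotopy invariance then gives
\[
  \psi_{\wt H_2,\wt H_3}\circ \psi_{\wt H_1,\wt H_2}=\Psi_{\text{concat}}=\Psi_{\text{straight line}}=\psi_{\wt H_1,\wt H_3}.
\]

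The main obstacle is verifying homotopy invariance cleanly, i.e., that a smooth $(r,s)$-family of Hamiltonians genuinely yields a continuous family of elements of $\pi_2^g(x,y;\wt K)$. This reduces to noting that applying the Hamiltonian flow is smooth in all parameters, the concatenation and rescaling are continuous operations on the space of sections as in~\eqref{eq:pi-2-g}, and homotopy classes of such sections form a discrete set; no analytic input beyond the transversality already assumed for $(J,H)$ is required.
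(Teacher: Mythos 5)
The paper leaves this lemma as an exercise for the reader, so there is no written argument to compare against; your task was simply to supply a correct proof. Your approach is sound and, to my mind, the natural one: you promote the straight-line construction to a map $\Psi_{\{\wt H_s\}}$ associated to an arbitrary smooth path in the space of $1$-story semicylindrical Hamiltonians with fixed ends $H,\,gH$, observe that concatenation of paths corresponds to composition of the $\Psi$'s, prove homotopy-rel-endpoints invariance of $\Psi$ using discreteness of the target set of homotopy classes, and then deduce both identities from the convexity (hence contractibility) of that affine space of Hamiltonians. The one detail you elide but should keep in mind is that passing from ``concatenate two flow-slabs'' to ``one flow-slab for the concatenated path'' and then rescaling $[0,2]\to[0,1]$ involves reparametrizations of the domain; these change the representative section but visibly not its homotopy class, and your homotopy-invariance step also implicitly covers the fact that $\psi$ is well defined on homotopy classes rather than on sections. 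Both points are routine and the argument goes through.

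One small caveat, which does not affect correctness but is worth noting: the affine combination $(1-s)\wt H + s\wt K$ only obviously lands in the class of semicylindrical Hamiltonians if one fixes attention to $1$-story pairs (which is what the text implicitly does when it speaks of ``a corresponding $1$-parameter family $L_0^{\wt H}\subset\RR\times M$''), so your appeal to convexity is justified but worth stating explicitly. Likewise, your final remark that ``no analytic input beyond the transversality already assumed for $(J,H)$ is required'' is a slight overstatement: in fact no analytic input at all is used here — the lemma is purely homotopy-theoretic, and transversality of $L_0^H$ and $L_1$ enters only to make the intersection points $x,y$ isolated, not to control any moduli space.
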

\begin{proof}
  This is left as an exercise to the reader.
\end{proof}

Thus, we are justified in dropping $\wt{H}$ from the notation, and
simply writing $\pi_2^g(x,y)$.

Concatenation (in the $\RR$-direction) gives an operation
\[
  *\from\pi_2^g(x,y)\times\pi_2^h(y,z)\to \pi_2^{gh}(x,z).
\]

Next, consider two points $g_0,g_1\in G$ and a path $g_t$ in
$G$ from $g_0$ to $g_1$.  For $y\in L_0^H\cap L_1$ there is a
corresponding element $\phi_y\in\pi_2^{g_0^{-1}g_1}(y,y)$ defined as
follows. View $g_t$ as a map $\RR\to G$ with $g_t=g_0$ for $t<0$ and
$g_t=g_1$ for $t>1$. Then there is a corresponding semi-cylindrical
Hamiltonian $\wt{H}=g_0^{-1}g_tH$ and we define $\phi_y(s,t)=g_0^{-1}g_ty\in\pi_2^{g_0^{-1}g_1}(y,y)$.

Splicing with the $\phi_y$ defines a bijection
\begin{align*}
  \xi_{g_t}&\co \pi_2^{g_0}(x,y)\to \pi_2^{g_1}(x,y)\\
  \xi_{g_t}(\phi)&=\phi*\phi_y.
\end{align*}
\begin{lemma}
  If $g_t$ and $g'_t$ are homotopic rel endpoints then
  $\xi_{g_t}=\xi_{g'_t}$.
\end{lemma}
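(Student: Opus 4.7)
The plan is to reduce to showing that $\phi_y \in \pi_2^{g_0^{-1}g_1}(y,y)$ depends only on the homotopy class of $g_t$ rel endpoints; once this is established, $\xi_{g_t}(\phi) = \phi * \phi_y$ and well-definedness of concatenation on homotopy classes give $\xi_{g_t} = \xi_{g'_t}$ immediately.

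To prove this reduced statement, suppose we are given a homotopy $G \co [0,1]_r \times [0,1]_t \to G$ with $G(r,0) = g_0$, $G(r,1) = g_1$, $G(0,t) = g_t$, and $G(1,t) = g'_t$. I would first extend each $G(r, -)$ to $\RR$ by the constant $g_0$ on $(-\infty, 0]$ and the constant $g_1$ on $[1, \infty)$, and then consider the one-parameter family of semi-cylindrical Hamiltonians $\wt{H}_r = g_0^{-1} G(r, t) H$ together with the family of sections $\phi_y^r(s, t) = g_0^{-1} G(r, t) y$, where $\phi_y^r$ represents a class in $\pi_2^{g_0^{-1} g_1}(y, y; \wt{H}_r)$. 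The boundary conditions $\phi_y^r(0, t) \in L_1$ and $\phi_y^r(1, t) \in L_0^{\wt{H}_r(t)}$ follow from $G$-invariance of $L_0$ and $L_1$, together with the identity $L_0^{gH} = g L_0^H$. At $r = 0$ and $r = 1$ this family recovers $\phi_y$ as built from $g_t$ and $g'_t$, respectively.

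Finally, I would use the preceding lemma to transfer these two classes into a common set via the canonical bijection $\psi_{\wt{H}_0, \wt{H}_1}$ and show that $\psi_{\wt{H}_0, \wt{H}_1}([\phi_y^0]) = [\phi_y^1]$. The map $\psi$ is defined by splicing with a section coming from the linear interpolation of Hamiltonians, but the same splicing recipe applied along the path $r \mapsto \wt{H}_r$ produces a section whose $M$-component is exactly $g_0^{-1} G(r, t) y$ up to reparameterization, and this is manifestly homotopic to $\phi_y^1$ with the family $\phi_y^r$ itself providing the filling. The main obstacle is therefore showing that the splicing construction underlying $\psi$ is independent of the choice of interpolating path in the space of semi-cylindrical pairs; I expect this to follow from a straightforward contractibility-of-paths argument, of exactly the same flavor as the bookkeeping behind the preceding lemma.
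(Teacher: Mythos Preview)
Your approach is correct and is the natural way to carry out what the paper leaves as an exercise (the paper gives no proof here beyond ``straightforward from the definitions''). The only slightly delicate point is your final step about independence of the interpolating path underlying $\psi$, but as you note this follows from contractibility of the space of semi-cylindrical Hamiltonians with fixed asymptotics, which is the same elementary input as in the preceding lemma.
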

\begin{proof}
  Again, this is straightforward from the definitions, and is left as
  an exercise.
\end{proof}

Given a twisted homotopy class $\phi\in\pi_2^g(x,y)$ there is a
corresponding Maslov index $\mu(\phi)$. Further, $\mu$ is additive in
the sense that $\mu(\phi_1*\phi_2)=\mu(\phi_1)+\mu(\phi_2)$.

\begin{definition}\label{def:G-twisted-loop}
  A \emph{$G$-twisted loop} is a homotopy class
  $\phi\in\pi_2^g(x,x)$ for some $x\in L_0^H\cap L_1$ and $g\in G$.
\end{definition}

\begin{lemma}\label{lem:mu-on-pi2-g}
  If $G$ is compact and
  Hypothesis~\ref{hyp:Floer-defined}~\ref{item:J-1} is satisfied then
  any $G$-twisted loop $\phi\in\pi_2^g(x,x)$ has Maslov index $0$.
\end{lemma}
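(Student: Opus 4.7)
The plan is to reduce to the untwisted case by concatenating with an explicit model disk coming from a path in $G$, handling $G_0$ first and then using finiteness of $\pi_0(G)$.

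First I would treat the case $g\in G_0$, where $G_0$ is the identity component. Choose a smooth path $g_t$ in $G_0$ with $g_0=e$, $g_1=g$, extended constantly outside $[0,1]$. As in the paragraph preceding the lemma, this produces a model element $\phi_x\in\pi_2^g(x,x)$ given by $\phi_x(s,t)=g_t x$ (with respect to the semicylindrical Hamiltonian $\wt H_t = g_tH$). The key computation is that $\mu(\phi_x)=0$: trivialize $\phi_x^*TM$ by the symplectic linear maps $(g_t)_*^{-1}\co T_{g_t x}M\to T_xM$, extended constantly in $s$ (this is symplectic since $G$ acts by symplectomorphisms). Under this trivialization, $TL_1|_{g_tx}$ pulls back to the constant Lagrangian $T_xL_1$ (using $G$-invariance of $L_1$), and $TL_0^{g_tH}|_{g_tx}=(g_t)_*T_xL_0^H$ pulls back to the constant Lagrangian $T_xL_0^H$ (using the identity $L_0^{g_tH}=g_tL_0^H$, which follows from $\Phi^{gH}_s=g\Phi^H_sg^{-1}$ together with $gL_0=L_0$). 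A pair of constant Lagrangian paths has Maslov index zero.

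Now for an arbitrary $\phi\in\pi_2^g(x,x)$, the bijection $\xi_{g_t}\co\pi_2^e(x,x)\to\pi_2^g(x,x)$ from the excerpt writes $\phi=\phi'*\phi_x$ for a unique $\phi'\in\pi_2^e(x,x)=\pi_2(x,x)$. Hypothesis~\ref{hyp:Floer-defined}\ref{item:J-1} gives $\mu(\phi')=0$ (after identifying Whitney disks for $(M,L_0^H,L_1)$ with those for $(M,L_0,L_1)$ via the Hamiltonian flow of $H$), and additivity of the Maslov index under splicing yields $\mu(\phi)=\mu(\phi')+\mu(\phi_x)=0$.

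For the general case, since $G$ is compact, $G/G_0$ is finite, so there exists $n\geq 1$ with $g^n\in G_0$. The $n$-fold concatenation $\phi*g\phi*\cdots*g^{n-1}\phi$ lies in $\pi_2^{g^n}(x,x)$, and because $G$ acts by symplectomorphisms $\mu(g^k\phi)=\mu(\phi)$, so additivity gives $n\mu(\phi)=\mu(\phi*g\phi*\cdots*g^{n-1}\phi)$. The previous step applied to $g^n\in G_0$ shows the right-hand side is zero, hence $\mu(\phi)=0$. The only delicate point is the trivialization argument in the $\mu(\phi_x)=0$ computation; everything else is formal additivity and invariance, already provided by the setup of Section~\ref{sec:twisted-pi2}.
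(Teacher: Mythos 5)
Your proof is correct and follows the same overall strategy as the paper's: use compactness to get a power of $g$ into the identity component, build a Maslov-index-zero model disk from a path in $G_0$, and combine additivity of $\mu$ under splicing with Hypothesis~\ref{item:J-1}. The differences are organizational and in level of detail. The paper works with $g^{-n}$ in a small neighborhood of the identity (obtained by a recurrence argument) and concatenates $\phi^{*n}*\phi_0$ to land directly in $\pi_2^1(x,x)$, asserting that the ``nearly constant'' disk $\phi_0$ has Maslov index $0$ without further justification. You instead use $g^n\in G_0$ for $n=|\pi_0(G)|$, handle the $G_0$ case first, and --- the genuinely substantive addition --- prove $\mu(\phi_x)=0$ for the model disk via an explicit symplectic trivialization of $\phi_x^*TM$ by $(g_t)_*^{-1}$, under which both boundary Lagrangian paths become constant (using $G$-invariance of $L_1$ and the identity $L_0^{g_tH}=g_tL_0^H$). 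This makes explicit, and slightly strengthens, the step the paper's proof glosses over: the model disk has $\mu=0$ for \emph{any} $g\in G_0$, not just $g$ near the identity.
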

\begin{proof}
  For any neighborhood $U\ni 1$ in $G$, there is some $n$ so that
  $g^{-n}\in U$. For $U$ a small enough neighborhood, there is a (nearly
  constant) Maslov index $0$ disk $\phi_0$ in $\pi_2^{g^{-n}}(x,x)$. Now, given
  $\phi\in\pi_2^g(x,x)$ we have
  \[
    \mu(\overbrace{\phi*\phi*\cdots*\phi}^n*\phi_0)=n\mu(\phi),
  \]
  but the left side is an element of $\pi_2^{1}(x,x)$ and so, by
  Hypothesis~\ref{hyp:Floer-defined}~\ref{item:J-1}, has Maslov
  index $0$.
\end{proof}

\subsection{Equivariant coherent orientations}\label{sec:G-or}
We continue to fix a generic cylindrical pair $(J,H)$.
In this section, we adapt the notions of coherent
orientations~\cite{FloerHofer93:orientations,EES05:orientations,FOOO1,SeidelBook} to the
$G$-equivariant setting. (We explain what happens when one varies $(J,H)$ or considers a semi-cylindrical pair at the end of this section.) We start by reviewing the
non-equivariant situation.

Fix a generic, cylindrical pair $(J,H)$. Given points
$x,y\in L_0^H\cap L_1$, let $\mathcal{B}_{(J,H)}(x,y)$ denote an
appropriate Sobolev space of maps as in
Formula~\eqref{eq:pi-2-g}. (See also the proof of
Theorem~\ref{thm:transversality}, below.) The $\dbar$-operator is a
section of a vector bundle $\mathcal{E}_{(J,H)}$ over
$\mathcal{B}_{(J,H)}$.  There is a virtual vector bundle
$\ind(D\dbar)$ over $\mathcal{B}_{(J,H)}(x,y)$, the \emph{index
  bundle} of the family of Fredholm maps $D\dbar$. If $\dbar$ is
transverse to the $0$-section then the restriction of $\ind(D\dbar)$
to the subspace $\cM_{(J,H)}(x,y)$ of holomorphic disks is identified
with the tangent space $T\cM_{(J,H)}(x,y)$. So, orienting the
determinant line bundle $\det(D\dbar)$ (which is an honest line
bundle) orients the moduli spaces $\cM_{(J,H)}(x,y)$, when they are
transversely cut out.

Given $u\in\mathcal{B}_{(J,H)}(x,y)$ and
$v\in\mathcal{B}_{(J,H)}(y,z)$ and a parameter $R\gg0$ large enough
that $u([0,1]\times[R,\infty))$ and $v([0,1]\times(-\infty,-R])$ are
contained in a ball neighborhood of $y$ we can use bump functions to
\emph{pre-glue} $u$ and $v$ to get a curve
$(u\natural_R v)\in\mathcal{B}_{(J,H)}(x,z)$.  We view pre-gluing as a
map
\[
  \gamma\co \mathcal{B}_{(J,H)}(y,z)\times \mathcal{B}_{(J,H)}(x,y)\to
  \mathcal{B}_{(J,H)}(x,z);
\]
note the ordering of the factors, which we have chosen to be
consistent with composition in a category, and not with the
concatenation operator $*$.  Pre-gluing is covered by bundle maps
\begin{align*}
  T_v\mathcal{B}_{(J,H)}(y,z)\oplus T_u\mathcal{B}_{(J,H)}(x,y) &\to
                                                                T_{u\natural_R v}\mathcal{B}_{(J,H)}(x,z)\\
  \mathcal{E}_{(J,H),v}\oplus \mathcal{E}_{(J,H),u}&\to\mathcal{E}_{(J,H),u\natural_Rv}.
\end{align*}
The two paths around the following diagram agree modulo terms of
lower order:
\[
  \xymatrix{
    T_v\mathcal{B}_{(J,H)}(y,z)\oplus T_u\mathcal{B}_{(J,H)}(x,y)
    \ar[rr]^-{\text{pre-glue}}
    \ar[d]^{D\dbar} && T_{u\natural_R
      v}\mathcal{B}_{(J,H)}(x,z)\ar[d]^{D\dbar}\\
    \mathcal{E}_{(J,H),v}\oplus\mathcal{E}_{(J,H),u}\ar[rr]^-{\text{pre-glue}}
    & & \mathcal{E}_{(J,H),u\natural_Rv}.
  }
\]
This gives an identification of the determinant lines of the
Fredholm maps $D_{u\natural_R v}\dbar|_{\mathcal{B}_{(J,H)}(x,z)}$ and
$(D_v\dbar|_{\mathcal{B}_{(J,H)}(y,z)})\oplus (D_u\dbar|_{\mathcal{B}_{(J,H)}(x,y)})$.

By \emph{coherent orientations} of the moduli spaces we mean
trivializations of the determinant line bundles
\[
\phi_{x,y}\co \det(D\dbar)|_{\mathcal{B}_{(J,H)}(x,y)}\stackrel{\cong}{\longrightarrow}\RR\times
\mathcal{B}_{(J,H)}(x,y)=\underline{\RR}
\]
so that the diagram
\[
  \xymatrix{
    \det(D\dbar)|_{\mathcal{B}(y,z)}\otimes \det(D\dbar)|_{\mathcal{B}(x,y)}\ar[rr]^-{\mathrm{pre-glue}} \ar[d]_{\phi_{y,z}\otimes\phi_{x,y}} & & \gamma^*\det(D\dbar)|_{\mathcal{B}(x,z)}\ar[d]^{\phi_{x,z}}\\
    \underline{\RR}\otimes\underline{\RR}\ar[rr]^-{\cong} & & \underline{\RR}
}
\]
commutes up to positive scaling. (Here, the bottom horizontal arrow is the standard
isomorphism $1\otimes 1\mapsto 1$.) A coherent
orientation induces orientations of the moduli spaces of holomorphic
disks, and these orientations are coherent with respect to
gluing. (See also the proof of Theorem~\ref{thm:gluing}.)

The $G$-equivariant analogue of coherent orientations is as follows. 

Given $x,y\in L_0^H\cap L_1$ and $g\in G$ we can consider the space
$\mathcal{B}_g(x,y)$ of smooth Whitney disks connecting $x$ and $g y$, with
respect to the Hamiltonian isotopy from $L_0^H$ to
$L_0^{gH}$ coming from the inverse of the Hamiltonian isotopy
induced by $H$ followed by the Hamiltonian isotopy corresponding
to $gH$, $L_0^{H}\sim L_0\sim L_0^{gH}$ (cf.\ Section~\ref{sec:twisted-pi2}). The spaces
$\mathcal{B}_g(x,y)$ fit together into a fiber bundle
\[
  \xymatrix{
    \mathcal{B}_g(x,y)\ar[r] & \mathcal{B}_G(x,y)\ar[d]\\
    & G.
  }
\]
At each $g\in G$ and each point $u\in \mathcal{B}_g(x,y)$ we have a linearized
$\dbar$ operator $D_u\dbar$. The index bundles of these operators
fit together to give a virtual index bundle
\[
  \xymatrix{
    \ind(D_u\dbar)=[\Ker(D_u\dbar)-\Coker(D_u\dbar)]\ar[r] & \ind(D\dbar)\ar[d]\\
    & \mathcal{B}_G(x,y).
  }
\]
The determinant line bundle of this virtual index bundle is
$\det(D\dbar)\coloneqq \det(\ind(D\dbar))$.

As in the non-equivariant case, pre-gluing $\natural$ gives a map
$\gamma\co \mathcal{B}_h(y,z)\times \mathcal{B}_g(x,y)\to \mathcal{B}_{gh}(x,z)$,
$(v,u)\mapsto u\natural (g v)$, which is covered by an
identification of bundles
$\psi_{x,y,z}\co \ind(D\dbar)|_{\mathcal{B}_h(y,z)}\otimes
\ind(D\dbar)|_{\mathcal{B}_g(x,y)}\to \gamma^*\ind(D\dbar)|_{\mathcal{B}_{gh}(x,z)}$.

\begin{definition}\label{def:G-or-system}
  By a \emph{$G$-orientation system} we mean a trivialization
  $\phi_{x,y}\co \det(D\dbar)\cong\underline{\RR}$ over each $\mathcal{B}_G(x,y)$ so that for
  each $[g],[h]\in\pi_0(G)$ and $x,y,z\in L_0^H\cap L_1$, the trivializations 
  \[
    \det(D\dbar)|_{\mathcal{B}_h(y,z)}\otimes\det(D\dbar)|_{\mathcal{B}_g(x,y)}\stackrel{\phi_{y,z}\otimes\phi_{x,y}}{\relbar\joinrel\relbar\joinrel\relbar\joinrel\relbar\joinrel\longrightarrow} \underline{\RR}\otimes\underline{\RR}\cong \underline{\RR}
  \]
  and
  \[
    \det(D\dbar)|_{\mathcal{B}_h(y,z)}\otimes\det(D\dbar)|_{\mathcal{B}_g(x,y)}\stackrel{\psi_{x,y,z}}{\relbar\joinrel\longrightarrow}\gamma^*\det(D\dbar)|_{\mathcal{B}_{gh}(x,z)}\stackrel{\phi_{x,z}}{\relbar\joinrel\longrightarrow}\underline{\RR}
  \]
  agree up to positive scaling.
\end{definition}

\begin{figure}
  \centering
  \includegraphics{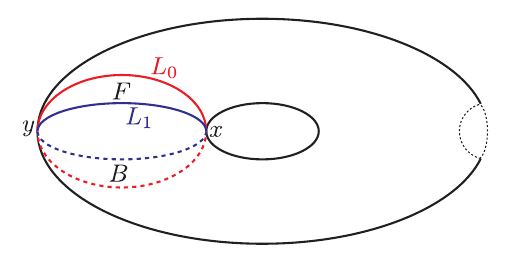}
  \caption{\textbf{Lagrangians in a punctured torus.} The two regions
    supporting holomorphic bigons are labeled $F$ and $B$, for front
    and back.}
  \label{fig:Lag-in-torus}
\end{figure}

\begin{example}\label{eg:coh-or}
  (Compare~\cite[Example 13.5]{SeidelBook}.)
  Consider the Lagrangians shown in
  Figure~\ref{fig:Lag-in-torus}. Here, the symplectic manifold
  $(M,\omega)$ is a punctured torus, with an action of an involution
  $\tau$ induced by the elliptic involution of $T^2$, with the
  puncture as one of the fixed points. The Lagrangians $L_0$, $L_1$
  are homologically nontrivial circles which pass through two of the
  fixed points, and so that $L_0\pitchfork L_1$ at exactly these fixed
  points. Call the points in $L_0\pitchfork L_1$ $x$ and $y$, labeled
  so that the moduli space of holomorphic disks from $x$ to $y$ has
  exactly two elements. Call the two regions supporting these bigons
  $F$ and $B$, for front and back.

  Each of the spaces $\mathcal{B}(x,x)$, $\mathcal{B}(x,y)$,
  $\mathcal{B}(y,x)$, and $\mathcal{B}(y,y)$ is homotopy equivalent to
  $\ZZ$; the homology classes in $H_2(M,L_0\cup L_1)$ corresponding
  to these elements are:
  \begin{align*}
    \mathcal{B}(x,x)&:\ \{nF-nB\mid n\in\ZZ\} 
    & \mathcal{B}(x,y)&:\ \{(n+1)F-nB\mid n\in\ZZ\}\\
    \mathcal{B}(y,x)&:\ \{(n-1)F-nB\mid n\in\ZZ\} 
    & \mathcal{B}(y,y)&:\ \{nF-nB\mid n\in\ZZ\}.
  \end{align*}
  A choice of coherent orientation is uniquely determined by the
  trivialization over the component of
  $\ind(D\dbar)|_{\mathcal{B}(x,x)}$ corresponding to $F-B$, say, and
  a trivialization of $\ind(D\dbar)$ over the component of
  $\mathcal{B}(x,y)$ corresponding to $F$. As is well known, there are
  two different Floer homologies that can result, coming from whether
  the holomorphic disks from $x$ to $y$ are given the same or opposite
  signs.
  
  For $G=\ZZ/2=\{e,\tau\}$, since the Lagrangians already
  intersect transversely we may choose the Hamiltonian $H=0$. 
  Choosing a $G$-orientation involves
  choosing compatible orientations over $\mathcal{B}_e(x,x)$,
  $\mathcal{B}_\tau(x,x)$, $\mathcal{B}_e(x,y)$,
  $\mathcal{B}_\tau(x,y)$, and so on. Writing $B_g(x,y,R)$
  for the component of the configuration space $\mathcal{B}_g(x,y)$
  with homology class $R$, pre-gluing gives maps
  \begin{align*}
    B_e(x,x,mF-mB)\times B_e(x,x,nF-nB)&\to B_e(x,x,(m+n)F-(m+n)B)\\
    B_\tau(x,x,mF-mB)\times B_e(x,x,nF-nB)&\to
    B_\tau(x,x,(m+n)F-(m+n)B)\\
    B_e(x,x,mF-mB)\times B_\tau(x,x,nF-nB)&\to
    B_\tau(x,x,(n-m)F-(n-m)B)\\
    B_\tau(x,x,mF-mB)\times B_\tau(x,x,nF-nB)&\to
    B_e(x,x,(n-m)F-(n-m)B)
  \end{align*}
  and so on. So, to choose a $G$-orientation, it suffices to choose an
  orientation over $B_e(x,x,F-B)$, over $B_\tau(x,x,0)$, and over
  $B_e(x,y,F)$.

  There are conceivable obstructions to the existence of a
  $G$-orientation. For instance, the concatenation map
  \[
    B_e(x,x,F-B)\times B_\tau(x,x,0)\times B_e(x,x,F-B)\times B_\tau(x,x,0)\to B_e(x,x,0)
  \]
  induces an orientation over $B_e(x,x,0)$, which is required to agree
  with the canonical orientation. Since each of $B_e(x,x,F-B)$ and
  $B_\tau(x,x,0)$ appears twice in each formula, either these coherence
  conditions are satisfied for every choice of orientation over
  $B_e(x,x,F-B)$ and $B_\tau(x,x,0)$, or they are satisfied for no
  choice.  In fact, it will follow from Example~\ref{eg:or-2} that the
  compatibility holds for any choice.
\end{example}

Next we recall that brane data for the Lagrangians induce coherent
orientations, and then give a $G$-equivariant analogue.
Following Seidel~\cite{SeidelBook},
assume that twice the first Chern class of $(M,\omega)$
vanishes, and fix a trivialization $\eta^2$ of $(\Lambda^n_\CC TM)^{\otimes 2}$, 
the square of the top
exterior power of $TM$ viewed as a complex bundle with respect to any
almost complex structure compatible with $\omega$. There is an induced
\emph{squared phase map} $\alpha\co \Gr_{\Lag}(TM)\to S^1$ on the space of Lagrangian
subspaces of $TM$, representing the
Maslov class. For each point $p\in L_i$ we can evaluate
$\alpha$ on $T_pL_i$; we abuse notation and denote the result
$\alpha(p)$.  \emph{Brane data} for $L_i$ consists of a
$\pin$-structure $P_i$ on $TL_i$, i.e., a principal $\Pin(n)$ bundle $P_i$ and bundle map $P_i\to \Fr(TL_i)$ so that the diagram
\[
  \xymatrix{
    P_i\times \Pin(n)\ar[r]\ar[d] & P_i\ar[d]\\
    \Fr(TL_i)\times O(n)\ar[r] & \Fr(TL_i)
  }
\]
commutes;
and a \emph{grading} of $L_i$, i.e., a lift
$\alpha_{L_i}^\#\co L_i\to\RR$ of the squared phase map
$\alpha|_{L_i}\co L_i\to S^1$~\cite[Section~(11j)]{SeidelBook}. (Here,
we will always use $\Pin(n)$ for $\Pin^+(n)$, as we have no use for
$\Pin^-(n)$.) We
will denote brane data $(P_i,\alpha_{L_i}^\#)$ on $L_i$ by
$\brane_i$.
 
For the moment, suppose $L_0\pitchfork L_1$. Fix brane data
$\brane_i$ on each $L_i$. For $x\in L_0\cap L_1$,
Seidel defines a $1$-dimensional vector space $o(x)$ with the property that 
\begin{equation}\label{eq:o-spaces}
  \det(D\dbar|_{\mathcal{B}(x,y)})\cong (o(y)\otimes o(x)^*)\times\mathcal{B}(x,y),
\end{equation}
as vector bundles~\cite[Equation (12.2)]{SeidelBook}.  

The space $o(x)$ is defined as follows. By a \emph{phased symplectic
  vector space} we mean a symplectic vector space $(V,\omega)$
together with an identification
$\eta^2\co (\Lambda^n_\CC
V)^{\otimes 2}\stackrel{\cong}{\longrightarrow}\CC$. Given a Lagrangian subspace
$L$ of a phased symplectic vector space $(V,\omega,\eta^2)$ there is a
well defined element 
\begin{equation}\label{eq:phase}
  \alpha(L)=\frac{\eta^2\bigl((v_1\wedge\cdots\wedge
    v_n)^{\otimes 2}\bigr)}{|\eta^2\bigl((v_1\wedge\cdots\wedge
    v_n)^{\otimes 2}\bigr)|}\in S^1,
\end{equation}
where $v_1,\dots,v_n$ is a basis for $L$.  By a \emph{graded
  Lagrangian subspace} of a phased symplectic vector space
$(V,\omega,\eta^2)$ we mean a pair $(W,\alpha^\#)$ where $W$ is a
Lagrangian subspace of $V$ and $\alpha^\#\in\RR$ satisfies
$e^{2\pi i\alpha^\#}=\alpha(W)$ (i.e., is a preimage of $\alpha(W)$). Note
that $(T_xL_i,\alpha_{L_i}^\#(T_xL_i))$ is a graded Lagrangian subspace of
$(T_xM,\omega,\eta^2)$. Choose a one-parameter family
$(W_t,\alpha^\#_t)$, $t\in[0,1]$, of graded Lagrangian subspaces of
$(T_xM,\omega,\eta^2)$ so that:
\begin{itemize}
\item $(W_i,\alpha^\#_i)=(T_xL_i,\alpha_{L_i}^\#(T_xL_i))$ for $i=0,1$,
\item viewed as a path in the Lagrangian Grassmannian, $W_t$ is
  transverse to the Maslov cycle
  $\mathrm{Mas}=\{V\in\Gr_{\Lag}(T_xM)\mid V\cap T_xL_1\neq\{0\}\}$, and
\item at $t=1$ the path $W_i$ approaches $T_xL_1$ in the positive
  direction in the Lagrangian Grassmanian.
\end{itemize}
Choose also a $\pin$-structure $P$ on the bundle $\cup_t W_t$ over
$[0,1]$ and isomorphisms $P|_{W_i}\cong P_i$ (where $P_i$ is the
$\pin$-structure on $T_xL_i$). Then
\[
  o(x)=\bigotimes_{\substack{t\in[0,1)\\W_t\cap T_xL_1\neq\{0\}}}(W_t\cap
  T_xL_1)^{\sign(W_t\cdot\mathrm{Mas})},
\]
where $\sign(W_t\cdot\mathrm{Mas})$ denotes the sign of the
intersection of the path $W_t$ and the Maslov cycle at $t$. Given
a different choice of path $W'_t$ and $\pin$-structure $P'$, there is
a
canonical isomorphism between the vector spaces, $o_{W_t,P}(x)\cong
o_{W'_t,P'}(x)$, forming a transitive system. 

\begin{remark}
  While the $\pin$-structure is not used in the definition of $o(x)$,
  it is involved in determining these canonical isomorphisms. Seidel
  notes that, given $W_t$, there are essentially two choices $P$, $P'$ of
  $\pin$-structure, and that the map
  \[
    \bigotimes_{\substack{t\in[0,1)\\W_t\cap T_xL_1\neq\{0\}}}(W_t\cap
    T_xL_1)^{\sign(W_t\cdot\mathrm{Mas})}=
    o_{W_t,P}(x)\to o_{W_t,P'}(x)=\bigotimes_{\substack{t\in[0,1)\\W_t\cap T_xL_1\neq\{0\}}}(W_t\cap
    T_xL_1)^{\sign(W_t\cdot\mathrm{Mas})}
  \]
  is an orientation-reversing map from a vector space to
  itself~\cite[Remark 11.19]{SeidelBook}. Nonetheless, the colimit
  \[
    o(x)=\varinjlim_{W_t,P}o_{W_t,P}(x)
  \]
  is a single, well-defined, one-dimensional vector space.
\end{remark}

It is immediate from their definitions that gluing respects the identifications~\eqref{eq:o-spaces}, in the sense that the
diagram
\begin{equation}
  \label{eq:seidel-glue-coh}
  \begin{gathered}
    \xymatrix{
      \det(D\dbar|_{\mathcal{B}(y,z)})\otimes\det(D\dbar|_{\mathcal{B}(x,y)})\ar[r]^-{\psi_{x,y,z}}\ar[d] & 
      \gamma^*\det(D\dbar|_{\mathcal{B}(x,z)})\ar[d]\\
      o(z)\otimes o(y)^*\otimes o(y)\otimes o(x)^* \ar[r] & o(z)\otimes o(x)^*
    }
  \end{gathered}
\end{equation}
commutes. The bottom horizontal map is induced by the canonical
identification of vector spaces
$o(y)^*\otimes o(y)\cong \RR$~\cite[Formula (12.4)]{SeidelBook}.  Thus, if we choose a
trivialization $\phi_x\co o(x)\stackrel{\cong}{\longrightarrow}\RR$
for each $x\in L_0\cap L_1$ there is an induced trivialization
\[
  \phi_{x,y}\co \det(D\dbar|_{\mathcal{B}(x,y)})\stackrel{\cong}{\longrightarrow}\RR\times\mathcal{B}(x,y).
\]
It is immediate from Formula~\eqref{eq:seidel-glue-coh} that these identifications
$\phi_{x,y}$ give coherent orientations.

Sometimes it will be convenient to write $o_{\brane_0,\brane_1}(x)$
instead of $o(x)$, to indicate the brane data used to define the space
$o(x)$.

If $L_0^H$ is Hamiltonian isotopic to $L_0$ then brane data $\brane_0$
on $L_0$ together with the map $L_0\to L_0^H$ induced by
the Hamiltonian isotopy induces brane data on $L_0^H$. In
particular, choosing brane data on the $L_i$ induces
brane data on any pair of Lagrangians $L'_i$ which are related to
the $L_i$ by (chosen) Hamiltonian isotopies, and hence orientations of
the moduli spaces of holomorphic disks between such $L'_i$.

To adapt brane data to the equivariant case, we will use an extension
of Cho-Hong's notion of \emph{$\spin$ profiles} for finite group
actions~\cite[Section 5.1]{CH:group-actions} to the case of compact Lie
groups.

Assume that the Lagrangians $L_i$ are connected.  Assume also that we
are given $\pin$-structures $P_i$ on $L_i$ which are weakly
$G$-equivariant, in the sense that for any $g\in G$, the induced
$\pin$-structure $g\cdot P_i\coloneqq (g^{-1})^*P_i$
is isomorphic to $P_i$. In this case, for any
$g\in G$ the set of isomorphisms $\Iso(g\cdot P_i,P_i)$ of $\pin$-structures
from $P_i$ to $g^*P_i$ has two elements.

Recall that isomorphism classes of central extensions of $G$ by
$\ZZ/2$ correspond to elements of $H^2(BG;\ZZ/2)$ (see,
e.g.,~\cite{Segal70:group-coho}).

\begin{definition}\label{def:pin-profile}
  The \emph{$\pin$ profile} associated to $(L_i,G,P_i)$ is the class in
  $H^2(BG;\ZZ/2)$ associated to the central extension 
  \[
    \ZZ/2\to \wt{G}_i\to G
  \]
  defined as follows. The fiber of $\wt{G}_i$ over $g\in G$ is
  $\Iso(g\cdot P_i,P_i)$. Given $\alpha\in\Iso(g\cdot P_i,P_i)$ and
  $\beta\in \Iso(h\cdot P_i,P_i)$ the product $\alpha\beta$ is the induced
  isomorphism
  \[
    (gh)\cdot P_i=(h^{-1}g^{-1})^*P_i=(g^{-1})^*(h^{-1})^*P_i\stackrel{(g^{-1})^*\beta}{\longrightarrow}(g^{-1})^*P_i
    \stackrel{\alpha}{\longrightarrow}P_i
  \]
  or, more succinctly, $\alpha\circ ((g^{-1})^*\beta)$.

  If $L_i$ is $\spin$ and the $\spin$-structure $Q_i$ satisfies
  $g^*Q_i\cong Q_i$ for all $g\in G$ then the \emph{$\spin$ profile}
  associated to $(L_i,G,Q_i)$ is defined in the same way as the $\pin$ profile.
\end{definition}

The $\pin$ profile associated to $(L_i,G,P_i)$ is local to
$L_i$, in the sense that it depends only on the action of $G$ on $L_i$
and the $\pin$-structure $P_i$ on $TL_i$.

\begin{lemma}
  If $G$ is discrete then the $\spin$ profile of $(L_i,G,Q_i)$ as in
  Definition~\ref{def:pin-profile} agrees with the $\spin$ profile as
  defined by Cho-Hong~\cite[Proposition 5.6]{CH:group-actions}.
\end{lemma}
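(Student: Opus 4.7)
The plan is to compute a $\ZZ/2$-valued 2-cocycle representing each of the two classes in $H^2(BG;\ZZ/2)$ from the same input data and then observe that they agree. First I would fix, for each $g \in G$, an isomorphism $\phi_g \in \Iso(g \cdot Q_i, Q_i)$; this is possible by the weak equivariance hypothesis on $Q_i$, and it amounts to a set-theoretic section of the quotient $\wt{G}_i \to G$ in our setup.

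Next I would run the standard recipe converting the central extension $\ZZ/2 \to \wt{G}_i \to G$ of Definition~\ref{def:pin-profile} into a cocycle: the class is represented by $c \in Z^2(BG;\ZZ/2)$ determined by $\phi_g \cdot \phi_h = c(g,h)\, \phi_{gh}$ in $\wt{G}_i$. Using the multiplication rule in the definition, this becomes the identity $\phi_g \circ \bigl((g^{-1})^*\phi_h\bigr) = c(g,h) \cdot \phi_{gh}$ in the $\ZZ/2$-torsor $\Iso((gh) \cdot Q_i, Q_i)$. The plan is then to verify that Cho-Hong's Proposition~5.6 produces exactly the same 2-cocycle from the same choices $\{\phi_g\}$, which is a matter of unwinding their definitions and matching them term-by-term with the formula above.

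The main (and really only) obstacle is a bookkeeping one: aligning the conventions for left versus right pullback of $\spin$-structures (i.e.\ $g^*$ versus $(g^{-1})^*$) and the order in which $g$ and $h$ appear in the cocycle. Any such discrepancy is controlled, since reversing the order of a $\ZZ/2$-cocycle or replacing a section $\phi_g$ by $\phi_{g^{-1}}^{-1}$ alters the cocycle only by a $\ZZ/2$-valued coboundary. As a consistency check to nail down the normalization, both conventions are uniquely pinned down by the demand that a genuinely $G$-equivariant $\spin$-structure---for which the $\phi_g$ may be chosen to compose strictly---correspond to the trivial extension, so the two classes in $H^2(BG;\ZZ/2)$ necessarily agree.
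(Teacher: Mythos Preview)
Your approach is correct and is essentially what underlies the paper's argument, but the paper takes a shorter route: it simply observes that Cho--Hong, in the proof of their Proposition~5.6, already reformulate their $\spin$ profile as the class of the central extension $\ZZ/2\to\wt{G}_i\to G$ built from the torsors $\Iso(g\cdot Q_i,Q_i)$, which is exactly Definition~\ref{def:pin-profile}. So rather than computing and matching $2$-cocycles yourself, you can cite that reformulation and be done; your explicit cocycle comparison is precisely the content of that reformulation, just reproduced rather than invoked.
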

\begin{proof}
  This is immediate from Cho-Hong's reformulation of their
  definition~\cite[Proof of Proposition 5.6]{CH:group-actions}.
\end{proof}

Turning to the gradings, we will need the following assumptions:
\begin{enumerate}[label=(Gr-\arabic*)]
\item \label{hyp:phase-invt}The squared phase map $\eta^2$ on $M$ is $G$-invariant.
\item \label{hyp:gr-invt}The grading $\alpha_{L_i}^\#$ on $L_i$ is
  $G$-invariant, in the sense that
  $\alpha_{L_i}^\#(gx)=\alpha^\#_{L_i}(x)$ for all $x\in L_i$ and
  $g\in G$.
\end{enumerate}
Note that if the action of $G$ on $L_i$ has at least one fixed point
on each component of $L_i$
then Hypothesis~\ref{hyp:gr-invt} is automatically satisfied.

Next, we relate brane data to $G$-orientations.  Under
Hypothesis~\ref{hyp:phase-invt}, given $g\in G$ and brane data
$\brane_i$ on $L_i$ there are induced brane data $g\cdot\brane_i$ on
the $L_i$ given by the pin structure $g\cdot P_i=(g^{-1})^*P_i$ and
grading $(g\cdot \alpha_{L_i}^\#)(x)=\alpha_{L_i}^\#(g^{-1}x)$.
The brane data $\brane_0$ also induces brane data $\brane_0$ on
$L_0^H$ and on $L_0^{gH}$ for any $g\in G$. These brane data induced by $\brane_0$ on
$L_0^{gH}$ are not typically the same as $g$ of the brane data induced
by $\brane_0$ on $L_0^H$.

\begin{proposition}\label{prop:spin-or}
  Fix a squared phase map $\eta^2$ on $(M,\omega)$ satisfying
  Hypothesis~\ref{hyp:phase-invt}.  Let $\brane_i$ be brane data on
  $L_i$, $i=0,1$, so that:
  \begin{enumerate}
  \item The gradings satisfy Hypothesis~\ref{hyp:gr-invt}.
  \item The $\pin$ profiles associated to the $\pin$-structures $P_i$
    on $L_i$ are the same, i.e., there is an isomorphism $\iota$ of central
    extensions
    \begin{equation}\label{eq:iso-extens}
      \xymatrix{
        \ZZ/2\ar[r]\ar[d]_= & \wt{G}_{L_0}\ar[r]\ar[d]^\iota_\cong & G\ar[d]^=\\        
        \ZZ/2\ar[r] & \wt{G}_{L_1}\ar[r] & G.
        }
    \end{equation}
  \end{enumerate}
  Then $(\eta^2,\brane_0,\brane_1,\iota)$ induce a $G$-orientation (in the sense of
  Definition~\ref{def:G-or-system}).
\end{proposition}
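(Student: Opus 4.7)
The plan is to extend Seidel's non-equivariant construction of coherent orientations from brane data to the $G$-equivariant setting, using the isomorphism $\iota$ of central extensions to cancel the $\ZZ/2$ ambiguities that arise from the $G$-action on $\pin$-structures.

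First I would fix notation and perform the non-equivariant step fiberwise. Using the Hamiltonian flow of $\wt{H}$ from $H$ to $gH$ (and the fact that $L_0^{gH} = gL_0^H$), the brane data $\brane_0$ on $L_0$ induces brane data on the family $L_0^{\wt{H}}$, hence on both $L_0^H$ and $L_0^{gH}$; write these as $\Phi_H^*\brane_0$ and $\Phi_{gH}^*\brane_0$. For $x\in L_0^H\cap L_1$ and $y\in L_0^H\cap L_1$ (so that $gy\in L_0^{gH}\cap L_1$), Seidel's construction yields one-dimensional vector spaces $o(x) = o_{\Phi_H^*\brane_0,\brane_1}(x)$ and $o(gy) = o_{\Phi_{gH}^*\brane_0,\brane_1}(gy)$ together with an isomorphism
\[
  \det(D\dbar)\big|_{\mathcal{B}_g(x,y)} \cong \underline{o(gy)\otimes o(x)^*},
\]
compatible with the non-equivariant gluing diagram~\eqref{eq:seidel-glue-coh}.

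Next I would construct canonical ``translation'' isomorphisms $\wt{g}\co o(y)\to o(gy)$ for each $g\in G$. The diffeomorphism $g\co M\to M$ sends $L_0^H$ to $L_0^{gH}$ and preserves $L_1$. By Hypotheses~\ref{hyp:phase-invt} and~\ref{hyp:gr-invt}, it intertwines the squared phase maps and gradings; it sends the tangent paths and Maslov crossings entering the definition of $o(y)$ to the corresponding data at $gy$. To transport the $\pin$-structure piece, one must additionally choose an isomorphism $g\cdot P_i\cong P_i$ for each $i=0,1$, i.e., a lift $\wt{g}_i$ of $g$ to $\wt{G}_{L_i}$. The key observation is that $o(y)$ is built out of \emph{both} $\pin$-structures $P_0$ and $P_1$, so the map on $o$ induced by a pair $(\wt g_0,\wt g_1)$ of lifts changes by a sign only when exactly one of $\wt g_0,\wt g_1$ is flipped by the deck transformation of $\wt G_{L_i}\to G$. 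Using $\iota$, I would define
\[
  \wt{g}\co o(y)\to o(gy),\qquad \wt g = \text{map induced by }(\wt g_0,\iota(\wt g_0)),
\]
and observe that if $\wt g_0$ is replaced by its other lift $-\wt g_0$, then $\iota(\wt g_0)$ is also replaced by its other lift, and the two sign changes cancel, so $\wt g$ is independent of the choice of $\wt g_0$. The functoriality $\wt{gh}=\wt g\circ\wt h$ follows because $\iota$ is a homomorphism of central extensions.

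Finally I would define the $G$-orientation. Choose any trivialization $\phi_x\co o(x)\stackrel{\cong}{\to}\RR$ for each $x\in L_0^H\cap L_1$. Define $\phi_{x,y}$ over $\mathcal{B}_g(x,y)$ as the composition
\[
  \det(D\dbar)\big|_{\mathcal{B}_g(x,y)}\cong o(gy)\otimes o(x)^* \stackrel{\wt g^{-1}\otimes\Id}{\relbar\joinrel\relbar\joinrel\relbar\joinrel\longrightarrow}o(y)\otimes o(x)^*\stackrel{\phi_y\otimes\phi_x^*}{\relbar\joinrel\relbar\joinrel\relbar\joinrel\longrightarrow}\RR.
\]
By construction $\phi_{x,y}$ is smooth in $g\in G$ (it depends only on $g$ through the diffeomorphism and through $\wt g$, both of which vary smoothly). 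To verify the coherence axiom of Definition~\ref{def:G-or-system}, I would compare the two ways to trivialize $\det(D\dbar)|_{\mathcal{B}_{gh}(x,z)}$: the non-equivariant Seidel coherence~\eqref{eq:seidel-glue-coh} reduces the equivariant diagram to the identity $\wt{gh}=\wt g\circ\wt h$ on the translation maps, which was established above.

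The main obstacle is the ambiguity-cancellation argument in the construction of $\wt g$: one must unpack Seidel's definition of $o(x)$ carefully enough to see that a simultaneous sign flip of $\wt g_0$ and $\wt g_1=\iota(\wt g_0)$ acts trivially, and that it is precisely the failure of $[P_0]=[P_1]\in H^2(BG;\ZZ/2)$ without $\iota$ that would otherwise prevent $\wt g$ from being well-defined. Once this is understood, smoothness and the gluing compatibility are formal consequences of their non-equivariant counterparts.
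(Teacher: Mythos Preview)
Your proposal is correct and follows essentially the same approach as the paper's proof. The paper decomposes your translation isomorphism $\wt{g}\co o(y)\to o(gy)$ into two pieces---a naturality isomorphism $o_{\brane_0,\brane_1}(y)\cong o_{g\brane_0,g\brane_1}(gy)$ and an isomorphism $o_{g\brane_0,g\brane_1}(gy)\cong o_{\brane_0,\brane_1}(gy)$ coming from $\iota$---and records the needed commutativities as separate diagrams, but the key sign-cancellation argument and the reduction of coherence to Seidel's non-equivariant gluing plus functoriality of the translation maps are the same. One point the paper makes explicit that you gloss over slightly: in the coherence check the pre-gluing is $u\natural(gv)$, so the orientation appearing on the $\mathcal{B}_h(y,z)$ factor is the $g$-twisted one, and one must observe (via the commutative diagrams) that this agrees with the untwisted orientation; this is implicit in your claim that coherence reduces to $\wt{gh}=\wt g\circ\wt h$.
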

\begin{proof}
  We claim that the isomorphism~\eqref{eq:iso-extens} induces an isomorphism
  \begin{equation}\label{eq:o-cong}
    o_{g\brane_0,g\brane_1}(x)\cong o_{\brane_0,\brane_1}(x),
  \end{equation}
  well-defined up to positive scaling. Let $\wt{g}_0,\wt{g}'_0$ be the
  preimages of $g$ in $\wt{G}_{L_0}$. The element $\wt{g}_0$ is an
  identification $\brane_0\cong g\brane_0$, and hence gives an
  identification $\brane_0|_{T_xL_0}\cong g\brane_0|_{T_{x}L_0}$.
  (This uses Hypothesis~\ref{hyp:gr-invt}, which says that the
  gradings are already identified.)  Similarly, $\iota(\wt{g}_0)$
  gives an identification
  $\brane_1|_{T_xL_1}\cong g\brane_1|_{T_{x}L_1}$. These two
  identifications then give the identification~\eqref{eq:o-cong}, as
  the orientation spaces are defined purely in terms of the brane
  data.

  The key point is that if we instead used $\wt{g}'_0$ and
  $\iota(\wt{g}'_0)$ we would get the same identification of
  orientation spaces: replacing $\wt{g}_0$ by $\wt{g}'_0$ and
  replacing $\iota(\wt{g}_0)$ by $\iota(\wt{g}'_0)$ each changes the
  sign of the identification~\eqref{eq:o-cong}. Further, continuity of
  $\iota$ implies that these identifications are continuous in $g$.
  
  Given group elements $g,h\in G$, Formula~\eqref{eq:o-cong} gives a commutative diagram
  \begin{equation}
    \label{eq:o-cong-1-comm}
    \begin{gathered}
      \xymatrix{
        o_{\brane_0,\brane_1}(x)\ar[r]^\cong_h \ar[dr]_\cong^-{gh} &o_{h\brane_0,h\brane_1}(x)\ar[d]^\cong_{g}\\
        & o_{gh\brane_0,gh\brane_1}(x).  }
    \end{gathered}
  \end{equation}

  There is also an identification
  \begin{equation}
    \label{eq:o-cong-2}
    o_{g\brane_0,g\brane_1}(gx)\cong o_{\brane_0,\brane_1}(x),    
  \end{equation}
  coming from naturality of the orientation spaces. The obvious
  analogue of Diagram~\eqref{eq:o-cong-1-comm} commutes, of course, as
  does the diagram
  \begin{equation}
    \label{eq:o-cong-2-comm}
    \begin{gathered}
      \xymatrix{
        o_{\brane_0,\brane_1}(x)\ar[r]^-h\ar[d]_{g} & o_{h\brane_0,h\brane_1}(x)\ar[d]_g\\
        o_{g\brane_0,g\brane_1}(gx)\ar[r]_-{ghg^{-1}} &
        o_{gh\brane_0,gh\brane_1}(gx), }
    \end{gathered}
  \end{equation}
  where the vertical arrows use the isomorphism~(\ref{eq:o-cong-2})
  and the horizontal arrows use the isomorphism~(\ref{eq:o-cong}).

  Fix an orientation
  $\phi_x\co
  o_{\brane_0,\brane_1}(x)\stackrel{\cong}{\longrightarrow}\RR$
  for each $x\in L_0^H\cap L_1$. These choices give an orientation of
  $\mathcal{B}_g(x,y)$ via the sequence of identifications
  \begin{align*}
    \det(D\dbar|_{\mathcal{B}_g(x,y)})
    &\cong (o_{\brane_0,\brane_1}(gy)\otimes o_{\brane_0,\brane_1}(x)^*)\times\mathcal{B}_g(x,y)\\
    &\cong (o_{g\brane_0,g\brane_1}(gy)\otimes o_{\brane_0,\brane_1}(x)^*)\times\mathcal{B}_g(x,y)\\
    &\cong (o_{\brane_0,\brane_1}(y)\otimes o_{\brane_0,\brane_1}(x)^*)\times\mathcal{B}_g(x,y)\\
    &\cong \underline{\RR}\otimes\underline{\RR}\cong\underline{\RR},
  \end{align*}
  where the isomorphisms use Seidel's result~\cite[Equation
  (12.2)]{SeidelBook} (with respect to a one-parameter family of
  Hamiltonians connecting $H$ and $gH$), Formula~(\ref{eq:o-cong}),
  Formula~(\ref{eq:o-cong-2}), and the orientations $\phi_x$ and
  $\phi_y$, respectively. These identifications are continuous in
  $g$, so globalize to give an orientation of $\det(D\dbar|_{B_G(x,y)})$.

  Before verifying
  coherence of these orientations (Definition~\ref{def:G-or-system}), note that given $g,h\in G$ and
  $y,z\in L_0^H\cap L_1$, instead of using the orientation defined
  above on $\det(D\dbar|_{\mathcal{B}_h(y,z)})$ we could introduce an
  action of $g$, via
  \begin{equation}\label{eq:no-twist-or}
    o_{\brane_0,\brane_1}(hz)\otimes o_{\brane_0,\brane_1}(y)^*\cong o_{g\brane_0,g\brane_1}(ghz)\otimes o_{g\brane_0,g\brane_1}(gy)^*\cong o_{\brane_0,\brane_1}(ghz)\otimes o_{\brane_0,\brane_1}(gy)^*,
  \end{equation}
  where the first isomorphism comes from the action of $g$ on $(M,L_0,L_1)$ as in Formula~(\ref{eq:o-cong-2}) and the second uses Formula~(\ref{eq:o-cong}).
  However, commutativity of Diagrams~(\ref{eq:o-cong-1-comm})
  and~(\ref{eq:o-cong-2-comm}) implies that this gives the same
  orientation of $\det(D\dbar|_{\mathcal{B}_h(y,z)})$.

  To verify coherence, consider the diagram
{\small
  \begin{equation}
    \label{eq:seidel-glue-coh-G}
    \begin{gathered}
      \xymatrix{
        \det(D\dbar|_{\mathcal{B}_h(y,z)})\otimes\det(D\dbar|_{\mathcal{B}_g(x,y)})\ar[r]^-{\psi_{x,y,z}}\ar[d] & 
        \gamma^*\det(D\dbar|_{\mathcal{B}_{gh}(x,z)})\ar[d]\\
        {\begin{aligned}o_{\brane_0,\brane_1}(ghz)&\otimes o_{\brane_0,\brane_1}(gy)^*\\\otimes o_{\brane_0,\brane_1}(gy)&\otimes o_{\brane_0,\brane_1}(x)^* \end{aligned}}\ar[r]\ar[d] & o_{\brane_0,\brane_1}(ghz)\otimes o_{\brane_0,\brane_1}(x)^*\ar[d]\\
        {\begin{aligned}o_{gh\brane_0,gh\brane_1}(ghz)&\otimes o_{g\brane_0,g\brane_1}(gy)^*\\\otimes o_{g\brane_0,g\brane_1}(gy)&\otimes o_{\brane_0,\brane_1}(x)^*\end{aligned}} \ar[r]\ar[d]& o_{gh\brane_0,gh\brane_1}(ghz)\otimes o_{\brane_0,\brane_1}(x)^*\ar[d]\\
        o_{\brane_0,\brane_1}(z)\otimes o_{\brane_0,\brane_1}(y)^*\otimes o_{\brane_0,\brane_1}(y)\otimes o_{\brane_0,\brane_1}(x)^* \ar[r]\ar[d]^{\phi_z\otimes\phi_y\otimes\phi_y\otimes\phi_x}& o_{\brane_0,\brane_1}(z)\otimes o_{\brane_0,\brane_1}(x)^*\ar[d]^{\phi_z\otimes\phi_x}\\
        {\RR}\otimes{\RR}\otimes{\RR}\otimes{\RR}\ar[r] & {\RR}\otimes{\RR}.
      }
    \end{gathered}
  \end{equation}
}
  It follows from the same argument as in the non-invariant
  case~\cite[Formula (12.4)]{SeidelBook} that the top square commutes
  up to positive scaling: this is simply gluing moduli spaces defined
  with respect to a variety of Hamiltonian perturbations of $L_0$. The
  vertical arrows in the second square come from the
  identification~\eqref{eq:o-cong}. The vertical arrows in the third
  square come from the identification~\eqref{eq:o-cong-2}, and the
  fourth square uses our chosen orientations $\phi_x$.  The second,
  third, and fourth squares all obviously commute, as we are using the
  same identifications on the two sides. Note that the left hand
  column is not the definition of the orientation on
  $\det(D\dbar|_{\mathcal{B}_h(y,z)})$, but rather is that orientation
  twisted by $g$ in the sense of Formula~(\ref{eq:no-twist-or}); but
  as discussed there this in fact gives the same orientation.

  Thus, the orientations we have chosen give a $G$-orientation, as
  desired.
\end{proof}

\begin{example}\label{eg:or-2}
  Continuing with Example~\ref{eg:coh-or}, recall that
  $\Pin(1)=O(1)\times O(1)$ (which is the same as the Klein four
  group), and the map to $O(1)$ is projection to the first factor,
  say. Thus, specifying a $\Pin(1)$-structure on a line bundle $E$ is
  the same as specifying a second line bundle $E'$; thinking of this
  as lifting the map $X\to BO(1)$ to a map $X\to BO(1)\times BO(1)$,
  this new line bundle is the projection to the second factor (while
  the line bundle $E$ is projection to the first
  factor). In particular, we can choose equivariant $\Pin$-structures
  on the Lagrangians $L_i$.

  To specify equivariant gradings, view $T^2$ as a quotient of the
  unit square in $\CC$. The tangent bundle inherits a complex
  trivialization from $\CC$. Then for a curve $\gamma$ in $T^2$ we can
  view $\gamma'(t)$ as lying in $\CC$. The square phase map $\alpha$
  is $\alpha(\gamma(t))=\gamma'(t)^2/\|\gamma'(t)\|^2\in S^1$. If we
  view the Lagrangians $L_0$ as $\{1/2+it\mid t\in[0,1]\}/\sim$ and
  $L_1=\{1/2+\sin(2\pi t)/100+it\mid t\in[0,1]\}/\sim$ then
  $\alpha_{L_0}(t)$ is the constant function $-1$, which has an
  obvious lift to $\RR$, while $\alpha_{L_1}(t)$ is non-constant but
  clearly null-homotopic, and hence lifts to $\RR$. Such lifts give
  gradings on the $L_i$. The action of $\ZZ/2$ on $T^2$ is rotation
  around $1/2+i/2$, and the gradings are clearly equivariant.

  Thus, the Lagrangians $L_0$ and $L_1$ admit equivariant
  $\pin$-structures and gradings, and in fact any choice of
  $\pin$-structures can be made equivariant.
\end{example}

Finally, we note that a coherent orientation with respect to $(J,H)$
induces a coherent orientation with respect to certain
semi-cylindrical pairs. First, fix $g$ and consider the space
$\GJHspace^g$ of semi-cylindrical pairs $(\wt{J},\wt{H})$ connecting
$(J,H)$ and $g(J,H)$. This space is contractible, since both the space
of almost complex structures compatible with $\omega$ and the space of
Hamiltonian functions are contractible. The determinant line bundle
$\det(D\dbar)$ extends to a bundle over
$\mathcal{B}_g(x,y)\times \GJHspace^g$, and contractibility of
$\GJHspace^g$ implies that a trivialization of $\det(D\dbar)$ over
$\mathcal{B}_g(x,y)$ induces a trivialization over
$\mathcal{B}_g(x,y)\times \GJHspace^g$. More generally, let
$\GJHspace=\bigcup_{g\in G}\GJHspace^g$ (with topology inherited from
the product topology on $G\times \JSpace$); see also
Equation~\ref{eq:GJHspace-def}. Then there is a fiber bundle
$\GJHspace\times_G \mathcal{B}_G(x,y)\to \mathcal{B}_G(x,y)$ with
contractible fiber. So, a trivialization of $\det(D\dbar)$ over
$\mathcal{B}_G(x,y)$ induces a trivialization over
$\GJHspace\times_G \mathcal{B}_G(x,y)$. Given a $G$-orientation
system, these trivializations clearly satisfy the condition of
Definition~\ref{def:G-or-system} with $\mathcal{B}_g(x,y)$
(respectively $\mathcal{B}_h(x,y)$) replaced by
$\mathcal{B}_g(x,y)\times \GJHspace^g$ (respectively
$\mathcal{B}_h(x,y)\times\GJHspace^h$).

\subsection{Polytopes and moduli spaces}\label{subsec:moduli}
We continue to fix a generic cylindrical pair $(J,H)$.
We will be interested in semi-cylindrical pairs whose asymptotics are related to $(J,H)$ by elements of $G$. Precisely, let
\begin{equation}\label{eq:GJHspace-def}
  \GJHspace=\{(g,\JH)\in G\times \JSpace\mid \JH^{-\infty}=(J,H),\ \JH^{+\infty}=g(J,H)\},
\end{equation}
which we endow with the subspace topology. There are projections
$\pi_G\co\GJHspace\to G$ and $\pi_{\JSpace}\co \GJHspace\to\JSpace$.

Let $\Polyh^n$ be an $n$-dimensional (compact) polytope. The main case of
interest will be
$\Polyh^n=[0,1]^n$.  We will be interested in families
$\sigma=(g_\sigma,\JH_\sigma)\co \Polyh\to \GJHspace$ which are smooth in the
following sense:
\begin{definition}\label{def:smooth-family}
  Call a continuous map
  $\sigma=(g_\sigma,\JH_\sigma)\co \Polyh\to \GJHspace$ \emph{smooth}
  if it satisfies the following conditions:
  \begin{enumerate}
  \item The map $g_\sigma\co \Polyh\to G$ is smooth (i.e., extends to
    a smooth map on some open neighborhood of $\Polyh$ in $\RR^N$).
  \item For each open face $F$ of $\Polyh$ (of any dimension), the
    number of non-cylindrical stories of $\JH_\sigma|_F$ is constant.
  \item For each open face $F$ of $\Polyh$ (of any dimension) and each
    story $(\wt{J}_i,\wt{H}_i)$ of $\JH_\sigma|_{F}$, $\wt{J}_i$ is
    smooth when viewed as a fiberwise almost complex structure on the
    $M$-bundle 
  \[
    M\times [0,1]\times\RR\times F\to [0,1]\times\RR\times F
  \]
  and $H_i$ is a smooth function $M\times\RR\times F\to\RR$.
  \end{enumerate}
\end{definition}

Given such a family $\sigma\co \Polyh\to \GJHspace$, choose a point $v_0\in \Polyh$ and let
$g(\sigma)=g_\sigma(v_0)\in G$. Given $x,y\in L_0^H\cap L_1$, by
the \emph{$\sigma$-twisted homotopy classes from $x$ to $y$} we mean
$\pi_2^{g(\sigma)}(x,y)$. Since $\Polyh$ is simply connected, the
map $\xi_{g_\sigma\circ \gamma_t}$ associated to any path
$\gamma_t$ in $\Polyh$ from $v_0$ to $v\in\Polyh$ gives a
canonical bijection
$\pi_2^{g_\sigma(v)}(x,y)\cong \pi_2^{g(\sigma)}(x,y)$.

Given a family $\sigma=(g_\sigma,\JH_\sigma)$ and a $\sigma$-twisted homotopy class
$\phi$, we can define
\begin{align*}
  \cM(\phi;\sigma)&=\bigcup_{\quad\mathclap{v\in\interior(\Polyh)}\quad}\cM(\phi;\wt{J}(v),\wt{H}(v))\\
  \ocM(\phi;\sigma)&=\bigcup_{\quad\mathclap{v\in\Polyh}\quad}\ocM(\phi;\wt{J}(v),\wt{H}(v)).
\end{align*}
The spaces $\cM(\phi;\sigma)$ and $\ocM(\phi;\sigma)$ have natural topologies which we
again leave to the reader to spell out.

\begin{definition}
  Given a smooth family $\sigma\co \Polyh\to\GJHspace$ (in the sense
  of Definition~\ref{def:smooth-family}), we say that
  $\cM(\phi;\sigma)$ is \emph{transversely cut out} by the
  $\dbar$-operator if the following condition is satisfied for each
  open face $F$ of $\Polyh$ (of any dimension). Let
  $\JH|_F=(\wt{J_k},\wt{H}_k)\circ\cdots\circ(\wt{J}_1,\wt{H}_1)$ be
  the decomposition of $\JH|_F$ into stories. Then at each
  $u=(u_1,\dots,u_k)\in\cM(\phi;\sigma|_F(v))$ (where $v\in F$) we
  require that for each $i$, the linearized $\dbar$-operator with
  respect to $(\wt{J}_i,\wt{H}_i)$ at $u_i$ is transverse to the
  $0$-section and the map
  \[
    \cM([u_1];\wt{J}_1,\wt{H}_1)\times\cdots\times\cM([u_k];\wt{J}_k,\wt{H}_k)\to
    F\times\cdots\times F
  \]
  is transverse to the (thin) diagonal
  $\Delta=\{(v,v,\dots,v)\mid v\in F\}$ at $v$, where $[u_i]$ is the
  homotopy class of $u_i$.
\end{definition}
Note that this transversality condition does not impose a restriction
in the normal direction to the facets.

Given a submanifold $Y^n\subset X^m$ and a map $\pi\co Z\to X$ from
another manifold $Z^p$ with $p\leq m-n$, we say that $\pi$ is
\emph{topologically transverse} to $Y$ if the following condition is
satisfied. If $p<m-n$, then we require $\pi(Z)\cap Y=\emptyset$. If
$p=m-n$, we require for each point $z\in \pi^{-1}(Y)$ that there is an open
neighborhood $U\ni z$ in $Z$, an open neighborhood $V\ni
\pi(z)$ and a homeomorphism $\psi\co V\to \RR^m$ so that $\psi(Y\cap
V)=\RR^n\times\{0\}$ and
$\psi(\pi(U))=\{0\}\times\RR^{m-n}$.
This extends in an obvious way to the case of manifolds with
boundary. In particular, if $Z$ is one-dimensional and $Y=\bdy X$ then
$\pi$ is topologically transverse to $Y$ if there is a topological
collar neighborhood $Y\times[0,\epsilon)$ of $Y$ so that
$\pi(U)=\{\pi(z)\}\times[0,\epsilon)$.

\begin{definition}\label{def:generic-pair}
  We say that a smooth map $\sigma\co \Polyh^n\to \GJHspace$ is
  \emph{generic} if it
  satisfies the following conditions:
  \begin{enumerate}[resume, label=($\wt{G}$-\arabic*)]
  \item\label{item:family-transv} For any $\sigma$-twisted homotopy class $\phi$ with
    $\mu(\phi)\leq -n$, the moduli space $\cM(\phi;\sigma)$ is
    transversely cut out by the $\dbar$ equation.
  \item\label{item:family-transv-transv} For any $\sigma$-twisted
    homotopy class $\phi$ with $\mu(\phi)\leq -n$ and any face $F$ of
    $\Polyh^n$ (of any dimension), the projection $\ocM(\phi;\sigma)\to \Polyh^n$ is
    topologically transverse to $F$.
  \item\label{item:family-transv-faces} The restriction of $\sigma$ to any face of $\Polyh^n$ (of any dimension) satisfies Conditions~\ref{item:family-transv} and~\ref{item:family-transv-transv}, with $n$ replaced by the dimension of the face (i.e., is itself generic).
  \end{enumerate}
  We say that $\sigma$ is \emph{very generic} if the above hold with
  $\mu(\phi)\leq -n$ replaced by $\mu(\phi)\leq -n+1$.
\end{definition}
 
The property of being (very) generic is preserved by a kind of composition:
\begin{lemma}\label{lem:exterior-prod-generic}
  Let $P_1$ and $P_2$ be polyhedra and $\sigma_i\co P_i\to \GJHspace$ a generic
  (respectively very generic) map for $i=1,2$. Define a map $\tau\co P_1\times P_2\to\GJHspace$ by
  \begin{equation}\label{eq:exterior-prod-polyh}
    \tau(p_1,p_2)=\bigl(g_{\sigma_1}(p_1)g_{\sigma_2}(p_2), (g_{\sigma_1}(p_1)\cdot\JH_{\sigma_2}(p_2))\circ\JH_{\sigma_1}(p_1)\bigr).
  \end{equation}
  Then $\tau$ is generic (respectively very generic).
\end{lemma}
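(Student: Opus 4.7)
The plan is to reduce the required transversality for $\tau$ to that for the factors $\sigma_1$ and $\sigma_2$, by exhibiting each moduli space for $\tau$ as a product (up to a smooth group twist) of moduli spaces for the $\sigma_i$. First I would set up the structure: any face of $P_1\times P_2$ is a product $F_1\times F_2$ with $F_i$ a face of $P_i$; smoothness of $\tau$ in the sense of Definition~\ref{def:smooth-family} is inherited from the smoothness of $\sigma_1$, $\sigma_2$, and the $G$-action on $\JSpace_{\cyl}$; and the story decomposition of $\JH_\tau|_{F_1\times F_2}$ consists of the stories of $\JH_{\sigma_1}|_{F_1}$ followed by the $g_{\sigma_1}(p_1)$-translates of the stories of $\JH_{\sigma_2}|_{F_2}$.

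Next, using the description of twisted homotopy classes as concatenations from Section~\ref{sec:twisted-pi2}, a $\tau$-twisted class $\phi$ from $x$ to $y$ splits as $\phi_1*\phi_2$ over intermediate points $z\in L_0^H\cap L_1$, with $\mu(\phi)=\mu(\phi_1)+\mu(\phi_2)$, and correspondingly
\begin{equation*}
  \cM(\phi;\tau|_{F_1\times F_2})=\coprod_{\phi_1*\phi_2=\phi}\cM(\phi_1;\sigma_1|_{F_1})\times \bigl(g_{\sigma_1}(p_1)\cdot\cM(\phi_2;\sigma_2|_{F_2})\bigr)
\end{equation*}
as a family over $F_1\times F_2$. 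Since the first factor is constant in $p_2$ and the second factor, after the smooth twist by $g_{\sigma_1}(p_1)$ (a diffeomorphism of moduli spaces by $G$-invariance of $\omega$), is constant in $p_1$, both surjectivity of the linearized $\dbar$-operators at each point and transversality of the evaluation map against the diagonal in $(F_1\times F_2)^{k_1+k_2}$ split into the analogous conditions for the individual factors. The compactifications $\ocM(\phi;\tau|_{F_1\times F_2})$ admit the same product description, since every expansion of $\tau|_{F_1\times F_2}$ is an expansion of the $\sigma_1$-part followed by a translated expansion of the $\sigma_2$-part.

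The heart of the proof, and the main obstacle, is the Maslov bookkeeping needed to verify conditions~\ref{item:family-transv} and~\ref{item:family-transv-faces} of Definition~\ref{def:generic-pair}. Given $\phi$ with $\mu(\phi)\leq-(n_1+n_2)$, either both $\mu(\phi_1)\leq-n_1$ and $\mu(\phi_2)\leq-n_2$, in which case genericness of $\sigma_i|_{F_i}$ (applicable since $\dim F_i\leq n_i$) directly gives transversality of each factor and hence of the product; or else one Maslov index, say $\mu(\phi_1)$, exceeds $-n_1$, forcing $\mu(\phi_2)<-n_2\leq-\dim F_2$, so by genericness of $\sigma_2|_{F_2}$ the moduli space $\cM(\phi_2;\sigma_2|_{F_2})$ is transversely cut out of strictly negative expected dimension and hence empty, making the product vacuously transverse. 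The very-generic case is identical with $-(n_1+n_2)$ replaced by $-(n_1+n_2)+1$ throughout. Finally, topological transversality (condition~\ref{item:family-transv-transv}) of $\ocM(\phi;\tau)\to P_1\times P_2$ against a face $F_1\times F_2$ follows from the same condition for the $\ocM(\phi_i;\sigma_i)\to P_i$: the boundary strata of $\ocM(\phi;\tau)$ decompose as products of strata of the $\ocM(\phi_i;\sigma_i)$, the product of the topological collar neighborhoods of $F_1\subset P_1$ and $F_2\subset P_2$ is a topological collar of $F_1\times F_2\subset P_1\times P_2$, and the product (or empty) structure of the moduli-space projection fits through this collar.
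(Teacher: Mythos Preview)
Your approach is essentially the same as the paper's: decompose $\cM(\phi;\tau)$ as a union over splittings $\phi=\phi_1*\phi_2$ of products $\cM(\phi_1;\sigma_1)\times\cM(\phi_2;\sigma_2)$, then use additivity of the Maslov index together with the genericness of the $\sigma_i$ to conclude. The paper streamlines one point you handle more laboriously: since every face of $P_1\times P_2$ is itself a product $F_1\times F_2$ and $\tau|_{F_1\times F_2}$ is again given by Formula~\eqref{eq:exterior-prod-polyh} with $\sigma_i$ replaced by the (still generic) $\sigma_i|_{F_i}$, condition~\ref{item:family-transv-faces} follows automatically once~\ref{item:family-transv} and~\ref{item:family-transv-transv} are checked on the top cell, so there is no need to carry both $n_i$ and $\dim F_i$ through the bookkeeping simultaneously.
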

\begin{proof}
  Since the restriction of $\tau$ to each face of $P_1\times P_2$ is defined by
  Formula~\eqref{eq:exterior-prod-polyh}, it suffices to verify
  Conditions~\ref{item:family-transv} and~\ref{item:family-transv-transv}. We
  will focus on the (slightly harder) very generic case.
  Let $n_i=\dim(P_i)$.
  Given a homotopy class $\phi$, the corresponding moduli space is 
  \[
    \bigcup_{\psi_1*\psi_2=\phi}\cM(\psi_1;\sigma_1)\times \cM(\psi_2;\sigma_2).
  \]
  Since the $\sigma_i$ are generic, if $\mu(\phi)\leq -n_1-n_2$ then this moduli
  space is empty unless $\mu(\psi_1)=-n_1$ and $\mu(\psi_2)=-n_2$, in which case
  the moduli space is transversally cut out. Similarly,
  if $\mu(\phi)=-n_1-n_2+1$ then this moduli space is empty unless either
  \begin{itemize}
  \item $\mu(\psi_1)=-n_1$ and $\mu(\psi_2)=-n_2+1$ or
  \item $\mu(\psi_1)=-n_1+1$ and $\mu(\psi_2)=-n_2$,
  \end{itemize}
  and in both of those cases the moduli spaces are transversally cut
  out. Further, in each of these cases the intersection of the moduli space with
  each face of the boundary is clearly transverse.
\end{proof}

We recall some fundamental results about these moduli spaces of
holomorphic curves:
\begin{theorem}\label{thm:compactness} (Compactness)
  Given a convergent sequence $(\wt{J}_i,\wt{H}_i)\in\JSpace$ and
  a sequence $\{u_i\}$ of $(\wt{J}_i,\wt{H}_i)$-holomorphic disks with
  bounded area there is a subsequence of $\{u_i\}$ which converges
  to a broken holomorphic disk. In particular, the moduli spaces
  $\ocM(\phi;\sigma)$ are compact.
\end{theorem}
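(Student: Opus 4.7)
The plan is to reduce the statement to the standard Gromov--Floer compactness for moduli spaces with varying almost complex structure and Hamiltonian, with additional bookkeeping to accommodate the stratified space $\JSpace$ of multistory semi-cylindrical pairs.

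First, I would confine every $u_i$ to a fixed compact subset of $M$. In the compact case this is automatic; in the convex-at-infinity case, Hypothesis~\ref{hyp:Floer-defined}~\ref{item:J-2} is designed for precisely this purpose: each $\wt{J}_i$ agrees with the $G$-invariant $I$ outside a fixed $G$-invariant compact $K$, and each $\wt{H}_i$ is supported in $K$, so the unperturbed $I$-holomorphic equation holds outside $K$. The standard $I$-plurisubharmonic maximum-principle argument (Gromov; see e.g.\ the setup in~\cite{KhS02:BraidGpAction} or the discussion around convexity in~\cite{SeidelSmith10:localization}) together with the compact-or-conical Lagrangian boundary condition then forces the images of the $u_i$ to lie in a single compact $K' \supset K$ uniformly in $i$.

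Second, derive a uniform Floer energy bound: the area bound bounds $\int u_i^*\omega$ directly, and bounded $C^0$-norm of the $\wt{H}_i$ on $K'$ (from convergence in $\JSpace$) controls the Hamiltonian correction. Now apply the standard Floer--Gromov compactness theorem for Lagrangian intersection Floer theory with moving data: a subsequence of $\{u_i\}$ converges, after reparametrization in the $\RR$-direction inside each noncylindrical story, to a broken holomorphic curve. The limit can acquire extra $\RR$-invariant stories wherever energy concentrates on a long cylindrical segment between existing stories (or beyond them), which is precisely the notion of an ``expansion'' of $\JH$ entering the definition of $\ocM(\phi;\wt{J},\wt{H})$. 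Any sphere or disk bubble in the limit would give a nonzero class in $\pi_2(M)$ or in the relevant $\pi_2(M,L_i)$ with positive $\omega$-area; such classes are excluded by Hypothesis~\ref{hyp:Floer-defined}~\ref{item:J-1} (applied to constant loops of paths to kill $\pi_2(M)$, and as stated for disks with boundary on $L_0 \cup L_1$). The same argument with the $(\wt{J}_i,\wt{H}_i)$ replaced by a convergent sequence inside a single fiber rules out the corresponding bubbling for fixed pairs.

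For the second sentence, apply the preceding result with $(\wt{J}_i,\wt{H}_i) = \sigma(v_i)$ for any sequence $v_i \in \Polyh$. The homotopy class $\phi$ (interpreted via the canonical identifications from Section~\ref{sec:twisted-pi2}) fixes the symplectic area of $u_i$ up to the uniformly bounded Hamiltonian contribution, so the area hypothesis is satisfied and we obtain a convergent subsequence whose limit lies in $\ocM(\phi;\sigma)$ by the definition of the latter.

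The main obstacle is combinatorial rather than analytic: one must verify that when $\JH_i \to \JH$ in $\JSpace$ (so that new $\RR$-invariant stories can appear in $\JH$ itself as gluing parameters go to $\infty$), the bubbling-off / neck-stretching analysis produces a broken curve whose stratification in the story direction matches exactly the expansion structure defining $\ocM$, rather than some coarser or finer combinatorial object. This amounts to matching Hofer-type limiting descriptions of sequences of pseudoholomorphic strips with the stratification by difference coordinates $d_1,\dots,d_{2n-1}$ on $\oMarkLine$, and is the only place where the setup of Section~\ref{sec:J-space} is actually used beyond standard Lagrangian Floer compactness.
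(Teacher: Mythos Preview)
Your proposal is correct and matches the paper's approach: the paper does not give a detailed proof at all, but simply cites Floer's original compactness theorem and Sullivan's extension, noting that Hypothesis~\ref{hyp:Floer-defined}~\ref{item:J-1} rules out sphere and disk bubbles and gives the energy bound, while Hypothesis~\ref{hyp:Floer-defined}~\ref{item:J-2} prevents escape to infinity. Your outline is in fact more detailed than what the paper provides, particularly your discussion of the combinatorial matching between the broken-curve limit and the expansion structure on $\ocM$, which the paper leaves entirely implicit.
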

This is a modest extension of Floer's compactness
theorem~\cite[Proposition 2.2]{Floer88:LagrangianHF}, and a special
case of, e.g., Sullivan's extension of Floer's result~\cite[Theorem
2.7]{Sullivan02:K}. Note in particular that
Hypothesis~\ref{hyp:Floer-defined} rules out bubbling disks and
spheres (by Item~\ref{item:J-1}) and curves escaping to infinity
(Item~\ref{item:J-2}), and gives a uniform bound on the energy of
holomorphic disks in $\ocM(\phi;\sigma)$ (again by Item~\ref{item:J-1}).

\begin{theorem}\label{thm:transversality} (Transversality)
  If
  \[
  g_\sigma\co \Polyh^n\to G\qquad\text{and}\qquad
  \sigma_\bdy\co\bdy \Polyh\to\GJHspace
\]
  are such that $\pi_G\circ\sigma_\bdy=g_\sigma|_{\bdy \Polyh}$ and
  the restriction of $\sigma_\bdy$ to each boundary facet is generic
  (respectively very generic) then there is a generic (respectively
  very generic) extension $\sigma\co \Polyh\to\GJHspace$ of
  $\sigma_\bdy$ so that $g_\sigma=\pi_G\circ \sigma$.
\end{theorem}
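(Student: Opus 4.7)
The theorem is a parametric transversality result of Floer-Hofer-Salamon type, now constrained by the requirement $\pi_G\circ\sigma=g_\sigma$ and by prescribed boundary data $\sigma_\bdy$. Because $g_\sigma$ is fixed, all freedom lies in choosing the semicylindrical pair $\JH_\sigma(v)$ over each $v\in\Polyh$, subject to $\JH_\sigma(v)^{-\infty}=(J,H)$ and $\JH_\sigma(v)^{+\infty}=g_\sigma(v)\cdot(J,H)$. First I would produce some smooth extension $\sigma_0\co\Polyh\to\GJHspace$ of $\sigma_\bdy$: choose a collar $\bdy\Polyh\times[0,\eps)\hookrightarrow\Polyh$ on which $\sigma_0$ equals $\sigma_\bdy$ composed with the canonical concatenation recording the change in $g_\sigma$, then extend arbitrarily using a partition of unity. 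This is possible because the fiber of $\GJHspace\to G$ over a fixed $g$, once a reference pair is fixed, is a convex subset of a Fr\'echet space.

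With $\sigma_0$ in hand, I would set up a Banach manifold $\mathcal{P}$ of perturbations of $\sigma_0$ supported in $\interior(\Polyh)$ and, for each twisted homotopy class $\phi$ with $\mu(\phi)\leq -n$ (respectively $\leq -n+1$), form the universal moduli space
\[
  \cM^{\mathrm{univ}}(\phi)=\{(u,v,\sigma)\mid \sigma\in\mathcal{P},\ v\in\Polyh,\ u\in\cM(\phi;\JH_\sigma(v))\}.
\]
The key analytic input is surjectivity of the fiberwise linearization at every simple broken strip $u=(u_1,\dots,u_k)$: story-by-story, choose an injective regular point $(s_i,t_i)$ on each non-constant factor $u_i$ away from the asymptotics, and vary $(\wt J_i,\wt H_i)$ in a neighborhood of $v$ and near $(s_i,t_i)$ to fill out the $L^p$-cokernel. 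This is the standard somewhere-injective argument, augmented by the freedom to vary $v\in\interior(\Polyh)$ to achieve transversality of the evaluation map into the thin diagonal $\Delta\subset F^k$ for each face $F$. Surjectivity makes $\cM^{\mathrm{univ}}(\phi)$ a smooth Banach manifold, and Sard-Smale applied to its Fredholm projection to $\mathcal{P}$ yields a comeager set of $\sigma\in\mathcal{P}$ for which $\cM(\phi;\sigma)$ is transversally cut out. Intersecting over the countably many relevant $\phi$ produces a comeager subset achieving Condition~\ref{item:family-transv}; topological transversality of $\ocM(\phi;\sigma)\to\Polyh$ at each face (Condition~\ref{item:family-transv-transv}) then follows from the local product structure supplied by standard gluing for broken trajectories. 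I would run the whole argument inductively on $\dim\Polyh$, handling faces of increasing dimension so that Condition~\ref{item:family-transv-faces} is built in from the inductive hypothesis and from the genericity of $\sigma_\bdy$.

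The principal obstacle is compatibility along the boundary: the perturbation must vanish on $\bdy\Polyh$ in order to preserve $\sigma_\bdy$, yet must still suffice to cover cokernels for broken configurations whose stories sit near the boundary. The resolution is that the cokernel-controlling factors are non-$\RR$-invariant strips whose parameter $v$ can be chosen in $\interior(\Polyh)$ (or in the interior of a lower-dimensional face, already treated by the induction), while the trivial cylindrical stories appearing at broken ends impose only transverse normal constraints and do not obstruct surjectivity. A secondary technicality is countability of the relevant homotopy classes; this follows from the uniform energy bound guaranteed by Hypothesis~\ref{hyp:Floer-defined}\ref{item:J-1} and Theorem~\ref{thm:compactness}, so that the countable intersection of comeager sets of perturbations remains comeager.
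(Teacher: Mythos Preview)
Your approach is correct in spirit and uses the same Floer--Hofer--Salamon / Sard--Smale framework as the paper, but the paper organizes the argument differently by making two preliminary reductions that you skip. First, since by hypothesis the relevant moduli spaces over the corners of $\Polyh$ are empty (dimension counting plus the genericity of $\sigma_\bdy$), any extension whatsoever is generic on a neighborhood of the corners, reducing to the problem of extending from $\bdy\bD^n$ to $\bD^n$. Second, and more substantially, the paper uses gluing to extend $\sigma_\bdy$ into a collar $\bdy\bD^n\times[0,\epsilon]$: where $\sigma_\bdy$ is multi-story, glue with a large parameter $\chi(t)$ depending on the collar coordinate $t$, so that over each spherical shell the pairs are $1$-story and the low-index moduli spaces are transversely cut out and consist of a single point near each boundary point. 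This collar construction simultaneously makes Condition~\ref{item:family-transv-transv} hold by fiat (the moduli space over the collar is a product with $[0,\epsilon)$) and reduces the remaining universal-moduli-space argument to the genuinely standard single-story case, with full freedom to perturb away from a fixed closed collar $\ol{V}$.

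Your direct route is workable, but your ``principal obstacle'' paragraph is where the honest content lies, and it is vaguer than it should be. The difficulty you identify---that perturbations vanish on $\bdy\Polyh$ yet broken configurations live near there---is exactly what the paper's gluing-collar resolves; your proposed resolution (push $v$ into the interior) does not by itself explain why the resulting moduli spaces are topologically transverse to the boundary strata, nor how you control multi-story configurations inherited from $\sigma_\bdy$ without invoking gluing anyway. The induction on $\dim\Polyh$ you propose is unnecessary here: genericity on lower faces is already part of the hypothesis. In short, both arguments land in the same Sard--Smale endgame, but the paper's reductions buy a cleaner setup in which only the standard single-story somewhere-injective analysis is needed.
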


That is, we can find a dashed arrow in the following diagram (very)
generically, assuming the top arrow is (very) generic:
\[
  \xymatrix{
    \partial \Polyh\ar@{^{(}->}[d] \ar[r]^{\sigma_\bdy}& \GJHspace\ar[d]^-{\pi_G}\\
    \Polyh\ar[r]_{g_\sigma} \ar@{-->}[ur]^{\sigma}& G.
  }
\]

\begin{proof}[Outline of proof]
  This is a simple adaptation of standard arguments
  (e.g.,~\cite{FHS95:transversality} or~\cite[Chapter
  3]{MS04:HolomorphicCurvesSymplecticTopology}). For definiteness,
  we will focus on the (slightly harder) very generic case.

  We are already given $\pi_G\circ \sigma$, so the goal is to extend
  $\pi_{\JSpace}\circ \sigma_\bdy$ from $\bdy \Polyh$ to $\Polyh$
  compatibly with $\pi_G\circ\sigma$. In what follows, we will denote
  the extension of $\pi_{\JSpace}\circ \sigma_\bdy$ we are
  constructing by $\JH_\sigma$.

  The assumption that $\sigma_\bdy$ is very generic on the boundary
  implies that for each $\phi$ with $\mu(\phi)\leq -n$ the space
  $\bigcup_{p\in\bdy \Polyh}\cM(\phi;\wt{J}(p),\wt{H}(p))$ is empty, while for each
  $\phi$ with $\mu(\phi)=-n+1$ the space
  $\bigcup_{p\in\bdy \Polyh}\cM(\phi;\wt{J}(p),\wt{H}(p))$ consists of finitely many
  points, none of which lie over the codimension $>1$ faces of $\Polyh$ (the \emph{corners} of $\Polyh$). Further,
  Hypothesis~\ref{hyp:Floer-defined} ensures that there are only
  finitely many homotopy classes $\phi$ with $\mu(\phi)\leq -n+1$ for
  which these moduli spaces are non-empty.
 
  Given an open subset $U\subset \Polyh$ and a map
  $\sigma_{J}\co \Polyh\to\JSpace$ it makes sense to say that $\JH_\sigma$ is
  very generic on $U$. Since the moduli spaces lying over the corners
  of $\Polyh$ are empty, by compactness given any extension of $\pi_{\JSpace}\circ \sigma_\bdy$
  to a neighborhood of the corners of $\Polyh$ there is a smaller
  neighborhood on which this extension is very generic (because the
  corresponding moduli spaces are empty). So, we may assume $\JH_\sigma$ is
  given and very generic in a neighborhood of the corners. Thus,
  instead of choosing a very generic extension of $\sigma_\bdy$ from $\bdy \Polyh$
  to $\Polyh$ it suffices to find a very generic extension from
  $\bdy\bD^n$ to $\bD^n$. (Upshot: no more corners.)

  Our next reduction is to the case that the image of
  $\pi_{\JSpace}\circ \sigma_\bdy$ is contained in the subspace of $1$-story
  semicylindrical pairs. Identify a closed neighborhood of $\bdy\bD^n$ with
  $\bdy\bD^n\times[0,\epsilon]$.  It follows from standard gluing arguments
  (compare Theorem~\ref{thm:gluing}) that given a very generic semicylindrical pair with
  length $>1$ for any gluing $\gluing(\pi_{\JSpace}\circ\sigma_\bdy)$ of
  $\pi_{\JSpace}\circ \sigma_\bdy$ with sufficiently large gluing parameters,
  the moduli spaces of $\gluing(\pi_{\JSpace}\circ\sigma_\bdy)$-holomorphic disks in the homotopy
  classes $\phi$ with $\mu(\phi)\leq -n+1$ are also transversely cut out. In
  particular, we can extend $\pi_{\JSpace}\circ \sigma_\bdy$ to
  $\bdy \bD^n\times[0,\epsilon]$ as follows. 
  If the semi-cylindrical pairs in $\pi_{\JSpace}\circ \sigma_\bdy$ have one
  story, define
  $\JH_{\sigma,\nbd(\bdy)}'\co \bdy \bD^n\times[0,\epsilon]\to\JSpace$ to be
  $\JH_{\sigma,\nbd(\bdy)}'(p,t)=\pi_{\JSpace}\bigl(\sigma_\bdy(p)\bigr)$. Otherwise,
  choose a homeomorphism $\chi\co (0,\epsilon]\to [R,\infty)$ for some $R\gg0$
  and let $\JH_{\sigma,\nbd(\bdy)}'(p,t)$ be the result of gluing
  $\pi_{\JSpace}\bigl(\sigma_\bdy(p)\bigr)$ with gluing parameter $\chi(t)$. For
  $R$ sufficiently large, for each fixed $t$ (i.e., on each spherical shell) the moduli space of holomorphic
  disks with respect to $\bigcup_{p\in \bdy\bD^n}\JH_{\sigma,\nbd(\bdy)}'(p,t)$
  is transversely cut out, and consists of a single point near each point in the
  moduli space with respect to $\pi_{\JSpace}\circ\sigma_\bdy$. In particular,
  these moduli spaces are topologically transverse to the boundary.
  
  The asymptotics of $\JH_{\sigma,\nbd(\bdy)}'$ at $+\infty$ are the same as for
  $\sigma_\bdy$ (i.e., are constant in the $[0,\epsilon]$-direction), but
  $g_\sigma\cdot(J,H)$ is not necessarily constant in the
  $[0,\epsilon]$-direction. Shrinking $\epsilon$, however, we can find another
  extension $\JH_{\sigma,\nbd(\bdy)}$ arbitrarily close to $\JH_{\sigma,\nbd(\bdy)}'$ and asymptotic to 
  $\bigl(g_\sigma\cdot (J,H)\bigr)|_{\bdy \bD^n\times [0,\epsilon]}$ at $+\infty$ (since $g_\sigma$ is smooth).
  For $\epsilon$ small enough, the moduli space with respect to $\JH_{\sigma,\nbd(\bdy)}|_{\bdy\bD^n\times\{t\}}$ is still transversely cut out, because transversality is an open condition, and consists of a single point near each point of the $\pi_{\JSpace}\circ\sigma_\bdy$ moduli space. In particular, these moduli spaces are still topologically transverse to the boundary. So, this extension $\JH_{\sigma,\nbd(\bdy)}$ is very generic.
  
  Let $U=\bdy\bD^n\times[0,\epsilon)$ with such a small $\epsilon$ and, by abuse
  of notation, write $\sigma_\bdy$ for the extension of $\sigma_\bdy$ to $U$.
  Note that the image of $\pi_{\JSpace}\circ \sigma_\bdy$ is contained in the
  space of $1$-story semicylindrical pairs away from $\bdy \bD^n$.

  We next adapt standard transversality arguments to show that there
  is a generic extension of $\sigma_\bdy$ to the rest of the disk.  
  Choose a smaller neighborhood $V$ of $\bdy\bD^n$ with
  $\overline{V}$ contained in the original neighborhood. Given
  $\ell>2$ consider the following spaces:
  \begin{itemize}
  \item The space of maps $\bD^n\to \JSpace$ whose restriction to $\ol{V}$
    agrees with $\pi_{\JSpace}\circ \sigma_\bdy$ and which agree with $(J,H)$ and
    $g_\sigma(J,H)$ at $-\infty$ and $+\infty$, respectively,
    so that the induced fiberwise almost complex
    structure on $\bD^n\times [0,1]\times\RR\times M$, and the
    function $\bD^n\times \RR\times M$ induced by the Hamiltonian, are
    $C^\ell$. (So, by
    $\JSpace$ we really mean the $C^\ell$ semi-cylindrical pairs,
    whereas usually we mean smooth semi-cylindrical pairs.) We will denote this space
    $\Map_{C^\ell}((\bD^n,V),(\JSpace,\sigma_\bdy))$. Note that
    $\Map_{C^\ell}((\bD^n,V),(\JSpace,\sigma_\bdy))$ is nonempty and
    contractible, since the space of $C^\ell$ almost complex
    structures on $M$ is contractible.
  \item The bundle $\mathcal{B}(x,y)$ over
    $\Map_{C^\ell}((\bD^n,V),(\JSpace,\sigma_\bdy))\times \bD^n$
    whose fiber over $(\JH_\sigma,v)$ is the space of maps
    \[
      ([0,1]\times\RR,\{0\}\times\RR,\{1\}\times\RR)\to(M,L_1,L_0^{\wt{H}(v)})
    \]
    connecting the generators $x$ and $g_\sigma(v)\cdot y$, in the weighted Sobolev space
    $W^{1,p}_\delta$, for an appropriate weight $\delta\approx 0$; see,
    e.g.,~\cite[Section (8h)]{SeidelBook}.
  \item The bundle $\mathcal{E}$ over $\mathcal{B}(x,y)$ whose fiber
    over $u$ is the space of
    $W^{0,p}_\delta$ sections of $\Lambda^{0,1}u^*TM$ (i.e.,
    $(0,1)$-forms on $[0,1]\times\RR$ valued in $u^*TM$ with
    exponential decay).
  \end{itemize}
  The $\dbar$ operator defines a section
  \[
    \dbar\co \mathcal{B}(x,y)\to \mathcal{E},
  \]
  by $\dbar(u,\JH,v)=\dbar_{\JH(v)}(u)$.
  A choice of Riemannian metric on $M$ induces a linearization
  \[
    D\dbar\co T\mathcal{B}(x,y)\to \mathcal{E}
  \]
  of this section $\dbar$.  Further, $\dbar$ is a $C^{\ell-1}$ map
  (cf.~\cite[Section
  3.2]{MS04:HolomorphicCurvesSymplecticTopology}). A standard argument
  (again, cf.~\cite[Section
  3.2]{MS04:HolomorphicCurvesSymplecticTopology}) shows that the
  linearization $D\dbar$ is surjective, so $\dbar^{-1}(0)$ is a
  $C^{\ell-1}$ Banach manifold. The fact that maps $\JH_\sigma$ as desired
  are co-meager in $\Map_{C^\ell}((\bD^n,V),(\JSpace,\sigma_\bdy))$
  then follows from Smale's infinite-dimensional version of Sard's
  theorem~\cite[Theorem 1.3]{Smale65:Sard}, applied to the projection
  \[
   \dbar^{-1}(0) \to \Map_{C^\ell}((\bD^n,V),(\JSpace,\sigma_\bdy));
  \]
  here we need that $\ell>2$ to have enough regularity to apply the
  Sard-Smale theorem. Finally, to bootstrap to a result about the
  space of smooth (rather than $C^\ell$) almost complex structures
  requires a small further argument (again, cf.~\cite[Section
  3.2]{MS04:HolomorphicCurvesSymplecticTopology}).
\end{proof}

\begin{theorem}\label{thm:gluing} (Gluing)
  Given a very generic family $\sigma\co\Polyh^{n-1}\to\GJHspace$ and
  a homotopy class $\phi\in\pi_2^{g(\sigma)}(x,y)$ with
  $\mu(\phi)=2-n$, the compactified moduli space $\ocM(\phi;\sigma)$
  is a topological $1$-manifold with boundary, the boundary of which is
  \begin{equation}\label{eq:gluing}
  \begin{split}
    (-1)^n\cM(\phi;\sigma|_{\bdy \Polyh})
    &\amalg
    \bigcup_{z\in L_0^H\cap L_1}
    \bigcup_{\substack{\psi\in\pi_2^{1}(x,z),\ \psi'\in\pi_2^{g(\sigma)}(z,y)\\\psi*\psi'=\phi\\
    \mu(\psi)=1,\ \mu(\psi')=-n+1}}\hspace{-2em}
    (-1)^n\cM(\psi';\sigma)\times\cM(\psi;J,H)\\
    &\amalg
    \bigcup_{z\in L_0^H\cap L_1}
    \bigcup_{\substack{\psi\in\pi_2^{g(\sigma)}(x,z),\ \psi'\in\pi_2^{1}(z,y)\\\psi*\psi'=\phi\\
    \mu(\psi)=-n+1,\ \mu(\psi')=1}}\hspace{-2em}
    \cM(\psi';J,H)\times \cM(\psi;\sigma).
  \end{split}
  \end{equation}
  (That is, the boundary consists of holomorphic curves over
  $\bdy\Polyh^{n-1}$ and breakings of curves at either $-\infty$ or
  $+\infty$.) Further, 
  if $\Polyh=[0,1]^{n-1}$ then the first term in Formula~\eqref{eq:gluing} is given by
  \begin{equation}
    \label{eq:gluing-cube}
    (-1)^n\cM(\phi;\sigma|_{\bdy [0,1]^{n-1}})
        =\bigcup_{i=1}^{n-1}\bigcup_{\epsilon\in\{0,1\}}(-1)^{n+i+\epsilon}\cM(\phi;\sigma|_{\bdy_{i,\epsilon}[0,1]^{n-1}}).
  \end{equation}
  In Formulas~(\ref{eq:gluing}) 
  and~(\ref{eq:gluing-cube}), the signs denote the orientations of the
  moduli spaces, when we are working in the oriented setting.
\end{theorem}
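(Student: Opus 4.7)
The plan is the standard Floer-theoretic analysis of one-dimensional moduli spaces, combining compactness, transversality, and gluing. By Theorem~\ref{thm:transversality} and the very generic hypothesis applied to $\Polyh^{n-1}$, the open stratum $\cM(\phi;\sigma)$ is a smooth manifold of dimension $\mu(\phi)+(n-1)=1$. Its closure in $\ocM(\phi;\sigma)$ is obtained by extending smoothly across the loci of internal breakings between stories of $\JH_\sigma(v)$; these expansions correspond to a gluing parameter going from finite to $\infty$ and back to finite through a neighboring chamber, so are interior points of $\ocM$ rather than boundary. It thus remains to enumerate the genuine codimension-$1$ limits, to install topological collars at each, and to verify signs.

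For the enumeration, I would apply Theorem~\ref{thm:compactness} to a divergent sequence in the interior of $\ocM(\phi;\sigma)$. Under Hypothesis~\ref{hyp:Floer-defined} (which rules out sphere/disc bubbling and escape to infinity) any such sequence subconverges to a pair $(v,u_\infty)$ with $v\in\Polyh^{n-1}$ and $u_\infty=\psi_1*\cdots*\psi_k$ a broken Whitney disk with respect to some expansion of $\JH_\sigma(v)$, the breakings occurring at $\pm\infty$ in the strip direction, between stories of the expansion, or on the face of $\Polyh$ containing $v$. Maslov additivity $\sum_j \mu(\psi_j) = 2-n$ together with the face-by-face very-generic bounds of Definition~\ref{def:generic-pair} (noting that any piece at a translation-invariant cylindrical pair has moduli space of dimension $\mu-1$ after the $\RR$-quotient) yields a codimension count showing that the only codimension-$1$ strata in $\ocM(\phi;\sigma)\setminus\cM(\phi;\sigma)$ are the three families listed in Formula~\eqref{eq:gluing}: a single smooth curve over a facet of $\Polyh^{n-1}$, or a once-broken Floer trajectory at one of the ends $\pm\infty$ with the piece at the constant pair $(J,H)$ of Maslov index exactly $1$.

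Next I install topological collars at each boundary point. For the curves over facets, the required collar is exactly condition~\ref{item:family-transv-transv} of Definition~\ref{def:generic-pair}, already baked into the very-generic hypothesis. For Floer-style breakings at $\pm\infty$, I apply the parametric, $g_\sigma$-twisted version of the classical Floer gluing construction (following~\cite{Floer88:LagrangianHF,FHS95:transversality,FOOO1,SeidelBook}): pre-glue the two pieces with a large gluing parameter $R$, produce a uniformly bounded right inverse for the linearized $\dbar$ at the approximate solution from the transversality of both pieces, and Newton-iterate to correct to an honest holomorphic curve in $\cM(\phi;\sigma)$. Letting $R\to\infty$ gives the desired one-parameter family converging to the broken configuration, and uniqueness up to parameter follows from the standard estimates. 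The main technical obstacle, and the only new content beyond classical gluing, is running the estimates simultaneously in the parametric setting and with dynamic Lagrangian boundary conditions coming from the twist by $g_\sigma$; both aspects fit in the weighted-Sobolev-space framework already outlined in the proof of Theorem~\ref{thm:transversality}.

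Finally, the signs in Formula~\eqref{eq:gluing} come from the chosen coherent $G$-orientation system (Section~\ref{sec:G-or}) together with the pregluing isomorphism of determinant line bundles that is part of Definition~\ref{def:G-or-system}. The factor $(-1)^n$ on the breaking terms is the standard Koszul rearrangement of determinant factors under pre-gluing, together with the outward-normal sign at the breaking neck; this is a direct parametric, equivariant analogue of the computation in~\cite[Section~12]{SeidelBook}. The additional sign $(-1)^{i+\epsilon}$ on the facet $\bdy_{i,\epsilon}[0,1]^{n-1}$ in Formula~\eqref{eq:gluing-cube} is exactly the parity of the identification $\bdy_{i,\epsilon}[0,1]^{n-1}\cong[0,1]^{n-2}$ fixed in Section~\ref{sec:cubes}.
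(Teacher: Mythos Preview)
Your outline matches the paper's approach closely: both defer the analytic gluing to standard references, invoke condition~\ref{item:family-transv-transv} for the facet collars, and focus the real work on the orientation bookkeeping. One imprecision worth flagging: you write that ``the factor $(-1)^n$ on the breaking terms'' comes from Koszul rearrangement plus the outward-normal sign, but only the \emph{second} term in Formula~\eqref{eq:gluing} carries $(-1)^n$; the third term carries no sign. The paper tracks this asymmetry explicitly: for a breaking at $-\infty$ the $\RR$-translation on the cylindrical piece points \emph{inward} relative to gluing (contributing a $-1$) and commuting the $\RR$ and $T\Polyh^{n-1}$ factors past the formally $(2-n)$-dimensional index bundle gives $(-1)^{n-1}$, for a net $(-1)^n$; for a breaking at $+\infty$ the $\RR$-action points outward and the determinant factors are already in the correct order, so no sign appears---but here the $G$-orientation (not merely a coherent orientation) is essential, since one is pregluing $g_\sigma\cdot u_1$ to $u_2$. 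Your account of the facet sign $(-1)^{i+\epsilon}$ and of the internal-story cancellations is correct.
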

Note that the ordering of the factors in the second and third terms
agrees with composition order in a category (with objects intersection
points and morphisms holomorphic disks), and not with concatenation
order (the operation $*$ above).
\begin{proof}[Aspects of proof]
  Again, this is a simple adaptation of standard arguments. 
  Note that
  Condition~\ref{item:family-transv-transv}
  easily implies the result (modulo signs)
  for curves lying over $\bdy\Polyh^{n-1}$, so the interesting case is
  gluing a broken disk lying over an interior point of $\Polyh^{n-1}$. The
  argument in this case goes back to
  Floer~\cite[Proposition 4.1]{Floer88:LagrangianHF} (see
  also~\cite[Theorem 4.5]{Sullivan02:K}), and a nice explanation in a
  related context can be found in McDuff-Salamon~\cite[Chapter
  10]{MS04:HolomorphicCurvesSymplecticTopology}. 
  Note that in the last term we have used the action of $G$ to
  identify the moduli spaces of disks from $g_\sigma(v)\cdot z$ to
  $g_\sigma(v)\cdot y$ with respect to $g_\sigma(v)\cdot (J,H)$ 
  with the space of disks from $z$ to $y$ with
  respect to $(J,H)$, i.e., $\cM(\psi';J,H)$.
  
  We will not say more about these gluing arguments, but will discuss
  the orientations. 
  Fix a $G$-orientation as in Section~\ref{sec:G-or}, for some cylindrical pair $(J,H)$.
  Given any other semicylindrical pair $\JH$ agreeing with $(J,H)$ at $-\infty$, as discussed at the end of Section~\ref{sec:G-or} there is an induced
  trivialization of $\det(D\dbar|_{\JH})$.
  There is also an
  identification
  \begin{equation}\label{eq:det-dbar-decomp}
    \det(D\dbar)=\det(D\dbar|_{\JH})\otimes\det(T\Polyh^{n-1}).
  \end{equation}
  So, via the orientation of the polyhedron $\Polyh^{n-1}$ we obtain an
  orientation of $\det(D\dbar)$.

  By construction, if $\sigma$ is a family of semi-cylindrical pairs
  such that $\cM(\phi;\sigma)$ is transversely cut out then the
  tangent space $T\cM(\phi;\sigma)$ is identified with the index
  bundle of $D\dbar$, so the coherent orientation induces an
  orientation of $\cM(\phi;\sigma)$. In the case that
  $\cM(\phi;\sigma)$ is $0$-dimensional, this means that
  $\cM(\phi;\sigma)$ is a collection of signed points.

  In the cylindrical case, there is an additional complication. We
  quotient by the action of $\RR$ to get the unparameterized moduli
  space, so the orientation of the unparameterized moduli space
  depends on both the coherent orientation system and the orientation
  of $\RR$. If we let $\widetilde{\cM}$ denote the parameterized
  moduli space then we define the orientation of the unparameterized
  moduli space by declaring that
  \begin{equation}\label{eq:R-side}
    \widetilde{\cM}=\RR\times\cM
  \end{equation}
  as oriented manifolds (where $\RR$ has the standard orientation and
  the standard action by $\RR$).

  Our goal is to compare the induced orientation of the boundary of
  $\ocM(\phi;\sigma)$ with the orientation of the moduli spaces
  appearing in Formula~\eqref{eq:gluing} coming from the coherent
  orientations, and deduce the signs given in
  Formulas~\eqref{eq:gluing} 
  and~(\ref{eq:gluing-cube}). To this end, note that the sign in
  Formula~(\ref{eq:gluing-cube}) comes from the fact that the
  identification of $\bdy_{i,\epsilon}[0,1]^{n-1}$ with $[0,1]^{n-2}$
  is orientation-preserving or orientation-reversing depending on the
  parity of $i+\epsilon$. Note also that, before taking determinants,
  there is a formally $(2-n)$-dimensional bundle to the left of
  $T[0,1]^{n-2}$ in Formula~\eqref{eq:det-dbar-decomp}. Thus, the
  ``outward normal first'' convention gives the sign of the first
  term. This also explains the sign in the first term of
  Formula~(\ref{eq:gluing}), from commuting the outward normal vector
  past the $(2-n)$-dimensional bundle.

  For the second term in Formula~(\ref{eq:gluing}), consider a curve 
  \[
    (u_1,u_2)\in\cM(\psi';\sigma)\times \wt{\cM}(\psi;J,H).
  \]
  Gluing using a fixed, large gluing parameter $R$ identifies a
  neighborhood of $(u_1,u_2)$ in $\cM(\psi*\psi';\sigma)$ with
  $[0,1)$. Since the $\RR$-action on $u_2$ translates upwards, it
  agrees with the inward-pointing normal vector to the boundary of
  $[0,1)$, so switching to the usual, outward-normal boundary
  orientation contributes a minus sign. Further, the isomorphism
  \[
    \det(D\dbar|_{\JH})\otimes\det(T\Polyh^{n-1})\otimes\RR\otimes
    \det(D\dbar|_{(J,H)})\to 
    \RR\otimes\det(D\dbar|_{\JH})\otimes
    \det(D\dbar|_{(J,H)})\otimes\det(T\Polyh^{n-1})
  \]
  given by permuting the factors
  picks up a sign of $(-1)^{3(n-1)}=(-1)^{n-1}$.
  Thus, using the standard, outward-normal first convention for
  orienting boundaries, the orientation of $(u_1,u_2)$ in
  $\cM(\psi';\sigma)\times\cM(\psi;J,H)$ and in
  $\cM(\psi*\psi';\sigma)$ differ by $(-1)^{n-1+1}=(-1)^n$.

  The argument for the third term is similar, except that now:
  \begin{itemize}
  \item The action of $\RR$ is on the top story, so points outwards
    with respect to gluing.
  \item The tensor product of index bundles is already
    \[
      \RR\otimes\det(D\dbar|_{(J,H)})\otimes\det(D\dbar|_{\JH})\otimes \det(T\Polyh^{n-1}),
    \]
    so does not contribute an additional sign.
  \item The fact that we have a $G$-orientation, rather than just an
    orientation, is important, as we are pre-gluing
    $g_\sigma\cdot (u_1)$ and $u_2$.
  \end{itemize}

  Finally, when one of the semi-cylindrical pairs $\JH_\sigma(v)$ has more
  than $1$ story, it is possible to degenerate a cylindrical story in
  the middle, i.e., a curve $(u_1,u,u_2)$ where $u$ is
  cylindrical. These degenerations occur as the boundary of two moduli
  spaces, corresponding to gluing $u_1$ and $u$ and to gluing $u$ and
  $u_2$, respectively. In the first case the $\RR$-action is
  outward-pointing, while in the second case the $\RR$-action is
  inward-pointing. The rest of the orientation discussion is the same
  for the two cases, so the two cases contribute with opposite signs.
\end{proof}

\subsection{Constructing the diagram of complexes}\label{sec:build-diag}

By Sections~\ref{sec:smooth-nerves} and~\ref{sec:homotopy-colimits},
in order to construct equivariant Floer homology, it suffices to
construct an appropriate functor $F\co \Nerve BG\to\Complexes$ and
then take the hypercohomology. The functor will factor through another
category, $\Nerve\GJHcat$.

We continue to fix a generic cylindrical pair $(J,H)$.
Let $\GJHcat$ be the topological category with a single object $(J,H)$ and
$\Hom((J,H),(J,H))=\GJHspace/\sim$ where $\sim$ identifies semi-cylindrical
pairs which differ by expansions (i.e., allows forgetting cylindrical stories).
Make $\GJHcat$ into a smooth category using the definition of smoothness from
Definition~\ref{def:smooth-family}; it is clear that this definition
satisfies the conditions required for a smooth category (Definition~\ref{def:smooth-cat}). Let
$\Nerve\GJHcat$ be the smooth nerve of $\GJHcat$. 

We will actually be interested in a particular subcategory of $\Nerve\GJHcat$.
\begin{definition}\label{def:acme}
  Call $\sigma\in\Nerve_n\GJHcat$ \emph{acme} (respectively \emph{very acme}) if
  for each $0\leq i<j\leq n$, the map
  \[
    \sigma|_{\Hom(i,j)}\co[0,1]^{j-i-1}\to \GJHspace
  \]
  is generic (respectively very generic).
  
  Let $\NerveWAc_0\GJHcat=\NerveAc_0\GJHcat=\Nerve_0\GJHcat=\{(J,H)\}$
  and for $n>0$ let $\NerveWAc_n\GJHcat\subset \Nerve_n\GJHcat$
  (respectively $\NerveAc_n\GJHcat$) be the set of acme (respectively
  very acme) $n$-simplices in $\Nerve\GJHcat$. We call
  $\NerveWAc\GJHcat$ the \emph{acme nerve} and $\NerveAc\GJHcat$ the
  \emph{very acme nerve} of $\GJHcat$.
\end{definition}

\begin{lemma}
  The acme (respectively very acme) nerve is a subcategory of
  $\Nerve\GJHcat$, i.e., is a sub-simplicial set that satisfies the
  weak Kan condition.
\end{lemma}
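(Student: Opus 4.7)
The plan is to prove two separate claims: first, that the acme and very acme nerves are sub-simplicial sets of $\Nerve\GJHcat$, and second, that they satisfy the weak Kan condition.

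For the sub-simplicial claim, I would check that face and degeneracy maps preserve the (very) acme property. Under a face map $d_j$, each $\Hom$ cube of $d_j\sigma$ is either a full $\Hom$ cube of $\sigma$ (with the same dimension, when $\delta^j$ maps outside the interior of the cube) or the $\{t_j=0\}$ face of a $\Hom$ cube of $\sigma$; both are (very) generic by Condition~\ref{item:family-transv-faces} in Definition~\ref{def:generic-pair}. Under a degeneracy map $s_j$, the only nontrivial case is when $(s_j\sigma)|_{\Hom(a,b)}$ is obtained by precomposing $\sigma|_{\Hom(a,b-1)}$ with the smooth projection $p\co [0,1]^{b-a-1}\to[0,1]^{b-a-2}$ that merges two coordinates via $(t_j,t_{j+1})\mapsto t_j+t_{j+1}-t_jt_{j+1}$. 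A direct dimension count handles this: for $\mu(\phi)$ in the range where transversality of the pullback cube is required (namely $\mu(\phi)\leq -(b-a-1)$ or, in the very acme case, $\mu(\phi)\leq -(b-a)+2$), the base moduli space $\cM(\phi;\sigma|_{\Hom(a,b-1)})$ has nonpositive expected dimension, so it is either empty or a transverse finite set; the unique critical locus of $p$ is the corner $\{t_j=t_{j+1}=1\}$, whose image under $p$ lies in the $\{t_j=1\}$ face of $\sigma|_{\Hom(a,b-1)}$, and applying Condition~\ref{item:family-transv-faces} to $\sigma$ on that codim-two face forces emptiness of the base moduli space there. The pullback is thereby transversely cut out, and topological transversality (Condition~\ref{item:family-transv-transv}) holds on every face of the $(s_j\sigma)$-cube.

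For the weak Kan condition, fix an inner horn $\Lambda^n_i\to\NerveAc\GJHcat$ with $0<i<n$. The horn data determines $\sigma$ on every $\Hom$ cube of $\sCVcat[n]$ except the top cube $\Hom(0,n)\cong[0,1]^{n-1}$, where it specifies $\sigma$ only on the portion $\partial[0,1]^{n-1}\setminus\{t_i=0\}$ of the boundary: the $\{t_j=0\}$ faces for $j\neq i$ come from $d_j\sigma|_{\Hom(0,n-1)}$, while the $\{t_j=1\}$ faces come from the compositions $\sigma|_{\Hom(0,j)}\cdot\sigma|_{\Hom(j,n)}$, all of which lie in lower-dimensional cubes that are fully determined by the horn. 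I would fill the horn in two stages. First, use Lemma~\ref{lem:sm-nerve-inf} to extend the group-valued component $g_\sigma$ smoothly across the missing face $\{t_i=0\}$, then apply Theorem~\ref{thm:transversality} to extend $\sigma|_{\{t_i=0\}}\co[0,1]^{n-2}\to\GJHspace$ very generically compatibly with its already-defined (very generic) boundary data; this defines $d_i\sigma|_{\Hom(0,n-1)}$. Second, extend $g_\sigma$ across the rest of the $(n-1)$-cube and apply Theorem~\ref{thm:transversality} once more to extend $\sigma|_{\Hom(0,n)}$ very generically. Condition~\ref{item:family-transv-faces} applied to the new top cube automatically ensures that the newly constructed $d_i$ face is very acme as an $(n-1)$-simplex.

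The main obstacle is the degeneracy case for the sub-simplicial claim: one must verify topological transversality to the codim-two corner $\{t_j=t_{j+1}=1\}$ of the cube, and this requires the face restriction of $\sigma$ to be simultaneously very generic and of a dimension that forces emptiness of the relevant base moduli space there, using Hypothesis~\ref{hyp:Floer-defined}. For the weak Kan step the structural subtlety is ordering the extensions correctly (missing face first, then interior) so that Theorem~\ref{thm:transversality} always receives fully defined very generic boundary data on the codim-one skeleton before being invoked.
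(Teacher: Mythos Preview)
Your proposal is correct and follows essentially the same strategy as the paper. The treatment of face maps and degeneracy maps matches the paper's closely (the paper phrases the degeneracy argument as ``every interior point of the target cube is a regular value of $\delta$,'' which is equivalent to your observation that the critical locus maps into the boundary, where the base moduli space is empty by dimension count).

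Your horn-filling argument is in fact slightly cleaner than the paper's. The paper sets up an induction on $b-a$ over all $\Hom(a,b)$ with $a\leq i\leq b$, but you correctly observe that since $0<i<n$ both $d_0\sigma$ and $d_n\sigma$ lie in the horn, so every $\sigma|_{\Hom(a,b)}$ with $(a,b)\neq(0,n)$ is already fully specified; only the top cube needs filling. Two small points you leave implicit that the paper makes explicit: the $\{t_j=1\}$ faces of the top cube are very generic by Lemma~\ref{lem:exterior-prod-generic} (composition of very generic cubes), and Theorem~\ref{thm:transversality} requires $g_\sigma$ on the whole cube as input, which is exactly what your appeal to Lemma~\ref{lem:sm-nerve-inf} provides.
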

\begin{proof}
  The proofs of the two statements are essentially the same; we focus
  on the very acme case.  We must check that $\NerveAc\GJHcat$ is
  closed under face and degeneracy maps and under horn filling. The
  fact that $\NerveAc\GJHcat$ is closed under face maps is immediate
  from the definition. For
  degeneracies, for each $i<j$, $(s_k\sigma)|_{\Hom(i,j)}$
  is either given by the restriction of $\sigma$ to one of the
  $\Hom$-spaces (if $k\not\in[i,j-1]$), or is given by projecting
  $[0,1]^{j-i-1}\to[0,1]^{j-i-2}$ by a smooth map $\delta$ and
  applying the restriction of $\sigma$ to one of its $\Hom$-spaces. In
  the former case, the map $(s_k\sigma)|_{\Hom(i,j)}$ is very generic
  by assumption. In the latter case, note that every point in the
  interior of $[0,1]^{j-i-2}$ is a regular value of $\delta$.  The
  index $i-j+1$ moduli spaces with respect to
  $(s_k\sigma)|_{\Hom(i,j)}$ are empty by dimension counting. There
  are finitely-many points $q_1,\dots,q_\ell$ in the interior of
  $[0,1]^{j-i-2}$ for which the index $i-j+2$ moduli space with
  respect to $\sigma|_{\Hom(i,j-1)}(q_m)$ is non-empty. The index
  $(i-j+2)$-dimensional moduli spaces with respect to
  $(s_k\sigma)|_{\Hom(i,j)}$ are
  $\cM(\sigma|_{\Hom(i,j-1)}(q_i))\times \delta^{-1}(q_i)$. In
  particular, it follows from the fact that $q_i$ is a regular value
  of $\delta$ that these moduli spaces are transversally cut-out and
  it is clear from the definition of $\delta$ that these moduli spaces
  are transverse to the boundary strata in $[0,1]^{j-i-1}$.

  For horn filling, suppose we are given a map
  $f\co \Lambda^n_i\to \NerveAc\GJHcat$ for some $0<i<n$, which we want to
  extend to an $n$-simplex $\sigma\in \NerveAc_n\GJHcat$.  Given
  $0\leq a<b\leq n$, if $i\not\in[a,b]$ then the map $f$ specifies
  $\sigma|_{\Hom(a,b)}$. If $a\leq i\leq b$ then $f$ specifies
  $\sigma|_{\Hom(a,b)}\co[0,1]^{b-a-1}\to\GJHspace$ on the boundary facets
  $t_j=0$ for $j\neq i$. Further, if we already know $\sigma|_{\Hom(c,d)}$ for
  $d-c<b-a$ then the fact that $\sigma$ must respect composition specifies
  $\sigma|_{\Hom(a,b)}$ on the facets $t_j=1$. Further, if these
  $\sigma|_{\Hom(c,d)}$ are very generic then, by
  Lemma~\ref{lem:exterior-prod-generic}, so is the value of $\sigma$ on the
  facets $t_j=1$.

  So, define the extension $\sigma|_{\Hom(a,b)}$ inductively on
  $b-a$. At each stage, $\sigma|_{\Hom(a,b)}$ is defined except on the
  facet $t_i=0$ and the top-dimensional cell, and is very generic
  where defined. Applying Theorem~\ref{thm:transversality} first to
  $\bdy_{i,0}[0,1]^{b-a-1}$ (i.e., the face with $t_i=0$) and then to
  $[0,1]^{b-a-1}$ itself gives the desired very acme extension of $f$.
\end{proof}

\begin{example}\label{eg:ngj-3cell}
A $3$-cell $\sigma$ in the smooth nerve $\Nerve \GJHcat$ consists of:
\def\yscale{0.35}
\def\xscale{0.08}
\begin{enumerate}[leftmargin=*,label=(N-\arabic*),ref=(N-\arabic*)]
\item a $3$-cell in $\Nerve B\SingG$, with data as in Example~\ref{exam:3-cell-in-nbg},
\item\label{item:nerve-0} six elements of $\JSpace$, 
 $\vcenter{\hbox{\begin{tikzpicture}[xscale=\xscale,yscale=\yscale]
    \fill[blue!50!white] (0,0) rectangle (2,2);
    \node[intext] at (1,0) {\tiny $(J,H)$};
    \node[intext] at (1,2) {\tiny $\action{g_{01}}{(J,H)}$};
  \end{tikzpicture}}}$, $\vcenter{\hbox{\begin{tikzpicture}[xscale=\xscale,yscale=\yscale]
    \fill[blue!50!white] (0,0) rectangle (2,2);
    \node[intext] at (1,0) {\tiny $(J,H)$};
    \node[intext] at (1,2) {\tiny $\action{g_{12}}{(J,H)}$};
  \end{tikzpicture}}}$, $\vcenter{\hbox{\begin{tikzpicture}[xscale=\xscale,yscale=\yscale]
    \fill[blue!50!white] (0,0) rectangle (2,2);
    \node[intext] at (1,0) {\tiny $(J,H)$};
    \node[intext] at (1,2) {\tiny $\action{g_{23}}{(J,H)}$};
  \end{tikzpicture}}}$, $\vcenter{\hbox{\begin{tikzpicture}[xscale=\xscale,yscale=\yscale]
    \fill[blue!50!white] (0,0) rectangle (2,2);
    \node[intext] at (1,0) {\tiny $(J,H)$};
    \node[intext] at (1,2) {\tiny $\action{g_{02}}{(J,H)}$};
  \end{tikzpicture}}}$, $\vcenter{\hbox{\begin{tikzpicture}[xscale=\xscale,yscale=\yscale]
    \fill[blue!50!white] (0,0) rectangle (2,2);
    \node[intext] at (1,0) {\tiny $(J,H)$};
    \node[intext] at (1,2) {\tiny $\action{g_{13}}{(J,H)}$};
  \end{tikzpicture}}}$, $\vcenter{\hbox{\begin{tikzpicture}[xscale=\xscale,yscale=\yscale]
    \fill[blue!50!white] (0,0) rectangle (2,2);
    \node[intext] at (1,0) {\tiny $(J,H)$};
    \node[intext] at (1,2) {\tiny $\action{g_{03}}{(J,H)}$};
  \end{tikzpicture}}}$,
\item\label{item:nerve-1} four smooth paths in $\JSpace$, 
  $\vcenter{\hbox{\begin{tikzpicture}[xscale=\xscale,yscale=\yscale]
    \fill[blue!50!white] (0,0) rectangle (2,2);
    \node[intext] at (1,0) {\tiny $(J,H)$};
    \node[intext] at (1,2) {\tiny $\action{{\color{red}g_{02,012}}}{(J,H)}$};
  \end{tikzpicture}}}$, $\vcenter{\hbox{\begin{tikzpicture}[xscale=\xscale,yscale=\yscale]
    \fill[blue!50!white] (0,0) rectangle (2,2);
    \node[intext] at (1,0) {\tiny $(J,H)$};
    \node[intext] at (1,2) {\tiny $\action{{\color{red}g_{13,123}}}{(J,H)}$};
  \end{tikzpicture}}}$, $\vcenter{\hbox{\begin{tikzpicture}[xscale=\xscale,yscale=\yscale]
    \fill[blue!50!white] (0,0) rectangle (2,2);
    \node[intext] at (1,0) {\tiny $(J,H)$};
    \node[intext] at (1,2) {\tiny $\action{{\color{red}g_{03,013}}}{(J,H)}$};
  \end{tikzpicture}}}$, $\vcenter{\hbox{\begin{tikzpicture}[xscale=\xscale,yscale=\yscale]
    \fill[blue!50!white] (0,0) rectangle (2,2);
    \node[intext] at (1,0) {\tiny $(J,H)$};
    \node[intext] at (1,2) {\tiny $\action{{\color{red}g_{03,023}}}{(J,H)}$};
  \end{tikzpicture}}}$,
\item\label{item:nerve-2} a square in $\JSpace$ 
 $\vcenter{\hbox{\begin{tikzpicture}[xscale=\xscale,yscale=\yscale]
    \fill[blue!50!white] (0,0) rectangle (2,2);
    \node[intext] at (1,0) {\tiny $(J,H)$};
    \node[intext] at (1,2) {\tiny $\action{{\color{green!50!black}g_{\maxmap{\sigma}}}}{(J,H)}$};
  \end{tikzpicture}}}$, 
\end{enumerate}
fitting into the following square:
\[
\begin{tikzpicture}[cm={0,1,1,0,(0,0)},scale=4]

\tikzstyle{pair}=[rectangle]

\draw[thick] (0,0) rectangle (2,2);

\node[pair] at (0,0) {\begin{tikzpicture}[xscale=\xscale,yscale=\yscale]
    \fill[blue!50!white] (0,0) rectangle (2,6);
    \node[intext] at (1,0) {\tiny $(J,H)$};
    \node[intext] at (1,6) {\tiny $\action{g_{03}}{(J,H)}$};
  \end{tikzpicture}};
\node[pair] at (0,2) {\begin{tikzpicture}[xscale=\xscale,yscale=\yscale]
    \fill[blue!50!white] (0,0) rectangle (2,2);
    \fill[blue!50!white] (0,3) rectangle (2,7);
    \node[intext] at (1,0) {\tiny $(J,H)$};
    \node[intext] at (1,2) {\tiny $\action{g_{01}}{(J,H)}$};
    \node[intext] at (1,3) {\tiny $(J,H)$};
    \node[intext] at (1,7) {\tiny $\action{g_{13}}{(J,H)}$};
    \node[left] at (0,5) {\tiny$\mathllap{\action{g_{01}}{}}$};
  \end{tikzpicture}};
\node[pair] at (2,0) {\begin{tikzpicture}[xscale=\xscale,yscale=\yscale]
    \fill[blue!50!white] (0,0) rectangle (2,4);
    \fill[blue!50!white] (0,5) rectangle (2,7);
    \node[intext] at (1,0) {\tiny $(J,H)$};
    \node[intext] at (1,4) {\tiny $\action{g_{02}}{(J,H)}$};
    \node[intext] at (1,5) {\tiny $(J,H)$};
    \node[intext] at (1,7) {\tiny $\action{g_{23}}{(J,H)}$};
    \node[left] at (0,6) {\tiny$\mathllap{\action{g_{02}}{}}$};
  \end{tikzpicture}};
\node[pair] at (2,2) {\begin{tikzpicture}[xscale=\xscale,yscale=\yscale]
    \fill[blue!50!white] (0,0) rectangle (2,2);
    \fill[blue!50!white] (0,3) rectangle (2,5);
    \fill[blue!50!white] (0,6) rectangle (2,8);
    \node[intext] at (1,0) {\tiny $(J,H)$};
    \node[intext] at (1,2) {\tiny $\action{g_{01}}{(J,H)}$};
    \node[intext] at (1,3) {\tiny $(J,H)$};
    \node[intext] at (1,5) {\tiny $\action{g_{12}}{(J,H)}$};
    \node[intext] at (1,6) {\tiny $(J,H)$};
    \node[intext] at (1,8) {\tiny $\action{g_{23}}{(J,H)}$};
    \node[left] at (0,4) {\tiny$\mathllap{\action{g_{01}}{}}$};
    \node[left] at (0,7) {\tiny$\mathllap{\action{g_{01}g_{12}}{}}$};
  \end{tikzpicture}};

\node[pair] at (1,2) {\begin{tikzpicture}[xscale=\xscale,yscale=\yscale]
    \fill[blue!50!white] (0,0) rectangle (2,2);
    \fill[blue!50!white] (0,3) rectangle (2,7);
    \node[intext] at (1,0) {\tiny $(J,H)$};
    \node[intext] at (1,2) {\tiny $\action{g_{01}}{(J,H)}$};
    \node[intext] at (1,3) {\tiny $(J,H)$};
    \node[intext] at (1,7) {\tiny $\action{{\color{red}g_{13,123}}}{(J,H)}$};
    \node[left] at (0,5) {\tiny$\mathllap{\action{g_{01}}{}}$};
  \end{tikzpicture}};
\node[pair] at (2,1) {\begin{tikzpicture}[xscale=\xscale,yscale=\yscale]
    \fill[blue!50!white] (0,0) rectangle (2,4);
    \fill[blue!50!white] (0,5) rectangle (2,7);
    \node[intext] at (1,0) {\tiny $(J,H)$};
    \node[intext] at (1,4) {\tiny $\action{{\color{red}g_{02,012}}}{(J,H)}$};
    \node[intext] at (1,5) {\tiny $(J,H)$};
    \node[intext] at (1,7) {\tiny $\action{g_{23}}{(J,H)}$};
    \node[left] at (0,6) {\tiny$\mathllap{\action{{\color{red}g_{02,012}}}{}}$};
  \end{tikzpicture}};

\node[pair] at (1,1) {\begin{tikzpicture}[xscale=\xscale,yscale=\yscale]
    \fill[blue!50!white] (0,0) rectangle (2,6);
    \node[intext] at (1,0) {\tiny $(J,H)$};
    \node[intext] at (1,6) {\tiny $\action{{\color{green!50!black}g_{\maxmap{\sigma}}}}{(J,H)}$};
  \end{tikzpicture}};
\node[pair] at (1,0) {\begin{tikzpicture}[xscale=\xscale,yscale=\yscale]
    \fill[blue!50!white] (0,0) rectangle (2,6);
    \node[intext] at (1,0) {\tiny $(J,H)$};
    \node[intext] at (1,6) {\tiny $\action{{\color{red}g_{03,023}}}{(J,H)}$};
  \end{tikzpicture}};
\node[pair] at (0,1) {\begin{tikzpicture}[xscale=\xscale,yscale=\yscale]
    \fill[blue!50!white] (0,0) rectangle (2,6);
    \node[intext] at (1,0) {\tiny $(J,H)$};
    \node[intext] at (1,6) {\tiny $\action{{\color{red}g_{03,013}}}{(J,H)}$};
  \end{tikzpicture}};

\end{tikzpicture}
\]
This 3-cell is (very) acme if all of the data~\ref{item:nerve-0}--\ref{item:nerve-2} is (very) generic.
\end{example}

Projection $(g_\sigma,\JH_\sigma)\mapsto g_\sigma$ defines a functor
$\pi\co \GJHcat\to BG$. Taking smooth nerves, we get an
infinity functor (simplicial set map)
$\Nerve \pi\co \Nerve \GJHcat\to \Nerve BG$. This restricts to simplicial set maps $\Nerve \pi\co \NerveWAc \GJHcat\to \Nerve BG$ and $\Nerve \pi\co \NerveAc \GJHcat\to \Nerve BG$.
\begin{lemma}\label{lem:sec-exists}
  There is a section $\BGSection\co \Nerve BG\to\NerveAc \GJHcat$
  of the projection $\Nerve \pi$, i.e., a simplicial set map $\BGSection\co
  \Nerve BG\to\NerveAc\GJHcat$ so that $(\Nerve \pi)\circ
  \BGSection=\Id_{\Nerve BG}$.
\end{lemma}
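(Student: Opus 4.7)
The plan is to construct $\BGSection$ inductively on the dimension of simplices, using Theorem~\ref{thm:transversality} to extend very generic families of semi-cylindrical pairs at each stage. For $n=0$, there is a unique $0$-simplex of $\Nerve BG$ (the single object of $BG$); send it to $(J,H)\in\NerveAc_0\GJHcat$.

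For the inductive step, suppose $\BGSection$ has been constructed on all simplices of dimension less than $n$, compatibly with face and degeneracy maps. Fix an $n$-simplex $\sigma\co \sCVcat[n]\to BG$. If $\sigma$ is degenerate, write $\sigma=s_j\tau$ for the unique non-degenerate $\tau$ (Eilenberg--Zilber) and set $\BGSection(\sigma):=s_j\BGSection(\tau)$. So assume $\sigma$ is non-degenerate. The functor $\sigma$ is determined by its restrictions $\sigma|_{\Hom(a,b)}\co[0,1]^{b-a-1}\to G$ for $0\leq a<b\leq n$. For each pair $(a,b)$ with $b-a<n$, the $\sCVcat$-coface inclusion identifies $\sigma|_{\Hom(a,b)}$ with the $\Hom(0,b-a)$-component of the face simplex of $\sigma$ spanned by the vertices $\{a,a+1,\dots,b\}$, so the inductive hypothesis already defines $\BGSection(\sigma)|_{\Hom(a,b)}$; these definitions are mutually consistent because $\BGSection$ has been built compatibly with all face maps of lower-dimensional simplices.

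It therefore remains to construct the top-dimensional cube $\cubemap{\BGSection(\sigma)}\co[0,1]^{n-1}\to\GJHspace$ lying over $g_{0n}:=\cubemap{\sigma}$. Its restriction to $\bdy[0,1]^{n-1}$ is already prescribed: on the facet $\{t_i=0\}$ (for $0<i<n$) it equals $\cubemap{\BGSection(d_i\sigma)}$, by $\sCVcat$-coface compatibility, while on the facet $\{t_i=1\}$ it is the $\sCVcat$-composition (i.e.\ concatenation of semi-cylindrical pairs) of $\BGSection(\sigma)|_{\Hom(0,i)}$ and $\BGSection(\sigma)|_{\Hom(i,n)}$, both defined inductively. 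These prescriptions agree on pairwise intersections of boundary facets because $\sCVcat$ is a cosimplicial topological category (Proposition~\ref{prop:sCV-is-cosimp-cat}) and $\BGSection$ respects the structure maps on all lower-dimensional faces. Moreover, Lemma~\ref{lem:exterior-prod-generic} shows that concatenations of very generic families are very generic, so together with the inductive very genericity of the faces, the prescribed boundary data is very generic in the sense of Definition~\ref{def:generic-pair}. Theorem~\ref{thm:transversality} then supplies a very generic extension to all of $[0,1]^{n-1}$ lying over $g_{0n}$, completing the construction of $\BGSection(\sigma)$.

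By construction $(\Nerve\pi)\circ\BGSection=\Id_{\Nerve BG}$ and $\BGSection$ commutes with all face and degeneracy maps. The central analytic input is Theorem~\ref{thm:transversality} (together with Lemma~\ref{lem:exterior-prod-generic} for composition-compatibility). The main bookkeeping obstacle is verifying that the pre-specified boundary data on $[0,1]^{n-1}$ is consistent on overlaps of facets, which I expect to be the most delicate part of writing the argument cleanly; it reduces to the cosimplicial identities of $\sCVcat$ combined with the inductive assumption that $\BGSection$ has already been built compatibly with all face and degeneracy maps of lower-dimensional simplices.
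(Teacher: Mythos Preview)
Your proof is correct and follows essentially the same approach as the paper: induct on simplex dimension, use the inductive hypothesis to determine the boundary data of the top cube (faces $t_i=0$ from $d_i\sigma$, faces $t_i=1$ from composition via Lemma~\ref{lem:exterior-prod-generic}), and invoke Theorem~\ref{thm:transversality} to extend very generically. You are slightly more explicit than the paper about handling degenerate simplices via Eilenberg--Zilber, whereas the paper only treats non-degenerate simplices and then asserts at the end that the resulting $\BGSection$ is a simplicial map.
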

\begin{proof}
  The proof is by induction on the dimension of a simplex. There is a unique
  $0$-simplex of $\Nerve BG$ and a unique $0$-simplex $(J,H)$ of
  $\Nerve\GJHcat$, so a unique way to define $\BGSection$ on the
  $0$-simplices. Now, assume $\BGSection$ has been constructed for all simplices
  of dimension $\leq n-1$, respecting the simplicial identities among simplices
  of those dimensions. Consider a non-degenerate $n$-simplex $\sigma$ of
  $\Nerve BG$, i.e., a functor $\sigma\co \sCVcat[n]\to BG$ of topological
  categories. By induction, since the functor must respect the face
  maps $\partial_0$ and $\partial_n$, we have already
  defined $\BGSection(\sigma)$ on $\Hom_{\sCVcat[n]}(i,j)$ except for $i=0$ and
  $j=n$.  
  Further, the fact that $\BGSection(\sigma)$ must respect composition in
  $\sCVcat[n]$ means that
  $\BGSection(\sigma)|_{\Hom(0,n)}=\cubemap{\BGSection(\sigma)}\co[0,1]^{n-1}\to \GJHspace$ is already
  defined on $\bdy_{i,1}[0,1]^{n-1}$ for each $i$, and the fact that
  $\BGSection$ must respect face maps means that
  $\cubemap{\BGSection(\sigma)}$ is already defined on
  $\bdy_{i,0}[0,1]^{n-1}$ for each $i$.

  We claim that the restriction of $\cubemap{\BGSection(\sigma)}$ to each of these
  facets is very generic. The face $\bdy_{j,0}\cubemap{\BGSection(\sigma)}$ is
  $\cubemap{\BGSection((d_j\sigma))}$, which is very generic by induction. The face
  $\bdy_{j,1}\cubemap{\BGSection(\sigma)}$ is slightly more complicated.
  Let
  $\sigma_{[0,j]}\co \sCVcat[j]\to BG$ be the restriction
  of $\sigma$ to the full subcategory of $\sCVcat[n]$ spanned by
  $\{0,\dots,j\}$. Let $\sigma_{[j,n]}\co \sCVcat[n-j]\to BG$ be the composition
  \[
    \sCVcat[n-j]\to \sCVcat[n]\stackrel{\sigma}{\longrightarrow}BG
  \]
  where the first map is induced by $\{0,\dots,n-j\}\to \{j,\dots,n\}$,
  $i\mapsto i+j$. We have already defined maps
  \begin{align*}
    \cubemap{\BGSection(\sigma_{[0,j]})}&\co [0,1]^{j-1}\to \GJHspace \qquad\qquad\text{and}\\
    \cubemap{\BGSection(\sigma_{[j,n]})}&\co [0,1]^{n-j-1}\to \GJHspace.
  \end{align*}
  The face $\bdy_{j,1}\cubemap{\BGSection(\sigma)}$ is given by the map $[0,1]^{n-2}=[0,1]^{j-1}\times [0,1]^{n-j-1}\to\GJHspace$, 
  \[
    (p,q)\mapsto\bigl(g_{\sigma_{[0,j]}}(p)g_{\sigma_{[j,n]}}(q),(g_{\sigma_{[0,j]}(p)}\cdot\JH_{\sigma_{[j,n]}}(q))\circ \JH_{\sigma_{[0,j]}}(p)\bigr).
  \]
  This map is very generic by Lemma~\ref{lem:exterior-prod-generic}.

  Thus, we have checked that $\cubemap{\BGSection(\sigma)}$ is very generic on the entire boundary of $[0,1]^{n-1}$. It follows from Theorem~\ref{thm:transversality} that $\cubemap{\BGSection(\sigma)}$ admits a very generic extension to the cube $[0,1]^{n-1}$. Choose such an extension and proceed with the induction.
  
  It is immediate from the construction that the resulting map $\BGSection$ is
  a map of simplicial sets.
\end{proof}

Given an $n$-simplex $\sigma\in \NerveWAc\GJHcat$, points
$x,y\in L_0^H\cap L_1$, and a homotopy class
$\phi\in\pi_2^{g(\sigma)}(x,y)$ there is a moduli space of holomorphic
disks
\[
\cM(\phi;\sigma)=\bigcup_{v\in\interior([0,1]^{n-1})}\cM\bigl(\phi;\wt{J}_{\vec{\sigma}}(v),\wt{H}_{\vec{\sigma}}(v)\bigr)
\]
defined in Section~\ref{subsec:moduli}. Given $\sigma\in
\NerveWAc_n\GJHcat$ and $x\in L_0^H\cap L_1$ define
\begin{equation}\label{eq:FloerFuncDef}
  \maxFloerFunc(\sigma)(x)=\sum_{y\in L_0^H\cap L_1}\sum_{\substack{\phi\in\pi_2^{g(\sigma)}(x,y)\\\mu(\phi)=-n+1}}
  \#\cM(\phi;\sigma)y.
\end{equation}
Define $\FloerFunc(J,H)=\CF(L_0^H,L_1;J)$. If $(J,H)$ is very generic
then the usual proof of $\bdy^2=0$ implies that $\FloerFunc(J,H)$ is a
chain complex. If $(J,H)$ is merely generic then a short additional
argument still implies $\bdy^2=0$. Specifically, there is a very generic
cylindrical pair $(J',H')$ arbitrarily close to $(J,H)$. Since the
$0$-dimensional moduli spaces with respect to $(J,H)$ are
transversally cut out, for $(J',H')$ close enough to $(J,H)$, the
$0$-dimensional moduli spaces with respect to $(J',H')$ are identified
with those for $(J,H)$. Thus, the differential with respect to $(J,H)$
and $(J',H')$ are the same, and since the differential with respect to
$(J',H')$ satisfies $\bdy^2=0$, so does the differential with respect
to $(J,H)$. (This argument recurs, in a parametric version, in the
proof of Lemma~\ref{lem:is-sset-map}.)

We digress briefly to discuss gradings. In order to apply the
homotopical algebra from Section~\ref{sec:background} 
we need $\ZZ$-gradings on our
chain complexes. To this end, note that the Floer complex
$\CF(L_0^H,L_1)$ decomposes as a direct sum
\[
  \bigoplus_{\spinc\in\pi_0(P(L_0,L_1))}\CF(L_0^H,L_1;\spinc)
\]
over homotopy classes $\spinc$ of paths from $L_0$ to $L_1$. The group
$G$ acts on $P(L_0,L_1)$ and hence on $\pi_0(P(L_0,L_1))$.  For each
$G$-orbit $[\spinc]\in \pi_0(P(L_0,L_1))/G$ for which there is an
$x\in L_0^H\cap L_1$ so that the constant path $x$ represents
$[\spinc]$, choose a representative $\spinc\in P(L_0,L_1)$ and a base
generator $x_\spinc\in L_0^H\cap L_1$ representing $\spinc$. We
declare that each of these base generators has grading $0$,
$\gr(x_\spinc)=0$. Next, given any other generator
$y\in L_0^H\cap L_1$, choose a $g\in G$ and homotopy class
$\xi_y\in\pi_2^g(x_\spinc,y)$ and define $\gr(y)=-\mu(\xi_y)$. By
Hypothesis~\ref{item:J3}, $\gr(y)$ is independent of the choices of
$g$ and $\xi_y$. In the case the Lagrangians $L_0$ and
$L_1$ are endowed with $G$-equivariant gradings in the sense of
Section~\ref{sec:G-or}, up to an overall shift the induced grading of the Floer complexes is
a case of the grading defined in this paragraph.

\begin{lemma}
  Given an $n$-simplex $\sigma\in\NerveWAc_n\GJHcat$, the map
  $\maxFloerFunc(\sigma)$  is homogeneous of degree $n-1$.
\end{lemma}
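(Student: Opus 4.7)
The plan is to unpack the definition of $\maxFloerFunc(\sigma)$ and compare it directly with the $\ZZ$-grading built immediately before the lemma. First I would observe that by Formula~\eqref{eq:FloerFuncDef}, any $y\in L_0^H\cap L_1$ appearing with nonzero coefficient in $\maxFloerFunc(\sigma)(x)$ comes equipped with a twisted homotopy class $\phi\in\pi_2^{g(\sigma)}(x,y)$ of Maslov index $\mu(\phi)=-n+1$ and nonempty moduli space $\cM(\phi;\sigma)$. The mere existence of $\phi$ forces the path classes through $x$ and $y$ to lie in the same $G$-orbit $[\spinc]\in\pi_0(P(L_0,L_1))/G$, so $x$ and $y$ share a common base generator $x_\spinc$.

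Next I would pick a representative $\xi_x\in\pi_2^{g_x}(x_\spinc,x)$ realizing $\gr(x)=-\mu(\xi_x)$ and form its concatenation with $\phi$, using the pairing
\[
  *\co \pi_2^{g_x}(x_\spinc,x)\times \pi_2^{g(\sigma)}(x,y)\to \pi_2^{g_xg(\sigma)}(x_\spinc,y)
\]
from Section~\ref{sec:twisted-pi2}. Taking $\xi_x*\phi$ as the class used to compute $\gr(y)$, additivity of the Maslov index under concatenation yields
\[
  \gr(y)=-\mu(\xi_x*\phi)=-\mu(\xi_x)-\mu(\phi)=\gr(x)+(n-1),
\]
which is exactly the claim.

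The only step requiring care is that the value $-\mu(\xi_x*\phi)$ genuinely equals the $\gr(y)$ defined intrinsically (i.e., that the grading is independent of the choice of $g$ and $\xi_y$ used to define it). This is precisely the role of Hypothesis~\ref{item:J3} in Hypothesis~\ref{hyp:Floer-defined}: any two choices in $\bigcup_g\pi_2^g(x_\spinc,y)$ differ, after the action of a path in $G$ via some $\xi_{g_t}$, by a $G$-twisted loop, whose Maslov index vanishes by assumption. So this potential obstacle evaporates, and the lemma reduces to bookkeeping with Maslov additivity and the grading conventions.
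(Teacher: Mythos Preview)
Your proposal is correct and follows essentially the same approach as the paper's proof: both use the existence of $\phi\in\pi_2^{g(\sigma)}(x,y)$ with $\mu(\phi)=-n+1$, concatenate with $\xi_x$, and invoke Hypothesis~\ref{item:J3} together with Maslov additivity. The paper phrases the last step slightly differently, forming the $G$-twisted loop $\xi_x*\phi*\xi_y^{-1}\in\pi_2^{g_xg(\sigma)g_y^{-1}}(x_\spinc,x_\spinc)$ and applying~\ref{item:J3} directly to it, whereas you invoke the already-established well-definedness of $\gr(y)$; these are equivalent packagings of the same idea.
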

\begin{proof}
  Suppose $y$ occurs in $\maxFloerFunc(\sigma)(x)$. Then there
  is a homotopy class $\phi\in\pi_2^{g(\sigma)}(x,y)$ with
  $\mu(\phi)=-n+1$. We have 
  \[
    \xi_x*\phi*\xi_y^{-1}\in\pi_2^{g_xg(\sigma)g_y^{-1}}(x_\spinc,x_\spinc),
  \]
  so by Hypothesis~\ref{item:J3}
  \[
    -\gr(x)-n+1+\gr(y)=\mu(\xi_x)+\mu(\phi)-\mu(\xi_y)=\mu(\xi_x*\phi*\xi_y^{-1})=0,
  \]
  so $\gr(y)=\gr(x)+n-1$, as desired.
\end{proof}

\begin{lemma}\label{lem:is-sset-map}
  The map $\FloerFunc\co \NerveWAc \GJHcat\to\ChainComplexes$ is a map
  of simplicial sets.
\end{lemma}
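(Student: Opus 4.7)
The plan is to verify, per Convention~\ref{convention:highest-maps-in-complexes}, that for each non-degenerate $n$-simplex $\sigma\in\NerveWAc_n\GJHcat$ with $n\geq 1$ the map $\maxFloerFunc(\sigma)$ from~\eqref{eq:FloerFuncDef} is homogeneous of degree $n-1$ and satisfies the structure equation~\eqref{eq:chain-cx-cat-max}, while for each degeneracy $s_j\sigma$ with $j\geq 1$ the same formula yields zero. Well-definedness of $\FloerFunc(J,H)$ as a chain complex is an instance of the standard argument flagged just before this lemma, and the degree claim follows from Hypothesis~\ref{item:J3} and the grading convention of Section~\ref{sec:build-diag} by the argument already given in the preceding lemma.

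The heart of the argument is~\eqref{eq:chain-cx-cat-max}. Assume first that $\sigma$ is very acme, so the relevant $1$-parameter moduli spaces are transversely cut out. Fix $x,y\in L_0^H\cap L_1$ and $\phi\in\pi_2^{g(\sigma)}(x,y)$ with $\mu(\phi)=2-n$. By Theorems~\ref{thm:compactness} and~\ref{thm:gluing}, $\ocM(\phi;\sigma)$ over $[0,1]^{n-1}$ will be a compact topological $1$-manifold whose signed boundary count vanishes; summing this vanishing over $y$ and $\phi$ will yield~\eqref{eq:chain-cx-cat-max}. By~\eqref{eq:gluing} and~\eqref{eq:gluing-cube}, the boundary has three kinds of strata. $\RR$-breakings at $+\infty$ (with a $\mu=1$ cylindrical strip on top) contribute $\bdy\circ\maxFloerFunc(\sigma)$ with coefficient $+1$; $\RR$-breakings at $-\infty$ contribute $\maxFloerFunc(\sigma)\circ\bdy$ with coefficient $(-1)^n$, where the $G$-action and $G$-orientation of Section~\ref{sec:G-or} are used to identify the bottom strip with a cylindrical $(J,H)$-disk counted by the Floer differential. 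On the facet $\bdy_{i,0}[0,1]^{n-1}$, the co-face map $\delta^i$ of Definition~\ref{def:smooth-CV-cat} identifies $\cubemap{\sigma}|_{t_i=0}$ with $\cubemap{d_i\sigma}$, producing $\maxFloerFunc(d_i\sigma)$ with coefficient $(-1)^{n+i}$. On $\bdy_{i,1}[0,1]^{n-1}$, functoriality of $\sigma$ in $\sCVcat[n]$ forces $\cubemap{\sigma}|_{t_i=1}=\cubemap{\sigma_{i,\dots,n}}\cdot\cubemap{\sigma_{0,\dots,i}}$ in $\GJHcat$; the moduli space over this composition decomposes as a disjoint union of products $\cM(\psi_1;\cubemap{\sigma_{0,\dots,i}})\times\cM(\psi_2;\cubemap{\sigma_{i,\dots,n}})$ indexed by $\psi_1*\psi_2=\phi$, with both factors forced to be $0$-dimensional by the index constraint, giving $\maxFloerFunc(\sigma_{i,\dots,n})\circ\maxFloerFunc(\sigma_{0,\dots,i})$ with coefficient $(-1)^{n+i+1}$.

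Setting the signed boundary count to zero, moving the pure $\bdy$-breaking terms to the left, and using $(-1)^{n+i+\epsilon}=(-1)^{n-i+\epsilon}$, the resulting identity will match~\eqref{eq:chain-cx-cat-max} on the nose. The hard part is the sign bookkeeping together with the careful identification of the $\bdy_{i,1}$-facet moduli with the composition $\maxFloerFunc(\sigma_{i,\dots,n})\circ\maxFloerFunc(\sigma_{0,\dots,i})$, a parametric analogue of the gluing argument for $\bdy^2=0$. To pass from very acme $\sigma$ to merely acme $\sigma$, the plan is to invoke the parametric perturbation argument promised in the remark preceding this lemma: by Theorem~\ref{thm:transversality} one can find a very acme $\sigma'$ arbitrarily close to $\sigma$, and smallness of the perturbation together with transversality of every $0$-dimensional moduli space arising on sub-simplices of $\sigma$ ensures that the counts $\maxFloerFunc(\tau)$ for $\tau$ a sub-simplex of $\sigma$ coincide for $\sigma$ and $\sigma'$; hence~\eqref{eq:chain-cx-cat-max} for $\sigma'$ implies~\eqref{eq:chain-cx-cat-max} for $\sigma$.

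Finally, for a degenerate $(n+1)$-simplex $s_j\sigma$ with $j\geq 1$, the top cube $\cubemap{s_j\sigma}\co[0,1]^n\to\GJHspace$ factors through the co-degeneracy projection of Proposition~\ref{prop:sCV-is-cosimp-cat}, so $\cM(\phi;s_j\sigma)$ is the pullback of $\cM(\phi;\cubemap{\sigma})$ along a smooth map with generically $1$-dimensional fibers. For $\mu(\phi)=-n$, the base moduli space has virtual dimension $-1$ and is transversely cut out (hence empty) by acmeness of $\sigma$ on $[0,1]^{n-1}$, since $-n\leq -(n-1)$; thus $\maxFloerFunc(s_j\sigma)=0$. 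Face compatibility is immediate from the co-face identity $\cM(\phi;d_i\sigma)=\cM(\phi;\cubemap{\sigma}|_{\bdy_{i,0}[0,1]^{n-1}})$.
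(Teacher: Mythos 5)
Your proof is correct and follows the same route as the paper's: it verifies the structure equation~\eqref{eq:chain-cx-cat-max} for very acme simplices via the gluing Theorem~\ref{thm:gluing} together with the cube-boundary signs of Formula~\eqref{eq:gluing-cube} (with exactly the paper's identification of the $\bdy_{i,0}$, $\bdy_{i,1}$, and $\pm\infty$-breaking strata), then passes from acme to very acme by a small perturbation that preserves the relevant zero-dimensional counts. The bonus degeneracy check is a sensible addition that the paper leaves implicit; your restriction to $j\geq 1$ is slightly too narrow, since the same co-degeneracy/dimension-count argument applies verbatim for $j=0$ whenever $\dim\sigma\geq 1$, and only the identity $1$-simplex $s_0(J,H)$ (whose value $\Id$ is imposed by Convention~\ref{convention:highest-maps-in-complexes} rather than read off the moduli-space count) lies outside its scope.
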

\begin{proof}
  We need to check that the maps $\maxFloerFunc(\sigma)$
  satisfy Formula~(\ref{eq:chain-cx-cat-max}).  First, fix a very acme
  simplex $\sigma\in\NerveAc_n\GJHcat$. Let $x,y\in L_0^H\cap L_1$ and
  $\phi\in\pi_2^{g(\sigma)}(x,y)$ with $\mu(\phi)=2-n$. By
  Formulas~(\ref{eq:gluing}) and~(\ref{eq:gluing-cube}),
  \begin{equation}\label{eq:FloerFuncPf}\begin{split}
    0&=\sum_{i=1}^{n-1}
    (-1)^{n+i}\#\cM(\phi;\sigma|_{\bdy_{i,0}[0,1]^{n-1}})
    +\sum_{i=1}^{n-1}
    (-1)^{n+i+1}\#\cM(\phi;\sigma|_{\bdy_{i,1}[0,1]^{n-1}})\\
    &\qquad\qquad+\sum_{z\in L_0^H\cap L_1}
    \sum_{\substack{\psi\in\pi_2^{1}(x,z),\ \psi'\in\pi_2^{g(\sigma)}(z,y)\\\psi*\psi'=\phi\\
        \mu(\psi)=1,\ \mu(\psi')=-n+1}}\hspace{-2em}
    (-1)^n\bigl(\#\cM(\psi';\sigma)\bigr)\bigl(\#\cM(\psi;J,H)\bigr)\\
    &\qquad\qquad+
    \sum_{z\in L_0^H\cap L_1}
    \sum_{\substack{\psi\in\pi_2^{g(\sigma)}(x,z),\ \psi'\in\pi_2^{1}(z,y)\\\psi*\psi'=\phi\\
    \mu(\psi)=-n+1,\ \mu(\psi')=1}}\hspace{-2em}
    \bigl(\#\cM(\psi';J,H)\bigr)\bigl(\#\cM(\psi;\sigma)\bigr).
  \end{split}\end{equation}
  Summing over $\phi$, this says
  \begin{align*}
    0&=\sum_{i=1}^{n-1}\Bigl[(-1)^{n+i}\maxFloerFunc(d_i\sigma)(x)+(-1)^{n+i+1}\maxFloerFunc(\sigma_{i,\dots,n})\bigl(\maxFloerFunc(\sigma_{0,\dots,i})(x)\bigr)\Bigr]\\
    &\qquad\qquad+(-1)^n\maxFloerFunc(\sigma)(\bdy x)+\bdy\bigl(\maxFloerFunc(\sigma)(x)\bigr)
  \end{align*}
  which is exactly Formula~(\ref{eq:chain-cx-cat-max}).

  Finally, given an acme simplex $\sigma$, we can perturb $\sigma$
  slightly to be very acme. (This perturbation is done inductively,
  over the faces of $\sigma$.) Since the $0$-dimensional moduli spaces
  with respect to $\cubemap{\sigma}$ and its faces are
  transversally cut out, for a small enough perturbation this does not
  change the count of $\cM(\psi;\sigma)$ for any $\psi$ with
  $\mu(\psi)=-n+1$ or, indeed, any of the counts in
  Formula~\eqref{eq:FloerFuncPf}. Thus, the argument above, applied to
  the perturbed simplex, shows that
  Formula~(\ref{eq:chain-cx-cat-max}) still holds.
\end{proof}

Let
\[
  \chainFb\co \Nerve BG\to\ChainComplexes
\]
be the composition of $\FloerFunc$ and the section $\BGSection$ from
Lemma~\ref{lem:sec-exists},
\[
  \chainFb=\FloerFunc\circ \BGSection.
\]

\subsection{Equivariant Floer cohomology}\label{seq:Floer-coho}
\begin{definition} \label{def:equivariant-cohomology}
  The \emph{equivariant Floer cohomology} of $(M,L_0,L_1)$, denoted
  $\eHF[G](L_0,L_1)$, is the hypercohomology of the diagram $\chainFb\co\Nerve BG\to\ChainComplexes$. 
\end{definition}

As discussed in Section~\ref{sec:homotopy-colimits},
$\eHF[G](L_0,L_1)$ is a module over $H^*(|\Nerve BG|)=H^*(BG)$.

\begin{theorem}\label{thm:eHF-inv}
  Up to isomorphism over $H^*(BG)$, $\eHF[G](L_0,L_1)$ is independent
  of the choices in its construction. Further, if $L'_0$ and $L'_1$ are isotopic to $L_0$ and
  $L_1$ via compactly supported, $G$-equivariant Hamiltonian isotopies
  then $\eHF[G](L_0,L_1)\cong\eHF[G](L'_0,L'_1)$ as modules over
  $H^*(BG)$.
\end{theorem}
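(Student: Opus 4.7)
The plan is to show that any two choices of initial data --- a generic cylindrical pair $(J,H)$ and a very generic section $\BGSection$ as in Lemma~\ref{lem:sec-exists} --- produce functors $\chainFb_0, \chainFb_1 \colon \Nerve BG \to \ChainComplexes$ that are connected by a natural transformation inducing a quasi-isomorphism on hypercohomology which respects the $H^*(BG)$-module structure. The core technical input is Lemma~\ref{lem:hocolim-map}, and uniqueness of the resulting isomorphism up to higher homotopy will follow by iterating with Lemma~\ref{lem:hocolim-htpy}. Independence from a choice of $G$-orientation system reduces to the standard observation that different coherent orientations differ by signs along homotopy classes and give isomorphic Floer data.

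The key construction is an enlarged smooth category $\GJHcat^{01}$ with two objects, one for each pair $(J_i, H_i)$. A morphism from object $i$ to object $j$ is a pair $(g, \JH)$ where $g \in G$ and $\JH$ is a semi-cylindrical pair with $\JH^{-\infty} = (J_i, H_i)$ and $\JH^{+\infty} = g \cdot (J_j, H_j)$, with composition by concatenation. Declaring smoothness of a family as in Definition~\ref{def:smooth-family} makes $\GJHcat^{01}$ a smooth category and produces a projection $\pi \colon \Nervesm \GJHcat^{01} \to \Nervesm(\ICat \times BG) = \Delta^1 \times \Nerve BG$, where $\ICat$ is the interval category from Section~\ref{sec:smooth-nerves}. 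Adapting the inductive argument of Lemma~\ref{lem:sec-exists} to the relative setting --- using Theorem~\ref{thm:transversality} together with Lemma~\ref{lem:exterior-prod-generic} at each step --- I would build a very generic section $\BGSection^{01} \colon \Delta^1 \times \Nerve BG \to \NerveAc \GJHcat^{01}$ of $\pi$ whose restriction to $\{i\} \times \Nerve BG$ equals the prescribed $\BGSection_i$.

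The functor $\maxFloerFunc$ extends verbatim to a map $\FloerFunc^{01} \colon \NerveWAc \GJHcat^{01} \to \ChainComplexes$ sending object $i$ to $\CF(L_0^{H_i}, L_1; J_i)$, and Equation~\eqref{eq:chain-cx-cat-max} is verified exactly as in Lemma~\ref{lem:is-sset-map} from the gluing formula~\eqref{eq:gluing}. The composition $G = \FloerFunc^{01} \circ \BGSection^{01}$ is then a functor $\Delta^1 \times \Nerve BG \to \ChainComplexes$ restricting to $\chainFb_i$ on $\{i\} \times \Nerve BG$, i.e., a natural transformation in the sense of Definition~\ref{def:sm-htpy}. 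By Lemma~\ref{lem:hocolim-map}, $G$ induces a chain map $\hocolim \chainFb_0 \to \hocolim \chainFb_1$ whose action on cohomology respects the $H^*(BG)$-module structure. To see it is a quasi-isomorphism, I would filter both sides by the dimension of simplices and compare the resulting spectral sequences. On the $E^1$ page, the induced map decomposes as a direct sum indexed by non-degenerate simplices of $\Nerve BG$, and on each summand it is the classical continuation-map chain map $\CF(L_0^{H_0}, L_1; J_0) \to \CF(L_0^{H_1}, L_1; J_1)$ determined by $\ol{G}$ restricted to the corresponding edge of $\Delta^1$; this is an isomorphism by standard Floer theory, and the spectral sequence comparison theorem finishes the argument. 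For Hamiltonian-isotopy invariance, a compactly supported $G$-equivariant isotopy carrying $(L_0, L_1)$ to $(L_0', L_1')$ conjugates all of the above data into the primed setup, producing the required identification.

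For independence up to a canonical isomorphism, I would iterate: given three choices, construct a smooth category $\GJHcat^{012}$ with three objects and, by another relative version of Lemma~\ref{lem:sec-exists}, a very generic section $\Delta^2 \times \Nerve BG \to \NerveAc \GJHcat^{012}$ extending the three pairwise sections already built; the composition with $\FloerFunc^{012}$ then provides the hypothesis of Lemma~\ref{lem:hocolim-htpy}. The main obstacle I expect is the inductive construction of $\BGSection^{01}$: at each stage one must verify that the boundary data --- coming from $\BGSection_0$, $\BGSection_1$, the lower-dimensional simplices of the interpolation, and the composition relations in $\GJHcat^{01}$ --- already form a very generic family on the boundary of the relevant cube, so that Theorem~\ref{thm:transversality} can produce the required interior extension. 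A related subtlety is checking that on $E^1$ the induced map really is the classical continuation map, which requires that at the level of $0$-simplices of $\Nerve BG$, $\BGSection^{01}$ gives a generic path of cylindrical pairs from $(J_0, H_0)$ to $(J_1, H_1)$; this can be arranged at the base case of the induction.
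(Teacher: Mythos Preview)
Your approach is correct and shares its core with the paper: both build the enlarged two-object smooth category $\GJHcat'$ (your $\GJHcat^{01}$), extend the section over $\Nerve(\ICat\times BG)$ by the relative version of Lemma~\ref{lem:sec-exists}, and apply $\FloerFunc$ to obtain a natural transformation, then invoke Lemma~\ref{lem:hocolim-map}. The differences are in how you conclude.

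For the quasi-isomorphism, you filter by simplex dimension and identify the $E^1$ map with the classical continuation map. The paper instead runs the construction in reverse to get a natural transformation $\BGSection_{10}$, then builds sections $\BGSection_{010}$ and $\BGSection_{101}$ over $\Nerve[(0\to 1\to 2)\times BG]$ whose restrictions to the edge $0\to 2$ are the trivial interpolation coming from $\BGSection_0$ (resp.\ $\BGSection_1$); a dimension count shows these restrict to the identity on $\hocolim$, so Lemma~\ref{lem:hocolim-htpy} gives that $f_{G_{01}}$ and $f_{G_{10}}$ are mutual homotopy inverses. Your spectral-sequence route is quicker, while the paper's route simultaneously establishes that the comparison maps are actual inverses (so your separate $\GJHcat^{012}$ step is folded into their argument). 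For Hamiltonian-isotopy invariance, you conjugate by the $G$-equivariant time-one map and then invoke independence of choices; the paper instead enlarges the category again, replacing semi-cylindrical pairs by pairs equipped with interpolating Lagrangian isotopies and applying Floer theory with dynamic boundary conditions $L_0^{t,H_t}$, $L_1^t$. Your conjugation argument is valid because the isotopy is assumed $G$-equivariant (so $\phi_1$ intertwines the $G$-actions and pushes generic data to generic data); the paper's version is more in keeping with the continuation-map philosophy and would adapt to situations where only the Lagrangians, not the ambient action, are being deformed.
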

\begin{proof}
  For the first statement, suppose that $(J_0,H_0)$ and $(J_1,H_1)$
  are generic cylindrical pairs. Let $\GJHcat_i$ be the category
  $\GJHcat$ defined using $(J_i,H_i)$ as the base cylindrical pair,
  and suppose that $\BGSection_i\co \Nerve BG\to\NerveWAc\GJHcat_i$ is a
  section of $\Nerve\pi$ for $i=0,1$. Consider a larger smooth category
  $\GJHcat'$ with:
  \begin{itemize}
  \item Objects $\{(J_0,H_0),(J_1,H_1)\}$.
  \item For $i=0,1$,
    $\Hom_{\GJHcat'}((J_i,H_i),(J_i,H_i))=\Hom_{\GJHcat_i}((J_i,H_i),(J_i,H_i))$
    (so $\GJHcat_i$ is a full subcategory of $\GJHcat'$).
  \item The space $\Hom_{\GJHcat'}((J_0,H_0),(J_1,H_1))$ consists of pairs
    $(g,\JH)$ where $g\in G$ and $\JH$ is a semi-cylindrical pair with
    $\JH^{-\infty}=(J_0,H_0)$ and $\JH^{+\infty}=g\cdot (J_1,H_1)$.
  \item $\Hom_{\GJHcat'}((J_1,H_1),(J_0,H_0))$ is empty.
  \item Composition is defined by multiplying maps to $G$ and
    composing (stacking) semi-cylindrical pairs.
  \item Smoothness is as in Definition~\ref{def:smooth-family}. 
  \end{itemize}
  Define the acme nerve $\NerveWAc\GJHcat'$ of $\GJHcat'$ similarly to
  Definition~\ref{def:acme}.
  
  Recall from Section~\ref{sec:smooth-nerves} that to define a natural
  transformation from $\BGSection_0$ to $\BGSection_1$ it suffices to define a
  map $\BGSection_{01}\co \Nerve(\ICat\times BG)\to \NerveWAc\GJHcat'$ extending
  $\BGSection_0$ and $\BGSection_1$. An inductive argument analogous to the
  proof of Lemma~\ref{lem:sec-exists} shows that we can find such a natural
  transformation $\BGSection_{01}$. Applying Floer theory then gives a functor
  \[
    \FloerFunc\circ \BGSection_{01}\co \Nerve (\ICat\times BG)=\Delta^1\times\Nerve(BG)\to\ChainComplexes
  \]
  extending the $(\FloerFunc)_i$.
  By Lemma~\ref{lem:hocolim-map}, there is an induced map of modules 
  \[
    (\FloerFunc\circ \BGSection_{01})^*\co H^*(\BGSection_1)\to 
    H^*(\BGSection_0).
  \]
  Exchanging the roles of $\BGSection_0$ and $\BGSection_1$ in the
  construction above gives a natural transformation $\FloerFunc\circ
  \BGSection_{10}$ from $\FloerFunc\circ \BGSection_1$ to
  $\FloerFunc\circ \BGSection_0$. Construct two more
  sections, $\BGSection_{010}$ and $\BGSection_{101}$, over $\Nerve[(0\to 1\to 2)\times BG]$, so that
  \begin{align*}
    \BGSection_{010}|_{\Nerve [(0\to 1)\times BG]}&=\BGSection_{01}\\ 
    \BGSection_{010}|_{\Nerve [(1\to 2)\times BG]}&=\BGSection_{10} \\
    \BGSection_{010}|_{\Nerve [(0\to 2)\times BG]}&=(\pi_{\Nerve(0\to2)},\BGSection_{0}\circ \pi_{\Nerve BG}) 
  \end{align*}
  where $\pi_{\Nerve(0\to 2)}\co \Nerve [(0\to 2)\times BG]\to \Nerve(0\to 2)$
  and $\pi_{\Nerve BG}\co \Nerve [(0\to 2)\times BG]\to \Nerve BG$ are the
  projections; and similarly for $\BGSection_{101}$ (with $1$ and $0$ reversed).
  We get functors
  $\Nerve((0\to 1\to 2)\times BG)=\Delta^2\times\Nerve BG\to\Complexes$. By
  dimension counting,
  $\FloerFunc\circ\BGSection_{010}|_{\Nerve [(0\to 2)\times BG]}$ and
  $\FloerFunc\circ\BGSection_{101}|_{\Nerve [(0\to 2)\times BG]}$ induce the
  identity maps on the homotopy colimits.
  Therefore, we conclude from Lemma~\ref{lem:hocolim-htpy} that the maps of homotopy colimits induced by
  $\FloerFunc\circ \BGSection_{01}$ and $\FloerFunc\circ\BGSection_{10}$
  are homotopy inverses to each other. It follows from
  Lemmas~\ref{lem:hocolim-map} and~\ref{lem:hocolim-htpy} that
  $H^*(\BGSection_0)\cong H^*(\BGSection_1)$ as modules over
  $H^*(BG)$, as desired.
  
  The proof of invariance under $G$-equivariant Hamiltonian isotopies
  is similar, but with $\GJHcat'$ replaced by a slightly different
  category. Write $L_i^t$ for the $G$-equivariant
  Hamiltonian isotopy from $L_i$ to $L'_i$, where $t\in\RR$, and
  $L_i^t=L_i$ for $t\ll 0$ while $L_i^t=L'_i$ for $t\gg 0$. Fix 
  generic cylindrical pairs $(J_0,H_0)$ for $(L_0,L_1)$ and
  $(J_1,H_1)$ for $(L'_0,L'_1)$.  Given $g\in G$, by an
  \emph{interpolating Lagrangian isotopy} from $L_0^{H_0}$ to
  $g((L_0')^{H_1})=(L'_0)^{gH_1}$ we mean a family of compactly
  supported Hamiltonians $H_t$, $t\in\RR$, so that $H_t=H_0$ for $t\ll
  0$ and $H_t=gH_1$ for $t\gg 0$, and a family of Lagrangians
  $L_0^{t,H_t}$, $t\in\RR$, which are time-one flows of $L_0^t$ under
  $H_t$, $L_0^{t,H_t}=(L_0^t)^{H_t}$.

  Now, let $\GJHcat'$ be the smooth category with:
  \begin{itemize}
  \item Two objects, $(J_0,H_0)$ and $(J_1,H_1)$ where $(J_0,H_0)$ is
    a generic cylindrical pair for $(L_0,L_1)$ and $(J_1,H_1)$ is a
    generic cylindrical pair for $(L'_0,L'_1)$.
  \item For $i=0,1$,
    $\Hom_{\GJHcat'}((J_i,H_i),(J_i,H_i))=\Hom_{\GJHcat_i}((J_i,H_i),(J_i,H_i))$. (Here,
    $\GJHcat_0$ is the category $\GJHcat$ as defined in
    Section~\ref{sec:build-diag} for $(L_0,L_1,J_0,H_0)$, while
    $\GJHcat_1$ is the category $\GJHcat$ as defined in
    Section~\ref{sec:build-diag} for $(L'_0,L'_1,J_1,H_1)$).
  \item $\Hom_{\GJHcat'}((J_1,H_1),(J_0,H_0))=\emptyset$.
  \item $\Hom_{\GJHcat'}((J_0,H_0),(J_1,H_1))$ is the space of pairs
    $(g,(\wt{J},\wt{L}))$ where $g\in G$, $\wt{J}$ is a (possibly multi-story)
    cylindrical-at-infinity almost complex structure and $\wt{L}=\{L_0^{t,H_t}\}$ is an
    interpolating Lagrangian isotopy (for $g$), again perhaps with several
    stories. We require that $\wt{J}$ agrees at $-\infty$ (of the bottom story)
    with $J_0$ and at $+\infty$ (of the top story) with $g\cdot J_1$.
  \item Smoothness for cubes is as in Definition~\ref{def:smooth-family}.
  \end{itemize}
  Again, the category $\GJHcat'$ has an acme nerve $\NerveWAc\GJHcat'$.
  Similar arguments to the proof of Lemma~\ref{lem:sec-exists} produce
  a section $\Nerve (\ICat\times BG)\to \NerveWAc \GJHcat'$.
  Applying Floer theory with the dynamic boundary conditions
  $L_0^{t,H_t}$ and $L_1^t$ gives a functor
  $\FloerFunc\co \NerveWAc \GJHcat'\to\ChainComplexes$. The composition
  \[
    \Delta^1\times (\Nerve BG)=\Nerve(\ICat\times BG)\to \NerveWAc\GJHcat'\to \ChainComplexes
  \]
  is a natural transformation between the equivariant diagrams before
  and after the Hamiltonian isotopy. The same argument as in the proof
  of independence of the perturbation data then implies that the
  equivariant cohomologies before and after the Hamiltonian isotopy
  agree.
\end{proof}

\subsection{An analogous construction in Morse theory}\label{sec:Morse-const}
In this section, we explain how to adapt the construction of $\eHF[G]$
to the setting of Morse homology. In Section~\ref{sec:comp-morse} we
show that the resulting equivariant Morse cohomology agrees with
the usual Borel equivariant cohomology.

Fix a compact, connected, smooth manifold $M$ and a smooth action of a
Lie group $G$ on $M$. To reduce confusion, either assume that $M$ is orientable and $G$
acts by orientation-preserving diffeomorphisms $M\to M$ or else take
coefficients in $\Field_2$ below.

By a \emph{cylindrical Morse pair} we mean a
pair $(\metric,f)$ where $\metric$ is a Riemannian metric on $M$ and
$f$ is a Morse function on $M$. A cylindrical Morse pair is
\emph{generic} if $(\metric,f)$ is Morse-Smale. A
\emph{semi-cylindrical Morse pair} is a triple
$(\Line,\tmetric,\wt{f})$ where $\Line$ is a marked line, $\tmetric$
is a family of Riemannian metrics parameterized by $\Line$, and
$\wt{f}$ is a smooth function $\Line\times M\to\RR$ so that $\tmetric$
and $\wt{f}$ are constant outside the intervals $[p_i,q_i]$ in
$\Line$. The space of semi-cylindrical Morse pairs has a topology
similar to the topology on $\JSpace$, which we leave to the reader to
spell out.

There is a left action of $G$ on the space of cylindrical Morse
pairs, by $(g\cdot f)(x)=f(g^{-1}x)$ and
$(g\metric) (v,w) =\langle g^{-1}_*v,g^{-1}_*w\rangle$.

Fix a cylindrical Morse pair $(\metric,f)$. Consider the space
$\MPspace$ of triples $(g,\tmetric,\wt{f})$, where $g\in G$
and $(\tmetric,\wt{f})$ is a semi-cylindrical Morse pair connecting
$(\metric,f)$ at $-\infty$ and $g\cdot(\metric,f)$ at $+\infty$. There is a
projection $\MPspace\to G$, and a composition map
$\circ\co \MPspace\times\MPspace\to\MPspace$,
$(g',\tmetric',\wt{f}')\circ(g,\tmetric,\wt{f})=
(gg',(g\cdot\tmetric')*(\tmetric),(g\cdot \wt{f}')*\wt{f})$
(where $*$ denotes stacking).  Given $x,y\in\Crit(f)$, and
$(g,\tmetric,\wt{f})\in\MPspace$, where $\tmetric$ and $\wt{f}$ are
parameterized by some marked line $\Line$, a \emph{flow line from $x$
  to $y$ with respect to $(g,\tmetric,\wt{f})$} is a smooth map
$\gamma=\gamma(t)\co \Line\to M$ so that
\begin{equation}\label{eq:grad-flow-line}
\gamma'(t)=-\nabla_{\gamma(t)}^{\tmetric(t)}\wt{f}(t)
\end{equation}
and so that $\lim_{t\to-\infty}\gamma(t)=x$ and
$\lim_{t\to+\infty}\gamma(t)=gy$. Let $\cM(x,y;g,\tmetric,\wt{f})$
denote the space of flow lines from $x$ to $y$ with respect to
$(g,\tmetric,\wt{f})$; we leave topologizing the space
$\cM(x,y;g,\tmetric,\wt{f})$ to the reader. More generally, given a
polyhedron $P^n$ and
map $\sigma=(g_{\sigma},\tmetric_{\sigma},\wt{f}_{\sigma})\co P^n\to \MPspace$ let
\[
  \cM(x,y;\sigma)=\bigcup_{v\in P^n}\cM(x,y;g_{\sigma}(v),\tmetric_{\sigma}(v),\wt{f}_{\sigma}(v)).
\]
The expected dimension of $\cM(x,y;\sigma)$ is $\ind(x)-\ind(y)+n$.
We call $\sigma$ \emph{generic} if the spaces $\cM(x,y;\sigma)$ are
transversely cut out whenever $\ind(x)-\ind(y)\leq -n+1$, and the
restriction of $\sigma$ to every boundary face is also generic. (This
is the analogue of ``very generic'' above, but we will have no use for
the analogue of the weaker ``generic'' notion.)

To orient the moduli spaces $\cM(x,y;\sigma)$ choose an orientation of
$M$ and an orientation of the descending disk $D_d(x)$ for the flow
of $-\nabla^{\metric}f$ from each point $x\in\Crit(f)$. (This is equivalent
to choosing an orientation for the negative eigenspace of the Hessian
$\Hess_x(f)$.) There is an induced orientation of the ascending disk
$D_a(x)$ for each $x\in\Crit(f)$ by requiring that
$D_a(x)\cap D_d(x)$ is positive. Via the action of $g$, there
is then an induced orientation of the ascending disk $D_a(gx)$ of the
point $gx\in\Crit(g\cdot f)$. For the moduli spaces of flow lines with
respect to the cylindrical pair $(\metric,f)$, we also have an identification
\[
  \RR\times \cM(x,y;\metric,f)\cong D_a(y)\cap D_d(x)
\]
for $x\neq y$ (cf.\ Equation~\eqref{eq:R-side})
so the orientations of $D_d(x)$, $D_a(y)$, $M$, and $\RR$ induce an
orientation of $\cM(x,y;\metric,f)$.

Now, suppose that $(\tmetric,\wt{f})$ is a $1$-story semi-cylindrical
Morse pair. If we let
$D_d(x;\tmetric,\wt{f})$ denote the space of solutions of
Equation~\eqref{eq:grad-flow-line} asymptotic to $x$ at $-\infty$ then
we can identify $D_d(x;\tmetric,\wt{f})$ with $D_d(x)$ as
follows. Choose $T\ll0$ small enough that $\wt{f}(t)=f$ for all
$t\leq T$. Then $\gamma\in D_d(x;\tmetric,\wt{f})$ if and only if
$\gamma$ is a solution to Equation~\eqref{eq:grad-flow-line} and
$\gamma(T)\in D_d(x)$, so the map $\gamma\mapsto \gamma(T)$ identifies
$D_d(x;\tmetric,\wt{f})$ and $D_d(x)$. In particular, $D_d(x;\tmetric,\wt{f})$
inherits an orientation from the orientation of $D_d(x)$. Similarly,
the space $D_a(gx;\tmetric,\wt{f})$ of solutions of
Equation~\eqref{eq:grad-flow-line} asymptotic to $gx$ at $+\infty$
inherits an orientation from $D_a(gx)$. The map $\gamma\mapsto
\gamma(0)$ maps $\cM(x,y;g,\tmetric,\wt{f})$, $D_d(x;\tmetric,\wt{f})$, and
$D_a(gy;\tmetric,\wt{f})$ into $M$, and
\[
  \cM(x,y;g,\tmetric,\wt{f})=D_a(gy;\tmetric,\wt{f})\cap D_d(x;\tmetric,\wt{f}).
\]
Thus, the space $\cM(x,y;g,\tmetric,\wt{f})$ inherits an
orientation. If $(\tmetric,\wt{f})$ is a generic multi-story
semi-cylindrical Morse pair and $\gluing(\tmetric,\wt{f})$ is a gluing
of $(\tmetric,\wt{f})$ with large gluing parameters then there is an
identification 
\[
\cM(x,y;g,\tmetric,\wt{f})\cong \cM(x,y;g,\gluing(\tmetric,\wt{f})),
\]
so the orientation of $\cM(x,y;g,\gluing(\tmetric,\wt{f}))$ induces an
orientation of $\cM(x,y;g,\tmetric,\wt{f})$.  Combining these
orientations with the orientation of the polyhedron $P^n$ gives an orientation
of $\cM(x,y;\sigma)$ (cf.\ Equation~\eqref{eq:det-dbar-decomp}).

We claim that these orientations are coherent in the sense that
Equation~\eqref{eq:gluing} and, in the case 
$P=[0,1]^{n-1}$, Equation~(\ref{eq:gluing-cube}), hold with signs; we leave the proof to the
reader.

There is a smooth category $\MPcat$ with a single object $(\metric,f)$,
$
  \Hom((\metric,f),(\metric,f))=\MPspace,
$
and composition given by
concatenation. The definition of smoothness is analogous to
Definition~\ref{def:smooth-family} and is left to the reader. Projection induces
  a functor $\pi\co \MPcat\to BG$.

Taking smooth nerves, we have a projection
$\pi\co \Nerve \MPcat\to \Nerve BG$. There is a subcategory of
$\Nerve\MPcat$ of acme simplices, defined just as in the Floer case (Definition~\ref{def:acme}). Like
Lemma~\ref{lem:sec-exists}, there is a section
$\BGSection\co \Nerve BG\to \NerveAc \MPcat$. Further, Morse homology gives
a functor $\MorseFunc\co \NerveAc\MPcat\to \ChainComplexes$, defined by
$\MorseFunc(\metric,f)=C_*^{\Morse}(\metric,f)$, the Morse
complex of the cylindrical pair $(\metric,f)$, and for
$\sigma\in \NerveAc\MPcat$ a simplex of dimension $n>0$,
$\maxMorseFunc(\sigma)$ is defined by Formula~\eqref{eq:FloerFuncDef} but
using the Morse moduli spaces in place of the Floer moduli spaces.

The composition is a diagram
$\MorseFunc\circ \BGSection\co \Nerve BG\to \ChainComplexes$. Taking
hypercohomology gives
\begin{equation}\label{eq:equi-morse-coho}
  \eH[G]^*(M)\coloneqq H^*(\MorseFunc\circ \BGSection).
\end{equation}


\section{Computations}\label{sec:computations}
\subsection{The discrete case}\label{sec:discrete}
Let $G$ be a discrete group acting on $(M, L_0, L_1)$ satisfying the
hypotheses in Section~\ref{subsec:hypotheses}. In particular, if $G$
is not finite then we require that every $G$-twisted loop has Maslov
index $0$. In this section, we prove the following theorem.

\begin{theorem}\label{thm:finite-same} 
  For $\Ring = \FF_2$, the $G$-equivariant Floer cohomology of
  $(M,L_0,L_1)$ constructed in
  Definition~\ref{def:equivariant-cohomology}, agrees with the
  construction of $G$-equivariant Floer cohomology given in our
  previous paper~\cite{HLS:HEquivariant}.
\end{theorem}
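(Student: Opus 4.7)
The plan is to make the smooth nerve $\Nerve BG$ explicit in the discrete case, read off the resulting formula for $\hocolim F$, and compare it to the explicit simplicial/cubical model used in~\cite{HLS:HEquivariant}. First I would observe that when $G$ is discrete every smooth cube $[0,1]^n \to G$ is constant on each component, so an $n$-simplex of $\Nervesm BG$ reduces to a composable sequence $(g_1,\dots,g_n)$ of elements of $G$ with all higher cubical data constant. Consequently $\Nervesm BG$ is canonically isomorphic to the ordinary nerve of the one-object category $BG$, and a section $\BGSection\co \Nerve BG\to\NerveAc\GJHcat$ amounts precisely to a choice, for each composable word $(g_1,\dots,g_n)$, of a generic $(n-1)$-cube of semi-cylindrical Floer data with prescribed faces --- the same data used to build the equivariant complex in~\cite{HLS:HEquivariant}.

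Next I would unpack $\hocolim F$ using Formula~\eqref{eq:simple-hocolim-diff}. After quotienting by degeneracies, $\hocolim F$ is freely generated over $\FF_2$ by symbols $[g_1|\cdots|g_n]\otimes x$ with $x\in L_0^H\cap L_1$, and the differential has three types of terms: the internal Floer differential $\bdy x$ on the complex $F(\sigma_0)$, the simplicial face sum $\sum_{i=1}^{n-1}[g_1|\cdots|g_ig_{i+1}|\cdots|g_n]\otimes x$ together with $[g_2|\cdots|g_n]\otimes x$, and the top term $[g_2|\cdots|g_n]\otimes\maxFloerFunc(\sigma_{0,1})(x)$ coming from $\maxmap{F}(\sigma_{01})$. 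After choosing $\BGSection$ to use the cubical families constructed in~\cite{HLS:HEquivariant}, the count $\maxFloerFunc(\sigma)$ is by definition the parametric disk count that defines the corresponding differential in the old equivariant complex, and the combinatorial pieces match the explicit bar-type differential of that paper on the nose.

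I would then define a map $\Phi$ from $\hocolim F$ to the equivariant complex of~\cite{HLS:HEquivariant} by sending $[g_1|\cdots|g_n]\otimes x$ to the generator of the old complex indexed by the same simplex of $EG$ and the same intersection point, and verify directly that $\Phi$ is a chain map using the matching of terms above. Both complexes carry a natural filtration by word length $n$ (equivalently, by the skeleta of $BG$), and $\Phi$ is a morphism of filtered complexes. On the $E^1$-page both filtrations produce, in bidegree $(n,q)$, a copy of $\HF^q(L_0,L_1;\FF_2)$ for each $n$-simplex of the nerve, and $\Phi$ induces the identity on these pages; hence $\Phi$ is a quasi-isomorphism.

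The main obstacle is notational rather than substantive: the conventions used to index the parametric families, the orientation of the cubes, and the $E_\infty$/bar-style differential in~\cite{HLS:HEquivariant} do not literally coincide with the conventions introduced here in Sections~\ref{sec:smooth-nerves} and~\ref{sec:homotopy-colimits}. I would handle this by invoking invariance (Theorem~\ref{thm:eHF-inv}) on the new side and the analogous invariance on the old side to align the two choices of coherent Floer data, and by a careful bookkeeping of the bijection between non-degenerate simplices of $\Nerve BG$ and the cubical cells of $EG$ used in the earlier paper. Once those conventions are aligned, the spectral-sequence argument above promotes the bijection of generators to the desired isomorphism of $H^*(BG;\FF_2)$-modules.
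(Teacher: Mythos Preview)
Your outline is the same as the paper's: for discrete $G$ the smooth nerve $\Nerve BG$ collapses to the ordinary nerve, a section $\BGSection$ is precisely a choice of the cubical families of almost complex structures used in~\cite{HLS:HEquivariant}, and one then compares the two homotopy colimits. Two corrections are in order. First, your expansion of the differential is wrong in a way that matters. Formula~\eqref{eq:simple-hocolim-diff} contributes, for \emph{every} $1\le i\le n$, a term $[\sigma_{i,\dots,n}]\otimes\maxmap{F}(\sigma_{0,\dots,i})(x)$, not just the $i=1$ case you wrote; these higher terms are exactly the parametric disk counts over the $(i-1)$-cubes and are what match the bar-type differential in the previous paper. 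Moreover the term $[g_2|\cdots|g_n]\otimes x$ you list does not occur: $d_0\sigma$ appears only through the $i=1$ parametric term, while the missing simplicial face is $d_n\sigma=[g_1|\cdots|g_{n-1}]\otimes x$. Without the higher parametric terms your map $\Phi$ would not be a chain map, so this is not merely cosmetic. Second, once the bookkeeping is fixed the spectral-sequence argument is unnecessary. The paper observes that the equivalence relations in Definition~\ref{def:hocolim} and in the old homotopy colimit (recalled as Definition~\ref{def:old-hocolim-cxs}) are literally identical when $G$ is discrete, so after Lemma~\ref{lemma-coherent-diagrams-same} the two chain complexes are equal on the nose, not just quasi-isomorphic; your $\Phi$ is then the identity.
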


In our previous paper~\cite{HLS:HEquivariant} the construction of $G$-equivariant Floer cohomology was carried out for finite $G$, but all of the arguments hold also for the general discrete case. Furthemore, note that the construction of this paper lifts the construction of
equivariant cohomology given in our previous paper~\cite{HLS:HEquivariant}, described there
only over $\FF_2$-coefficients, to $\Ring$-coefficients for an arbitrary ring $\Ring$.

In the special case that there is a $G$-equivariant cylindrical pair $(J,H)$ which is generic (that is, in the presence of equivariant transversality), one may also consider the $G$-equivariant cohomology of the chain complex $\CF(L_0^H,L_1)$ with the induced action of $G$ by chain maps. This equivariant cohomology is the module $\Ext_{\Ring[G]}(\CF(L_0^H,L_1), \Ring)$, where $\Ring$ is the trivial $\Ring[G]$ module. (This is the theory used in~\cite[Section 2b and 3]{SeidelSmith10:localization}.) In our previous paper, we showed the the new construction of $G$-equivariant Lagrangian Floer cohomology given there is identified with this simpler construction when a generic $G$-equivariant $(J,H)$ exists~\cite[Section 4]{HLS:HEquivariant}. It follows that the same is true of the $G$-equivariant Floer cohomology constructed in Definition~\ref{def:equivariant-cohomology}:

\begin{corollary} 
  Suppose there exists a generic cylindrical pair $(J,H)$ which is
  invariant under the action of $G$, that is, such that
  $g(J,H) = (J,H)$ for all $g \in G$. Then the $G$-equivariant Floer
  cohomology constructed in Definition
  \ref{def:equivariant-cohomology} agrees with the $G$-equivariant
  cohomology $\Ext_{\Ring[G]}(\CF(L_0^H,L_1),\Ring)$.
\end{corollary}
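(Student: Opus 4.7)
The plan is to exploit the $G$-invariance of $(J,H)$ to construct a canonical \emph{constant} section $\BGSection_0\co\Nerve BG\to\NerveAc\GJHcat$ that trivializes the Floer diagram, reducing it to the standard strict functor encoding the $G$-action on $\CF(L_0^H,L_1)$. Once this is done, the hypercohomology calculation reduces to a familiar bar construction, and Theorem \ref{thm:eHF-inv} handles independence of the choice of section. For $\Ring=\FF_2$ this is already a direct consequence of Theorem \ref{thm:finite-same} combined with \cite[Section 4]{HLS:HEquivariant}, but the following construction works uniformly over any ring.

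First I would define $\BGSection_0$ by sending each smooth functor $\sigma\co\sCVcat[n]\to BG$ to the lift whose morphism cubes all map constantly to the trivial (single-story, length-$0$) semi-cylindrical pair at $(J,H)$. The $G$-invariance $g(J,H)=(J,H)$ is exactly what makes this assignment respect the required asymptotic matching, and stacking trivial pairs is trivial up to the expansion equivalence $\sim$ on $\GJHcat$, so $\BGSection_0$ is a map of simplicial sets. To see that it lands in $\NerveAc\GJHcat$, note that for $n\geq 2$ and any $\phi\in\pi_2^{g(\sigma)}(x,y)$ with $\mu(\phi)\leq -n$, the relevant moduli space is identified with $\cM(\phi;J,H)\times[0,1]^{n-1}$, which is empty because $\cM(\phi;J,H)$ has negative formal dimension. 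For $n=1$, genericity of $(J,H)$ is exactly what is required.

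Second, the same dimension count shows $\maxmap{\chainFb}(\sigma)=0$ for every non-degenerate $n$-simplex $\sigma$ with $n\geq 2$, so $\chainFb=\FloerFunc\circ\BGSection_0$ is a \emph{strict} functor $\Nerve BG\to\Complexes$. On vertices it returns $\CF(L_0^H,L_1)$. On a $1$-simplex $g\in G$, $\maxmap{\chainFb}(g)$ counts index $0$ Whitney disks from $x$ to $gy$ with (static) boundary conditions $L_0^H,L_1$. Translating by $g^{-1}$ identifies these with the constant index $0$ disks from $g^{-1}x$ to $y$, so generically the only contribution is when $y=g^{-1}x$; the $G$-equivariance of the coherent orientation from Section \ref{sec:G-or} ensures signs match. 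Hence $\maxmap{\chainFb}(g)$ is precisely the action of $g^{-1}$, exhibiting $\chainFb$ as the standard diagram encoding $\CF(L_0^H,L_1)$ as an $\Ring[G]$-module.

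Finally, since $\chainFb$ is strict, Definition \ref{def:hocolim} reduces to the usual two-sided bar construction $B_\bullet(\Ring,\Ring[G],\CF(L_0^H,L_1))$ built on $\Nerve BG$. Lemma \ref{lem:sm-nerve} identifies $\lvert\Nerve BG\rvert$ with the classifying space of $G$, so this bar construction computes the Borel equivariant hypercohomology, i.e.\ $\Ext^*_{\Ring[G]}(\CF(L_0^H,L_1),\Ring)$. Theorem \ref{thm:eHF-inv} then guarantees that the equivariant Floer cohomology computed from \emph{any} choice of section agrees with this value, proving the corollary.

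The main obstacle is the sign bookkeeping in the identification of $\maxmap{\chainFb}(g)$ with the $g^{-1}$-action; this requires checking that the $G$-orientation system and the action of $G$ on the moduli spaces of index $0$ disks are compatible, which is exactly the coherence built into Definition \ref{def:G-or-system}. Over $\FF_2$ the sign issue vanishes and one can bypass the whole direct argument by appealing to Theorem \ref{thm:finite-same}; the reduction of the strict hocolim to the bar model, and of the bar model to $\Ext$, is then a routine unwinding.
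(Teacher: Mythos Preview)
Your approach is correct and more self-contained than the paper's. The paper's proof is quite terse: it invokes Theorem~\ref{thm:finite-same} to identify the new construction with that of~\cite{HLS:HEquivariant}, and then cites~\cite[Proposition~4.5]{HLS:HEquivariant} (adapted from $\FF_2$- to $\Ring$-coefficients, with the remark that ``including the signs proceeds without incident'') for the comparison to $\Ext_{\Ring[G]}(\CF(L_0^H,L_1),\Ring)$. You instead bypass the translation to the previous paper's framework entirely, exhibiting the constant section $\BGSection_0$ directly, observing that the resulting diagram is strict (since for $n\geq 2$ the relevant Maslov indices are negative and the cylindrical moduli spaces are empty), and computing the hypercohomology via the bar model. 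This is essentially what the cited Proposition~4.5 does internally, re-derived in the present paper's language; your version avoids routing through two frameworks, at the cost of re-verifying the sign compatibility you correctly flag as the main obstacle. Note that since $G$ is discrete here, every smooth cube $[0,1]^{n-1}\to G$ is constant, so your very-acme check for $\BGSection_0$ reduces cleanly to the genericity of $(J,H)$ plus a product-with-cube argument, and the final bar identification is literally the classical one.
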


\begin{proof} 
  This follows from Theorem \ref{thm:finite-same} and an adaptation of
  the proof of \cite[Proposition 4.5]{HLS:HEquivariant}, originally
  proved over $\Field_2$-coefficients, to $\Ring$-coefficients. Including
  the signs proceeds without incident.
\end{proof}

Turning to the proof of Theorem~\ref{thm:finite-same}, we begin
with a comparison of the spaces of almost complex structures
appearing in this paper and the previous paper
\cite{HLS:HEquivariant}. In the previous paper, the main construction
was carried out for Lagrangians $L_0$ and $L_1$ intersecting
transversely, so there was no need to include the data of a
Hamiltonian function. (In \cite[Section 3.6]{HLS:HEquivariant} we did
consider the case of non-transverse intersections.) We
considered the topological category $\ol{\JSpace}$ with objects
consisting of cylindrical almost complex structures $J$. The spaces of
morphisms $\ol{\JSpace}(J_a,J_b)$ consisted of multi-story semicylindrical
almost complex structures with limits $J^{-\infty}=J_a$,
$J^{\infty}=J_b$, up to an equivalence relation analogous to that in
Section~\ref{sec:J-space}.  In this paper we have used the space $\GJHspace$ of elements
$(g,(\wt{J}, \wt{H}))$ with $(\wt{J}, \wt{H})^{-\infty} = (J,H)$ and
$(\wt{J},\wt{H})^{\infty} = g(J,H)$. Because $G$ is discrete, there is a continuous map
$\eta\co\GJHspace \to \coprod_{J,J'\in\ob(\ol{\JSpace})}
{\ol{\JSpace}}(J,J')$, sending $(g,(\wt{J},\wt{H}))\in\GJHspace$ to $\wt{J}\in{\ol{\JSpace}}(J,gJ)$.

With this in mind, we turn our attention to the functor
$\BGSection \co \Nerve B G \to \NerveAc \GJHcat$ from
Section~\ref{sec:construction}. Recall that $BG$ is the smooth category with one object $o$ and $\Hom(o,o)=G$, with composition given by
$g \circ h = hg$. There is exactly one smooth map $[0,1]^n \rightarrow G$ for each $n$ and each element $g \in G$, the constant map. Ergo $n$-simplices of the smooth nerve $\Nerve B G$
correspond to ordered $n$-tuples $(g_n, \dots, g_1)$ of elements of
$G$. For example, a triple $(g_3,g_2,g_1)$ corresponds to a square of maps as in the left-hand side of Example~\ref{exam:3-cell-in-nbg}, with cube maps as follows:

\begin{itemize}
\item $\cubemap{\sigma}_{01} = g_1$, $\cubemap{\sigma}_{12} = g_2$, $\cubemap{\sigma}_{23} = g_3$, $\cubemap{\sigma}_{02} = g_1g_2$, $\cubemap{\sigma}_{13} = g_2g_3$, $\cubemap{\sigma}_{03} = g_1g_2g_3$.

\item $\cubemap{\sigma}_{012}$ is the constant path at $g_1g_2$, $\cubemap{\sigma}_{123}$ is the constant path at $g_2g_3$, $\cubemap{\sigma}_{023} = \cubemap{\sigma}_{013}$ are both the constant path at $g_1g_2g_3$.

\item $\cubemap{\sigma}$ is the constant map from $[0,1]^2$ to $g_1g_2g_3$.
\end{itemize}

 Consider the diagram
$\BGSection \co \Nerve BG \to \NerveAc \GJHcat$ described
in Lemma \ref{lem:sec-exists}. The functor $\BGSection$ associates to
every element in $G$ a point
$\BGSection(g)=(g,(\wt{J},\wt{H}))$ with
$(\wt{J}, \wt{H})^{-\infty} = (J,H)$ and
$(\wt{J},\wt{H})^{\infty} = g(J,H)$. Because $\BGSection$ respects
the degeneracy maps in $\Nerve BG$ and
$\NerveAc \GJHcat$, $\BGSection(e) = (e, (\wt{J},\wt{H}))$ has $\wt{J}(v)=J$ and $\wt{H}(v)=H$ for all $v \in \mathbb R$. Furthermore, to each $n$-simplex in $\Nerve BG$, or
equivalently each sequence of elements $(g_n,\dots,g_1)$,
$\BGSection$ associates a smooth $(n-1)$-dimensional cube of
semicylindrical pairs
\[
  \BGSection(g_n,\dots, g_1) \co [0,1]^{n-1} \to \GJHspace.
\]

For example, $\BGSection$ associates to the triple $(g_3,g_2, g_1)$ a map from the square $[0,1]^2$ into $\GJHspace$; this is a special case of Example~\ref{eg:ngj-3cell}.

Observe that in the case that $L_0\pitchfork L_1$, we may choose
$\BGSection$ so that $\BGSection(o)=(J,H)$ has $H \equiv 0$ and for
each simplex $\sigma\in \Nerve_nBG$,
$\cubemap{\BGSection(\sigma)}=\bigl(g_{\vec{\sigma}},(\wt{J},\wt{H})_{\vec{\sigma}}\bigr)\co
[0,1]^{n-1} \to \GJHspace$ has $\wt{H}\equiv 0$. Therefore the same is
true of the maps $\BGSection(g_n, \dots, g_1)$ described above. Since
all of our Hamiltonians are trivial, to condense notation we will let
\[
\cM(\phi; \wt{J}_{\sigma})=\cM(\phi; \wt{J}_{\sigma}, \wt{H}_{\sigma}).
\]
 
Next we express the information above in the language of our
previous paper~\cite{HLS:HEquivariant}. Our construction in that paper
used the notion of a homotopy coherent functor. We will require a superficially slightly different (although
equivalent) definition to the one appearing there. As in
Section~\ref{sec:background}, this is an adaptation of Vogt~\cite{Vogt73:hocolim}.

\begin{definition}
  Let $\Cat$ be a small category. A homotopy coherent $\Cat$-diagram in
  $\ol{\JSpace}$ consists of:
  \begin{itemize}
  \item For each object $x$ of $\Cat$, a cylindrical almost complex
    structure $F(x)$, and
  \item For each integer $n\geq 1$ and each sequence
    $x_0\stackrel{f_1}{\longrightarrow}\cdots\stackrel{f_n}{\longrightarrow}
    x_n$ of composable morphisms a continuous
    map $F(f_n,\dots,f_1)\co [0,1]^{n-1}\to\ol{\JSpace}$,
\end{itemize}    
such that for all $(t_{n-1},\dots,t_{1})\in[0,1]^{n-1}$,
\[
  F(f_n,\dots,f_1)(t_{n-1},\dots,t_{1})^{-\infty} =F(x_0) 
  \qquad\text{and}\qquad
  F(f_n,\dots,f_1)(t_{n-1},\dots,t_{1})^{\infty} =F(x_n),
\]
and such that
  \begin{gather*}
    \begin{split}
      F(f_n,\dots,f_2,\Id)(t_{n-1},\dots,t_{1})&=F(f_n,\dots,f_2)(t_{n-1},\dots,t_{2})\\
      F(\Id,f_{n-1},\dots,f_1)(t_{n-1},\dots,t_{1}) &= F(f_{n-1},\dots,f_1)(t_{n-2},\dots,t_{1})
    \end{split}\\
    \begin{split}
      F(f_n,\dots,f_{i+1},&\Id,f_{i-1},\dots,f_1)(t_{n-1},\dots,t_{1}) \\
      &= F(f_n,\dots,f_{i+1},f_{i-1},\dots,f_1)(t_{n-1},\dots,t_i \cdot t_{i+1},\dots,t_{1})\\
      F(f_n,\dots,f_1)&(t_{n-1},\dots,t_{i+1},1,t_{i-1},\dots,t_{1}) \\
      &= F(f_n,\dots,f_{i+1}\circ f_i,\dots,f_1)(t_{n-1},\dots,t_{i+1},t_{i-1},\dots,t_{1})\\
      F(f_n,\dots,f_1)&(t_{n-1},\dots,t_{i+1},0,t_{i-1},\dots,t_{1})\\
      &=
      [F(f_n,\dots,f_{i+1})(t_{n-1},\dots,t_{i+1})]\circ [F(f_i,\dots,f_1)(t_{i-1},\dots,t_{1})].
    \end{split}
  \end{gather*}
\end{definition}

The ordering of the factors in $[0,1]^{n-1}$ above differs from the definition given in our previous paper \cite[Definition 3.3]{HLS:HEquivariant}. This
choice is for compatibility with
the signs in Section~\ref{sec:background}. 

Now let $\BCat G$ be the category with a single object $o$ and
$\Hom(o,o)=G$, and let $\ECat G$ be the
category with an object for every element of $G$ and a unique morphism
between any two objects. Let $G$ act on $\ECat G$ on the left. 
(This is not quite standard, but necessary in order to match the 
composition in $\BCat G$.)   There is an identification between $n$-simplices
in $\Nerve B G$ and sequences of $n$-composable morphisms in
$\BCat G$, since both correspond to $n$-tuples $(g_n, \dots, g_1)$ of
elements in $G$. Sequences of $n$-composable morphisms in $\ECat G$ are
of the form
\[
g \xrightarrow{g_1} gg_1 \xrightarrow{g_2} gg_1g_2 \to \dots \xrightarrow{g_n} gg_1\cdots g_n.
\]
Call this sequence $(g; g_n, \dots, g_1)$. 
We can use $\BGSection$ to give a $G$-equivariant homotopy coherent diagram
$F\from\ECat G \to \ol{\JSpace}$ as follows. For every $g \in G$, let
$F(g)=g\BGSection(o) = gJ$. For every sequence of morphisms
$(g; g_n, \dots, g_1)$ in $\ECat G$, let $F(g; g_n,\dots, g_1)
=g(\eta(\BGSection(g_n,\dots, g_1)\circ (\vec{t}\mapsto \vec{1}-\vec{t})))\co [0,1]^{n-1}\to \ol{\JSpace}$.
Here, $\vec{t}\mapsto \vec{1}-\vec{t}$ denotes reflection of each $t_i$-coordinate in
$[0,1]^{n-1}$. (Compare Remark~\ref{rem:max-vs-smooth-max}.)
Because $\BGSection$ respects the simplicial relations in $\Nerve B G$,
$F$ gives a homotopy coherent diagram. 

We will also need the notion of a homotopy coherent diagram of chain
complexes from a small category $\Cat$. Once again, this is a slight modification of
the definition in the previous paper in order to better align
with the signs of Section~\ref{sec:background}.

\begin{definition}\label{def:ho-coh-diag-cxs}
  Let $\Cat$ be a small category. Then a homotopy coherent
  $\Cat$-diagram in chain complexes consists of:
 \begin{itemize}
 \item For each object $x$ of $\Cat$, a chain complex $T(x)$.
 \item For each $n\geq 1$ and each sequence 
   $x_0\stackrel{f_1}{\longrightarrow}\cdots\stackrel{f_n}{\longrightarrow} x_n$
   of composable morphisms a chain map $T(f_n,\dots,f_1)\co
   I_*^{\otimes (n-1)}\otimes T(x_0)\to T(x_n)$
 \end{itemize}
 such that
 \begin{multline*}
   T(f_n,\dots,f_1)(t_{n-1}\otimes\dots\otimes t_1)\\
   =
   \begin{cases}
     T(f_n,\dots,f_2)(t_{n-1}\otimes\dots\otimes t_2 \otimes \pi(t_{1})) & f_1=\Id\\
     T(f_n,\dots,f_{i+1},f_{i},\dots,f_1)(t_{n-1} \otimes \dots \otimes m(t_{i}\otimes t_{i-1}) \otimes \dots \otimes t_1) & f_i=\Id, 1<i<n\\
     T(f_{n-1},\dots,f_{1})(\pi(t_{n-1})\otimes t_{n-2} \otimes\dots\otimes t_1) & f_n=\Id\\
     T(f_n,\dots,f_{i+1}\circ f_i,\dots,f_1)(t_{n-1}\otimes\dots \otimes t_{i+1}\otimes t_{i-1}\otimes \dots\otimes t_{1}) & t_i=\{1\}\\
     [T(f_n,\dots,f_{i+1})(t_{n-1}\otimes\dots\otimes t_{i+1})]\circ [T(f_i,\dots,f_1)(t_{i-1}\otimes\dots\otimes t_{1})] & t_i=\{0\}
   \end{cases}
 \end{multline*}
 as maps $T(x_0)\to T(x_n)$.
 \end{definition}

 \begin{lemma}
   A homotopy coherent diagram of chain complexes is determined by the
   complexes $T(x)$ and the maps
   \[
     T_{f_n,\dots,f_1} =
     T(f_n,\dots,f_1)(\{0,1\}\otimes\cdots\otimes\{0,1\})\co T(x_0)\to
     T(x_n)
   \]
   which satisfy compatibility conditions
   \begin{enumerate}[label=(hc-\arabic*)]
   \item $T_{\Id_{x_0}} = \Id_{T(x_0)}$.
   \item $T_{f_n,\dots,f_1} = 0$ if $n>1$ and any $f_i = \Id$.
   \item\label{item:hc3}
     $\partial \circ T_{f_n,\dots,f_1} + (-1)^{n}T_{f_n,\dots,f_1}
     \circ \partial = \sum \limits_{0<i<n} (-1)^{n-i-1} [T_{f_n,\dots,
       f_{i+1} \circ f_i, \dots,f_1}- T_{f_n,\dots,f_{i+1}} \circ
     T_{f_i,\dots,f_1}].$
   \end{enumerate}
 \end{lemma}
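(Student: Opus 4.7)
The plan is to establish both directions of this equivalence of data. For the forward direction, starting from a homotopy coherent diagram $T$ as in Definition~\ref{def:ho-coh-diag-cxs}, I set $T_{f_n,\dots,f_1}(x) = T(f_n,\dots,f_1)(\{0,1\}^{\otimes(n-1)}\otimes x)$. Condition (hc-1) is immediate. Condition (hc-2) follows from the degeneracy relations: if $f_1=\Id$ or $f_n=\Id$, the first or third relation of Definition~\ref{def:ho-coh-diag-cxs} produces a factor $\pi(\{0,1\})=0$, while for an interior $f_i=\Id$ the second relation produces $m(\{0,1\}\otimes\{0,1\})=0$. For (hc-3), I apply the chain map identity to $\{0,1\}^{\otimes(n-1)}\otimes x$: the left side equals $\bdy T_{f_n,\dots,f_1}(x)$; on the right, the tensor differential produces terms with a single $\{0,1\}$ factor at position $i$ replaced by $\{1\}-\{0\}$ (with sign $(-1)^{n-1-i}$) plus a term with $\bdy x$ (with sign $(-1)^{n-1}$). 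Evaluating the $\{1\}$-terms via the fourth relation yields $T_{f_n,\dots,f_{i+1}\circ f_i,\dots,f_1}(x)$ and the $\{0\}$-terms via the fifth relation yield $T_{f_n,\dots,f_{i+1}}\circ T_{f_i,\dots,f_1}(x)$. Rearranging gives (hc-3) on the nose.

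For the reverse direction, given $\{T(x),T_{f_n,\dots,f_1}\}$ satisfying (hc-1)--(hc-3), I define $T(f_n,\dots,f_1)$ on a basis element $t_{n-1}\otimes\cdots\otimes t_1\otimes x$ recursively: if all $t_i=\{0,1\}$ the value is $T_{f_n,\dots,f_1}(x)$; otherwise pick any $i$ with $t_i\in\{\{0\},\{1\}\}$ and apply the fourth relation (if $t_i=\{1\}$) or the fifth (if $t_i=\{0\}$). I then show that every basis element reduces canonically to an iterated composition $T_{h^{(k)}}\circ\cdots\circ T_{h^{(1)}}$, where the factors correspond to the maximal runs of $\{0,1\}$-entries between consecutive $\{0\}$-cut points and the $h^{(j)}$ are obtained from the $f_i$s by collapsing across intermediate $\{1\}$-cut points. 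This normal form is independent of the order of reduction: commuting $\{1\}$-reductions uses associativity of composition in $\Cat$, commuting $\{0\}$-reductions uses associativity of composition of maps, and the mixed case is verified directly. If a $\{1\}$-reduction creates an identity morphism, the resulting factor vanishes by (hc-2), which matches the vanishing obtained by any alternative route.

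The main obstacle is verifying the chain map property for $T(f_n,\dots,f_1)$ on a general basis element, together with the five structural relations of Definition~\ref{def:ho-coh-diag-cxs}. The latter hold by construction of the recursion together with (hc-2). For the former, the top-generator case is exactly (hc-3); on a lower generator, write the value as a composition $T_{h^{(k)}}\circ\cdots\circ T_{h^{(1)}}$ as above and apply the graded Leibniz rule to the composition. Invoking (hc-3) for each factor $h^{(j)}$, the resulting contributions split into boundary terms within each factor (matching the $\bdy x$ component of the tensor differential), collapsed-$\Id$ terms (which vanish by (hc-2) and correspond to internal $\{0,1\}$ differentials within a run), and splitting terms (corresponding to the $\{0,1\}$ differentials at the boundaries between runs). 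The delicate point is the sign bookkeeping, but this parallels the verification already carried out for $\hocolim$ after Definition~\ref{def:hocolim}, so no new ideas are required.
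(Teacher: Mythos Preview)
Your proposal is correct, and for the forward direction your sign computation for (hc-3) is exactly the paper's argument. The paper, however, treats the converse direction and conditions (hc-1)--(hc-2) as obvious and only writes out the verification of (hc-3); your proof is considerably more detailed in spelling out the reconstruction of $T(f_n,\dots,f_1)$ from the $T_{f_n,\dots,f_1}$, the normal-form argument for well-definedness, and the chain-map check on general basis elements. What your more explicit route buys is a genuine proof of the ``determined by'' claim rather than an appeal to evidentness; what the paper's minimalism buys is brevity, relying on the reader to see that the five relations in Definition~\ref{def:ho-coh-diag-cxs} recursively reduce every value to a composite of top-cell values.
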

\begin{proof}
  The only non-obvious part of this assertion is the signs in the last
  equation, which we now verify. Observe that
\begin{align*}
\partial&\bigl( T_{f_n,\dots, f_1}(z)\bigr)=\partial \bigl( T(f_n,\dots,f_1)(\{0,1\}\otimes \cdots \otimes \{0,1\}\otimes z)\bigr) \\
&= T(f_n,\dots,f_1)\bigl(\partial (\{0,1\}\otimes \cdots \otimes \{0,1\}\otimes z)\bigr) \\
&= (-1)^{n-1}T(f_n,\dots,f_1)(\{0,1\}\otimes\cdots\otimes \{0,1\}\otimes \partial z) \\ 
& \qquad +\sum_{i=1}^{n-1} (-1)^{n-1+i}[T(f_n,\dots, f_1)(\{0,1\}\otimes \cdots \otimes \{1\} \otimes \cdots \otimes \{0,1\}\otimes z)\\
&\qquad - T(f_n,\dots,f_1)(\{0,1\}\otimes \cdots \otimes \{0\} \otimes \cdots \otimes \{0,1\}\otimes z)]\\
&= (-1)^{n-1}T_{f_n,\dots, f_1}(\partial z)\\ 
&\qquad + \sum_{i=1}^{n-1} (-1)^{n-i+1}[T(f_n,\dots, f_{i+1}\circ f_i,\dots, f_1)(\{0,1\}\otimes\cdots\otimes\{0,1\}\otimes z)\\
&\qquad  - T(f_n,\dots,f_{i+1})(\{0,1\}\otimes \cdots \otimes \{0,1\} \otimes T(f_{i},\dots, f_1)(\{0,1\}\otimes\dots\otimes\{0,1\}\otimes z))]\\
&= (-1)^{n-1}T_{f_n,\dots, f_1} \circ \partial (z) + \sum_{0<i<n} (-1)^{n-i-1} [T_{f_n,\dots, f_{i+1} \circ f_i, \dots,f_1}- T_{f_n,\dots,f_{i+1}} \circ T_{f_i,\dots,f_1}].
\end{align*}
(In the third line, $t_i$ is the term that may be
$\{1\}$ or $\{0\}$, and all other $t_j$ are $\{0,1\}$.) This completes
the proof.
\end{proof}

In our previous paper \cite[Section 3]{HLS:HEquivariant} we obtained a
$G$-equivariant homotopy coherent $\ECat G$-diagram $T'$ in chain
complexes by letting $T'(g) = \CF(L_0,L_1; F(g))=\CF(L_0, L_1; gJ)$ and 
$T'(g;g_n,\dots,g_1) \co T'(g) \to T'(gg_1\cdots g_n)$ be
\begin{equation}\label{eq:T-map}
T'(g;g_1,\dots,g_n)(x) =  
\sum_{y \in L_0 \cap L_1}
\sum_{\substack{\phi \in \pi_2(x,y) \\ \mu(\phi)= 1-n}} 
\#\cM(\phi;F(g;g_n,\dots,g_1))y
\end{equation}
on generators $x\in L_0 \cap L_1$ and extending linearly. (The
formulas here are slightly different from our previous paper, as we
are using a left action of $G$ here and used a right action in the
prequel.)  
The moduli space in Formula~\eqref{eq:T-map} is the union of the moduli spaces of
$F(g;g_n,\dots,g_1)(v)$-holomorphic curves over points $v$ in the
$(n-1)$-cube.
Equivalently, we can
define a homotopy coherent $\BCat G$-diagram in chain complexes by
setting $T(o) = \CF(L_0,L_1; J)$ and
\[
T(g_n,\dots, g_1)(x)=  
\sum_{gy \in L_0 \cap L_1} 
\sum_{\substack{\phi \in \pi_2(x,gy) \\ \mu(\phi)=1-n}} 
\#\cM(\phi;F(g_n,\dots,g_1))y
\]
on generators $x \in L_0 \cap L_1$ and extending linearly. Here $g =
g_n \cdots g_1$.

Let us compare this to the data we get by applying the Floer functor
to $\BGSection$. First, notice that since the $n$-simplices in
$\Nerve B G$ correspond
to sequences of $n$-composable morphisms $(g_n,\dots, g_1)$ in
$\BCat G$, the data of the functor
$\FloerFunc \circ \BGSection \co \Nerve BG \to
\Complexes$ is exactly the data of a homotopy coherent $\BCat G$-diagram
in chain complexes. (Note that the $i\th$ degeneracy map corresponds to dropping $g_{i+1}$.) Condition~\ref{item:hc3}
corresponds to Equation~\eqref{eq:chain-cx-cat} in the definition of
the infinity category of chain complexes.

\begin{lemma}\label{lemma-coherent-diagrams-same} 
  The homotopy coherent $\BCat G$-diagram in chain complexes induced
  by $\FloerFunc \circ \BGSection \co \Nerve BG \to \Complexes$
  is exactly the homotopy coherent $\BCat G$-diagram $T$ defined using
  the maps determined by the diagram of multi-story almost complex
  structures $F$.
\end{lemma}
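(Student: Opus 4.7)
The plan is to verify directly, simplex by simplex, that when $\FloerFunc\circ\BGSection\co\Nerve BG\to\Complexes$ is decoded as a homotopy coherent $\BCat G$-diagram in chain complexes via Definition~\ref{def:ho-coh-diag-cxs}, the resulting data is literally the data defining $T$. An $n$-simplex of $\Nerve BG$ corresponds uniquely to a sequence $(g_n,\dots,g_1)$ of composable morphisms in $\BCat G$; both sides associate to each such sequence a single chain map $T(o)\to T(o)$, so the lemma reduces to matching the chain complex at $o$, the chain maps $T_{g_n,\dots,g_1}$, and the compatibility relations.

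On objects, by Lemma~\ref{lem:sec-exists} together with the remark in the text that in the transverse case we may arrange $\BGSection(o)=(J,0)$ with $\wt H\equiv 0$ on every cube, we have $\FloerFunc\circ\BGSection(o)=\CF(L_0,L_1;J)=T(o)$. For the higher chain maps, $F(g_n,\dots,g_1)=\eta\bigl(\BGSection(g_n,\dots,g_1)\bigr)$ is by construction the family of multi-story semi-cylindrical almost complex structures underlying the cube map $\cubemap{\BGSection(g_n,\dots,g_1)}\co[0,1]^{n-1}\to\GJHspace$. The canonical bijection $\pi_2^g(x,y)\cong \pi_2(x,gy)$ built into the definition of twisted homotopy classes (Formula~\eqref{eq:pi-2-g}, with trivial Hamiltonian isotopy in the discrete case) identifies the moduli spaces $\cM(\phi;\BGSection(g_n,\dots,g_1))$ of Section~\ref{subsec:moduli} with the moduli spaces $\cM(\phi;F(g_n,\dots,g_1))$ of~\cite{HLS:HEquivariant}. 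Since the Maslov-index constraints agree ($-n+1=1-n$), comparing Formula~\eqref{eq:FloerFuncDef} with Formula~\eqref{eq:T-map} yields $\maxFloerFunc(\BGSection(g_n,\dots,g_1))=T_{g_n,\dots,g_1}$ on generators.

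Finally, the compatibility conditions (hc-1)--(hc-3) of a homotopy coherent diagram are exactly the simplicial-set-map conditions encoded by Formula~\eqref{eq:chain-cx-cat-max} together with vanishing on degeneracies: (hc-1) and (hc-2) are the degeneracy conditions, and (hc-3) is Formula~\eqref{eq:chain-cx-cat-max}. These hold on the $\FloerFunc\circ\BGSection$ side by Lemma~\ref{lem:is-sset-map} and on the $T$ side by construction, so the two homotopy coherent diagrams coincide. The main obstacle in writing out the proof is bookkeeping: verifying that the composition convention $g\circ h=hg$ in $\BCat G$, the cube orderings inherited from $\sCVcat[n]$, and the ordering of Hamiltonian concatenation used in Lemma~\ref{lem:exterior-prod-generic} all match the ordering of the sequence $(g_n,\dots,g_1)$ in the homotopy coherent diagram, and that the signs in Formula~\eqref{eq:chain-cx-cat-max} match those in (hc-3) under this identification. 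None of these comparisons involves any new geometric content beyond unpacking definitions.
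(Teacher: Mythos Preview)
Your proposal is correct and takes essentially the same approach as the paper, which dispatches the lemma with the single sentence ``This is immediate from the definitions.'' You have simply written out explicitly the definition-unpacking that the paper leaves implicit; there is no additional idea needed on either side.
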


\begin{proof}
  This is immediate from the definitions.
\end{proof}

Before the proof of Theorem \ref{thm:finite-same},
we recall one final definition. Given a homotopy coherent $\Cat$-diagram of
chain complexes, we recall that the homotopy colimit is the
following.

\begin{definition}\label{def:old-hocolim-cxs}
  Given a homotopy coherent $\Cat$-diagram of chain complexes,
  the \emph{homotopy colimit} of $T$ is defined by
  \begin{equation}\label{eq:old-chain-hocolim-def}
  \hocolim T = \bigoplus_{n\geq 0}
  \bigoplus_{x_0\stackrel{f_1}{\longrightarrow}\cdots\stackrel{f_n}{\longrightarrow}x_n}
  I_*^{\otimes n}\otimes T(x_0)/\sim,
  \end{equation}
  where the coproduct is over $n$-tuples of composable morphisms in
  $\Cat$ and the case $n=0$ corresponds to the objects
  $x_0\in\ob(\Cat)$.  The equivalence relation $\sim$ is given by
  \begin{multline*}
  (f_n,\dots,f_1;t_n\otimes\dots\otimes t_1;x)\\
  \sim
  \begin{cases}
    (f_n,\dots,f_2;t_n\otimes\dots\otimes t_2 \otimes \pi(t_1);x) &  f_1=\Id\\
    (f_n,\dots,f_{i+1},f_{i-1},\dots,f_1;t_n\otimes\dots\otimes m(t_{i},t_{i-1})\otimes\dots \otimes t_1;x) & f_i=\Id,\ i>1\\
    (f_n,\dots,f_{i+1}\circ f_i,\dots,f_1;t_n\otimes\dots\otimes t_{i+1}\otimes t_{i-1} \otimes\dots \otimes t_1;x) & t_i=\{1\},\ i<n\\
    (f_{n-1},\dots,f_1;t_{n-1}\otimes\dots\otimes t_{1};x) & t_n=\{1\}\\
    (f_n,\dots,f_{i+1};t_{n}\otimes\dots\otimes t_{i+1};T(f_i,\dots,f_1)(t_{i-1}\otimes\dots\otimes t_{1} \otimes x)) & t_i=\{0\}.
  \end{cases}
  \end{multline*}
  The differential is induced by the tensor product differential in
  Formula~\eqref{eq:old-chain-hocolim-def}.
\end{definition}

\begin{proof}[Proof of Theorem~\ref{thm:finite-same}] 
  The homotopy colimit of $\FloerFunc \circ \BGSection$, in the sense
  of Definition \ref{def:hocolim}, is the complex
  \[
    C_*=\hocolim (\FloerFunc \circ \BGSection) = \bigoplus_{n\geq 0}
    \bigoplus_{\sigma \in \Nerve BG}
    I_*^{\otimes n}\otimes \maxFloerFunc(\BGSection(\sigma))(x_0)/\sim
  \]
  where $\sim$ is as in Definition~\ref{def:hocolim}. The cohomology
  of this complex is the equivariant cohomology of $(M,L_0,L_1)$ in
  the sense of this paper. Notice that this complex is also the
  homotopy colimit of the homotopy coherent $\BCat G$-diagram
  in the sense of Definition~\ref{def:old-hocolim-cxs},
  since the equivalence relations in the two
  definitions are identical in the case that $G$ is discrete. However,
  by Lemma~\ref{lemma-coherent-diagrams-same} this homotopy coherent
  diagram is precisely the diagram $T$, so the complex $C_*$ is also
  equal to
  \[
    \hocolim (T) = \bigoplus_{n\geq 0}
    \bigoplus_{x_0\stackrel{f_1}{\longrightarrow}\cdots\stackrel{f_n}{\longrightarrow}x_n}
    I_*^{\otimes n}\otimes T(x_0)/\sim
  \]
  where $\sim$ is as in Definition~\ref{def:old-hocolim-cxs}. 
  
The $G$-equivariant cohomology of $(M,L_0,L_1)$ in the sense of our
previous paper is the homology of the complex
$\Hom_{\Ring[G]}(\hocolim T',\Ring)$. This is the same as the
cohomology of $\hocolim T$, the homotopy colimit of the $\BCat G$-diagram $T$. Since the cohomology of this
  homotopy colimit is also the $G$-equivariant cohomology of $(M,L_0,L_1)$
  in the sense of this present paper, the two definitions agree.
\end{proof}

\begin{example}
  Continuing with Examples~\ref{eg:coh-or} and~\ref{eg:or-2},
  depending on the choice of $\ZZ/2$-orientation system, there are
  four possible models for $\hocolim(T)$. Since we can replace $x$ by
  $-x$, it suffices to consider the cases that the element of
  $\cM(B;J)$ connecting $x$ to $y$ is oriented positively. The four
  cases then correspond to the orientations over $B_e(x,x,F-B)$ and
  $B_\tau(x,x,0)$. Equivalently, these correspond to whether the
  elements $u_F\in \cM(F;J)$ from $x$ to $y$ and $u_{0,x}\in \cM(0;F(\tau))$ from $x$
  to $x$ are oriented positively or negatively. There is also an
  element $u_{0,y}\in\cM(0;F(\tau))$ from $y$ to $y$, whose
  orientation is easy to determine from Theorem~\ref{thm:gluing}. The four cases are:
  \begin{itemize}
  \item Both $u_F$ and $u_{0,x}$ are oriented positively. Then
    $u_{0,y}$ is oriented positively and $\hocolim(T)$ is the total
    complex of
    \[
      \xymatrix{
        \ZZ\ar[d]_{2} & \ZZ\ar[l]_0\ar[d]_{-2} & \ZZ\ar[d]_{2}\ar[l]_2 & \ZZ\ar[d]_{-2}\ar[l]_0 & \ZZ\ar[d]_{2}\ar[l]_2 & \cdots\ar[l]\\
        \ZZ & \ZZ\ar[l]^0 & \ZZ\ar[l]^2 & \ZZ\ar[l]^0 & \ZZ\ar[l]^2 & \cdots.\ar[l]\\
      }
    \]
  \item $u_F$ is oriented positively and $u_{0,x}$ is oriented
    negatively. Then $u_{0,y}$ is oriented negatively and
    $\hocolim(T)$ is the total complex of
    \[
      \xymatrix{
        \ZZ\ar[d]_{2} & \ZZ\ar[l]_2\ar[d]_{-2} & \ZZ\ar[d]_{2}\ar[l]_0 & \ZZ\ar[d]_{-2}\ar[l]_2 & \ZZ\ar[d]_{2}\ar[l]_0 & \cdots\ar[l]\\
        \ZZ & \ZZ\ar[l]^2 & \ZZ\ar[l]^0 & \ZZ\ar[l]^2 & \ZZ\ar[l]^0 & \cdots.\ar[l]\\
      }
    \]
  \item $u_F$ is oriented negatively and $u_{0,x}$ is oriented positively. Then
    $u_{0,y}$ is oriented negatively and $\hocolim(T)$ is the total
    complex of
    \[
      \xymatrix{
        \ZZ\ar[d]_{0} & \ZZ\ar[l]_0\ar[d]_{0} & \ZZ\ar[d]_{0}\ar[l]_2 & \ZZ\ar[d]_{0}\ar[l]_0 & \ZZ\ar[d]_{0}\ar[l]_2 & \cdots\ar[l]\\
        \ZZ & \ZZ\ar[l]^2 & \ZZ\ar[l]^0 & \ZZ\ar[l]^2 & \ZZ\ar[l]^0 & \cdots.\ar[l]\\
      }
    \]    
  \item Both $u_F$ and $u_{0,x}$ are oriented negatively. Then
    $u_{0,y}$ is oriented positively and $\hocolim(T)$ is the total
    complex of
    \[
      \xymatrix{
        \ZZ\ar[d]_{0} & \ZZ\ar[l]_2\ar[d]_{0} & \ZZ\ar[d]_{0}\ar[l]_0 & \ZZ\ar[d]_{0}\ar[l]_2 & \ZZ\ar[d]_{0}\ar[l]_0 & \cdots\ar[l]\\
        \ZZ & \ZZ\ar[l]^0 & \ZZ\ar[l]^2 & \ZZ\ar[l]^0 & \ZZ\ar[l]^2 & \cdots.\ar[l]\\
      }
    \]    
  \end{itemize}
  In particular, the last case gives the usual Borel equivariant
  (co)homology of $S^1$ with respect to the $\ZZ/2$-action by
  reflection across a line.
\end{example}

\subsection{The case of Morse theory}\label{sec:comp-morse}
The goal of this section is to prove:
\begin{proposition}\label{prop:Morse-is-Borel}
  For $G$ a Lie group acting on a compact, smooth manifold $M$ the
  $G$-equivariant cohomology $\eH[G](M)$ as defined in
  Equation~\eqref{eq:equi-morse-coho} agrees with the Borel
  equivariant cohomology $H^*(M\times_G EG)$.
\end{proposition}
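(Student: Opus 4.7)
The plan is to identify $\eH^*_G(M) = H^*(\hocolim(\MorseFunc \circ \BGSection))$ with the Borel cohomology $H^*(EG\times_G M)$ by comparing both to a common chain-level model for the $G$-equivariant local system of chains on $M$. I first pass to fat homotopy colimits via Proposition~\ref{prop:fat-hocolim-is-hocolim}, which preserves cohomology and intertwines the module structure (using $|\Nerve BG| \simeq \FatRealize{\Nerve BG} \simeq BG$ from Lemma~\ref{lem:sm-nerve}); this places us in the framework of summed natural transformations (Definition~\ref{def:summed-nat-trans} and Proposition~\ref{prop:summed-nat-chain-map}).

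Next, I introduce a reference functor $\Phi\co \Nerve BG \to \ChainComplexes$ built from the strict action of $G$ on $M$ by diffeomorphisms. Concretely: $\Phi(o) = C_*^{\sing}(M)$ (smooth singular chains); on a $1$-simplex corresponding to $g\in G$, $\maxmap{\Phi}(g) = g_*$; and on every non-degenerate simplex of dimension $\geq 2$, $\maxmap{\Phi} = 0$ (consistent with the strict $G$-action, which produces no higher coherence data). A short check shows this defines a simplicial set map with $\fathocolim\Phi$ canonically quasi-isomorphic to the two-sided bar construction $B(C_*^{\sing}(M), G, \mathrm{pt})$, a standard chain model for $C_*(EG\times_G M)$; hence $H^*(\fathocolim\Phi) \cong H^*(EG\times_G M)$ as $H^*(BG)$-modules.

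The key step is to construct a summed natural transformation $\alpha\co j^*(\MorseFunc\circ\BGSection)\to j^*\Phi$. At the unique $0$-simplex, $\alpha(o)\co C_*^{\Morse}(\metric,f)\to C_*^{\sing}(M)$ is the classical Morse-to-singular quasi-isomorphism sending a critical point $x$ to (a smooth singular chain representing) its compactified descending manifold. At an $n$-simplex $\sigma$, $\alpha(\sigma)$ is a signed count of hybrid moduli spaces consisting of broken gradient trajectories of the family Morse data $\BGSection(\sigma)$ whose initial asymptote is a critical point and whose final endpoint lies on the interior of a fixed singular simplex. The structural equation~\eqref{eq:summed-hocolim-map} follows from a codimension-one boundary analysis of these parameterized spaces, directly parallel to Theorem~\ref{thm:gluing}: the boundary decomposes into flowline breakings (giving the Morse-side composition terms), singular-chain face degenerations (giving the $\Phi$-side terms), and $\sigma$-face restrictions. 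By Proposition~\ref{prop:summed-nat-chain-map}, since $\alpha(o)$ is a quasi-isomorphism, so is the induced map on fat homotopy colimits, yielding the desired isomorphism $\eH^*_G(M)\cong H^*(EG\times_G M)$ of $H^*(BG)$-modules.

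The main obstacle is setting up the parameterized transversality, compactness, and gluing packages for the hybrid Morse-singular moduli spaces underlying $\alpha$, in the spirit of Theorems~\ref{thm:transversality} and~\ref{thm:gluing}. Conceptually these are standard adaptations of ordinary family Morse theory, but the bookkeeping --- tracking cube-shaped parameters from $\Nerve BG$ simultaneously with simplex-shaped parameters from the target singular chain --- is notationally cumbersome.
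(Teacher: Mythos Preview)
There is a genuine gap: your reference functor $\Phi$ is not well-defined when $G$ has positive dimension. A $2$-simplex $\sigma$ of the smooth nerve $\Nerve BG$ is \emph{not} a pair of composable group elements; it consists of three elements $g_{01},g_{12},g_{02}\in G$ together with a smooth path from $g_{02}$ to $g_{01}g_{12}$ (see Example~\ref{exam:3-cell-in-nbg}). The structure equation~\eqref{eq:chain-cx-cat-max} for $n=2$ then reads
\[
\bdy\circ\maxmap{\Phi}(\sigma)+\maxmap{\Phi}(\sigma)\circ\bdy
=\maxmap{\Phi}(d_1\sigma)-\maxmap{\Phi}(\sigma_{1,2})\circ\maxmap{\Phi}(\sigma_{0,1}),
\]
whose right-hand side is $(g_{02})_*-(g_{12})_*\circ(g_{01})_*$. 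Since $g_{02}\neq g_{01}g_{12}$ in general, this is a nonzero map, so $\maxmap{\Phi}(\sigma)=0$ is inconsistent. The phrase ``strict $G$-action, which produces no higher coherence data'' is exactly where the argument breaks: the higher simplices of $\Nerve BG$ encode the \emph{topology} of $G$, and any functor to $\ChainComplexes$ must supply genuine higher homotopies witnessing that nearby group elements act homotopically. For the same reason, your identification $\fathocolim\Phi\simeq B(C_*^{\sing}(M),G,\pt)$ cannot hold as stated: the algebraic bar construction over the set $G$ does not see $\pi_*(G)$ and so cannot model $EG\times_G M$ for non-discrete $G$.

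The paper repairs this by replacing your $\Phi$ with a functor $\cubeF$ into \emph{cubical} singular chains whose higher maps are nonzero: for an $n$-simplex $\sigma$, $\maxmap{F}_{\cube}(\sigma)$ sends a $k$-cube $\alpha$ to the $(k+n-1)$-cube $(x,y)\mapsto\cubemap{\sigma}(x)^{-1}\cdot\alpha(y)$, using the full $(n-1)$-cube $\cubemap{\sigma}$ in $G$. (This $\cubeF$ is only a semi-simplicial map, which is why the fat homotopy colimit is needed.) The summed natural transformation $\eta$ from $\MorseFunc\circ\BGSection$ to $\cubeF$ is then built not by counting intersections with target simplices, as you suggest, but by pushing forward cubulated parameterized moduli spaces $\opModuli(x,y;\cubemap{\BGSection(\sigma)})$ under evaluation, twisted by $\cubemap{\sigma}^{-1}$. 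Finally, $H^*(\fathocolim\cubeF)$ is identified with Borel cohomology by an explicit chain map from a product-cube subcomplex of $C_*^{\cube}(EG\times_G M)$ (Proposition~\ref{prop:sing-is-Borel}). Your overall architecture (fat hocolim, summed natural transformation, spectral sequence comparison) matches the paper's, but the target functor must carry the continuous $G$-structure.
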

As in Section~\ref{sec:Morse-const}, we either assume that $M$ is
orientable and the action of $G$ on $M$ is orientation-preserving or
we work with coefficients in $\Field_2$ throughout.

The proof of Proposition~\ref{prop:Morse-is-Borel} is in two steps.
First we replace the Morse complex in the definition of $\eH[G](M)$
with the smooth cubical singular chain complex $C_*^{\cube}(M)$ (the analogue of the
usual smooth singular chain complex, but using smooth maps of cubes to $M$ instead
of maps of simplices to $M$), obtaining a map of semi-simplicial sets
$\cubeF\co \Nerve BG\to\Complexes$. We then lift $\cubeF$ to a functor
to $\spaceF$ of simplicial sets and verify that the homotopy colimit of $\spaceF$ satisfies
the defining properties of the Borel construction.

Turning to the details,
we start by defining the map $\cubeF$. Define $\cubeF$ of the
0-simplex in $\Nerve BG$ to be $C_*^{\cube}(M)$. Recall that the smooth cubical chain group $C_k^{\cube}(M)$
is the free abelian group generated by the smooth maps $[0,1]^k\to M$ modulo
degenerate cubes, i.e., cubes which factor through one of the $k$
canonical projections $[0,1]^k\to [0,1]^{k-1}$ (cf.~\cite{Massey80:cubical-singular}).

We now define the map on the higher-dimensional simplices. Given $\sigma$ an $n$-dimensional simplex in $\Nerve BG$, by Convention \ref{convention:highest-maps-in-complexes}, it suffices to define $\maxmap{F}_{\cube}(\sigma)$. Informally,
via the $G$-action on $M$, $\cubemap{\sigma}$ gives a family of maps $M\to M$
parameterized by $[0,1]^{n-1}$, and we define
$\maxmap{F}_{\cube}(\sigma)$ to be the higher chain homotopy associated
to (the inverse of) this family of maps, so
$\maxmap{F}_{\cube}(\sigma)\co C_k^{\cube}(M)\to C_{k+n-1}^{\cube}(M)$. More formally, we have a map $\cubemap{\sigma} \co [0,1]^{n-1} \rightarrow G$. Given a $k$-cube
$(\alpha\co [0,1]^k\to M)\in C_k^{\cube}(M)$ define
$\maxmap{F}_{\cube}(\sigma)(\alpha)\in C_{k+n-1}^{\cube}(M)$ to
be
\[
  \maxmap{F}_{\cube}(\sigma)(\alpha)(x,y)=\cubemap{\sigma}(x)^{-1}\cdot\alpha(y)\eqqcolon (\cubemap{\sigma}_!^{-1}\alpha)(x,y)
\]
where $(x,y)\in [0,1]^{n-1}\times[0,1]^k=[0,1]^{n-1+k}$. (The inverse
is because of our convention for composition; compare, for instance,
Example~\ref{eg:ngj-3cell}.) Note that for degenerate simplices
$\sigma$, $\maxmap{F}_{\cube}(\sigma)$ does not necessarily vanish, so
$F_{\cube}$ does not respect the degeneracy maps.
Extend $\maxmap{F}_{\cube}(\cubemap{\sigma})$ linearly to all of $C_k^{\cube}(M)$.

\begin{lemma}
  The maps $\maxmap{F}_{\cube}$ induce a map of semi-simplicial sets
  $\cubeF\co\Nerve BG\to \ChainComplexes$.
\end{lemma}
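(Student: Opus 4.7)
The plan is to verify two things: that $F_{\cube}$ respects the face maps of $\Nerve BG$, and that the maps $\maxmap{F}_{\cube}(\sigma)$ satisfy Equation~(\ref{eq:chain-cx-cat-max}). Taken together, these ensure (via Convention~\ref{convention:highest-maps-in-complexes}) that $F_{\cube}$ defines a semi-simplicial map to $\ChainComplexes$.

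Face-map compatibility is almost immediate from the construction. For $0 < \ell < n$, Equation~(\ref{eq:bdy-of-cubesym}) identifies $\cubemap{d_\ell\sigma}$ with the restriction of $\cubemap{\sigma}$ to the face $x_\ell = 0$ of $[0,1]^{n-1}$; since $\maxmap{F}_{\cube}$ is defined directly in terms of $\cubemap{\cdot}$, the sub-maximal maps of the simplex $F_{\cube}(\sigma)$ of $\ChainComplexes$ agree with $\maxmap{F}_{\cube}$ applied to the corresponding face simplices. The extremal cases $\ell = 0, n$ correspond to passing to the subsimplices $\sigma_{1,\dots,n}$ and $\sigma_{0,\dots,n-1}$, and are similarly straightforward.

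For Equation~(\ref{eq:chain-cx-cat-max}), fix a non-degenerate $n$-simplex $\sigma$ of $\Nerve BG$ and a smooth cube $\alpha\co[0,1]^k\to M$. The cube
\[
\maxmap{F}_{\cube}(\sigma)(\alpha)\co(x,y)\mapsto \cubemap{\sigma}(x)^{-1}\cdot\alpha(y)
\]
has cubical boundary that splits into contributions from the first $n-1$ coordinates (the $x$-coordinates) and the last $k$ coordinates (the $y$-coordinates). By the cubical Leibniz rule, the $y$-boundary contributes $(-1)^{n-1}\maxmap{F}_{\cube}(\sigma)(\bdy\alpha)$. Using Equation~(\ref{eq:bdy-of-cubesym}), the face $x_\ell = 0$ in the $x$-boundary gives $\maxmap{F}_{\cube}(d_\ell\sigma)(\alpha)$ for $0<\ell<n$, while the face $x_\ell = 1$ gives
\[
(\cubemap{\sigma_{0,\dots,\ell}}\cdot\cubemap{\sigma_{\ell,\dots,n}})^{-1}\alpha(y)
=\cubemap{\sigma_{\ell,\dots,n}}^{-1}\cubemap{\sigma_{0,\dots,\ell}}^{-1}\alpha(y),
\]
which, after a permutation of cube factors, coincides with $\maxmap{F}_{\cube}(\sigma_{\ell,\dots,n})\circ\maxmap{F}_{\cube}(\sigma_{0,\dots,\ell})(\alpha)$.

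The hard part will be reconciling the signs. The cubical boundary convention from Section~\ref{sec:cubes} attaches to each face $\bdy_{i,\epsilon}$ a sign depending on the parity of $i+\epsilon$; the reordering of cube factors needed to identify the $x_\ell = 1$ face with the composition produces an additional sign $(-1)^{(\ell-1)(n-\ell-1)}$; and the $y$-boundary terms come multiplied by $(-1)^{n-1}$. My expectation is that combining these yields precisely the coefficient $(-1)^{n-\ell+1}$ in Equation~(\ref{eq:chain-cx-cat-max}), together with the $(-1)^n$ in front of $\maxmap{F}_{\cube}(\sigma)\circ\bdy$, after elementary but tedious parity bookkeeping.
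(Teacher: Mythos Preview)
The paper leaves this proof to the reader as ``straightforward,'' so your approach---unpacking the cubical boundary and matching terms against Equation~(\ref{eq:chain-cx-cat-max})---is the natural one and is essentially what the authors have in mind.

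There is, however, a genuine gap in your argument at the $x_\ell=1$ face. You write that this face ``after a permutation of cube factors, coincides with'' the composition $\maxmap{F}_{\cube}(\sigma_{\ell,\dots,n})\circ\maxmap{F}_{\cube}(\sigma_{0,\dots,\ell})(\alpha)$, contributing an extra sign $(-1)^{(\ell-1)(n-\ell-1)}$. But in the cubical singular complex $C_*^{\cube}(M)$ as defined here (free on smooth cubes modulo degeneracies only), a permutation of cube coordinates does \emph{not} introduce a sign: it produces a genuinely different generator. Concretely, with the paper's ordering $(x,y)\in[0,1]^{n-1}\times[0,1]^k$, the face $x_\ell=1$ has coordinates $(x_1,\dots,x_{\ell-1},x_{\ell+1},\dots,x_{n-1},y)$, whereas the composition has coordinates $(x_{\ell+1},\dots,x_{n-1},x_1,\dots,x_{\ell-1},y)$. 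These are distinct cubes in $C_*^{\cube}(M)$, not related by any sign, so Equation~(\ref{eq:chain-cx-cat-max}) fails to hold on the nose with this convention.

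The cleanest fix is to reverse the ordering in the definition, setting $\maxmap{F}_{\cube}(\sigma)(\alpha)(y,x)=\cubemap{\sigma}(x)^{-1}\cdot\alpha(y)$ with $(y,x)\in[0,1]^k\times[0,1]^{n-1}$. Then the $x_\ell=1$ face and the composition are literally the same cube (no permutation needed), and your sign bookkeeping reduces to matching the cubical boundary signs $(-1)^{k+\ell+\epsilon}$ against the coefficients in Equation~(\ref{eq:chain-cx-cat-max}). Alternatively one could pass to a cubical complex with symmetry relations, but that is heavier than necessary here.
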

\begin{proof}
  The proof is straightforward and is left to the reader.
\end{proof}

Now, fix a Morse-Smale pair, and let $\MorseFunc$ be as in
Section~\ref{sec:Morse-const}.  Our next goal is to define a summed
natural transformation from $\MorseFunc\circ\BGSection$ to $\cubeF$
(Definition~\ref{def:summed-nat-trans}),
i.e., for each $\sigma\in \Nerve BG$ a map
$\eta(\sigma)\co C_*^{\Morse}(M,\metric,f)\to C_*^{\cube}(M)$ satisfying
Equation~\eqref{eq:summed-hocolim-map}, which we repeat here for
convenience:
\begin{equation}\label{eq:nat-transform-main}
\begin{split}
      (-1)^n\bdy\circ \eta(\sigma)-\eta(\sigma)\circ\bdy
      =&\sum_{1\leq\ell\leq n-1}
      (-1)^{\ell-1}\eta(\sigma_{0,\dots,\hat{\ell},\dots,n})-\sum_{1\leq \ell\leq n}
      \eta(\sigma_{\ell,\dots,n})\circ \maxMorseFunc\BGSection(\sigma_{0,\dots,\ell})\\
      &+\sum_{0\leq\ell\leq
          n-1}(-1)^{\ell}\maxmap{F}_\cube(\sigma_{\ell,\dots,n})\circ\eta(\sigma_{0,\dots,\ell}).
    \end{split}
  \end{equation}

We explain two low-dimensional cases of $\eta$ before giving the general case.

The first special case is when $\sigma$ is the unique $0$-simplex, which
we denote $o$, of
$\Nerve BG$. We need to define a chain map
$\eta(o)\from C_*^{\Morse}(M,\metric,f)\to C_*^{\cube}(M)$.
Let $\pModuli(x,y)$ denote the moduli space of parameterized gradient
flows from $x$ to $y$, so $\Moduli(x,y)=\pModuli(x,y)/\RR$, and
$\pModuli(x,y)$ has dimension $\ind(x)-\ind(y)$. There is
an evaluation map $\ev\co \pModuli(x,y)\to M$,
$\ev(\gamma)=\gamma(0)$. The space $\pModuli(x,y)$ has a
compactification $\opModuli(x,y)$ in terms of broken gradient
flows in which one of the flows is parameterized; as sets,
\begin{multline}\label{eq:bdy-pmoduli-top}
  \bdy\opModuli(x,y)=\opModuli(x,y)\setminus\pModuli(x,y)=\coprod_{z\neq
    y}\pModuli(x,z)\times\oModuli(z,y)\ \amalg\ \coprod_{z\neq x}\oModuli(x,z)\times\pModuli(z,y)\\
  \amalg\!\!\coprod_{z,w\not\in\{x,y\}}\oModuli(x,z)\times\pModuli(z,w)\times\oModuli(w,y).
\end{multline}
Further, $\opModuli(x,z)\times\oModuli(z,y)$
and $\oModuli(x,z)\times\opModuli(z,y)$ are subspaces of
$\opModuli(x,y)$, and every point in $\bdy \opModuli(x,y)$ lies in at
least one such subspace.

The orientations of the ascending disks
and descending disks as in Section~\ref{sec:Morse-const} induce
orientations of the spaces $\Moduli(x,y)$ and $\pModuli(x,y)$. Choose the orientations of the
$\Moduli(x,y)$ so that the inclusion
$\oModuli(z,y)\times\oModuli(x,z)\into\bdy\oModuli(x,y)$ is
orientation preserving. For $x\neq y$, orient the space $\pModuli(x,y)$ as
$(-1)^{\ind(y)}\Moduli(x,y)\times\RR$; if $x=y$, orient the point
$\pModuli(x,x)$ as $(-1)^{\ind(x)}$.

Choose a cubical fundamental chain (which we abbreviate as ``cubulation'') 
$[\oModuli(x,y)]$ of $\oModuli(x,y)$ so that the inclusion
$\oModuli(z,y)\times\oModuli(x,z)\hookrightarrow\bdy\oModuli(x,y)$ is
a cubical map. (These choices can be made inductively on $\ind(x)$,
say.) Then choose a cubulation $[\opModuli(x,y)]$ of each
$\opModuli(x,y)$ so that the inclusions
$\oModuli(z,y)\times\opModuli(x,z)\hookrightarrow \bdy\opModuli(x,y)$
and $\opModuli(z,y)\times\oModuli(x,z)\hookrightarrow
\bdy\opModuli(x,y)$ are cubical maps. Define
\[
  \eta(o)(x)=\sum_{\ind(y)=0}\ev_\#[\opModuli(x,y)]
\]
for each $x\in\Crit(f)$.

\begin{lemma}\label{lem:bdy-pmoduli}
  As cubical chains,
  \[
    \bdy\ev_{\#}\bigl[\opModuli(x,y)\bigr]=-\!\!\!\!\!\!\sum_{\ind(z)-\ind(y)=1}\!\!\!\!\!\!\bigl(\#\cM(z,y)\bigr)\ev_{\#}\bigl[\opModuli(x,z)\bigr]+\!\!\!\!\!\!\sum_{\ind(x)-\ind(z)=1}\!\!\!\!\!\!\bigl(\#\cM(x,z)\bigr)\ev_{\#}\bigl[\opModuli(z,y)\bigr].
  \]
\end{lemma}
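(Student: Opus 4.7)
The plan is to prove this by a direct computation from the definitions, using the fact that $\ev$ is continuous (so $\ev_\# \circ \bdy = \bdy \circ \ev_\#$ on cubical chains) to reduce the claim to analyzing $\ev_\# \bigl(\bdy[\opModuli(x,y)]\bigr)$. By Formula~\eqref{eq:bdy-pmoduli-top} and the preceding paragraph, the codimension-$1$ boundary strata of $\opModuli(x,y)$ are the closed subsets $\opModuli(x,z)\times\oModuli(z,y)$ (for $z\neq y$) and $\oModuli(x,z)\times\opModuli(z,y)$ (for $z\neq x$); the triple-breaking strata from Formula~\eqref{eq:bdy-pmoduli-top} lie in codimension-$2$ corners where pairs of these meet. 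Because the cubulations of $\opModuli(x,y)$ were chosen compatibly with the product cubulations on these facets, $\bdy[\opModuli(x,y)]$ is a signed sum of product chains $[\opModuli(x,z)]\times[\oModuli(z,y)]$ and $[\oModuli(x,z)]\times[\opModuli(z,y)]$.

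The crucial observation is that on a stratum $\opModuli(x,z)\times\oModuli(z,y)$, the evaluation map $(\gamma_1,\gamma_2)\mapsto\gamma_1(0)$ is independent of the second factor. Hence $\ev_\#$ of a product cube is degenerate in every coordinate direction corresponding to $\oModuli(z,y)$, and so vanishes in $C_*^{\cube}(M)$ unless $\oModuli(z,y)$ is zero-dimensional, that is, $\ind(z)-\ind(y)=1$. In that case $[\oModuli(z,y)]$ is a $0$-cycle with total signed mass $\#\cM(z,y)$, so the pushforward of the product chain equals $\#\cM(z,y)\cdot\ev_\#[\opModuli(x,z)]$. Applying the same argument to the strata $\oModuli(x,z)\times\opModuli(z,y)$, on which $\ev\co(\gamma_1,\gamma_2)\mapsto\gamma_2(0)$ is independent of the first factor, produces $\#\cM(x,z)\cdot\ev_\#[\opModuli(z,y)]$ whenever $\ind(x)-\ind(z)=1$, and zero otherwise. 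This gives the claimed decomposition at the level of unsigned sums.

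The remaining step, which I expect to be the main obstacle, is to pin down the signs $-1$ and $+1$ in front of the two sums. Starting from the convention $\pModuli(x,y)=(-1)^{\ind(y)}\Moduli(x,y)\times\RR$ and the fact that $\oModuli(z,y)\times\oModuli(x,z)\hookrightarrow\bdy\oModuli(x,y)$ is orientation-preserving, one decomposes the $\bdy\Moduli(x,y)\times\RR$ contribution to $\bdy\opModuli(x,y)$ according to whether the reparameterization point lies in the lower piece (giving the stratum $\pModuli(x,z)\times\oModuli(z,y)$, i.e., the $t\to-\infty$ end of the $\RR$ factor) or the upper piece (giving $\oModuli(x,z)\times\pModuli(z,y)$, the $t\to+\infty$ end). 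The outward-normal-first boundary convention, together with the factor of $(-1)^{\ind(y)}$ in the orientation of $\pModuli$ and the sign incurred when commuting the $\RR$ factor past $\Moduli$, then assigns a $-$ sign to the first sum and a $+$ sign to the second. The computation is elementary but bookkeeping-intensive; once completed, the result matches the statement of the lemma exactly.
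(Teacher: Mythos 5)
Your proposal follows essentially the same route as the paper's proof sketch: reduce to $\ev_\#$ of the boundary strata, use that $\ev$ factors through the parameterized component so that strata with a higher-index unparameterized piece push forward to degenerate cubes, and read off the two surviving families of index-difference-one terms. The sign discussion is left at the same sketch level as the paper (which simply asserts the signed boundary decomposition without derivation), and your parenthetical identification of the intermediate-breaking stratum $\pModuli(x,z)\times\oModuli(z,y)$ with the $t\to-\infty$ end of the $\RR$-factor is imprecise for $z\notin\{x,y\}$, but since that identification is never actually used the argument stands.
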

\begin{proof}[Proof sketch]
  First observe that taking orientations into account
  \[
  \bdy\opModuli(x,y)=\bigcup_z(-1)^{\ind(x)-\ind(z)-1}\opModuli(z,y)\times\oModuli(x,z)\cup(-1)^{\ind(z)-\ind(y)}\oModuli(z,y)\times\opModuli(x,z).
  \]

  Next observe that on the stratum $\opModuli(x,z)\times\oModuli(z,y)$ of
  $\opModuli(x,y)$, the evaluation map $\ev$ factors through the
  projection $\opModuli(x,z)\times\oModuli(z,y)\to \opModuli(x,z)$.
  So, if $\ind(z)-\ind(y)>1$ then every cube in
  $\ev_{\#}[\oModuli(z,y)\times\opModuli(x,z)]$ is
  degenerate. Similarly, if $\ind(x)-\ind(z)>1$ then every cube in
  $\ev_{\#}[\opModuli(z,y)\times\oModuli(x,z)]$ is degenerate. The
  remaining cubes in $\bdy\opModuli(x,y)$ lie in 
  \[
    \Bigl[\hspace{1em}-\hspace{-1.75em}\bigcup_{\ind(z)-\ind(y)=1}\hspace{-1.75em}\Moduli(z,y)\times\opModuli(x,z)\Bigr]\coprod\Bigl[\bigcup_{\ind(x)-\ind(z)=1}\hspace{-1.75em}\opModuli(z,y)\times\Moduli(x,z)\Bigr],
  \]
  and so
  \begin{align*}
  \bdy\ev_{\#}\bigl[\opModuli(x,y)\bigr]&=-\!\!\!\!\!\!\sum_{\ind(z)-\ind(y)=1}\!\!\!\!\!\!\ev_{\#}\bigl[\Moduli(z,y)\times\opModuli(x,z)\bigr]+\!\!\!\!\!\!\sum_{\ind(x)-\ind(z)=1}\!\!\!\!\!\!\ev_{\#}\bigl[\opModuli(z,y)\times\Moduli(x,z)\bigr]
\\
&=-\!\!\!\!\!\!\sum_{\ind(z)-\ind(y)=1}\!\!\!\!\!\!\bigl(\#\cM(z,y)\bigr)\ev_{\#}\bigl[\opModuli(x,z)\bigr]+\!\!\!\!\!\!\sum_{\ind(x)-\ind(z)=1}\!\!\!\!\!\!\bigl(\#\cM(x,z)\bigr)\ev_{\#}\bigl[\opModuli(z,y)\bigr],
  \end{align*}
as desired.
\end{proof}

\begin{lemma}\label{lem:eta-chain-map}
  If $o$ is the $0$-cell in $\Nerve BG$, then the map $\eta(o)$, as
  defined above, is a chain map.
\end{lemma}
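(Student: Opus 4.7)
The plan is to verify the $n=0$ case of Equation~\eqref{eq:nat-transform-main}, where both sums on the right-hand side are empty, so that the identity reduces to the chain map equation $\bdy^{\cube}\circ\eta(o)=\eta(o)\circ\bdy^{\Morse}$. By linearity, it suffices to evaluate both sides on a generator $x\in\Crit(f)$ and expand
\[
  \bdy^{\cube}\eta(o)(x)=\sum_{\ind(y)=0}\bdy\,\ev_\#[\opModuli(x,y)].
\]
Applying Lemma~\ref{lem:bdy-pmoduli} termwise splits this into two groups of contributions:
\begin{align*}
  (A)&=-\sum_{\ind(y)=0}\sum_{\ind(z)=1}(\#\cM(z,y))\,\ev_\#[\opModuli(x,z)],\\
  (B)&=\sum_{\ind(y)=0}\sum_{\ind(z)=\ind(x)-1}(\#\cM(x,z))\,\ev_\#[\opModuli(z,y)].
\end{align*}

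The term $(B)$ reassembles directly into the desired right-hand side: interchange the two sums and factor out $\#\cM(x,z)$; the remaining inner sum over $y$ is exactly $\eta(o)(z)$ by definition, so $(B)=\sum_{\ind(z)=\ind(x)-1}(\#\cM(x,z))\,\eta(o)(z)=\eta(o)(\bdy^{\Morse} x)$.

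The key step, and the main obstacle, is to show that $(A)$ vanishes. After swapping the two summations, $(A)=-\sum_{\ind(z)=1}\bigl[\sum_{\ind(y)=0}\#\cM(z,y)\bigr]\,\ev_\#[\opModuli(x,z)]$, so it suffices to prove that $\sum_{\ind(y)=0}\#\cM(z,y)=0$ for every index-$1$ critical point $z$. This is the geometric fact that an oriented compact $1$-manifold has vanishing signed boundary: the unstable manifold $D_d(z)$ is $1$-dimensional and oriented, and $D_d(z)\setminus\{z\}$ has exactly two components, each homeomorphic to $\RR$, whose closures adjoin limiting index-$0$ critical points. These two limit points are precisely the elements of $\bigsqcup_{\ind(y)=0}\cM(z,y)$, counted with opposite orientation signs that cancel in the sum. (Equivalently, this is the statement that the augmentation $\epsilon\co C_0^{\Morse}\to\Ring$ is a chain map.) This yields $(A)=0$, completing the verification of the chain map equation.
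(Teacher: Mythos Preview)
Your proof is correct and follows essentially the same approach as the paper: both expand $\bdy\,\eta(o)(x)$ via Lemma~\ref{lem:bdy-pmoduli}, identify the $\ind(z)=\ind(x)-1$ terms with $\eta(o)(\bdy x)$, and kill the $\ind(z)=1$ terms using the identity $\sum_{\ind(y)=0}\#\cM(z,y)=0$ for index-$1$ critical points $z$. You additionally spell out the geometric reason behind this vanishing (the two ends of the one-dimensional unstable manifold of $z$ contribute with opposite signs), which the paper simply asserts.
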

\begin{proof}
  Given a critical point $x$ of index $n$,
  \begin{align*}
    \bdy \eta(o)(x)&=\sum_{\ind(y)=0}\bdy \ev_\#[\opModuli(x,y)]\\
    &=-\sum_{\substack{\ind(y)=0\\\ind(z)=1}}(\#\cM(z,y))\ev_\#[\opModuli(x,z)]
    +\sum_{\substack{\ind(y)=0\\\ind(z)=n-1}}(\#\cM(x,z))\ev_\#[\opModuli(z,y)]\\
    &=\eta(o)(\bdy(x)),
  \end{align*}
  where the second equality uses Lemma~\ref{lem:bdy-pmoduli} and the
  third uses the fact that for any critical point $z$ with index $1$,
  $\sum_{\ind(y)=0}\#\cM(z,y)=0$.
\end{proof}

The second warm-up case is when $\sigma$ is a non-degenerate $1$-cell
$g$ of $\Nerve BG$ (where $g$ is a non-identity element of $G$). Then
$\eta(g)$ should be a homotopy between
$\eta(o)\circ \maxMorseFunc\BGSection(g)$ and
$\maxmap{F}_{\cube}(g)\circ \eta(o)$.
The semicylindrical Morse-Smale pair $\BGSection(g)$ produces
parameterized moduli spaces $\pModuli(x,y;\BGSection(g))$ of pairs
$(\gamma,t)$ where $\gamma$ is a gradient flow from $x$ to $gy$ with
respect to the semicylindrical pair $\BGSection(g)$ and $t\in
\RR$. There is an evaluation map $\ev\co
\pModuli(x,y;\BGSection(g))\to M$, $\ev(\gamma,t)=\gamma(t)$.  The
boundary of $\pModuli(x,y;\BGSection(g))$ is a non-disjoint union
\begin{align*}
  \bigcup_{z\neq y}\opModuli(x,z;\BGSection(g))\times\oModuli(z,y)&\cup
  \bigcup_{z}\opModuli(x,z)\times\oModuli(z,y;\BGSection(g))\\
  &\cup
  \bigcup_{z\neq x}\oModuli(x,z)\times\opModuli(z,y;\BGSection(g))\cup
  \bigcup_{z}\oModuli(x,z;\BGSection(g))\times\opModuli(z,y).
\end{align*}
Arguably, we should write the first term as $\bigcup_{z\neq y}\opModuli(x,z;\BGSection(g))\times g\cdot \oModuli(z,y)$ and the last term as $\bigcup_{z}\oModuli(x,z;\BGSection(g))\times g\cdot \opModuli(z,y)$. In particular, in the last term, the evaluation map sends $\gamma\in \opModuli(z,y)$ to $g\cdot\ev(\gamma)$.

Orient the moduli space $\Moduli(x,y;\BGSection(g))$ so
that the inclusions
$-\Moduli(z,y;\BGSection(g))\times\Moduli(x,z)\into\bdy\Moduli(x,y;\BGSection(g))$
and
$\Moduli(z,y)\times\Moduli(x,z;\BGSection(g))\into\bdy\Moduli(x,y;\BGSection(g))$
are orientation preserving (cf.~Equation~\eqref{eq:gluing}).  Orient
$\pModuli(x,y;\BGSection(g))$ as
$(-1)^{\ind(y)}\Moduli(x,y;\BGSection(g))\times\RR$.  Choose
cubulations of the $\opModuli(x,z;\BGSection(g))$ so that the above
inclusions are cubical and define
\[
  \eta(g)(x)=\sum_{\ind(y)=0}g_{\#}^{-1}\ev_\#[\opModuli(x,y;\BGSection(g))].
\]
(Here $g_\#^{-1}$ is the chain map induced by the action of $g^{-1}\from M\to M$; that is, for any cube $\sigma\from[0,1]^k\to\opModuli(x,y;\BGSection(g))$ in the cubulation $[\opModuli(x,y;\BGSection(g))]$, $g_\#^{-1}\ev_\#\sigma$ is the cube $g^{-1}\circ\ev\circ\sigma$.)
The space $\pModuli(x,y;\BGSection(g))$ has dimension
$\ind(x)-\ind(y)+1$, so $\eta(g)(x)$ is a cubical chain of
dimension $\ind(x)+1$.
\begin{lemma}\label{lem:eta-square}
  The map $\eta(g)$ is a homotopy between
  $\eta(o)\circ \maxMorseFunc\BGSection(g)$ and
  $\maxmap{F}_{\cube}(g)\circ \eta(o)$.
\end{lemma}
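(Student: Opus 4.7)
The plan is to establish the $n=1$ case of Equation~\eqref{eq:nat-transform-main}, namely the chain homotopy identity
\[
-\bdy\circ\eta(g)-\eta(g)\circ\bdy=-\eta(o)\circ\maxMorseFunc\BGSection(g)+\maxmap{F}_{\cube}(g)\circ\eta(o).
\]
To this end I would expand $\bdy\eta(g)(x)=\sum_{\ind(y)=0}g_\#^{-1}\ev_\#\bdy[\opModuli(x,y;\BGSection(g))]$ by enumerating the four codimension-one strata (A)--(D) of $\opModuli(x,y;\BGSection(g))$ listed in the text, and then, as an analogue of Lemma~\ref{lem:bdy-pmoduli}, observe that in each stratum the evaluation map $\ev$ factors through the projection onto whichever factor carries the parameterized time. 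Hence all cubes in $g_\#^{-1}\ev_\#$ of that stratum are degenerate unless the complementary factor is zero-dimensional.

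I would then match each non-degenerate contribution to a term in the homotopy identity. Stratum (C), non-degenerate when $\ind(x)-\ind(z)=1$, contributes $\sum_z\#\Moduli(x,z)\,\eta(g)(z)=\eta(g)(\bdy x)$, by the same bookkeeping as in Lemma~\ref{lem:eta-chain-map}. Stratum (D), non-degenerate when $\ind(z)=\ind(x)$, uses that the $\opModuli(z,y)$ factor parameterizes $g\cdot(\metric,f)$-flows whose $g$-action is cancelled by the $g_\#^{-1}$ in the definition of $\eta(g)$, and thereby contributes $\eta(o)\circ\maxMorseFunc\BGSection(g)(x)$. Stratum (B), non-degenerate when $\ind(z)=0$, produces
\[
\sum_{\ind(z)=0}\Bigl(\sum_{\ind(y)=0}\#\Moduli(z,y;\BGSection(g))\Bigr)g_\#^{-1}\ev_\#[\opModuli(x,z)];
\]
since $\maxMorseFunc\BGSection(g)$ is a chain map and $M$ is connected, the inner sum equals the augmentation $\epsilon(\maxMorseFunc\BGSection(g)(z))=\epsilon(z)=1$, and the stratum collapses to $\maxmap{F}_{\cube}(g)\circ\eta(o)(x)$. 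Finally, stratum (A), non-degenerate when $\ind(z)=1$, carries a coefficient $\sum_{\ind(y)=0}\#\Moduli(z,y)=\epsilon(\bdy z)$, which vanishes because $\epsilon\circ\bdy=0$, so stratum (A) contributes nothing.

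The main obstacle is the bookkeeping of signs. These arise from the boundary-orientation conventions in Equations~\eqref{eq:gluing} and~\eqref{eq:gluing-cube}, from the orientation prescription $\pModuli(x,y)=(-1)^{\ind(y)}\Moduli(x,y)\times\RR$, and from the relative direction of the $\RR$-action at each boundary stratum (inward when the parameterized time escapes into a new cylindrical story, outward when a cylindrical piece breaks off from the parameterized story). Once assembled, together with the overall $(-1)^{n}=-1$ prefactor in Equation~\eqref{eq:nat-transform-main}, the three surviving strata (B), (C), (D) should combine to give the claimed identity on the nose.
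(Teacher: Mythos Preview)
Your proposal is correct and follows essentially the same route as the paper: enumerate the four boundary strata of $\opModuli(x,y;\BGSection(g))$, use the factorization of $\ev$ to discard all but the zero-dimensional complementary factors, invoke $\sum_{\ind(y)=0}\#\Moduli(z,y;\BGSection(g))=1$ for stratum (B) and $\sum_{\ind(y)=0}\#\Moduli(z,y)=0$ for stratum (A), and assemble the remaining three strata into the homotopy identity. The paper carries the signs explicitly (giving the signed boundary formula for $\ev_\#[\opModuli(x,y;\BGSection(g))]$ up front and then computing $\maxmap{F}_\cube(g)\circ\eta(o)-\eta(o)\circ\maxMorseFunc\BGSection(g)$ directly), whereas you defer them to a sketch; but the underlying argument is the same.
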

\begin{proof}
  As in the proof of Lemma~\ref{lem:bdy-pmoduli}, the boundary of
  $\ev_{\#}[\opModuli(x,y;\BGSection(g))]$ is
  \begin{equation}
    \begin{split}
      &-\!\!\!\!\!\!\sum_{\ind(z)-\ind(y)=1}\bigl(\#\Moduli(z,y)\bigr)\ev_\#\bigl[\opModuli(x,z;\BGSection(g))\bigr]
      -\!\!\!\!\!\!
      \sum_{\ind(z)=\ind(y)}\bigl(\#\Moduli(z,y;\BGSection(g))\bigr)\ev_\#\bigl[\opModuli(x,z)\bigr]\\
      &-\!\!\!\!\!\!\sum_{\ind(x)-\ind(z)=1}\bigl(\#\Moduli(x,z)\bigr)\ev_\#\bigl[\opModuli(z,y;\BGSection(g))\bigr]
      +\!\!\!\!\!\!
      \sum_{\ind(z)=\ind(x)}\bigl(\#\Moduli(x,z;\BGSection(g))\bigr)g_\#\ev_\#\bigl[\opModuli(z,y)\bigr].
    \end{split}
  \end{equation}

  Note that for $z\in\Crit(f)$ with $\ind(z)=0$,
  \[
    \sum_{\ind(y)=0}\#\Moduli(z,y;\BGSection(g))=1.
  \]
  Thus, for $x\in\Crit(f)$,
  \begin{align*}
    \maxmap{F}_\cube(g)&(\eta(o)(x))-\eta(o)(\maxmap{F}_\Morse\BGSection(g)(x))\\
    &= g_\#^{-1}\!\!\!\sum_{\ind(y)=0}\ev_{\#}[\opModuli(x,y)]-\!\!\!\sum_{\substack{\ind(y)=0\\\ind(z)=\ind(x)}}\!\!\bigl(\#\Moduli(x,z;\BGSection(g))\bigr)\ev_{\#}[\opModuli(z,y)]\\
    &= g_\#^{-1}\!\!\!\sum_{\substack{\ind(y)=0\\\ind(z)=0}}\bigl(\#\Moduli(z,y;\BGSection(g))\bigr)\ev_{\#}[\opModuli(x,z)]
    -\!\!\!\sum_{\substack{\ind(y)=0\\\ind(z)=\ind(x)}}\!\!\bigl(\#\Moduli(x,z;\BGSection(g))\bigr)\ev_{\#}[\opModuli(z,y)]\\
    &=-g_{\#}^{-1}\!\!\!\sum_{\substack{\ind(y)=0\\\ind(z)=1}}\bigl(\#\Moduli(z,y)\bigr)\ev_{\#}\bigl[\opModuli(x,z;\BGSection(g))\bigr]\\
      &\qquad\qquad-g_{\#}^{-1}\!\!\!\!\!\!\!\!\!\sum_{\substack{\ind(y)=0\\\ind(x)-\ind(z)=1}}\!\!\!\!\bigl(\#\Moduli(x,z)\bigr)\ev_{\#}\bigl[\opModuli(z,y;\BGSection(g))\bigr]
    -\bdy(\eta(g)(x))\\
    &=-g_{\#}^{-1}\sum_{\substack{\ind(y)=0\\\ind(z)=1}}\!\!\!\!\bigl(\#\Moduli(z,y)\bigr)\ev_{\#}\bigl[\opModuli(x,z;\BGSection(g))\bigr]-\eta(g)(\bdy x)-\bdy(\eta(g)(x))\\
    &=-\eta(g)(\bdy(x))-\bdy(\eta(g)(x)),
  \end{align*}
  as desired.
\end{proof}

Finally, we explain the general case. For each $n$-simplex $\sigma$ in
$\Nerve BG$ we have an $n$-simplex $\BGSection(\sigma)$ in
$\Nerve \MPcat$, and in particular an $(n-1)$-cube
$\maxmap{\BGSection(\sigma)}$ in the space of Morse-Smale pairs. Let
\[
  \opModuli(x,y;\maxmap{\BGSection(\sigma)})
  =\bigcup_{v\in[0,1]^{n-1}}\opModuli\bigl(x,y;\maxmap{\BGSection(\sigma)}(v)\bigr)
\]
denote the compactification of the corresponding parameterized moduli
space. As in the definition of $\cubeF$, the $n$-simplex $\sigma$ also
specifies an $(n-1)$-cube $\cubemap{\sigma}$ in $G$. We define
\[
  \eta(\sigma)(x)=
    \sum_{\ind(y)=0}\cubemap{\sigma}^{-1}_\#\ev_\#\bigl[\opModuli(x,y;\maxmap{\BGSection(\sigma)})\bigr]\in
    C_{n+\ind(x)}^{\cube}(M).
\]
Here, $\cubemap{\sigma}_\#^{-1}\ev_\#$ means the map of chains induced by the composition 
\[
  \opModuli\bigl(x,y;\maxmap{\BGSection(\sigma)}(v)\bigr)\stackrel{\ev}{\longrightarrow} M\stackrel{\cubemap{\sigma}(v)^{-1}\cdot }{\longrightarrow}M
\]
for each $v\in[0,1]^{n-1}$. 
The precise chain depends on a choice of cubical fundamental chain
(informally, cubulation) of
$\opModuli\bigl(x,y;\maxmap{\BGSection(\sigma)}\bigr)$. The
boundary of $\opModuli\bigl(x,y;\maxmap{\BGSection(\sigma)}\bigr)$
is
\begin{align*}
  \opModuli(x,y;\maxmap{\BGSection(\sigma)}|_{\bdy[0,1]^{n-1}})&\cup
  \bigcup_z\bigl(\opModuli(x,z;\maxmap{\BGSection(\sigma)})\times\oModuli(z,y)\bigr)\cup
  \bigcup_z\bigl(\oModuli(x,z)\times\opModuli(z,y;\maxmap{\BGSection(\sigma)})\bigr)\\
  &\cup
  \bigcup_z\bigl(\opModuli(x,z)\times\oModuli(z,y;\maxmap{\BGSection(\sigma)})\bigr)\cup
  \bigcup_z\bigl(\oModuli(x,z;\maxmap{\BGSection(\sigma)})\times\opModuli(z,y)\bigr)
  .
\end{align*}
We  choose the cubulation inductively so that each of these boundary
inclusions is a cubical map.

\begin{proposition}
  These definitions make $\eta$ into a summed natural transformation.
\end{proposition}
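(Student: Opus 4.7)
The proof generalizes the computations already carried out for $n=0$ in Lemma~\ref{lem:eta-chain-map} and $n=1$ in Lemma~\ref{lem:eta-square}: compute the cubical boundary $\bdy \eta(\sigma)(x)$, identify the contributions from each boundary stratum of the parameterized moduli space $\opModuli(x,y;\maxmap{\BGSection(\sigma)})$ after applying $\ev_\#$ and $\cubemap{\sigma}_\#^{-1}$, and match the surviving terms against the right-hand side of Equation~(\ref{eq:nat-transform-main}).

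First, I would decompose the boundary of $\opModuli(x,y;\maxmap{\BGSection(\sigma)})$ using a Morse-theoretic adaptation of Theorem~\ref{thm:gluing} together with Lemma~\ref{lem:bdy-pmoduli}. The strata fall into two families. The first consists of restrictions to $\bdy[0,1]^{n-1}$: the face $t_i = 0$ identifies with the family $\maxmap{\BGSection(\sigma_{0,\dots,\hat{i},\dots,n})}$, while the face $t_i = 1$ encodes a composition of two semi-cylindrical pairs and fibers over intermediate critical points $z$ as products of moduli spaces for $\sigma_{0,\dots,i}$ and $\sigma_{i,\dots,n}$, with exactly one factor parameterized. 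The second consists of Morse flow breakings: at $-\infty$, giving $\oModuli(x,z) \times \opModuli(z,y;\maxmap{\BGSection(\sigma)})$; at $+\infty$, giving $\opModuli(x,z;\maxmap{\BGSection(\sigma)}) \times \oModuli(z,y)$; and at internal junctions of the multi-story configurations that appear on the cube boundary.

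Next, I would apply $\cubemap{\sigma}_\#^{-1}\ev_\#$, sum over $y$ with $\ind(y)=0$, and identify the surviving terms. Because $\ev$ factors through the parameterized factor of any product, strata with a positive-dimensional unparameterized factor yield degenerate cubes and vanish in $C_*^\cube(M)$. The surviving terms are: from the $t_i = 0$ faces, expressions of the form $\eta(\sigma_{0,\dots,\hat{i},\dots,n})(x)$ feeding the first sum of Equation~(\ref{eq:nat-transform-main}); from the $t_i = 1$ faces, terms of the form $\eta(\sigma_{i,\dots,n})\circ\maxMorseFunc\BGSection(\sigma_{0,\dots,i})(x)$ and $\maxmap{F}_\cube(\sigma_{i,\dots,n})\circ\eta(\sigma_{0,\dots,i})(x)$ feeding the second and third sums (the split is controlled by which factor carries the parameterization, and the $\cubemap{\sigma}_\#^{-1}$ factor combined with the composition convention $g\circ h = hg$ of Section~\ref{sec:discrete} correctly assembles the $\cubemap{\sigma_{i,\dots,n}}^{-1}$ factor that defines $\maxmap{F}_\cube(\sigma_{i,\dots,n})$); and from bottom flow breakings with $\ind(x)-\ind(z)=1$, the term $\eta(\sigma)(\bdy x)$. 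Top flow breakings with $\ind(z)=1$ and $\ind(y)=0$ vanish by the identity $\sum_{\ind(y)=0}\#\Moduli(z,y) = 0$ already exploited in Lemma~\ref{lem:eta-chain-map}, and higher-index breakings vanish by degeneracy of cubes.

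The main obstacle is sign bookkeeping. Signs enter from the cube boundary signs $(-1)^{n+i+\epsilon}$ of Formula~(\ref{eq:gluing-cube}), the orientation conventions relating $\pModuli$ and $\Moduli$ from Section~\ref{sec:Morse-const}, the orientation of compactified moduli spaces, and the $(-1)^n$ prefactor on $\bdy\circ\eta(\sigma)$ in Equation~(\ref{eq:nat-transform-main}). The key coincidence to verify is that these signs, together with the orientation sign arising from the parameterized-versus-unparameterized split at $t_i = 1$, exactly match the signs $-1$ in the second sum and $(-1)^\ell$ in the third sum of Equation~(\ref{eq:nat-transform-main}). The low-dimensional cases already treated in Lemmas~\ref{lem:eta-chain-map} and~\ref{lem:eta-square} illustrate the cancellations and provide a template; the general check is a direct if tedious extension.
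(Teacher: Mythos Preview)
Your overall strategy matches the paper's proof, but there is a genuine gap in your enumeration of the boundary strata of $\opModuli(x,y;\maxmap{\BGSection(\sigma)})$. You list only two flow-breaking strata, namely $\oModuli(x,z)\times\opModuli(z,y;\maxmap{\BGSection(\sigma)})$ and $\opModuli(x,z;\maxmap{\BGSection(\sigma)})\times\oModuli(z,y)$, in both of which the \emph{parameterized} factor is the semicylindrical one. But there are two further strata, already visible in the $n=1$ warm-up preceding Lemma~\ref{lem:eta-square}: $\opModuli(x,z)\times\oModuli(z,y;\maxmap{\BGSection(\sigma)})$ and $\oModuli(x,z;\maxmap{\BGSection(\sigma)})\times\opModuli(z,y)$, in which the parameterized factor is the \emph{cylindrical} piece. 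These arise when the evaluation time $t$ escapes to $-\infty$ or $+\infty$ respectively, so that the marked point lands in the cylindrical end rather than the semicylindrical core.

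These two missing strata are precisely what supply the endpoint terms of the second and third sums in Equation~(\ref{eq:nat-transform-main}). After pushing forward by $\cubemap{\sigma}_\#^{-1}\ev_\#$ and summing over $\ind(y)=0$, the stratum $\opModuli(x,z)\times\oModuli(z,y;\maxmap{\BGSection(\sigma)})$ with $\ind(z)=0$ yields $\maxmap{F}_\cube(\sigma)\circ\eta(o)(x)$, the $\ell=0$ term of the third sum; and $\oModuli(x,z;\maxmap{\BGSection(\sigma)})\times\opModuli(z,y)$ with $\ind(z)=\ind(x)+n-1$ yields $\eta(o)\circ\maxMorseFunc\BGSection(\sigma)(x)$, the $\ell=n$ term of the second sum. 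Your $t_i=1$ faces only produce the interior terms $1\leq\ell\leq n-1$ of those sums, so without these two extra strata the identity~(\ref{eq:nat-transform-main}) does not close up. Once you add them, your argument coincides with the paper's (which, like yours, leaves the signs to the reader).
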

\begin{proof}
  We need to check that these maps $\eta$ satisfy
  Equation~\eqref{eq:nat-transform-main}; this is similar to the proof
  of Lemma~\ref{lem:eta-square}. The boundary of $\eta(\sigma)(x)$
  (which is the first term in Equation~\eqref{eq:nat-transform-main})
  has the following non-degenerate pieces:
  \begin{enumerate}
  \item The chain
    \[
      \sum_{\substack{\ind(y)=0\\\ind(z)=\ind(x)-1}}\bigl(\#\Moduli(x,z)\bigr)\cubemap{\sigma}^{-1}_\#\ev_{\#}\bigl[\opModuli(z,y;\cubemap{\BGSection(\sigma)})\bigr],
    \]
    i.e., broken flows in which the first flow (from $x$) is
    cylindrical and unparameterized. These correspond to the second
    term in Equation~\eqref{eq:nat-transform-main}.
  \item The chain
    \[
      \sum_{\substack{\ind(y)=0\\\ind(z)=0}}\bigl(\#\Moduli(z,y;\maxmap{\BGSection(\sigma)})\bigr)\sigma_{!}^{-1}\ev_{\#}\bigl[\opModuli(x,z)\bigr]=
      \sum_{\ind(y)=0}\sigma_{!}^{-1}\ev_{\#}\bigl[\opModuli(x,y)\bigr],
    \]
    i.e., broken flows in which the first flow is
    cylindrical and parameterized. Here, if
    $\ev_{\#}\bigl[\opModuli(x,z)\bigr]=\sum n_i\alpha_i\in
    C_k^{\cube}(M)$, so
    $\alpha_i\co [0,1]^k\to M$ and $k=\ind(x)$, then
    \[
      \sigma_{!}^{-1}\ev_{\#}\bigl[\opModuli(x,z)\bigr]=\sum
      n_i\bigl[(s,t)\mapsto \cubemap{\sigma}(s)^{-1}\cdot \alpha_i(t)\bigr]\in C_{k+n-1}^{\cube}(M).
    \]
    (This operation $\sigma_!$ also appears in the definition of $\maxmap{F}_{\cube}$.) These
    correspond to the $\ell=0$ case of the fifth term of
    Equation~\eqref{eq:nat-transform-main}. 
  \item The chain
    \[
      \sum_{\substack{\ind(y)=0\\\ind(z)=\ind(x)+n-1}}\bigl(\#\Moduli(x,z;\maxmap{\BGSection(\sigma)})\bigr)\ev_{\#}\bigl[\opModuli(z,y)\bigr],
    \]
    i.e., broken flows in which the first flow is semicylindrical and
    unparameterized. These correspond to the $\ell=n$ case of the
    fourth term of Equation~\eqref{eq:nat-transform-main}.
  \item The chain
    \[
      \sum_{\substack{\ind(y)=0\\\ind(z)=1}}\bigl(\#\Moduli(z,y)\bigr)\cubemap{\sigma}^{-1}_\#\ev_{\#}\bigl[\opModuli(x,z;\cubemap{\BGSection(\sigma)})\bigr],
    \]
    i.e., broken flows in which the first flow is semicylindrical and
    parameterized. These flows contribute $0$, as in the proof of
    Lemma~\ref{lem:eta-chain-map}, because for any critical point $z$
    with index $1$, $\sum_{\ind(y)=0}\#\Moduli(z,y)=0$.
  \item The boundary of $[0,1]^n$ corresponding to where the $i\th$
    coordinate is $0$. This corresponds to the third term of
    Equation~\eqref{eq:nat-transform-main}.
  \item The boundary of $[0,1]^n$ corresponding to where the $i\th$
    coordinate is $1$. Over this boundary, $\maxmap{\BGSection(\sigma)}$ is a
    $2$-story Morse-Smale pair. This part of the boundary corresponds
    to the rest of the fourth or fifth terms in
    Equation~\eqref{eq:nat-transform-main}, depending on whether the
    first story is unparameterized or parameterized, respectively.
  \end{enumerate}
  Taking into account signs, which are left to the reader, this
  proves the result.
\end{proof}

Recall from Proposition~\ref{prop:summed-nat-chain-map} that a summed natural transformation
$\eta$ from $\MorseFunc\circ \BGSection$ to $\cubeF$ induces a chain map
$f_\eta\co \fathocolim(\MorseFunc\circ\BGSection)\to \fathocolim \cubeF$. 
Also, recall that by Proposition~\ref{prop:fat-hocolim-is-hocolim},
$\hocolim(\MorseFunc\circ\BGSection)\simeq \fathocolim(\MorseFunc\circ\BGSection)$, as modules over $H^*(BG)$. 
\begin{lemma}\label{lem:Morse-is-sing}
  The map
  $(f_\eta)^*\co H^*(\fathocolim \cubeF)\to H^*(\fathocolim(\MorseFunc\circ\BGSection))$ is
  an isomorphism and respects the $H^*(BG)$-module structure.
\end{lemma}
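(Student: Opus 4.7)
The module-structure portion of the statement is essentially formal. By Proposition~\ref{prop:summed-nat-chain-map}, $f_\eta$ intertwines the $H^*(\FatRealize{\Nerve BG})$-actions on source and target, and by Lemma~\ref{lem:sm-nerve} together with the Segal weak equivalence $\FatRealize{j^*S}\simeq \Realize{S}$ recalled earlier in the paper, $\FatRealize{\Nerve BG}$ is a model for $BG$; thus this action is exactly the $H^*(BG)$-action appearing in the statement. The plan is then to invoke the last clause of Proposition~\ref{prop:summed-nat-chain-map}, reducing the quasi-isomorphism claim to showing that $\eta(o)\co C_*^{\Morse}(M,\metric,f)\to C_*^{\cube}(M)$ is a quasi-isomorphism, where $o$ is the unique $0$-simplex of $\Nerve BG$.

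The remaining content is the classical comparison between Morse homology and singular homology via descending manifolds. By construction,
\[
  \eta(o)(x)=\sum_{\ind(y)=0}\ev_\#[\opModuli(x,y)],
\]
a cubical chain whose support is $\bigcup_{\ind(y)=0}\ev(\opModuli(x,y))$. Under the Morse--Smale hypothesis this image is precisely the closure $\overline{D_d(x)}$ of the descending manifold, and the family $\{\overline{D_d(x)}\}_{x\in\Crit(f)}$ forms a CW decomposition of $M$ whose cellular chain complex is canonically identified with $C_*^{\Morse}(M,\metric,f)$. The coherent inductive choice of cubulations of the $\opModuli(x,y)$ made in constructing $\eta$ ensures that, at the chain level, $\eta(o)$ is the composite of this cellular identification with the natural subdivision of CW chains into smooth cubical singular chains. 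Since both cellular and smooth cubical chains compute the ordinary singular homology of $M$, $\eta(o)$ is a quasi-isomorphism.

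The main obstacle is verifying that the closed descending manifolds in fact assemble into a CW complex with sufficiently regular attaching maps; this requires analyzing the compactifications $\overline{D_d(x)}$ and the continuity of the gluing, and is a standard Morse-theoretic result that I would invoke rather than reprove. The residual bookkeeping --- that Lemma~\ref{lem:eta-chain-map} already exactly reproduces the CW boundary map on the Morse complex, and that the cubulations chosen in defining $\eta(o)$ realize an honest cubical subdivision of the CW cells, compatibly on boundary strata --- is then routine, because those cubulations were constructed inductively over lower-dimensional strata precisely for this purpose.
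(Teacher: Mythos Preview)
Your proposal is correct and follows essentially the same approach as the paper. The only cosmetic difference is that you invoke the last clause of Proposition~\ref{prop:summed-nat-chain-map} directly to reduce to the single $0$-simplex check, whereas the paper unpacks that clause and runs the dimension-filtration spectral sequence explicitly, identifying the $E^1$-map as a direct sum (over all simplices) of copies of the same Morse-to-cubical comparison; since the proof of that clause of Proposition~\ref{prop:summed-nat-chain-map} is itself this spectral sequence argument, the two are the same proof in different packaging.
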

\begin{proof}
  That the map $(f_{\eta})^*$ respects the module structure
  follows from Proposition~\ref{prop:summed-nat-chain-map}. To see that $(f_\eta)^*$
  is an isomorphism, note that $f_{\eta}$ respects the filtration
  $\Filt$ by the dimension of simplices (see Section~\ref{sec:homotopy-colimits}, as well as the proof of Proposition~\ref{prop:fat-hocolim-is-hocolim}). On the $E^1$-page
  of the associated spectral sequence, the induced map
  \[
    (f_{\eta})_*\co \bigoplus_{\sigma\in \Nerve BG} H_*^{\Morse}(M)\to 
    \bigoplus_{\sigma\in \Nerve BG} H_*^{\cube}(M)
  \]
  is a sum of copies of the well-known isomorphism between Morse
  homology and cubical singular homology, induced by sending a
  critical point to a cubulation of its descending manifold. (See also
  the proof of Proposition~\ref{prop:sseq}.) Consequently, the map
  $f_{\eta}$ is itself a quasi-isomorphism, as desired.
\end{proof}

It remains to compute $H^*(\fathocolim \cubeF)$.

\begin{proposition}\label{prop:sing-is-Borel}
  There is an isomorphism
  $H^*(\fathocolim \cubeF)\cong H^*(M\times_G EG)$ of modules over
  $H^*(BG)$.
\end{proposition}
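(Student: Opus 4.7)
The strategy is to exhibit $\fathocolim \cubeF$ as the cubical cellular chain complex of a specific topological model of $EG \times_G M$ built from the data of $\Nerve BG$, and then conclude via the standard comparison of cubical cellular and singular chains.

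First, I would construct a space $Y$ fibering over $\FatRealize{\Nerve BG} \simeq BG$ with fiber $M$. For each non-degenerate $n$-simplex $\sigma \in \Nerve_n BG$, which by definition is a smooth functor $\sCVcat[n] \to BG$ and carries the compatible smooth cubes $\sigma|_{\Hom_{\sCVcat[n]}(i,j)}\co [0,1]^{j-i-1} \to G$ (the top one being $\cubemap{\sigma}$), attach a cell $\Delta^n \times [0,1]^{n-1} \times M$, gluing as follows: the $\partial_i$ face of $\Delta^n$ is identified with the corresponding cell for $d_i\sigma$; the $t_i = 0$ boundary of $[0,1]^{n-1}$ is identified with the concatenation of the cells for $\sigma_{0,\dots,i}$ and $\sigma_{i,\dots,n}$, with the $M$-coordinate in the latter acted on by $\cubemap{\sigma_{0,\dots,i}}$; and the $t_i = 1$ boundary is identified via degeneracy. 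A direct verification shows $Y \cong EG \times_G M$, for instance by checking that $Y$ fibers over $\FatRealize{\Nerve BG}$ with fiber $M$ and holonomy the natural $G$-action, which determines the Borel construction up to homeomorphism.

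Next, equip each cell of $Y$ with a cubical structure (subdividing $\Delta^n$ into cubes and choosing inductively compatible cubulations), and consider the resulting cubical cellular chain complex $C^{\cube,\cell}_*(Y)$. Its generators in degree $n+k$ biject with pairs $([\sigma],\alpha)$ where $\sigma$ is a non-degenerate $n$-simplex of $\Nerve BG$ and $\alpha$ is a $k$-cube in $M$, exactly the generators of $\fathocolim \cubeF$. The cellular differential matches the $\fathocolim$ differential of Definition~\ref{def:fat-hocolim} term-for-term: the boundary contribution from the $\partial_i$ face of $\Delta^n$ matches the $t_i = \{1\}$ equivalence (face-map identification), while the contribution from the $t_i=0$ boundary of $[0,1]^{n-1}$ matches the $t_i = \{0\}$ equivalence (applying $\maxmap{F}_\cube(\sigma_{0,\dots,i})$, i.e., the $G$-action through $\cubemap{\sigma_{0,\dots,i}}$). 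Hence $\fathocolim \cubeF \cong C_*^{\cube,\cell}(Y)$, and standard comparison results give $C_*^{\cube,\cell}(Y) \simeq C_*^{\cube}(Y) \simeq C_*^{\cube}(EG \times_G M)$.

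Finally, the $H^*(BG)$-module structure on $H^*(\fathocolim \cubeF)$ coming from Equation~\eqref{eq:hocolim-product} (with $H^*(BG)$ identified via Lemma~\ref{lem:hocolim-geom-realization} with $H^*(\FatRealize{\Nerve BG})$) is induced by the projection of $Y$ onto $\FatRealize{\Nerve BG}$, which is the cellular analogue of cup product pullback along $EG \times_G M \to BG$; this matches the standard $H^*(BG)$-action on $H^*(EG \times_G M)$, giving the isomorphism of modules. The principal obstacle is the construction and verification of $Y$: the cell-gluing data must respect both the simplicial identities of $\Nerve BG$ and the composition laws in $\sCVcat[n]$, and the quotient topology must be checked to recover the Borel construction. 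An alternative that may simplify the verification is to define $Y$ as a pullback of a universal bundle over $\FatRealize{\Nerve BG}$, bypassing the cell-by-cell construction, though then the identification with $\fathocolim \cubeF$ at the chain level needs a separate spectral sequence comparison argument.
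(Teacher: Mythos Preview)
Your proposal has a genuine dimension error that breaks the identification. You attach a cell $\Delta^n \times [0,1]^{n-1} \times M$ for each $n$-simplex $\sigma$, but then the cubical cellular chains of $Y$ have generators in degree up to $2n-1+k$ (from a top cube of $\Delta^n \times [0,1]^{n-1}$ times a $k$-cube in $M$), not $n+k$. The cube $[0,1]^{n-1}$ that appears in $\cubemap{\sigma}\co [0,1]^{n-1}\to G$ is already the \emph{data} of the $n$-simplex in the smooth nerve; it is not an extra topological factor to be glued in. In $\fathocolim \cubeF$, the shape attached to $\sigma$ is $I_*^{\otimes n}$, and the cube $\cubemap{\sigma}$ enters only through the functor value $\maxmap{F}_\cube(\sigma)$, which shifts the $M$-grading by $n-1$ in the $t_i=\{0\}$ identification. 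So there is no straightforward space with cells $\Delta^n \times M$ (or $\Delta^n\times[0,1]^{n-1}\times M$) whose cellular complex is $\fathocolim \cubeF$: the $t_i=\{0\}$ gluing collapses $i$ coordinates in the simplicial direction while simultaneously expanding the $M$-factor by an $(i-1)$-parameter family coming from $\cubemap{\sigma_{0,\dots,i}}$, and this is not a cellular attaching map in the usual sense.

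The paper avoids this by going the other direction. It starts from the bar model of $EG\times_G M$ as the totalization of the semi-simplicial space $\{G^k\times M\}_k$, and inside $C_*^{\cube}(EG\times_G M)$ singles out the subcomplex
\[
  \wt{C}_*^{\cube}(EG\times_G M)=\bigoplus_k\bigoplus_{m_1+\cdots+m_k+p+k=*}[\Delta^k]\otimes C_{m_1}^{\cube}(G)\otimes\cdots\otimes C_{m_k}^{\cube}(G)\otimes C_p^{\cube}(M).
\]
The key combinatorial move is a map $L$ that turns a smooth $m$-cube $\alpha\co[0,1]^m\to G$ into an $(m{+}1)$-simplex $L(\alpha)$ of $\Nerve BG$ (by declaring all $\Hom_{\sCVcat[m+1]}(i,j)$ with $j<m+1$ to map to $e$ and using $\alpha$ on the top cube), and more generally a tuple $(\alpha_1,\dots,\alpha_k)$ into an $n$-simplex $L(\alpha_1,\dots,\alpha_k)$ with $n=\sum m_j+k$, via a retraction $S^{\mathrm{sm}}[n]\to\wt{S}^{\mathrm{sm}}[0,m_1{+}1,\dots,n]$. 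The chain map $K$ sends $[\Delta^k]\otimes\alpha_1\otimes\cdots\otimes\alpha_k\otimes C_p^{\cube}(M)$ identically onto the summand $[L(\alpha_1,\dots,\alpha_k)]\otimes C_p^{\cube}(M)$. That this is a quasi-isomorphism is then a spectral sequence comparison, with the $E^1$-page identification reducing to Lemma~\ref{lem:Hinich} tensored with the identity on $H_*^{\cube}(M)$. This is what replaces your attempted topological realization: rather than exhibit $\fathocolim\cubeF$ as cellular chains of a space, the paper maps a bar-type chain model of $EG\times_G M$ into it directly.
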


To prove Proposition~\ref{prop:sing-is-Borel} we observe that $\cubeF$ is
essentially given by composing a functor to spaces with the cubical
chains functor. Then, we observe that the homotopy colimit of the map
$\spaceF$ satisfies the defining properties of Borel equivariant
homology: it is invariant under $G$-equivariant maps which are
homotopy equivalences and agrees with the singular homology of the
quotient when the action is free.

As a first step, note that in the definition of $\cubeF$ we can
replace the smooth nerve with the usual homotopy coherent nerve. An
argument analogous to the proof of Lemma~\ref{lem:sm-nerve}
shows that the two resulting chain complexes are quasi-isomorphic.

A left action of the topological group $G$ on a space $M$ induces a
map $G_\bullet\times M_\bullet\to M_\bullet$, where $G_\bullet$
(respectively $M_\bullet$) denotes the singular set of $G$
(respectively $M$). Let $L_g\co M_n\to M_n$ denote the left action by
an $n$-simplex $g\in G_n$.  Let $\SSetsOther$ be the
simplicially-enriched category of simplicial sets and
$\SSets=\Nerve\SSetsOther$ be the quasicategory of
simplicial sets.  There is an induced functor
$\spaceF\co \Nerve BG\coloneqq \Nerve BG_\bullet\to \SSets$ which sends the
unique $0$-simplex to $M_\bullet$ and sends an $n$-simplex
$\sigma\co S[n]\to G_\bullet$ to the map
$S[n]\to \Hom_{\SSetsOther}(M_\bullet,M_\bullet)$
\[
  S[n]\stackrel{\sigma}{\longrightarrow}G_\bullet \stackrel{g\mapsto L_{g^{-1}}}{\longrightarrow}\Hom_{\SSetsOther}(M_\bullet,M_\bullet).
\]

We recall the definition of the homotopy colimit for spaces and
simplicial sets, and some basic properties of it. Given a
quasicategory $\Cat$ and a functor $F\co\Cat\to\Spaces$, the homotopy
colimit of $F$ is
\begin{equation}\label{eq:sp-hocolim-def}
  \hocolim F = \Bigl(\coprod_{n\geq 0}
  \coprod_{\sigma\in \Cat}  I^{n}\times F(\sigma_0)\Bigr)/\sim
\end{equation}
where $I=[0,1]$ and the equivalence relation $\sim$ is generated by
\begin{align*}
  (s_0\sigma;t_{n+1},\dots, t_1;
  x)&\sim(\sigma;t_{n+1},\dots, t_2; x)\\
  (s_i\sigma;t_{n+1},\dots, t_1; x)&\sim
                                     (\sigma;t_{n+1},\dots, \min\{t_{i+1},t_{i}\},\dots,t_1; x)&\text{if }i\geq 1\\
  (\sigma;t_n,\dots, t_1; x)&\sim(d_i\sigma;t_n,\dots, \widehat{t_i},\dots, t_1; x)&\text{if }t_i=\{1\}\\
  (\sigma;t_n,\dots, t_1;x)&\sim(\sigma_{i,\dots,n};t_n,\dots, t_{i+1}, F(\sigma_{0,\dots,i})(t_1,\dots,t_{i-1})(x))
                                   &\text{if } t_i=\{0\}.
\end{align*}
Call the first two kinds of moves in the equivalence relation $\sim$
\emph{degeneracy moves} and the last two \emph{face moves}. Call
face moves from left to right \emph{face deflations} and face moves
from right to left \emph{face inflations}, and define \emph{degeneracy deflations} and \emph{degeneracy inflations} similarly. The fat homotopy colimit is
defined by the same formula~\eqref{eq:sp-hocolim-def} but quotienting
only by the face moves.

If instead $F\co\Cat\to\SSets$ then the homotopy colimit is defined
similarly, but where the $k$-simplices are a quotient 
\[
  \Bigl(\coprod_{n\geq 0}\coprod_{\sigma\in \Nerve_nBG}  I^{n}_k\times F(\sigma_0)_k\Bigr)/\sim
\]
with $I$ being the standard simplicial set for the interval. The
equivalence relation $\sim$ is induced by the case of
spaces. Explicitly, there 
is a multiplication map $m\co I_\bullet\times I_\bullet\to I_\bullet$
induced by $(t_1,t_2)\mapsto \min\{t_1,t_2\}$. The degeneracy moves
become
\begin{align*}
  (s_0\sigma;t_{n+1},\dots, t_1;
  x)&\sim(\sigma;t_{n+1},\dots, t_2; x)\\
  (s_i\sigma;t_{n+1},\dots, t_1; x)&\sim(\sigma;t_{n+1},\dots, m(t_{i+1},t_{i}),\dots,t_1; x)&\text{if }i\geq 1
\end{align*}
where now the $t_i$ are $k$-simplices in $I_\bullet$ and $x$ is a
$k$-simplex in $F(\sigma_0)$. The inclusion $\{1\}\into I$ induces a map
of simplicial sets $\iota\co \{1\}\into I_\bullet$. The first face
move becomes
\begin{align*}
  (\sigma;t_n,\dots, t_1; x)&\sim(d_i\sigma;t_n,\dots,
  \widehat{t_i},\dots, t_1; x)& \text{if }t_i\in\iota(\{1\}).
\end{align*}
To understand the second face move, note that $F(\sigma_{0,\dots,i})$
is an $i$-simplex in $\SSets$, that is, a
functor from the Cordier-Vogt category $\CVcat[i]$ to $\SSetsOther$. In particular,
restricting $F(\sigma_{0,\dots,i})$ to $\Hom_{\CVcat[i]}(0,i)$ gives a
map $\Hom_{\CVcat[i]}(0,i)\times F(\sigma_0)=I_\bullet^{i-1}\times
F(\sigma_0)\to F(\sigma_i)$. This is exactly what is needed for the
second face move,
\begin{align*}
  (\sigma;t_n,\dots, t_1;x)&\sim(\sigma_{i,\dots,n};t_n,\dots, t_{i+1}, F(\sigma_{0,\dots,i})(t_1,\dots,t_{i-1})(x))
                                   &\text{if } t_i\in\iota(\{0\}),
\end{align*}
where the $t_i$ are $k$-simplices in $I_\bullet$, $x$ is a
$k$-simplex in $F(\sigma_0)$, and $\iota$ is now induced by inclusion
of $0$ into $I$.

Call a $k$-simplex $\alpha=(t_1,\dots,t_n)\in I_\bullet^n$
\emph{internal} if no $t_i$ is in the image of $\{0\}$ or $\{1\}$.
\begin{lemma}\label{lem:hocolim-inject}
  Let $F\co\Cat\to\SSets$ be a functor of quasi-categories. Given
  non-degenerate simplices $\sigma,\sigma'\in\Nerve_nBG$, interior
  simplices $\alpha,\in I_\ell^n$, $\alpha',\in I_{k}^m$,
  $\beta\in F(\sigma_0)_\ell$, $\beta'\in F(\sigma'_0)_k$, we have
  \[
    (\sigma;\alpha;\beta)\sim(\sigma';\alpha';\beta')
  \]
  if and only if $\sigma=\sigma'$, $\alpha=\alpha'$, and $\beta=\beta'$.
\end{lemma}
\begin{proof}
  We claim that if
  \[
    (\sigma;\alpha;\beta)\sim (\sigma';\alpha';\beta')
  \]
  are equivalent then we can get from one to the other by first
  applying a sequence of face moves and then a sequence of degeneracy
  moves. To see this, it suffices to see that one can replace a degeneracy move
  followed by a face move with a face move followed by a degeneracy
  move (or the identity map). This is a simple case analysis.

  Next, observe that any simplex $(\tau;\gamma;\delta)$ is equivalent
  to a unique simplex $(\tau';\gamma';\delta')$ with $\gamma'$
  internal by a sequence of face deflations. That is, the order in
  which we apply the face deflations does not matter.
  Further, any sequence of face moves can be factored as a sequence of
  face inflations followed by face deflations. It follows that any
  simplex $(\tau;\gamma=(r_1,\dots,r_p);\delta)$ is equivalent to a
  unique simplex $(\tau';\gamma';\delta')$ with $\gamma'$ internal by
  a sequence of face moves (not just deflations), and this simplex is
  obtained by applying face deflations at the $r_i$ in the image of
  $\{0,1\}$. In particular, this already implies the result if we were
  only quotienting by the face moves.

  Similarly, using the simplicial identities, it is easy to see that if $(\tau;\gamma;\delta)$ is equivalent to $(\tau';\gamma';\delta')$ by a sequence of degeneracy inflations and deflations then one can get from 
  $(\tau;\gamma;\delta)$ to $(\tau';\gamma';\delta')$ by a sequence of
  degeneracy inflations followed by degeneracy deflations. Further,
  any maximal sequence of degeneracy deflations from
  $(\tau;\gamma;\delta)$ yields the same simplex
  $(\tau';\gamma';\delta')$ with $\tau'$ nondegenerate.

  Now, suppose 
  \[
    (\sigma;\alpha;\beta)\sim (\sigma';\alpha';\beta')
  \]
  where $\alpha,\alpha'$ are internal and $\sigma,\sigma'$ are
  non-degenerate. From the previous three paragraphs, there is a
  $(\tau;\gamma;\delta)$ equivalent to $(\sigma;\alpha;\beta)$ by a sequence of
  face deflations and equivalent to $(\sigma';\alpha';\beta')$ by a sequence of
  degeneracy deflations. So, $\delta=\beta'$. Write
  $\gamma=(t_n,\dots,t_{\ell+1};0,t_{\ell-1},\dots,t_1)$ where
  $t_n,\dots,t_{\ell+1}\not\in\iota(\{0\})$. Assume further that
  $t_{i_m},\dots,t_{i_1}\in\iota(\{1\})$ or, in shorthand, $t_{i_j}=1$, where
  $n\geq i_m>\cdots>i_1>\ell$, and the other $t_j$, $j>\ell$, are not in
  $\iota(\{1\})$. Then we have
  \[
    (\sigma;\alpha;\beta)=\bigl(d_{i_1-\ell}\cdots d_{i_m-\ell}\tau_{\ell,\dots,n};t_n,\dots,\widehat{t_{i_m}},\dots,\widehat{t_{i_1}},\dots,t_{\ell+1};F(\tau_{0,\dots,\ell})(t_1,\dots,t_{\ell-1})(\delta)\bigr)
  \]
  Observe that
  \[
    d_{i_1-\ell}\cdots d_{i_m-\ell}\tau_{\ell,\dots,n}=d_{i_1-\ell}\cdots d_{i_m-\ell} d_0^\ell\tau
  \]
  
  Since $\alpha'$ is internal, we must have
  \begin{equation}\label{eq:lotsa-degens}
    \tau=s_0^\ell  s_{i_m+\epsilon_m}\cdots s_{i_1+\epsilon_1} \tau'
  \end{equation}
  for some simplex $\tau'$, where each $\epsilon_i\in \{-1,0\}$. It follows
  from the simplicial relations that $\tau'=\sigma$, which is
  nondegenerate. So, since $\sigma'$ is the unique nondegenerate simplex
  obtained from $\tau$ by degeneracy deflations, $\tau'=\sigma'$, as well, and
  $\alpha'=(t_n,\dots,\hat{t}_{i_m},\dots,\hat{t}_{i_1},\dots,t_{\ell+1})$. So,
  $\sigma=\sigma'$ and $\alpha=\alpha'$. Further,
  Equation~\eqref{eq:lotsa-degens} also implies that $\tau_{0,\dots,\ell}$ is
  fully degenerate, so
  $\beta=F(\tau_{0,\dots,\ell})(t_1,\dots,t_{\ell-1})(\delta)=\beta'$. This
  proves the result.
\end{proof}

\begin{lemma}\label{lem:sp-to-cube}
  There is an isomorphism
  \[
    H_*\hocolim\spaceF\cong H_*\fathocolim\cubeF
  \]
  of modules over $H^*(BG)$.
\end{lemma}
\begin{proof}
  An argument analogous to the proof of
  Proposition~\ref{prop:fat-hocolim-is-hocolim} shows that
  \[
    H_*\hocolim\spaceF\cong H_*\fathocolim\spaceF.
  \]
  From the definitions of the fat homotopy colimit in complexes and
  spaces, there is an evident chain map
  $\fathocolim\cubeF\to
  C_*\fathocolim\spaceF$. Filtering
  Formula~\eqref{eq:sp-hocolim-def} by the
  integer $n$ and considering the corresponding filtration of
  $\fathocolim\cubeF$, the map of $E^1$-pages of the associated
  spectral sequences is an isomorphism. It is straightforward from the
  construction of the maps that the induced maps of cohomology respect
  the $H^*(BG)$-module structure. The result follows.
\end{proof}

Let $N$ be another $G$-space, and let $\spaceF^M$ and $\spaceF^N$ be
the corresponding functors. Given a $G$-equivariant map $M\to N$ there
is an induced map $\hocolim \spaceF^M\to\hocolim\spaceF^N$.
\begin{lemma}\label{lem:equi-map}
  If $f\co M\to N$ is an equivariant map which induces an isomorphism
  on homology then the induced map $\hocolim
  \spaceF^M\to\hocolim\spaceF^N$ also induces an isomorphism on homology.
\end{lemma}
\begin{proof}
  Consider the same spectral sequence as in the proof of
  Lemma~\ref{lem:sp-to-cube} (filtering by $n$).
\end{proof}

There is a map $\pi\co \hocolim \spaceF\to M/G$ defined by
\[
  \pi(\sigma;t_n,\dots, t_1; x)=[x],
\]
the image of $x$ in $M/G$. It is clear that this map is well-defined.
\begin{lemma}\label{lem:fibration}
  Suppose that $G$ acts freely on $M$ and $M\to M/G$ is a
  fibration. (If $G$ is compact, the latter condition is automatic.)
  Then the map $\hocolim \spaceF^M\to M/G$ is a Kan fibration, and the
  fibers of this map are $\hocolim \spaceF^G$.
\end{lemma}
\begin{proof}
  To show $\pi\co \hocolim \spaceF^M\to M/G$ is a Kan fibration, assume we
  are given the following solid arrows
  \[
  \begin{tikzpicture}[xscale=2.5,yscale=1.5]
    \node (simplex) at (0,0) {$\Delta^k$};
    \node (horn) at (0,1) {$\Lambda^k_i$};
    \node (quotient) at (1,0) {$M/G$};
    \node (hocolim) at (1,1) {$\hocolim\spaceF^M$};

    \draw[right hook->] (horn) -- (simplex);
    \draw[->] (horn) -- (hocolim) node[midway,above] {\small $f$};
    \draw[->] (simplex) -- (quotient) node[midway,below] {\small $g$};
    \draw[->] (hocolim) -- (quotient) node[midway,right] {\small $\pi$};
    \draw[->,dashed] (simplex) -- (hocolim) node[midway,anchor=south east] {\small $h$};
  \end{tikzpicture}
  \]
  and we want to construct the lift $h$ shown with a dashed arrow.

  Consider the one-point $G$-space $\pt$. It is immediate from a
  result of Joyal~\cite[Corollary 1.4]{Joyal02:quasi-cat} that $\hocolim\spaceF^{\pt}$ is a
  Kan complex. The map $Q\from M\to \pt$
  induces a map $Q_*\from \hocolim \spaceF^M\to\hocolim
  \spaceF^\pt$. Choose a $k$-simplex $\wt\tau$ in $\hocolim \spaceF^\pt$
  that fills the horn $Q_*\circ f\from \Lambda^k_i\to \hocolim
  \spaceF^\pt$. Recall that the $k$-simplices of $\hocolim \spaceF^\pt$ are
  $\Bigl(\coprod_{n\geq 0} \coprod_{\sigma\in \Nerve_nBG}
  I^{n}_k/\sim\Bigr)$. Write the $I^n$ corresponding to
  $\sigma\in\Nerve_nBG$ as ${}_{\sigma}I^{n}$, and assume $\wt\tau$ is the
  image under $\sim$ of some $\tau \in {}_\sigma I^n_k$.

  Consider the $k$ faces of $\Lambda^k_i$. By applying the map $f$, we
  get $k$ $(k-1)$-simplices in $\hocolim \spaceF^M$; assume they are
  images under $\sim$ of $\alpha_j\in {}_{\mu_j}I^{n_j}_{k-1}\times
  M_{k-1}$, with $0\leq j\leq k,j\neq i$. The map $Q_*$ is simply the
  projection to the first factor, and hence
  $Q_*(\alpha_j)\in{}_{\mu_j}I^{n_j}_{k-1}$ agrees with the face
  $d_j\tau\in {}_{\sigma}I^{n}_{k-1}$---that is, they are related by a
  sequence of equivalence relation atoms $\sim$ for $\hocolim
  \spaceF^\pt$ from Equation~\eqref{eq:sp-hocolim-def}. Apply the
  corresponding sequence of equivalence relation atoms to
  $\alpha_j\in{}_{\mu_j}I^{n_j}_{k-1}\times M_{k-1}$, but in $\hocolim
  \spaceF^M$, to get an equivalent $\beta_j\in {}_{\sigma}I^n_{k-1}\times M_{k-1}$.
  (This sequence will change the projection to $M_{k-1}$ by pointwise
  group actions.) In more detail, if the sequence applies the first
  kind of degeneracy move
  \[
    (s_0\sigma;t_{n+1},\dots,t_1;\pt)\to
    (\sigma;t_{n+1},\dots,t_2;\pt)\quad\text{or}\quad 
    (s_0\sigma;t_{n+1},\dots,t_1;\pt)\leftarrow
    (\sigma;t_{n+1},\dots,t_2;\pt) 
  \]
  to $Q_*(\alpha_j)$, say, then apply the corresponding degeneracy move
  \[
    (s_0\sigma;t_{n+1},\dots,t_1;x)\to
    (\sigma;t_{n+1},\dots,t_2;x)\quad\text{or}\quad 
    (s_0\sigma;t_{n+1},\dots,t_1;x)\leftarrow
    (\sigma;t_{n+1},\dots,t_2;x)
  \]
  to $\alpha_j$. The second degeneracy move and first face move are
  similar. For the second face move, if the sequence applies
  the equivalence
  \[
    (\sigma;t_n,\dots,t_1;\pt)\to (\sigma_{i,\dots,n};t_n,\dots,t_{i+1},\spaceF(\sigma_{0,\dots,i})(t_1,\dots,t_{i-1})(\pt)=\pt)
  \]
  to $Q_*(\alpha_j)$, say, then apply the equivalence
  \[
    (\sigma;t_n,\dots,t_1;x)\to (\sigma_{i,\dots,n};t_n,\dots,t_{i+1},\spaceF(\sigma_{0,\dots,i})(t_1,\dots,t_{i-1})(x))
  \]
  to $\alpha_j$. If the sequence applied the equivalence
  \[
    (\sigma;t_n,\dots,t_1;\pt)\leftarrow (\sigma_{i,\dots,n};t_n,\dots,t_{i+1},\spaceF(\sigma_{0,\dots,i})(t_1,\dots,t_{i-1})(\pt)=\pt)
  \]
  to $Q_*(\alpha_j)$ then apply the equivalence
  \[
    (\sigma;t_n,\dots,t_1;\spaceF(\sigma_{0,\dots,i})(t_1,\dots,t_{i-1})^{-1}(x))\leftarrow (\sigma_{i,\dots,n};t_n,\dots,t_{i+1},x)
  \]
  to $\alpha_j$. (Since $\spaceF$ is given by the group action, it is invertible.)
  
  We claim that applying the second projection to the $\beta_j$ gives
  a horn $g'\from
  \Lambda^k_i\to M$ fitting into the following square
  \[
    \begin{tikzpicture}[xscale=2.5,yscale=1.5]
      \node (simplex) at (0,0) {$\Delta^k$};
      \node (horn) at (0,1) {$\Lambda^k_i$};
      \node (quotient) at (1,0) {$M/G$};
      \node (hocolim) at (1,1) {$M$};
      
      \draw[right hook->] (horn) -- (simplex);
      \draw[->] (horn) -- (hocolim) node[midway,above] {\small $g'$};
      \draw[->] (simplex) -- (quotient) node[midway,below] {\small $g$};
      \draw[->] (hocolim) -- (quotient);
      \draw[->,dashed] (simplex) -- (hocolim);
    \end{tikzpicture}
  \]
  (without the dashed arrow).  To see this, we must verify that the
  maps $\beta_q$ agree on their common faces. For definiteness, fix
  $0\leq p<q\leq k$. We claim that $\bdy_p\beta_q=\bdy_{q-1}\beta_p$.

  By construction, $\bdy_p\beta_q\sim \bdy_{q-1}\beta_p$ and also
  $Q_*(\bdy_p\beta_q)=Q_*(\bdy_{q-1}\beta_p)$. We claim that these two
  conditions imply that $\bdy_p\beta_q=\bdy_{q-1}\beta_p$. By applying
  corresponding sequences face moves, we may assume that
  $\bdy_p\beta_q$ and $\bdy_{q-1}\beta_p$ are internal
  and by applying corresponding sequences of
  degeneracy moves, we may assume $\bdy_p\beta_q$ and
  $\bdy_{q-1}\beta_p$ correspond to the same nondegenerate simplex
  $\sigma$ in $\Nerve_nBG$. It then follows from
  Lemma~\ref{lem:hocolim-inject} that $\bdy_p\beta_q=\bdy_{q-1}\beta_p$,
  as desired.
  
  Now, since $M\to M/G$ is a fibration, there is a simplex $\omega\in
  M_k$ producing the dashed lift. Then (the equivalence class of) the
  $k$-simplex $(\tau,\omega)\in {}_\sigma I^n_k\times M_k$ produces
  the required lift $h \from \Delta^n\to \hocolim\spaceF^M$; this
  establishes that $\pi$ is a Kan fibration.
  
  The fact that the fiber is $\hocolim \spaceF^G$ is immediate from
  the definitions.
\end{proof}

\begin{lemma}\label{lem:contractible}
  For $M=G$ the space $\hocolim \spaceF^G$ is contractible.
\end{lemma}
\begin{proof}[Proof, courtesy of Tyler Lawson]
  Let $\Cat$ be the trivial category, with a single object and only
  the identity morphism. There is a projection
  $\pi\co \Nerve BG\to \Cat$ and an inclusion
  $i\co \Cat\to \Nerve BG$. The homotopy colimit of $\spaceF^G$ is the
  homotopy left Kan extension of $\spaceF^G$ along $\pi$. Let
  $F\co\Cat\to\SSets$ be the functor sending the object of $\Cat$ to
  a one-point space.  From the pointwise construction of homotopy left
  Kan extension (see~\cite[Section 4.3]{Lurie09:topos}), the diagram
  $\spaceF^G$ is the homotopy left Kan
  extension of $F$ along $i$. Since Kan extension respects
  composition, the homotopy colimit of $\spaceF^G$ is weakly
  equivalent to the left Kan extension of $F$ along
  $\pi\circ i=\Id_\Cat$, which is just a one-point space.
\end{proof}

\begin{proof}[Proof of Proposition~\ref{prop:sing-is-Borel}]
  By Lemma~\ref{lem:sp-to-cube}, it suffices to prove that the
  homology of $\hocolim\spaceF$ agrees with the usual Borel
  equivariant homology of $M$. By Lemma~\ref{lem:equi-map}, we may
  replace $M$ by $M\times EG$, so the action of $G$ is free and $M\to
  M/G$ is a fibration. By Lemmas~\ref{lem:fibration}
  and~\ref{lem:contractible}, there is a homotopy equivalence
  $\hocolim\spaceF\to M/G$.  Hence, the homology of $\spaceF$ is
  isomorphic to the homology of $M/G$; this is also the Borel
  equivariant homology of the free $G$-space $M$.
\end{proof}

\begin{proof}[Proof of Proposition~\ref{prop:Morse-is-Borel}]
  By Proposition~\ref{prop:fat-hocolim-is-hocolim}, $H^*(\hocolim (\MorseFunc\circ\BGSection))\cong H^*(\fathocolim (\MorseFunc\circ\BGSection))$. By
Lemma~\ref{lem:Morse-is-sing}, $H^*(\fathocolim (\MorseFunc\circ\BGSection))\cong H^*(\fathocolim \cubeF)$. By
  Proposition~\ref{prop:sing-is-Borel}, $H^*(\fathocolim \cubeF)\cong H^*(M\times_G EG)$.
\end{proof}

\subsection{A spectral sequence}\label{sec:sseq} 
Let $\pi_0(G)$ denote the quotient of $G$ by the connected
component containing the identity. So, $\pi_0(G)$ is a discrete group,
with one element per connected component of $G$. There is an action
of $\pi_0(G)$ on $\HF(L_0^H,L_1)$ defined as follows. Given $[g]\in
\pi_0(G)$ the action of $[g]$ is the composition
\[
  \HF(L_0^H,L_1)\to \HF(L_0^{gH},L_1)\to \HF(L_0^H,L_1)
\]
where the first map is the action of $g$ on $M$, which sends
$L_0^H\cap L_1$ to $L_0^{gH}\cap L_1$, and the second map is the
continuation map associated to the Hamiltonian isotopy $L_0^{gH}\sim
L_0\sim L_0^H$ induced by the inverse of $gH$ and then $H$. 
That this defines an action of $\pi_0(G)$ on $\HF(L_0^H,L_1)$ follows
from Lemma~\ref{lem:is-sset-map} (considering only $1$- and
$2$-morphisms), and is also not hard to prove directly.
Since $\pi_1(BG)=\pi_0(G)$, we can view the action of $\pi_0(G)$ on
$\HF(L_0^H,L_1)$ as a local system over $BG$.

\begin{proposition}\label{prop:sseq}
  Assume $G$ is a compact Lie group, and that we are working with
  Floer complexes with coefficients in a field $\Field$. Then there is
  a spectral sequence with $E^2$-page given by
  $H^*(BG;\HF(L_0^H,L_1;\Field))$ converging to
  $\eHF[G](L_0,L_1;\Field)$. In particular, if $G$ is connected
  then there is a spectral sequence $H^*(BG;\Field)\otimes
  \HF^*(L_0^H,L_1;\Field)\Rightarrow \eHF[G](L_0,L_1;\Field)$.
\end{proposition}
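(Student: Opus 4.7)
The approach is to use the filtration $\Filt$ by simplex dimension on $\hocolim\chainFb$ from Section~\ref{sec:homotopy-colimits}, dualized to a decreasing filtration on the cochain complex computing $\eHF[G](L_0,L_1;\Field)$. On the associated graded $E_0$, Formula~\eqref{eq:simple-hocolim-diff} shows that every boundary term except $(-1)^n[\sigma]\otimes(\bdy x)$ strictly lowers the filtration. Since every $\sigma\in\Nerve_n BG$ has $\sigma_0=o$ with $\chainFb(o)=\CF(L_0^H,L_1;\Field)$, taking cohomology of the internal Floer differential on $E_0$ identifies
\[
E_1^{p,q}\cong\bigoplus_{\sigma\in\Nerve_p BG\text{ nondeg.}}\HF^q(L_0^H,L_1;\Field).
\]

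For the $d_1$-differential, note that in Formula~\eqref{eq:simple-hocolim-diff} the term $[\sigma_{i,\dots,n}]\otimes\maxmap{\chainFb}(\sigma_{0,\dots,i})(x)$ lies in $\Filt_{n-i}$, so only $i=1$ survives modulo $\Filt_{n-2}$; combined with the face terms this yields
\[
d_1([\sigma]\otimes[x])=(-1)^n\Bigl([d_0\sigma]\otimes\maxmap{\chainFb}(\sigma_{0,1})_*([x])+\sum_{j=1}^n(-1)^j[d_j\sigma]\otimes[x]\Bigr).
\]
The plan is then to identify this with the (twisted) cellular cochain differential on $\Realize{\Nerve BG}$, which models $BG$ by Lemma~\ref{lem:sm-nerve}, with local coefficients in the $\pi_0(G)$-module $\HF^*(L_0^H,L_1;\Field)$. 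The key observation is that a $1$-simplex $\sigma_{0,1}$ corresponds via $\BGSection$ to a path in $G$ from $e$ to some $g\in G$, whose continuation map $\maxmap{\chainFb}(\sigma_{0,1})_*$ coincides with the action of $[g]\in\pi_0(G)$ defined immediately before the proposition. That this depends only on $[g]$ follows from a $2$-simplex filling argument: any two paths in $G$ sharing endpoints and lying in the same component are joined by a $2$-simplex of $\Nerve BG$ whose image under $\maxmap{\chainFb}$ supplies a chain homotopy, in direct analogy with the invariance argument in the proof of Theorem~\ref{thm:eHF-inv}. This gives $E_2^{p,q}\cong H^p\bigl(BG;\HF^q(L_0^H,L_1;\Field)\bigr)$.

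When $G$ is connected, $\pi_0(G)$ is trivial so the local system is trivial, and since $\Field$ is a field the universal coefficient theorem gives $E_2^{p,q}\cong H^p(BG;\Field)\otimes_\Field\HF^q(L_0^H,L_1;\Field)$. Convergence follows from completeness of the dual decreasing filtration: under Hypothesis~\ref{hyp:Floer-defined} the intersection $L_0^H\cap L_1$ is finite, so $\HF^*$ is finite-dimensional with bounded grading, forcing each cohomological degree of $\eHF[G]$ to receive contributions from only finitely many filtration levels and yielding strong convergence to $\eHF[G](L_0,L_1;\Field)$. The step I expect to be most delicate is the precise identification of $d_1$ with the twisted cellular differential, in particular matching the sign conventions of Equation~\eqref{eq:simple-hocolim-diff} with those of the standard cellular differential on $BG$ and verifying that $\maxmap{\chainFb}(\sigma_{0,1})_*$ agrees with the geometrically-defined $\pi_0(G)$-action; this reduces to a parametrized unwinding of $\FloerFunc\circ\BGSection$ on $1$- and $2$-simplices, which is essentially a fiberwise version of the argument proving Theorem~\ref{thm:eHF-inv}.
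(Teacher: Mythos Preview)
Your proposal is correct and follows essentially the same approach as the paper: both use the filtration $\Filt$ on $\hocolim\chainFb$ by simplex dimension and identify the $E^2$-page by computing $E^1$ and $d_1$. The paper packages the $E^1/d_1$ identification slightly more cleanly by introducing an auxiliary functor $H$ with $H(o)=\HF(L_0^H,L_1;\Field)$, $\maxmap{H}(\sigma)=\chainFb(\sigma)_*$ on $1$-simplices, and $\maxmap{H}=0$ on higher simplices, then observing that the $E^1$-page with its $d_1$ is exactly $\Hom(\hocolim H,\Field)$ and invoking (an extension of) Lemma~\ref{lem:hocolim-geom-realization}; your direct computation of $d_1$ from Formula~\eqref{eq:simple-hocolim-diff} amounts to the same thing unpacked by hand.
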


\begin{proof}[Proof of Proposition~\ref{prop:sseq}]
  Let $\chainFb\co\Nerve B G\to\ChainComplexes$ be the functor
  defined in Section \ref{sec:build-diag}. The equivariant cohomology
  $\eHF[G](L_0,L_1;\Field)$ is the cohomology of
  $\hocolim\chainFb$. Generators of $\hocolim\chainFb$ are of the form
  $[\sigma]\otimes \x$ where $\sigma$ is a non-degenerate simplex in
  $\Nerve B G$ and $\x\in\CF(L_0^H,L_1)$.  Recall the (increasing)
  filtration on $\hocolim \chainFb$ defined by setting the filtration
  of $[\sigma]\otimes\x$ to be the dimension of $\sigma$. We
  claim that the $E_2$-term of the associated spectral sequence is
  $H^*(BG;\HF^*(L_0^H,L_1;\Field))$.

  To see this, note that there is another functor, $H$,
  defined by 
  \[
    H(o)=\HF(L_0^H,L_1;\Field)
  \]
  for $o$ the unique object of $BG$;
  $\maxmap{H}(\sigma)=\chainFb(\sigma)_*\co
  \HF(L_0^H,L_1;\Field)\to \HF(L_0^H,L_1;\Field)$ for $\sigma$ a $1$-simplex
  ($1$-morphism) in $\Nerve BG$ (i.e., an element of $G$); and
  $\maxmap{H}(\sigma)=0$ for $\sigma$ an $n$-morphism,
  $n>1$. A small extension of Lemma~\ref{lem:hocolim-geom-realization}
  shows that the hypercohomology of $H$ is exactly the
  cohomology of $BG$ with coefficients in $\HF(L_0^H,L_1;\Field)$.

  The $E^1$-page of the spectral sequence for
  $H^*(\hocolim \chainFb;\Field)$ is $\Hom(\hocolim(H),\Field)$, and the
  $d_1$-differential agrees with the differential on
  $\Hom(\hocolim(H),\Field)$. It follows that the $E^2$-page of the
  spectral sequence is $H^*(\hocolim(H);\Field)\cong H^*(BG;\HF^*(L_0^H,L_1;\Field))$,
  as desired.
\end{proof}

\subsection{The Seidel-Smith example}\label{sec:kh-symp}
In this section, we define $O(2)$-equivariant symplectic Khovanov
homology. Symplectic Khovanov homology was defined by
Seidel-Smith~\cite{SeidelSmith6:Kh-symp}. We will work in Manolescu's Hilbert
scheme formulation~\cite{Manolescu06:nilpotent}; see also our previous
paper~\cite{HLS:HEquivariant} for a brief review. Very briefly, consider a degree $2n$
polynomial $p(z)$ with simple roots and the affine
algebraic surface $S=\{(u,v,z)\in\CC^3\mid u^2+v^2+p(z)=0\}$.
There is an open subspace $\ssspace{n}$ of the Hilbert scheme $\Hilb^n(S)$, and submanifolds
$L_0,L_1\subset \ssspace{n}$ associated to a bridge diagram for a link
$K$, so that the symplectic Khovanov homology
$\KhSymp(K)=\HF(L_0,L_1)$ inside $\ssspace{n}$. There is also an identification
\begin{align*}
  L_0&=\Sigma_{A_1}\times\cdots\times\Sigma_{A_n}\subset (S^{\times n}\setminus\Delta)/(S_n)\subset \Hilb^n(S)\\
  L_1&=\Sigma_{B_1}\times\cdots\times\Sigma_{B_n}\subset (S^{\times n}\setminus\Delta)/(S_n)\subset \Hilb^n(S)
\end{align*}
where each $\Sigma_{A_i}$ and $\Sigma_{B_i}$ is a $2$-sphere in $S$ and $\Delta$
denotes the fat diagonal.

The standard action of $O(2)$ on $(u,v)$ induces an action of $O(2)$
on the algebraic surface $S$ which
preserves the Lagrangians (see, e.g.,~\cite[Section
7.1]{HLS:HEquivariant}). We considered the subgroups $D_{2^n}$ of
$O(2)$ and defined a $D_{2^n}$-equivariant symplectic Khovanov
homology over $\FF_2$. Our goal here is to extend this to a definition
of $O(2)$-equivariant symplectic Khovanov homology, over an arbitrary
ring $\Ring$.

To produce a symplectic form we will use a well-known averaging procedure. For lack of a reference, we note the following lemma:
\begin{lemma}\label{lem:average}
  Let $G$ be a compact Lie group acting on a smooth manifold $M$ and
  $\alpha\in\Omega^k(M)$. Let $\mathit{dg}$ denote Haar measure on $G$. Then the
  exterior derivative satisfies
  \[
    d\left(\int_{g\in G} g^*\alpha \mathit{dg}\right)=\int_{g\in G}g^*(d\alpha)\mathit{dg}.
  \]
\end{lemma}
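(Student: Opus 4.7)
The plan is to reduce the claim to two standard facts: naturality of $d$ under pullback by a single diffeomorphism, and differentiation under the integral sign. Since $g\from M\to M$ is a diffeomorphism for each fixed $g\in G$, we have $g^*(d\alpha)=d(g^*\alpha)$ pointwise in $g$. Thus the right-hand side equals $\int_{g\in G} d(g^*\alpha)\,\mathit{dg}$, so it suffices to show
\[
d\!\left(\int_{g\in G} g^*\alpha\,\mathit{dg}\right)=\int_{g\in G} d(g^*\alpha)\,\mathit{dg}.
\]

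To establish this, I would work locally on $M$. Fix $p\in M$ and a coordinate chart $(U,x^1,\dots,x^n)$ around $p$. The action map $G\times M\to M$ is smooth, so in these coordinates we can write $g^*\alpha|_U=\sum_I f_I(g,x)\,dx^I$ where each $f_I\from G\times U\to\RR$ is smooth and the sum is over the $k$-element multi-indices. Since $G$ is compact, the function $f_I$ together with all its partial derivatives in the $x$-variables is uniformly bounded on $G\times K$ for any compact $K\subset U$. The dominated convergence theorem then justifies differentiating under the integral sign, giving
\[
\frac{\partial}{\partial x^j}\int_{g\in G}f_I(g,x)\,\mathit{dg}=\int_{g\in G}\frac{\partial f_I}{\partial x^j}(g,x)\,\mathit{dg}
\]
for each $j$ and each $I$. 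Summing appropriately with wedge products against $dx^j$ yields $d\bigl(\int_G g^*\alpha\,\mathit{dg}\bigr)=\int_G d(g^*\alpha)\,\mathit{dg}$ on $U$, and since $p$ was arbitrary the identity holds globally.

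The combination of this with the pointwise identity $d(g^*\alpha)=g^*(d\alpha)$ (which follows because exterior differentiation commutes with pullback under smooth maps) gives the lemma. The only mild subtlety is ensuring that the integrand $g\mapsto g^*\alpha$ and its first derivatives in $M$-directions are jointly measurable and dominated, but this is immediate from smoothness of the $G$-action combined with compactness of $G$, so no new estimate is needed and there is no substantive obstacle.
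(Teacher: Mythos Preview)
Your proof is correct but takes a different route from the paper. You work in local coordinates on $M$ and invoke dominated convergence to justify differentiating under the integral sign, using compactness of $G$ to get the needed uniform bounds. The paper instead reformulates the averaged form as a fiber integral: it defines a form $\widetilde{\alpha}$ on $G\times M$ so that $\int_{g\in G} g^*\alpha\,\mathit{dg}$ is the integral of $\mathit{dg}\wedge\widetilde{\alpha}=\mathit{dg}\wedge\mu^*\alpha$ along the fibers of $G\times M\to M$ (where $\mu$ is the action map), and then cites the standard fact that fiber integration commutes with the exterior derivative. Your argument is more elementary and entirely self-contained; the paper's is coordinate-free and reduces the lemma to a single citable result, which is perhaps tidier but requires the reader to know or look up the fiber-integration fact. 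Both are perfectly acceptable here.
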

\begin{proof}
  Define a $k$-form $\wt{\alpha}$ on $G\times M$ by
  $\wt{\alpha}_{g,p}(v_1,\dots,v_k)=\alpha\bigl(g_*(\pi_*(v_1)),\dots,g_*(\pi_*(v_k))\bigr)$. Then
  \[
    \int_{g\in G} g^*\alpha \mathit{dg}=\int_G \mathit{dg}\wedge \wt{\alpha}
  \]
  where the right side denotes integration along the fibers.
  If $\mu\co G\times M\to M$ denotes the action then
  $\mathit{dg}\wedge \wt{\alpha}=\mathit{dg}\wedge
  \mu^*(\alpha)$.
  Further, fiber integration commutes with the exterior derivative
  (see, e.g.,~\cite[Proposition 6.14.1]{BottTu} for the case of
  compact vertical cohomology of vector bundles or~\cite[Section
  VII.5]{GreubHalperinVanstone} in general). Thus, 
  \[
    d\left(\int_{g\in G} \!\!\!\!\!g^*\alpha \thinspace\mathit{dg}\right)
    =d\left(\int_G \mathit{dg}\wedge \mu^*(\alpha)\right)
    =\int_G \mathit{dg}\wedge d(\mu^*(\alpha))
    =\int_G \mathit{dg}\wedge \mu^*(d\alpha)
    =\int_{g\in G}\!\!\!\!\!g^*(d\alpha)\mathit{dg},
  \]
  as desired.
\end{proof}

\begin{lemma}\label{lem:O-invt-form}
  There is a symplectic form $\omega$ on $\ssspace{n}$ which is
  $O(2)$-invariant and which agrees with the product symplectic form
  $\omega_{S}^{\times n}$ outside an open neighborhood of the diagonal
  $\Delta$. Further, $\omega$ is exact and there is an
  $O(2)$-invariant complex structure $I$ compatible with $\omega$ so
  that $(\ssspace{n},I)$ is $I$-convex at infinity.
\end{lemma}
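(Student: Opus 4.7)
The plan is to construct $\omega$ by averaging a suitable symplectic form on $\ssspace{n}$ over the compact group $O(2)$ using Haar measure, then verify the remaining properties using Kähler positivity. The key point is that $O(2)$ acts on $S$ holomorphically by rotating and reflecting the $(u,v)$-coordinates, preserving the standard Kähler form $\omega_S = \frac{i}{2}(du\wedge d\bar u + dv\wedge d\bar v + dz\wedge d\bar z)|_S$ and the complex structure $I_S$ induced from $\CC^3$. This action extends diagonally to $S^{\times n}$ and, via the Hilbert-Chow morphism, to $\Hilb^n(S)$, preserving the open subset $\ssspace{n}$ and the fat diagonal $\Delta$.

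First, I would invoke the construction from Seidel-Smith and Manolescu to obtain a symplectic form $\omega_0$ on $\ssspace{n}$, compatible with a complex structure $I_0$ making $(\ssspace{n}, I_0)$ a (quasi-)Kähler manifold, and chosen so that $\omega_0$ agrees with $\omega_S^{\times n}$ and $I_0$ agrees with $I_S^{\times n}$ outside an open neighborhood $U$ of $\Delta$. Because $O(2)$ preserves $\Delta$, we may shrink or enlarge $U$ to be $O(2)$-invariant. Next, using that $O(2)$ acts by biholomorphisms on $\ssspace{n}$ preserving the product structure at infinity, one can arrange (by averaging $I_0$ first, or by choosing the Kähler resolution $O(2)$-equivariantly) that $I_0$ itself is $O(2)$-invariant; this works because $I_0$ is already $O(2)$-invariant outside $U$, and the space of compatible complex structures near a given one is contractible, so an equivariant choice exists.

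Once $I_0$ is $O(2)$-invariant, set
\[
\omega = \int_{g \in O(2)} g^*\omega_0 \, \mathit{dg}, \qquad \lambda = \int_{g \in O(2)} g^*\lambda_0 \, \mathit{dg},
\]
where $\lambda_0$ is a primitive of $\omega_0$ (which exists since the Seidel-Smith/Manolescu form is exact, and $\lambda_S$ on $S$ is exact with primitive built from $-\mathrm{Im}(\bar u\, du + \bar v\, dv + \bar z\, dz)|_S$). By Lemma~\ref{lem:average}, $\omega = d\lambda$, so $\omega$ is closed and exact. By construction $\omega$ is $O(2)$-invariant, and since $\omega_0 = \omega_S^{\times n}$ is already $O(2)$-invariant on $\ssspace{n}\setminus U$, averaging does not change it there: $\omega|_{\ssspace{n}\setminus U} = \omega_S^{\times n}$.

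The main obstacle is non-degeneracy of $\omega$, and here I would exploit Kähler positivity. Since $O(2)$ acts by $I_0$-holomorphic maps, each pullback $g^*\omega_0$ is a positive $(1,1)$-form with respect to $I_0$ (tame to $I_0$ and taming the same orientation). The integral of a family of positive $(1,1)$-forms with respect to a fixed complex structure against a positive measure remains a positive $(1,1)$-form; hence $\omega$ is a positive $(1,1)$-form with respect to $I_0$, and in particular non-degenerate and compatible with $I_0$. Setting $I = I_0$ therefore gives an $O(2)$-invariant $\omega$-compatible complex structure. Finally, $I$-convexity at infinity follows from the fact that outside a large compact set, $(\ssspace{n}, I, \omega) = (S^{\times n}\setminus\nbd(\Delta), I_S^{\times n}, \omega_S^{\times n})$, and $(S, I_S, \omega_S)$ is convex at infinity via the standard plurisubharmonic exhaustion $|u|^2 + |v|^2 + |z|^2$ on $\CC^3$ restricted to $S$; the sum of these on $S^{\times n}$ gives an $O(2)$-invariant plurisubharmonic exhaustion outside a compact set, as required.
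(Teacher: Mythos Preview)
Your approach is essentially the same as the paper's: start from the Abouzaid--Smith/Manolescu K\"ahler form, average it over $O(2)$ using Haar measure and Lemma~\ref{lem:average}, and use positivity with respect to an $O(2)$-invariant complex structure to conclude non-degeneracy and compatibility.

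The one wobbly step is your treatment of $I_0$. You write that one can ``average $I_0$ first, or choose the K\"ahler resolution $O(2)$-equivariantly'' to make $I_0$ invariant; but averaging almost complex structures does not in general yield an almost complex structure, and the contractibility argument you sketch would need more care. The paper sidesteps this entirely: the complex structure on $\Hilb^n(S)$ is \emph{already} $O(2)$-invariant, by naturality of the Hilbert scheme construction applied to the holomorphic $O(2)$-action on $S$. With that observation in hand, no modification of $I$ is needed, and your positivity argument (each $g^*\omega_0$ is a positive $(1,1)$-form for the fixed $I$, hence so is the average) goes through exactly as you wrote. For convexity at infinity, the paper simply cites that $(\ssspace{n},I)$ is biholomorphic to Seidel--Smith's affine variety; your explicit plurisubharmonic exhaustion is a valid alternative.
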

\begin{proof}
  Abouzaid-Smith~\cite[Lemma 5.5]{AbouzaidSmith:arc-alg} construct a
  K\"ahler form $\omega'$ on $\Hilb^n(S)$ which agrees with the
  product symplectic form outside an open neighborhood of the diagonal
  and the restriction of which to $\ssspace{n}$ is exact. (See
  also~\cite[Theorem 1.2]{Manolescu06:nilpotent} and~\cite[Section
  4.2]{SeidelSmith10:localization}.)  We modify their K\"ahler form
  $\omega'$ to be $O(2)$-invariant.  The complex structure on
  $\Hilb^n(S)$ is $O(2)$-invariant, by naturality of the Hilbert
  scheme construction. As in Lemma~\ref{lem:average}, let
  $\mathit{dg}$ denote Haar measure on $O(2)$, and let
  \[
    \omega(v,w)=\int_{g\in O(2)}\omega'(g_*v,g_*w)\mathit{dg}.
  \]
  Using the facts that $\omega'(v,Iv)>0$ for all $v\neq0$ and
  that $I$ is preserved by the $O(2)$-action, it is
  easy to see that $\omega$ is a non-degenerate $2$-form on
  $\Hilb^n(S)$, and $G$-invariance of $dg$ implies $G$-invariance of
  $\omega$. It follows from Lemma~\ref{lem:average} that $\omega$ is
  closed. In fact, applying Lemma~\ref{lem:average} to the primitive
  for $\omega'$, the averaged form $\omega$ is also exact. Finally,
  taking $I$ to be the restriction of the complex structure from
  $\Hilb^n(S)$ (i.e., the same complex structure we have been
  discussing), Manolescu~\cite{Manolescu06:nilpotent} showed that
  $(\ssspace{n},I)$ is biholomorphic to
  Seidel-Smith's~\cite{SeidelSmith6:Kh-symp} original $\ssspace{n}$
  which in turn is an affine variety, hence convex at infinity. 
\end{proof}

The first Chern class $c_1(\ssspace{n})$
vanishes~\cite[p.~501]{SeidelSmith6:Kh-symp} (see also~\cite[Section
6.2]{Manolescu06:nilpotent}), so one can define a squared phase map;
an explicit description of this map is given by
Manolescu~\cite[Section 6.2]{Manolescu06:nilpotent}. Since the
Lagrangians are simply connected, they necessarily admit gradings.
\begin{lemma}\label{lem:Kh-phase}
  The squared phase map on the Lagrangians $L_i$ can be
  chosen to be $O(2)$-equivariant.
\end{lemma}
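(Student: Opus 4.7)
The plan has two steps: first, to arrange an $O(2)$-invariant squared phase map $\alpha$ on $\ssspace{n}$ (so that Hypothesis~\ref{hyp:phase-invt} is satisfied); second, to promote this to an $O(2)$-equivariant grading on each $L_i$.

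For the first step, I would use the $O(2)$-invariant complex structure $I$ from Lemma~\ref{lem:O-invt-form} to view $(\Lambda^n_\CC T\ssspace{n})^{\otimes 2}$ as an $O(2)$-equivariant complex line bundle. Since $2c_1(\ssspace{n}) = 0$, this bundle is trivial as a complex line bundle; a nowhere-vanishing trivialization $\eta^2_0$ is given explicitly by Manolescu~\cite[Section~6.2]{Manolescu06:nilpotent}. The obstruction to $O(2)$-invariance of $\eta^2_0$ is the cocycle $c(g,x) = (g^*\eta^2_0)_x/(\eta^2_0)_x \in S^1$, and the ambiguity in the choice of trivialization is multiplication by a map $\ssspace{n} \to S^1$. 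By a standard averaging argument over $O(2)$ with Haar measure (as in Lemma~\ref{lem:average}), one can modify $\eta^2_0$ to a trivialization $\eta^2$ whose associated cocycle is identically $1$, equivalently the associated squared phase map $\alpha\co \Gr_{\Lag}(T\ssspace{n}) \to S^1$ is $O(2)$-invariant. Alternatively, one can check directly from Manolescu's formula that $\eta^2_0$ is already $O(2)$-invariant.

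For the second step, note that $\alpha|_{L_i}$ is now an $O(2)$-invariant map $L_i \to S^1$. Since $L_i$ is simply connected (being essentially a product of 2-spheres), a lift $\alpha_{L_i}^\#\co L_i \to \RR$ exists. For each $g \in O(2)$, the function
\[
f_g(x) \;\coloneqq\; \alpha_{L_i}^\#(gx) - \alpha_{L_i}^\#(x)
\]
takes values in $\ZZ$, because $e^{2\pi i f_g(x)} = \alpha(gT_xL_i)/\alpha(T_xL_i) = 1$ by the $O(2)$-invariance of $\alpha$ and the fact that $g \cdot L_i = L_i$. Continuity of $f_g$ then forces it to be constant with some integer value $n(g)$. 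The assignment $g \mapsto n(g)$ satisfies $n(gh) = n(g) + n(h)$, so it is a continuous group homomorphism $O(2) \to \ZZ$; since $O(2)$ is compact, such a homomorphism must be trivial. Hence $\alpha_{L_i}^\#(gx) = \alpha_{L_i}^\#(x)$ for all $g \in O(2)$ and $x \in L_i$, which is exactly Hypothesis~\ref{hyp:gr-invt}.

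The main obstacle will be the first step, specifically verifying $O(2)$-invariance of $\eta^2$ (or arranging it by modification). The subtlety is that the reflection component of $O(2)$ acts in a partially antiholomorphic manner on the defining coordinates of $S$, so one must be careful that the $O(2)$-action on the line bundle $(\Lambda^n_\CC T\ssspace{n})^{\otimes 2}$ is by unitary (and not merely complex-linear up to conjugation) automorphisms. Once this is sorted out, the averaging or direct-verification argument produces $\eta^2$, and the grading step is essentially automatic from $\pi_1(L_i) = 0$ together with compactness of $O(2)$.
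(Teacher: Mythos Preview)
The paper takes the direct-verification route you mention only as an alternative: it writes down Manolescu's holomorphic volume form
\[
\eta = \prod_{j=1}^n \frac{dv_j \wedge dz_j}{2u_j}
\]
explicitly, performs the change of variables for an arbitrary $A \in O(2)$ acting on $(u,v)$, and finds $A^*\eta = (\det A)\,\eta$, hence $A^*\eta^2 = \eta^2$ in either case. So your fallback is the paper's actual argument, and it is short.

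Your primary proposal---averaging the trivialization---has a gap. Integrating $g^*\eta^2_0$ over $O(2)$ produces an invariant section, but nothing guarantees it is nowhere vanishing; unlike Hermitian metrics or K\"ahler forms there is no positivity preserved under averaging of sections of a complex line bundle. Your assertion that the cocycle $c(g,x)$ lands in $S^1$ is also unjustified without first fixing an invariant Hermitian metric, and even then $S^1$-valued functions cannot be averaged by integration. Separately, your worry about antiholomorphicity is unfounded: the $O(2)$-action is by real matrices acting $\CC$-linearly on the $(u,v)$-coordinates (with $z$ fixed), so it is holomorphic on $S$ and hence on $\Hilb^n(S)$.

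Your Step~2 is correct but goes beyond what the lemma asserts: the statement concerns only the squared phase map (Hypothesis~\ref{hyp:phase-invt}). The paper handles the grading (Hypothesis~\ref{hyp:gr-invt}) in the subsequent corollary, via the existence of $O(2)$-fixed points on each $L_i$; your homomorphism-to-$\ZZ$ argument using compactness of $O(2)$ is a valid alternative to that fixed-point observation.
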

\begin{proof}
  We will be concrete; a more abstract argument is left to the reader.
  By Equation~\eqref{eq:phase}, it suffices to find a complex volume form $\eta$ so that $\eta^2$ is $O(2)$-equivariant.
  With notation as in Manolescu's paper~\cite[Section
  6.2]{Manolescu06:nilpotent}, note that
  \[
    \frac{dv_j\wedge dz_j}{2u_j}=-\frac{du_j\wedge dz_j}{2v_j}=\frac{du_j\wedge dv_j}{p'(z_j)}.
  \]
  The complex volume form is given by
  \[
    \eta = \prod_{j=1}^n \frac{dv_j\wedge dz_j}{2u_j}
  \]
  on the open set where $z_i\neq z_j$ for all $i\neq j$. In particular, this formula holds on a dense, open subset of $\ssspace{n}$.
  Any element $A$ of $O(2)$ can be written as either
  \[
    \begin{pmatrix}
      a & b\\
      -b & a
    \end{pmatrix}
    \qquad\text{or}\qquad
    \begin{pmatrix}
      a & b\\
      b & -a
    \end{pmatrix}
  \]
  depending on whether the matrix is in $\SO(2)$ or not. Making the
  change of variables $(u_i,v_i)^T=A(x_i,y_i)^T$ gives 
  \begin{align*}
    \eta&=\prod\frac{(-bdx_j+ady_j)\wedge dz_j}{2(ax_j+by_j)}
          =\prod\left(\frac{ax_j+by_j}{ax_j+by_j}\right)\left(\frac{dy_j\wedge dz_j}{2x_j}\right)\\
    \shortintertext{or}
    \eta&=\prod\frac{(bdx_j-ady_j)\wedge dz_j}{2(ax_j+by_j)}
          =\prod\left(\frac{-ax_j-by_j}{ax_j+by_j}\right)\left(\frac{dy_j\wedge dz_j}{2x_j}\right)
  \end{align*}
  depending on whether $\det(A)=1$ or $-1$. That is,
  either $A^*\eta=\eta$ or $A^*\eta=-\eta$, respectively. In either
  case, $\eta^2$ is preserved by $A$. Since preserving $\eta^2$ is a closed condition, the result follows.
\end{proof}

\begin{lemma}\label{lem:Kh-pin-profile}
  Let $P_i$ be the unique $\pin$-structure on $L_i$. Then the $\pin$
  profile associated to $(L_0,O(2),P_0)$ is the same as the $\pin$
  profile associated to $(L_1,O(2),P_1)$.
\end{lemma}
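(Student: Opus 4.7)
The plan is to reduce the problem to a comparison of $\spin$ profiles, which are more tractable here. Since each Lagrangian $L_i$ is simply connected (being a product of $2$-spheres), it admits a unique $\spin$-structure $Q_i$ refining its orientation, and this $\spin$-structure induces the unique $\pin$-structure $P_i$. Unwinding Definition~\ref{def:pin-profile}, the $\pin$ profile of $(L_i,O(2),P_i)$ coincides with the $\spin$ profile of $(L_i,O(2),Q_i)$, so it suffices to verify the equality of the two $\spin$ profiles in $H^2(BO(2);\ZZ/2)$.

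Next I would describe the $O(2)$-action on each Lagrangian concretely in terms of the underlying bridge diagram. Each factor $\Sigma_{A_i}$ or $\Sigma_{B_j}$ is a matching sphere in $S$ associated to an arc in the $z$-plane between two critical values of $p$. The $O(2)$-action on $S$ either preserves such an arc (acting on the corresponding matching sphere by rotation around two fixed points together with a reflection) or swaps it with another arc (permuting the associated sphere factors). Thus the $O(2)$-action on each $L_i$ decomposes as a factorwise action combined with a permutation of those factors lying in a common $O(2)$-orbit of arcs.

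Then I would establish a multiplicativity property for the $\spin$ profile: for a product of simply connected $\spin$ manifolds with a $G$-action permuting identical factors, the $\spin$ profile decomposes as a sum of contributions indexed by $G$-orbits of factor types. Applying this to both $L_0$ and $L_1$, each profile becomes a sum over the $O(2)$-orbits of arcs in the bridge diagram, with summands depending only on the local $O(2)$-structure of the orbit (an isolated fixed arc versus an interchanged pair). Since this combinatorial data is intrinsic to the bridge diagram and the same for both halves $L_0$ and $L_1$, the two $\spin$ profiles coincide.

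The main obstacle I anticipate is the treatment of the permutation contribution in the multiplicativity property: swapping two $\spin$-structured factors can produce a nontrivial $\ZZ/2$-obstruction to equivariance of the $\spin$ lift, and computing this obstruction requires an explicit equivariant identification of the swapped factors. A concrete analysis using the $O(2)$-action on $S$ in the Manolescu coordinates of Lemma~\ref{lem:Kh-phase} should give the needed comparison, and in particular should show that the same local obstruction appears on the $L_0$ side and the $L_1$ side for each $O(2)$-orbit of arcs.
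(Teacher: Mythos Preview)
Your reduction to $\spin$ profiles is fine, and your multiplicativity idea would work, but you have misidentified the $O(2)$-action and thereby made the problem harder than it is. The $O(2)$-action on $S=\{u^2+v^2+p(z)=0\}$ acts by the standard linear $O(2)$-action on the $(u,v)$-coordinates and \emph{fixes} $z$. In particular it fixes every arc in the $z$-plane, so it never swaps matching spheres: the action on each $L_i=\Sigma_1\times\cdots\times\Sigma_n$ is simply the product of $n$ copies of the standard $O(2)$-action on $S^2\cong\CC\cup\{\infty\}$ by rotations about the origin and reflections through lines through the origin. Thus $L_0$ and $L_1$ are equivariantly diffeomorphic as $O(2)$-manifolds, and since the $\pin$ profile depends only on the $O(2)$-manifold together with its (here unique) $\pin$-structure, the two profiles coincide immediately. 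This is exactly the paper's argument.

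Your permutation analysis is therefore unnecessary, and the ``main obstacle'' you flag does not arise. More importantly, if the action \emph{did} permute arcs, your final step would have a gap: there is no reason the $O(2)$-orbit structure on the $A$-arcs should match that on the $B$-arcs for a general bridge diagram, so the claim that ``this combinatorial data is intrinsic to the bridge diagram and the same for both halves'' would require justification you have not supplied.
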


\begin{proof} 
  First, observe that each Lagrangian $L_i$ consists of a product of
  $n$ two-spheres $S^2$. The action of $O(2)$ on each
  $S^2 = \CC \cup \{\infty\}$ factor is given by rotations about the origin
  and reflections across lines through the origin, and the action on $L_i$
  is the product of these actions. This implies that the pin profiles
  associated to $(L_0,O(2),P_0)$ and $(L_1,O(2),P_1)$ must be the same.
\end{proof}

\begin{corollary}
  There is a $O(2)$-orientation for $(\ssspace{n},L_0,L_1)$.
\end{corollary}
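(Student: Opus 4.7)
The plan is to apply Proposition~\ref{prop:spin-or} with $G = O(2)$, taking as input the $O(2)$-equivariant squared phase map produced by Lemma~\ref{lem:Kh-phase} together with the unique $\pin$-structures $P_0, P_1$ on $L_0, L_1$. Each Lagrangian $L_i$ is a product of two-spheres and hence simply connected, so it admits a unique $\pin$-structure and a grading of the phase map. Three hypotheses of Proposition~\ref{prop:spin-or} must be verified: $O(2)$-invariance of the squared phase map (Hypothesis~\ref{hyp:phase-invt}), $O(2)$-invariance of the gradings (Hypothesis~\ref{hyp:gr-invt}), and agreement of the $\pin$ profiles of $(L_0, O(2), P_0)$ and $(L_1, O(2), P_1)$ together with a chosen isomorphism of central extensions.

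The first condition is exactly Lemma~\ref{lem:Kh-phase}, and the third follows from Lemma~\ref{lem:Kh-pin-profile}: since the two $\pin$ profiles represent the same class in $H^2(BO(2); \ZZ/2)$, the associated central extensions $\wt{O(2)}_{L_0}$ and $\wt{O(2)}_{L_1}$ are isomorphic as central extensions of $O(2)$ by $\ZZ/2$, and I will fix any such isomorphism $\iota$. For the grading condition I will invoke the remark immediately after Hypothesis~\ref{hyp:gr-invt}, which reduces $O(2)$-invariance of the grading to producing a fixed point on each connected component of $L_i$. Each $L_i$ is connected, and the $O(2)$-action on each factor $S^2 = \CC \cup \{\infty\}$ (rotations about the origin and reflections across lines through it) fixes both $0$ and $\infty$; any product point with all coordinates equal to $0$ is then a global fixed point.

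I do not expect a substantial obstacle here: essentially all the nontrivial work has been done in Lemmas~\ref{lem:Kh-phase} and~\ref{lem:Kh-pin-profile}. The only mildly delicate point is promoting the cohomological equality asserted by Lemma~\ref{lem:Kh-pin-profile} into an explicit choice of isomorphism of central extensions, but this is simply an appeal to the standard classification of central extensions of $O(2)$ by $\ZZ/2$ via $H^2(BO(2);\ZZ/2)$. With these three conditions verified, Proposition~\ref{prop:spin-or} produces the desired $O(2)$-orientation on $(\ssspace{n}, L_0, L_1)$.
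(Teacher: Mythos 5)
Your proof is correct and follows essentially the same route as the paper's: verify Hypothesis~\ref{hyp:phase-invt} via Lemma~\ref{lem:Kh-phase}, verify Hypothesis~\ref{hyp:gr-invt} via the observation that the $O(2)$-action on each $L_i$ has a fixed point, and verify the $\pin$-profile condition via Lemma~\ref{lem:Kh-pin-profile}, then invoke Proposition~\ref{prop:spin-or}. If anything, you are slightly more careful than the paper in flagging that one must actually fix an isomorphism $\iota$ of central extensions rather than merely noting the $\pin$ profiles agree as cohomology classes.
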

\begin{proof}
  By Lemma~\ref{lem:Kh-phase} there is a $O(2)$-invariant square phase
  map, and since the $O(2)$-action has a fixed point (in fact,
  $2^{|K|}$ fixed points) Hypothesis~\ref{hyp:gr-invt} is
  automatically satisfied. By Lemma~\ref{lem:Kh-pin-profile} the
  $\pin$ profiles of the two Lagrangians are the same, so this follows
  from Proposition~\ref{prop:spin-or}.
\end{proof}

\begin{lemma}
  The triple $(\ssspace{n},L_0,L_1)$ satisfies
  Hypotheses~\ref{item:J-1} and~\ref{item:J-2}.
\end{lemma}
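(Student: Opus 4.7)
The plan is to verify the two hypotheses separately, drawing on the structure established in the previous lemmas of this section.

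For Hypothesis~\ref{item:J-2}, I would invoke Lemma~\ref{lem:O-invt-form} directly: it already produces an $O(2)$-invariant $\omega$-compatible complex structure $I$ with respect to which $\ssspace{n}$ is convex at infinity. The Lagrangians $L_0$ and $L_1$ are products of two-spheres $\Sigma_{A_i}$ (respectively $\Sigma_{B_i}$) embedded in $\ssspace{n}$, hence compact, so the compactness alternative applies and no conical-at-infinity condition is needed.

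For Hypothesis~\ref{item:J-1}, I would handle the symplectic area and the Maslov index separately. For the area, note that by Lemma~\ref{lem:O-invt-form} the form $\omega$ is exact, say $\omega=d\lambda$. Given a loop of paths $v$ with $v|_{[0,1]\times\{1\}}$ constant, Stokes' theorem gives $\int v^*\omega$ as a sum of integrals of $\lambda$ over the two boundary loops $v|_{\{0\}\times S^1}\subset L_1$ and $v|_{\{1\}\times S^1}\subset L_0$ (the constant-path edge contributes zero). Each Lagrangian $L_i$ is a product of two-spheres and hence simply connected, so these boundary loops bound disks in the $L_i$, and the restriction $\lambda|_{L_i}$ is automatically exact (on a simply connected manifold every closed $1$-form is exact, and $d(\lambda|_{L_i})=\omega|_{L_i}=0$). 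Both boundary integrals therefore vanish.

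For the Maslov index, I would use that $2c_1(\ssspace{n})=0$ (in fact $c_1=0$; see~\cite{SeidelSmith6:Kh-symp,Manolescu06:nilpotent}) together with simple connectivity of each $L_i$, which forces $H^1(L_i;\ZZ)=0$ and hence the Maslov class of each Lagrangian to vanish. The Maslov index of any loop of paths is then determined by the pairing of the Maslov classes of the Lagrangians with the boundary loops plus a multiple of $c_1$ on the sphere obtained by capping off, and all of these contributions vanish. The main (only minor) obstacle is to make sure the Stokes calculation for the area correctly accounts for the constant-path edge and produces genuinely closed loops in the $L_i$; everything else is a direct consequence of the topology of the Lagrangians and the facts already recorded in Lemmas~\ref{lem:O-invt-form} and~\ref{lem:Kh-phase}.
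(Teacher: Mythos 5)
Your proposal is essentially correct and follows the same route as the paper. For (J-2), both you and the paper invoke Lemma~\ref{lem:O-invt-form} for convexity and note compactness of the $L_i$. For the area part of (J-1), both arguments use exactness of $\omega$, Stokes' theorem, and exactness of $\lambda|_{L_i}$ (from simple connectivity). One small inaccuracy in your write-up: $\partial([0,1]\times S^1)=(\{0\}\times S^1)\amalg(\{1\}\times S^1)$, so there is no ``constant-path edge'' in the boundary — the segment $[0,1]\times\{1\}$ where $v$ is constant lies in the interior and plays no role in the Stokes computation; the parenthetical concern you raise does not arise.

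For the Maslov index, your argument and the paper's are close in spirit but differ in bookkeeping. The paper caps off the two boundary loops with disks in the simply connected $L_i$, extends $v$ to a map $(S^2,\bD^2,\bD^2)\to(M,L_0,L_1)$, and then uses excision plus the long exact sequence for the pair $(S^2,\bD^2\amalg\bD^2)$ to reduce the relative Chern class computation to the vanishing of $c_1(v^*T\ssspace{n})$. You instead assert directly a decomposition of the Maslov index into pairings of the Maslov classes of $L_0$, $L_1$ with the boundary loops plus a $c_1$-term on the capped sphere, each of which vanishes. This decomposition is correct (it is the standard ``capping'' identity for Maslov indices), and arguably slightly more elementary to invoke, but you should state it explicitly as a lemma or citation rather than gesture at it; the paper's excision argument is more self-contained. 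Note also that even without vanishing of the Maslov classes of the $L_i$, the pairings with the boundary loops would vanish because those loops are null-homotopic in the simply connected $L_i$, so the argument is a bit redundant as stated. The citation to Lemma~\ref{lem:Kh-phase} is unnecessary here; that lemma concerns the squared phase map, which is not used in verifying (J-1) or (J-2).
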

\begin{proof}
  By Lemma~\ref{lem:O-invt-form}, $(\ssspace{n},\omega)$ is exact and admits an $O(2)$-invariant, convex at infinity complex structure $I$.
  For notational
  convenience, write $\omega=d\lambda$. Each of the Lagrangians $L_0$ and $L_1$
  is a product of $2$-spheres, hence simply-connected, and
  therefore necessarily exact. This addresses
  Hypothesis~\ref{item:J-2}. Turning to Hypothesis~\ref{item:J-1},
  consider a loop of paths
  \[
  v\co \bigl([0,1]\times S^1,\{0\}\times S^1,\{1\}\times S^1\bigr)\to (\ssspace{n},L_0,L_1).
  \]
  The $\omega$-area of $v$ is
  $\int_{[0,1] \times S^1} v^*\omega = \int_{\{1\} \times S^1}
  v^*\lambda - \int_{\{0\}\times S^1} v^*\lambda = 0$, since
  $\lambda|_{L_i}$ is exact for $i=0,1$. For the second part of the
  hypothesis, consider the bundle $v^*T\ssspace{n}$, which is
  trivializable as a complex vector bundle. Recall that the Maslov
  index of $v$ is twice the relative Chern class of the bundle
  $(v^*T\ssspace{n}, \bigl((v|_{\{0\} \times S^1}^*TL_0)\otimes_{\RR}
  \CC) \amalg ((v|_{\{1\} \times S^1}^*TL_1) \otimes_{\RR} \CC)\bigr)$
  over
  $\bigl([0,1] \times S^1, (\{0\} \times S^1) \amalg (\{1\}\times
  S^1)\bigr)$. (Since $L_i$ is Lagrangian, the bundle
  $v^*TL_i \otimes_{\RR} \CC$ comes with an isomorphism to
  $v^*T\ssspace{n}$.) Since $L_i$ is simply-connected, we may choose a
  nulhomotopy of each $v(\{i\} \times S^1)$ in $L_i$ and use those
  maps to extend $v$ to a map
  $v\co(S^2, \bD^2, \bD^2) \to (M, L_0, L_1)$.
  By excision, we see that the relative
  Chern class of
  $\bigl((v^*T\ssspace{n}, ((v|_{\{0\} \times S^1}^*TL_0)\otimes_{\RR} \CC)
  \amalg ((v|_{\{1\} \times S^1}^*TL_1) \otimes_{\RR} \CC)\bigr)$ over
  $\bigl([0,1] \times S^1, \{0\} \times S^1 \amalg \{1\} \times S^1\bigr)$ is
  identified with the relative Chern class of
  $\bigl(v^*T\ssspace{n}, ((v|_{\bD^2}^*TL_0)\otimes_{\RR} \CC) \amalg
  ((v|_{\bD^2}^*TL_1) \otimes_{\RR} \CC)\bigr)$ over
  $(S^2, \bD^2 \amalg \bD^2)$. Furthermore, the long
  exact sequence for the pair $(S^2,\bD^2\amalg \bD^2)$ implies that the Chern
  classes of a relative bundle over $(S^2, \bD^2\amalg \bD^2)$ vanish
  if and only if the ordinary Chern classes of its restriction to
  $S^2$ vanish. But indeed $c_1(\ssspace{n})$ vanishes, so
  $c_1(v^*T\ssspace{n})=0$ as well. This implies that the Maslov index
  of $v$ is zero.
\end{proof}

\begin{corollary}
  For any coefficient ring $\Ring$, any subgroup $G$ of $O(2)$, and
  any bridge diagram $K$ for a link, there is a $G$-equivariant
  symplectic Khovanov homology $\KhSymp^{G}(K;\Ring)$ of $K$ with
  coefficients in $\Ring$. Up to isomorphism over $H^*(G;\Ring)$,
  $\KhSymp^{G}(K;\Ring)$ is an invariant of the bridge diagram for
  $K$.
\end{corollary}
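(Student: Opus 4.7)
Fix a subgroup $G\leq O(2)$ and a coefficient ring $\Ring$. The plan is to verify that the triple $(\ssspace{n},L_0,L_1)$ with its $G$-action meets all the input required for the main construction and invariance theorem of Section~\ref{sec:construction}, and then define $\KhSymp^G(K;\Ring)=\eHF[G](L_0,L_1;\Ring)$ via Definition~\ref{def:equivariant-cohomology}.

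First I would check Hypothesis~\ref{hyp:Floer-defined}. Items~\ref{item:J-1} and~\ref{item:J-2} were established in the previous lemma using the exact symplectic form and $G$-invariant convex-at-infinity complex structure from Lemma~\ref{lem:O-invt-form}, together with the exactness of each factor $S^2$ of the Lagrangians. Since $G\leq O(2)$ is a closed subgroup of a compact Lie group, $G$ itself is compact, so Hypothesis~\ref{item:J3} is automatic by Lemma~\ref{lem:mu-on-pi2-g}.

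Next I would produce a $G$-orientation system. Lemma~\ref{lem:Kh-phase} gives an $O(2)$-invariant squared phase map, which restricts to a $G$-invariant one, and Hypothesis~\ref{hyp:gr-invt} is automatic because the $O(2)$-action, hence the $G$-action, has fixed points on each Lagrangian component. Lemma~\ref{lem:Kh-pin-profile} establishes that the $\pin$ profiles of $L_0$ and $L_1$ coincide as classes in $H^2(BO(2);\ZZ/2)$; pulling back along $BG\to BO(2)$ shows they still agree as classes in $H^2(BG;\ZZ/2)$, and the concrete isomorphism $\iota$ of central extensions described in the proof of Lemma~\ref{lem:Kh-pin-profile} (coming from the product structure of the Lagrangians) is $O(2)$-equivariant, so restricts to a $G$-equivariant one. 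Applying Proposition~\ref{prop:spin-or} then gives a $G$-orientation system for the configuration spaces of Whitney disks, which permits working over an arbitrary ring $\Ring$.

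With these verifications in hand, Definition~\ref{def:equivariant-cohomology} produces a group $\eHF[G](L_0,L_1;\Ring)$, which we define to be $\KhSymp^G(K;\Ring)$. The invariance statement --- that up to isomorphism of $H^*(BG;\Ring)$-modules this group is independent of the cylindrical pair $(J,H)$, the section $\BGSection$ of Lemma~\ref{lem:sec-exists}, and the auxiliary cubical choices in constructing the moduli spaces --- is precisely the content of Theorem~\ref{thm:eHF-inv}. I do not anticipate any substantive obstacle in this argument: each hypothesis is already either verified in the preceding lemmas or is an immediate consequence of the compactness of $G$ and the product structure of the Lagrangians, so the proof is a straightforward application of the machinery developed in Sections~\ref{sec:construction} and~\ref{sec:G-or}.
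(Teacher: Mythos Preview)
Your proposal is correct and follows exactly the approach implicit in the paper: the corollary is stated without proof there, being an immediate consequence of the preceding lemmas (verifying Hypothesis~\ref{hyp:Floer-defined} and the existence of a $G$-orientation system via Proposition~\ref{prop:spin-or}) together with Definition~\ref{def:equivariant-cohomology} and Theorem~\ref{thm:eHF-inv}. One small caveat: your sentence ``Since $G\leq O(2)$ is a closed subgroup of a compact Lie group, $G$ itself is compact'' quietly assumes $G$ is closed, which the statement does not; however, Hypothesis~\ref{item:J3} still holds for arbitrary subgroups since every $G$-twisted loop is in particular an $O(2)$-twisted loop, and the Maslov-index-zero argument of the previous lemma (using simple connectivity of the $L_i$ and $c_1(\ssspace{n})=0$) applies to those as well.
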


In the interest of brevity, we will not attempt to prove:
\begin{conjecture}
  For coefficient ring $\Ring$ and subgroup $G$ of $O(2)$, the
  $G$-equivariant symplectic Khovanov homology
  $\KhSymp^{G}(K;\Ring)$ of $K$ is a link invariant.
\end{conjecture}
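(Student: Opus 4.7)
The plan is to show that any two bridge presentations $K_0$ and $K_1$ of the same link yield isomorphic $G$-equivariant symplectic Khovanov homologies. Following the non-equivariant invariance proofs of Seidel--Smith, Manolescu, and Waldron, two bridge diagrams of the same link are related by a finite sequence of (a) moves preserving the bridge number $n$ (isotopies of arcs in the $z$-plane, braid-type moves) and (b) stabilization/destabilization moves that change $n$ to $n\pm 1$. It suffices to establish invariance under each class of moves in a $G$-equivariant fashion.

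For moves of type (a), the Lagrangians $(L_0,L_1)$ and $(L_0',L_1')$ before and after the move are related by a compactly supported Hamiltonian isotopy of $\ssspace{n}$ arising from an ambient isotopy of $S$ that only alters the $z$-coordinate. Since the $O(2)$-action is trivial in $z$ and fiberwise linear in $(u,v)$, the isotopy already preserves the $G$-action when carried out symmetrically; failing that, a Haar-average of its generating Hamiltonian (as in the proof of Lemma~\ref{lem:O-invt-form}) produces a compactly supported $G$-equivariant Hamiltonian generating a $G$-equivariant isotopy carrying $L_i$ to $L_i'$. Theorem~\ref{thm:eHF-inv} then directly yields an isomorphism $\KhSymp^{G}(K_0;\Ring)\cong \KhSymp^{G}(K_1;\Ring)$ of $H^*(BG;\Ring)$-modules.

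For moves of type (b), one isolates a small disk $D$ in the $z$-plane, disjoint from all other bridge arcs, where a single matched pair is added (resp.\ removed). Because $G$ acts trivially in $z$, any such $D$ is automatically $G$-invariant, and the added matched pair in $D$ lifts to a pair of spheres in $S$ that is $G$-invariant. The challenge is to promote the non-equivariant stabilization isomorphism $\HF(L_0,L_1)\cong \HF(L_0',L_1')$ — typically proved via a neck-stretching/product argument in which generators and low-index holomorphic disks on $\ssspace{n+1}$ are identified with those on $\ssspace{n}$ — to a $G$-equivariant quasi-isomorphism at the level of the hypercohomology diagrams $\chainFb$ of Section~\ref{sec:build-diag}. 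To do so I would enlarge the category $\GJHcat$ of Section~\ref{sec:build-diag} to a two-object category $\GJHcat^{\mathrm{stab}}$ whose objects are generic cylindrical pairs on $\ssspace{n}$ and on $\ssspace{n+1}$, and whose $\Hom$-spaces from the first to the second are families of $G$-equivariant neck-stretching data interpolating the two Floer problems; the restriction to endomorphism spaces recovers the two copies of $\GJHcat$. One then proves a section-existence lemma analogous to Lemma~\ref{lem:sec-exists} for $\Nerve BG\to \NerveAc \GJHcat^{\mathrm{stab}}$ and applies the Floer functor to obtain a natural transformation between the two diagrams, giving the required isomorphism over $H^*(BG;\Ring)$ by Lemmas~\ref{lem:hocolim-map} and~\ref{lem:hocolim-htpy}.

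The main obstacle is this last step: the two sides of stabilization live in different symplectic manifolds, and even the non-equivariant stabilization invariance for $\KhSymp$ requires a careful degeneration/gluing argument to identify the relevant moduli spaces. Upgrading it to our setting demands both a $G$-equivariant neck-stretching compactness/transversality package — so that Theorems~\ref{thm:compactness}, \ref{thm:transversality}, and~\ref{thm:gluing} hold for the relevant interpolating families — and verification that the $G$-orientation system of Section~\ref{sec:G-or} is preserved under the stabilization identification. Once the neck-stretched moduli spaces are controlled coherently in families over $\Nerve BG$, the homotopy-colimit machinery of Section~\ref{sec:homotopy-colimits} delivers the conclusion.
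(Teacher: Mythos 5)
The statement is explicitly labeled a \emph{conjecture}, and the paper says ``In the interest of brevity, we will not attempt to prove'' it. There is no proof in the paper to compare against, so your task here was to supply an argument the authors themselves declined to give. Your outline is a sensible plan, and you are right to split into moves fixing the bridge number and stabilization moves, and right that stabilization is the hard part; but as written it is a program with a genuine gap rather than a proof, and you say as much.

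Two places need more than you give. For type (a) moves, Haar-averaging the generating Hamiltonian does not automatically produce an isotopy that still carries $L_i$ to $L_i'$: averaging Hamiltonians does not average the induced flows. The cleaner route is to note that an isotopy of bridge arcs in the $z$-plane lifts to an isotopy of Lagrangians in $\ssspace{n}$ that is $G$-equivariant on the nose (since $O(2)$ acts in the $(u,v)$-directions only), and then to use exactness of the $L_i$ and a $G$-invariant primitive of $\omega$ (available by averaging the primitive from Lemma~\ref{lem:O-invt-form}) to see this isotopy is generated by a compactly supported $G$-equivariant Hamiltonian, so Theorem~\ref{thm:eHF-inv} applies. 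For type (b), the category $\GJHcat^{\mathrm{stab}}$ you describe is the right shape, but filling in the $\Hom$-spaces with $G$-equivariant neck-stretching data and proving the analogues of Theorems~\ref{thm:compactness}, \ref{thm:transversality}, and~\ref{thm:gluing} for such interpolating families is precisely the analytic content that even the non-equivariant stabilization invariance requires; you have not shown this package exists, nor that the $G$-orientation system of Section~\ref{sec:G-or} is preserved under the stabilization identification (the latter requires comparing $\pin$ profiles and gradings on $\ssspace{n}$ and $\ssspace{n+1}$, which you do not address). Until these are supplied, this remains a plan rather than a proof, consistent with the paper's choice to state the claim only as a conjecture.
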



\bibliographystyle{hamsalpha}
\bibliography{heegaardfloer}
\end{document}
